\theoremstyle{plain}
    \newtheorem{thm}{Theorem}[section]
    \newtheorem{lem}[thm]   {Lemma}
    \newtheorem{cor}[thm]   {Corollary}
    \newtheorem{prop}[thm]  {Proposition}
\theoremstyle{definition}
    \newtheorem{ex}[thm]{Example}
    \newtheorem{rem}[thm]{Remark}
\def\cat{{\mathrm{cat}\hskip1pt}}
\def\secat{{\mathrm{secat}\hskip1pt}}
\def\TC{{\mathrm{TC}\hskip1pt}}
\def\zcl{{\mathrm{zcl}\hskip1pt}}
\def\RP{{\mathbb{R}\mathrm{P}\hskip1pt}}
\def\Conf{{\mathrm{Conf}\hskip1pt}}
\def\TCs{{\mathrm{TC}_s\hskip1pt}}
\title[Cohomology of configurations on projective spaces]{The cohomology ring away from 2 of configuration spaces on real projective spaces}
\author[J.~Gonz\'alez]{Jes\'us Gonz\'alez\textsuperscript{*}}
\author[A.~Guzm\'an-S\'aenz]{Aldo Guzm\'an-S\'aenz\textsuperscript{\dag}}
\author[M.~Xicot\'encatl]{Miguel Xicot\'encatl\textsuperscript{\ddag}}
\thanks{\textsuperscript{*}~~Supported by Conacyt Research Grant 221221.}
\thanks{\textsuperscript{\dag}~~Supported by Conacyt Ph.D.~Scholarship 316605.}
\thanks{\textsuperscript{\ddag}~~Supported by Conacyt Research Grant 168349.}
\address{Departamento de Matem\'aticas, Centro de Investigaci\'on y de Estudios Avanzados del IPN, Av.~IPN 2508, Zacatenco, M\'exico City 07000, M\'exico}
\email{jesus@math.cinvestav.mx}
\email{aldo@math.cinvestav.mx}
\email{xico@math.cinvestav.mx}
\begin{document}

\begin{abstract}
Let $R$ be a commutative ring containing $1/2$. We compute the $R$-cohomology ring of the configuration space $\operatorname{Conf}(\mathbb{R}\mathrm{P}^m,k)$ of $k$ ordered points in the $m$-dimensional real projective space $\mathbb{R}\mathrm{P}^m$. The method uses the observation that the orbit configuration space of $k$ ordered points in the $m$-dimensional sphere (with respect to the antipodal action) is a $2^k$-fold covering of $\operatorname{Conf}(\mathbb{R}\mathrm{P}^m,k)$. This implies that, for odd $m$, the Leray spectral sequence for the inclusion $\operatorname{Conf}(\mathbb{R}\mathrm{P}^m,k)\subset(\mathbb{R}\mathrm{P}^m)^k$ collapses after its first non-trivial differential, just as it does when $\mathbb{R}\mathrm{P}^m$ is replaced by a complex projective variety. The method also allows us to handle the $R$-cohomology ring of the configuration space of $k$ ordered points in the punctured manifold $\mathbb{R}\mathrm{P}^m-\star$. Lastly, we compute the Lusternik-Schnirelmann category and all of the higher topological complexities of some of the auxiliary orbit configuration spaces.
\end{abstract}

\maketitle

\noindent \textit{MSC 2010:}
Primary 55R80, 55T10, 55M30.\\
\noindent \textit{Keywords:}
Orbit configuration spaces, Serre spectral sequence, real projective spaces, topological complexity, Lusternik-Schnirelmann category.

\bigskip\bigskip\begin{center}
Draft version - \today
\end{center}\bigskip

\tableofcontents

\section{Introduction and motivation}
\label{intro}
We give an explicit presentation for the cohomology ring of configuration spaces of ordered points in real projective spaces. Before presenting our work, we review the theory where our results insert.

\bigskip
Since the work of Arnol'd, Fadell, and Fadell-Neuwirth~(\cite{arnold,fasolo,fane}) in the 1960's, a good understanding of the algebraic topology of ordered configuration spaces 
$$
\Conf(X,k)=\{(x_1,\cdots,x_k)\in X^k \;|\; x_i\neq x_j \mbox { for } i\neq j\}
$$
and their unordered analogues\footnote{The symmetric group on $k$ letters, $\Sigma_k$, acts freely on the right of $\Conf(X,k)$ by permutation of coordinates.} $B(X,k)=\Conf(X,k)/\Sigma_k$ has been a central need for many areas in mathematics, as well as in other disciplines such as physics, chemistry and computer science. Specifically, although the algebraic topology of configuration spaces turns out to be a key issue for many problems in algebraic geometry, knot theory, differential topology and homotopy theory, except for a few standard situations (configurations on Euclidean spaces and spheres), there is a rather surprisingly lack of explicit descriptions of the cohomology ring of configuration spaces. It is the aim of this paper to help mend such a situation by providing fully detailed descriptions in the case of ordered configurations on real projective spaces, as well as on punctured real projective spaces. In fact, rather than our explicit results, it is our methods (using orbit configuration spaces) that might be more interesting and could help shed light in other situations, specially when combined with already existing methods (some of which are mentioned later in the paper).

\medskip
Taking one step deeper into motivation requires reviewing a key construction, namely the space of finite ``linear'' combinations of points in a space $X$ with ``coefficients'' in a based space $(L,\star)$, where $\star$ is regarded as zero. The underlying set is
$$
C(X,L):= \left.\left(\;\coprod_{k\geq 0}\Conf(X,k)\times_{\Sigma_k} L^{k} \right)\right/ \sim
$$
where $\sim$ is the equivalence relation generated by
$$
((x_1,\ldots,x_k),(\ell_1,\ldots\ell_k))\sim
((x_1,\ldots,\widehat{x}_i,\ldots,x_k),(\ell_1,\ldots\widehat{\ell}_i,\ldots\ell_k))
$$
if $\ell_i=\star$. (We agree to set $\Conf(X,0)=L^0=\star$, which gives the base point in $C(X,L)$.) There are canonical maps
$$
\coprod_{0\leq k\leq r}\Conf(X,k)\times_{\Sigma_k} L^{k} \stackrel{\pi_r}\longrightarrow C(X,L)
$$
and we take the largest topology on $C(X,L)$ so that all the maps $\pi_r$ are continuous. 

\medskip
The resulting space is most interesting when $X=M$ is a smooth manifold. Indeed, $H^*(C(M,L))$ can be used to approach three seemingly different geometric objects: To begin with, the Anderson-Trauber spectral sequence,
which converges to the cohomology of the space of based maps from $M$ to $L$, has $E_2$-term given by $H^*(C(M-\mathrm{pt},L))$. On the other hand, for a suitably chosen $L$ (depending only on $\dim(M)$), $H^*(C(M,L))$ gives the $E_2$-term of the Gelfand-Fuks spectral sequence
converging to the continuous cohomology of the Lie algebra of compactly supported C${}^\infty$-vector fields on $M$. Furthermore, as proved by Haefliger
and Bott-Segal,
the latter situation is closely related to D.~McDuff's function spaces $\Gamma_k(M)$ of sections of degree $k$ of the bundle obtained by one-point compactification of each fiber in the tangent bundle of $M$. In such a context, the basic ingredient in a spectral sequence converging to $H^*(\Gamma_k(M))$ is $H^*(C(M,S^0))$. 

\medskip
On purely homotopy theoretic grounds, configuration spaces occupy an important position. To start with, the classical case of $C(\mathbb{R}^m,L)$ leads to a complete and useful theory of homology operations for $m$-fold loop spaces.
Also particularly interesting is the very fruitful connection (via Artin groups) between configuration spaces and  geometric topology: On the one hand, $\Conf(\mathbb{R}^2,k)$ and $B(\mathbb{R}^2,k)$ are aspherical spaces whose fundamental groups are the classical Artin braid groups on $k$ strings (we get the pure braid groups version, in the case of the ordered configuration space). In slightly more general terms, $\Conf(M,k)$ and $B(M,k)$ are Eilenberg-MacLane spaces $K(\pi,1)$ for  other versions of Artin braid groups $\pi$, if $M$ is a surface not homeomorphic to $S^2$ or $\mathbb{R}\mathrm{P}^2$. On the other hand, Artin groups play a role on the homotopy theory of mapping class groups of orientable punctured surfaces with boundary components, and on the stable structure of their classifying spaces.
In addition, there is an appealing connection between the Kervaire invariant problem and a distributivity law for braid groups.

\medskip
It should not be surprising that configuration spaces can be found at the heart of some of the major breakthroughs in homotopy theory. The point starts by noticing that Brown-Gitler spectra, one of the fundamental pieces in homotopy theory, have played a critical role in the solution of central problems such as:
\begin{enumerate} 
\item the Immersion Conjecture for manifolds, 
\item the Segal Conjecture relating the Burnside ring of a finite group $G$ to the stable cohomotopy of the classifying space $BG$, 
\item the Sullivan conjecture on the homotopy nature of the mapping space Map$(BG,X)$ for $G$ a finite group and $X$ a finite cell complex, and 
\item the disproof of the Doomsday Conjecture, i.e.~Mahowald's construction of infinite families of elements having Adams filtration 2 in the stable homotopy groups of spheres.
\end{enumerate}
The punch line then comes from the fact that Brown-Gitler spectra can be realized as Thom spectra of vector bundles over configuration spaces on Euclidean spaces. 
In particular, this leads to a classification up to cobordism of braid group oriented manifolds.

\medskip
Despite of having been studied intensively, except for a few special cases, explicit presentations of the cohomology ring of configuration spaces were largely unknown in the late 80's. The work back then, mostly represented by~\cite{gibe,bct89,bcm89,rbwh,CTconf,CTconf2,Cohen,cinco,clm,fuckshs,fuma,millof,VAINSHTEIN}, focuses mainly on additive descriptions with field coefficients. The work of Kriz and Totaro~(\cite{Kriz,totaro}) in the early 90's, and of Felix-Tanr\'e and Felix-Thomas (\cite{FTun,FT}) in the 2000's settled much of the multiplicative structure (mostly with field coefficients) in the case where $M$ is a projective algebraic variety and, more generally, if $M$ is rationally formal (mostly with characteristic-zero coefficients). But the problem is still largely unsettled for more general manifolds and coefficients. 

\medskip
In this state of affairs, and to the best of the authors' knowledge, the present paper is the first successful attempt to get at a full description of cohomology rings $H^*(\Conf(M,k);R)$ (in terms of generators, relations and explicit additive bases) for a not necessarily orientable manifold $M$ using not necessarily field coefficients $R$ (see the explicit descriptions summarized in the next section).

\medskip
Not reflected by our accomplishments is the fact that this work arose in part from a desire of understanding the collapsibility of the Cohen-Taylor spectral sequence in the case of the real projective spaces. Further indications on this direction are given in the paragraph following Remark~\ref{isomorfismoimpar} in the next section. 
An unexpected bonus of our work is that we get examples of homotopy equivalent manifolds whose configuration spaces have different stable homotopy types and whose loop spaces are not homotopy equivalent. This property is contextualized in the discussion following Corollary~\ref{longisalva} at the end of the next section. 

\medskip
We close this introductory section by briefly mentioning a couple of relatively new areas (in a sense dual to each other) of current research where configuration spaces have played a central role---thus adding to the general point in this section: explicit information on the algebraic topology of configuration spaces is eagerly awaited and needed.


For a smooth algebraic variety $X$, Fulton-MacPherson's compactification\footnote{We follow the notation suggested in~\cite{MR2099074}.} $\Conf[X,n]$ of the configuration space $\Conf(X,n)$ is the result of a series of blow-ups of the naive compactification $\Conf(X,n)\subset X^n$ in order to make the complement of $\Conf(X,n)$ in $\Conf[X,n]$ a divisor with normal crossings. The idea was soon translated to the manifold setting by Axelrod and Singer who defined, for a smooth\footnote{ An alternate approach to $\Conf[M,n]$ through the theory of operads was fully developed and extended to arbitrary manifolds by Markl following pioneering work by Getzler and Jones.} manifold $M$, a corresponding compactification $\Conf[M,n]$ of $\Conf(M,n)$ by using spherical blow-ups. The construction has proven vital in quantum topology, e.g.~in the identification of invariants of three-manifolds coming from Chern-Simons theory. More recently, Sinha (partly motivated by earlier work of Kontsevich) has given an elementary construction of the compactification $\Conf[M,n]$---avoiding the use of blow-ups---with far reaching applications to the study of knot spaces from the point of view of the Goodwillie-Weiss manifold calculus.

Lastly, it should be mentioned that the known homological stability of unordered configuration spaces on open manifolds is one of the phenomena that has motivated the recent development of the concept of topological chiral homology (also known as factorization homology, or higher Hochschild cohomology)---i.e.~the study of homology theories for $n$-manifolds with values in spaces and whose coefficient systems are $n$-disk algebras or $n$-disk stacks.


\section{Main results and their contextualization} \label{seccionintro}
The goal of this work is to give a description of the cohomology ring away from 2 of configuration spaces of pairwise distinct ordered points in (either regular or punctured) real projective spaces.  Our main results are stated next where $ k $ and $ n $ stand for integers greater than 1. 

\begin{thm}[Theorem~\ref{thminvariantesimpar}]
Suppose $R$ is a commutative ring with unit where $2$ is invertible. For $n\geq 2$ odd, there is an $R$-algebra isomorphism
\[
H^*(\Conf(\mathbb{R}\mathrm{P}^n,k);R)\cong \Lambda(\iota_n)\otimes R[\mathcal{C^+}]/\mathcal{K},
\]
where $\iota_n$ has degree $n$ and is the image of the generator in $\mathbb{R}\textnormal{P}^n$ under the projection on the first coordinate $\Conf(\mathbb{R}\textnormal{P}^n,k)\overset{\pi_1}{\longrightarrow} \mathbb{R}\textnormal{P}^n$, the generators in $\mathcal{C}^+$ have degree $n-1$ and are detailed at the beginning of Section $4$, and the relations $\mathcal{K}$ are specified in Theorem~\ref{isoinvariantesimpar}.
\end{thm}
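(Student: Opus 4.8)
The plan is to descend the computation from the \emph{orbit} configuration space $\Conf^{\Z/2}(S^n,k)=\{(x_1,\dots,x_k)\in(S^n)^k : x_i\neq\pm x_j\text{ for }i\neq j\}$. Choosing one of the two representatives in $S^n$ of each point of $\mathbb{R}\mathrm{P}^n$ exhibits the natural projection $p\colon\Conf^{\Z/2}(S^n,k)\to\Conf(\mathbb{R}\mathrm{P}^n,k)$ as a regular covering with deck group $(\Z/2)^k$, as recalled in the abstract. Since $2^k$ is a unit in $R$, the associated cohomology transfer splits $p^*$, so $p^*$ identifies $H^*(\Conf(\mathbb{R}\mathrm{P}^n,k);R)$ with the subalgebra of $(\Z/2)^k$-invariants in $H^*(\Conf^{\Z/2}(S^n,k);R)$. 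The proof therefore reduces to (i) computing the $R$-cohomology ring of $\Conf^{\Z/2}(S^n,k)$ with its deck-group action, and (ii) extracting the invariant subalgebra.

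For (i) I would exploit the Fadell--Neuwirth-type fibrations $\Conf^{\Z/2}(S^n,k)\to\Conf^{\Z/2}(S^n,k-1)$ obtained by forgetting the last point, whose fibre is $S^n$ with $2(k-1)$ points deleted; since $n\geq 3$ everything in sight is simply connected, and the reduced cohomology of the fibre is concentrated in degree $n-1$ and spanned by the linking classes of the deleted points. Those linking classes are restrictions of the global classes $A_{ij}$ (Poincar\'e dual of the diagonal locus $x_i=x_j$) and $B_{ij}$ (dual of the antidiagonal locus $x_i=-x_j$), hence are permanent cycles, so each Serre spectral sequence collapses over $R$ by multiplicativity. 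One then reads off a presentation of $H^*(\Conf^{\Z/2}(S^n,k);R)$ with degree-$n$ generators $\iota_1,\dots,\iota_k$ pulled back from the sphere factors and degree-$(n-1)$ generators $A_{ij},B_{ij}$, subject to: $\iota_i^2=0$; symmetry relations relating $A_{ij}$ to $A_{ji}$ and $B_{ij}$ to $B_{ji}$; the vanishings $A_{ij}^2=B_{ij}^2=0$ (self-intersection $=$ Euler class of $TS^n$, which is zero since $n$ is odd) and $A_{ij}B_{ij}=0$ (disjoint supports); the module relations $\iota_iA_{ij}=\iota_jA_{ij}$ and $\iota_iB_{ij}=\iota_jB_{ij}$ (using that the antipode has degree $+1$, again because $n$ is odd); and the Arnold-type three-term relations among $A$- and $B$-products sharing an index, organized by the parity of the number of $B$'s. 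The $i$-th generator $\tau_i$ of $(\Z/2)^k$ fixes every $\iota_j$ and every $A_{j\ell},B_{j\ell}$ with $i\notin\{j,\ell\}$, and interchanges $A_{ij}$ with $B_{ij}$.

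For (ii), $2\in R^\times$ lets one split into the $\pm 1$-eigenspaces of each $\tau_i$. The classes $\iota_i$ are invariant; among the degree-$(n-1)$ classes, $C^+_{ij}:=A_{ij}+B_{ij}$ is invariant while $C^-_{ij}:=A_{ij}-B_{ij}$ changes sign under both $\tau_i$ and $\tau_j$. A monomial is $(\Z/2)^k$-invariant precisely when every index occurs an even number of times among its $C^-$-factors, and such a monomial reduces---through the module relations and identities of the type $C^-_{ij}C^-_{i\ell}=C^+_{ij}C^-_{j\ell}$ together with $(C^-_{ij})^2=0$---to a polynomial in the $\iota_i$ and the $C^+_{ij}$; so the invariant subalgebra is generated by the $\iota_i$ and the $C^+_{ij}$. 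Finally, the collapse of the Leray spectral sequence of $\Conf(\mathbb{R}\mathrm{P}^n,k)\subset(\mathbb{R}\mathrm{P}^n)^k$ after its first nontrivial differential---equivalently, that differential applied to a codimension-$n$ diagonal class, which equals $\iota_i-\iota_j$---forces $\iota_1=\dots=\iota_k$ in cohomology, so only a single degree-$n$ class $\iota_n=\pi_1^*(\text{generator of }\mathbb{R}\mathrm{P}^n)$ survives; it satisfies $\iota_n^2=0$ and, a dimension count against the collapsed spectral sequence shows, enters no relation with the $C^+_{ij}$. Assembling this yields the claimed tensor decomposition $\Lambda(\iota_n)\otimes R[\mathcal C^+]/\mathcal K$, where $\mathcal C^+$ is the image of the collection $\{C^+_{ij}\}$ and $\mathcal K$ consists of the symmetry, truncation and Arnold relations they inherit, together with the relations produced while reducing the $C^-$-monomials.

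The main obstacle I anticipate is pinning down $\mathcal K$ minimally and correctly: beyond the Arnold relations one must track the mixed $A/B$ relations, the vanishing Euler classes, and the collapse-induced identification $\iota_i=\iota_j$, and then dispatch the usual multiplicative extension problem in the collapsed spectral sequences---the last being manageable only because $H^*(\mathbb{R}\mathrm{P}^n;R)$ is concentrated in degrees $0$ and $n$ and $2$ is invertible. The explicit description of $\mathcal C^+$ (Section~4) and of $\mathcal K$ (Theorem~\ref{isoinvariantesimpar}) then finishes the argument.
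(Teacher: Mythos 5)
Your global strategy is the same as the paper's: pass to the $(\Z_2)^k$-cover $\Conf_{\mathbb{Z}_2}(S^n,k)$, use the transfer (valid since $2^k$ is a unit) to identify $H^*(\Conf(\RP^n,k);R)$ with the deck-group invariants, compute the cohomology of the cover via Fadell--Neuwirth towers, and diagonalize the action by passing to the combinations $C^{\pm}$. The gap lies in the two technical inputs you assert without proof, and both are false as stated. First, the action: you claim each $\tau_i$ fixes all generators except that it interchanges $A_{ij}$ with $B_{ij}$. For any standard normalization of these degree-$(n-1)$ classes (Gauss maps; note that a ``Poincar\'e dual of the diagonal locus'' is not canonically a degree-$(n-1)$ class of the open manifold, since the Thom class of that locus has nonzero image $\iota_i+\iota_j$ in $H^n((S^n)^k)$), the deck transformation acting on the \emph{larger} index produces correction terms: Theorem~\ref{accion} gives, for $n$ odd, $\epsilon_3A_{2,1}=-A_{1,0}-A_{2,0}+A_{2,-1}$ rather than $A_{2,-1}$, the corrections being linking classes with $\pm x_1$. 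This is precisely the Feichtner--Ziegler error that Remark~\ref{3.6} isolates, and establishing the correct action is the content of the explicit degree computations in Section~\ref{section3}; your proposal offers no argument for it. Second, $A_{ij}B_{ij}=0$ ``by disjoint supports'' is false: linking classes of disjoint loci in an open manifold need not multiply to zero, and Theorem~\ref{2.1}(b) gives $A_{r,i}A_{r,-i}=(-1)^nA_{i,0}(A_{r,-i}-A_{r,i})\neq0$. These two errors compensate in your conclusion --- $(A_{ij}+B_{ij})^2=2A_{ij}B_{ij}$ for $n$ odd, which vanishes only because you also set $A_{ij}B_{ij}=0$, whereas the actual invariant generator $C^+_{i,j}=A_{i,j}+A_{i,-j}-A_{i,0}$ needs the extra term for its square to vanish --- so the argument as proposed does not establish the presentation $R[\mathcal{C}^+]/\mathcal{K}$.

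Two further points. You invoke the collapse of the Leray spectral sequence of $\Conf(\RP^n,k)\subset(\RP^n)^k$ after its first differential to force $\iota_1=\cdots=\iota_k$; that collapse is known only rationally (Totaro, F\'elix--Thomas), and the paper obtains it as a \emph{consequence} of the present theorem rather than using it, so over a general $R$ with $2$ invertible it cannot be cited. (The identification of the degree-$n$ classes follows instead from the fibration $\Conf_{\mathbb{Z}_2}(S^n,k)\to S^n$, whose fiber has no cohomology in degree $n$ for $n>2$.) Also, your proposed generating set in degree $n-1$ has $k(k-1)$ elements while $H^{n-1}$ has rank $(k-1)^2$; the $k-1$ missing linear relations must appear in any correct presentation and they interact with the reduction of the invariant $C^-$-monomials, so they cannot be ignored.
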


\begin{thm}[Theorem~\ref{isoinvariantespar}]
Let $R$ be a commutative ring with unit where $2$ is invertible. For $n\geq2$ even, there is an $R$-algebra isomorphism
\[
H^*(\Conf(\mathbb{R}\mathrm{P}^n,k);R) \cong \Lambda(\omega_{2n-1})\otimes R[\mathcal{E}]/\mathcal{J}, 
\]
where $\omega_{2n-1}$ is a generator of degree $2n-1$ specified in Theorem \ref{2.3}, the set of generators $\mathcal{E}$  have degree $2n-2$ and are defined just above Lemma \ref{relsmultI}, and $\mathcal{J}$ is the ideal generated by the relations in Lemma \ref{relsmultI}.
\end{thm}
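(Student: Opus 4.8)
The strategy I would follow rests on the $2^k$-fold covering $\operatorname{Conf}_{\mathbb{Z}/2}(S^n,k)\to\Conf(\RP^n,k)$ by the orbit configuration space of the antipodal action, whose deck group is $(\Z/2)^k$. Since $2$ is a unit in $R$, the transfer shows that the covering projection induces an $R$-algebra isomorphism $H^*(\Conf(\RP^n,k);R)\cong H^*(\operatorname{Conf}_{\mathbb{Z}/2}(S^n,k);R)^{(\Z/2)^k}$, and this is exactly the source of the shape of the answer: for even $n$ the antipodal map reverses the orientation of $S^n$, so the $\ell$-th copy of $\Z/2$ acts by $-1$ on the degree-$n$ class pulled back from the $\ell$-th sphere factor, killing the class $\iota_n$ that survives in the odd case; only the product of such a class with a degree-$(n-1)$ class on which the same involution acts by $-1$ becomes invariant, producing the single generator $\omega_{2n-1}$, while the invariant quadratic combinations of the degree-$(n-1)$ classes give the generators $\mathcal{E}$ of degree $2n-2$. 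First I would record, using Lemma~\ref{relsmultI}, that the defining relations hold in $H^*(\Conf(\RP^n,k);R)$, so that there is a well-defined $R$-algebra surjection
\[
\varphi\colon \Lambda(\omega_{2n-1})\otimes R[\mathcal{E}]/\mathcal{J}\longrightarrow H^*(\Conf(\RP^n,k);R).
\]

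The next step is to pin down the size of the source of $\varphi$. Using the relations $\mathcal{J}$ to rewrite products of the degree-$(2n-2)$ generators, I would put every element in a normal form $\omega_{2n-1}^{\varepsilon}\cdot M$ with $\varepsilon\in\{0,1\}$ and $M$ an ``admissible'' monomial in $\mathcal{E}$ for a fixed ordering of the pair-labels indexing the generators — the analogue, for the present generators, of the broken-circuit monomial bases used for the Arnol'd and Kriz--Totaro presentations. The content here is a confluence check: that these admissible monomials span $\Lambda(\omega_{2n-1})\otimes R[\mathcal{E}]/\mathcal{J}$ and are $R$-linearly independent in it, so that this algebra is $R$-free with an explicitly computable Poincaré series. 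Granting this, I would compare with the additive answer of Theorem~\ref{2.3} — equivalently, with the $(\Z/2)^k$-invariants of $H^*(\operatorname{Conf}_{\mathbb{Z}/2}(S^n,k);R)$, whose Poincaré series one reads off from the Fadell--Neuwirth-type tower $\operatorname{Conf}_{\mathbb{Z}/2}(S^n,j)\to\operatorname{Conf}_{\mathbb{Z}/2}(S^n,j-1)$ with fibre $S^n$ minus $2(j-1)$ points $\simeq\bigvee_{2j-3}S^{n-1}$, whose Serre spectral sequences collapse over $R$ after their first non-trivial differential. Both sides then being $R$-free of the same finite rank in each degree, the degreewise surjection $\varphi$ is an isomorphism.

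The main obstacle is the middle step: proving that the relations of Lemma~\ref{relsmultI} form a \emph{complete} set, i.e.\ that there are no hidden relations, which is precisely the assertion that the admissible monomials stay $R$-independent after passing to the quotient by $\mathcal{J}$. This is where the exact choice of the generators $\mathcal{E}$ and the ordering conventions has to be exploited, and where the even-dimensional, non-orientable geometry enters substantively: since $\RP^n$ is $R$-acyclic for even $n$, the whole ring is generated by the ``diagonal'' classes rather than by pullbacks from $(\RP^n)^k$, so the combinatorics of admissible monomials, and the way $\omega_{2n-1}$ multiplies against $\mathcal{E}$, differ from the odd case and must be set up afresh. A secondary point to watch is that, unlike in the complex-projective or odd case, some of the background spectral-sequence differentials are multiplication by $\pm2$, hence isomorphisms over $R$; tracking which classes they annihilate is essential both for the Poincaré-series bookkeeping and for matching $\omega_{2n-1}$ with the class singled out in Theorem~\ref{2.3}.
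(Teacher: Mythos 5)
Your overall architecture---transfer along the $2^k$-fold covering $\Conf_{\mathbb{Z}_2}(S^n,k)\to\Conf(\RP^n,k)$ to identify $H^*(\Conf(\RP^n,k);R)$ with the $(\mathbb{Z}_2)^k$-invariants, then matching a presentation against the additive answer---is the same as the paper's. But there are two substantive gaps, both traceable to treating the additive structure of the invariant subring as known input when it is in fact the main computational content. First, surjectivity of $\varphi$ requires knowing that $\omega_{2n-1}$ together with $\mathcal{E}$ generates the \emph{entire} invariant subring; you only observe that these elements \emph{are} invariant. Second, your rank comparison needs the rank of the invariants in each degree, and this is not what the Fadell--Neuwirth-type tower gives: the tower computes the Poincar\'e series of all of $H^*(\Conf_{\mathbb{Z}_2}(S^n,k);R)$, of which the invariants form a much smaller subring (for even $n$ the invariants vanish in degree $n-1$ even though $H^{n-1}$ of the orbit configuration space has large rank).

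Both points require the explicit $(\mathbb{Z}_2)^k$-action on $H^*(\Conf_{\mathbb{Z}_2}(\mathbb{R}^n-\{0\},k-1);R)$---the lengthy degree computations of Section~\ref{section3}, which your proposal never engages with---followed by the change of basis to the classes $D^{\pm}_{i,j}$, $D_{i,0}$ (legitimate only because $2$ is invertible), on which each $\epsilon_\ell$ acts diagonally by $\pm1$. With that diagonalization, invertibility of $2$ forces every monomial occurring in an invariant element to be itself invariant, and a sign-pairing argument identifies the invariant monomials as exactly the normal-form products of the $I$'s; since these form a subset of a $\mathcal{D}$-monomial basis of $\mathbb{K}^*$, their linear independence comes for free, so no abstract confluence argument for $\mathcal{J}$ is needed---only the easier fact that the relations rewrite every product into normal form. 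A smaller inaccuracy: $\iota_n$ does not die because the deck transformations act by $-1$ on it (only $\epsilon_1$ does, and $-\iota_n=\iota_n$ once $2\iota_n=0$); it dies because the differential $d_n(A_{i,j})=2\iota_n$ makes it $2$-torsion, hence zero when $2$ is invertible.
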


The relations in $\mathcal{K}$ are a reincarnation of those defining the cohomology of the standard ordered configuration spaces on Euclidean spaces (such a fact will be used in Remark~\ref{isomorfismoimpar}, and will be explained in the paragraph following Remark~\ref{isomorfismoimpar}  within the context of the Cohen-Taylor spectral sequence). On the other hand, although the relations defining $\mathcal{J}$ are particularly involved (their explicit description requires a couple of pages at least), we manage to describe an explicit fully-working additive basis for the tensor factor $R[\mathcal{E}]/\mathcal{J}$ (see Theorem~\ref{invariantespar}).

\begin{rem}\label{isomorfismoimpar}
Theorem~\ref{thminvariantesimpar} and the known description of the cohomology ring of configuration spaces on spheres~(\cite[pp.~112--114]{FH} and~\cite[Theorems~2.4, 5.1, and 5.4]{FZ}) imply that there is a ring isomorphism $H^*(\Conf(S^n,k);R)\cong H^*(\Conf(\mathbb{R}\mathrm{P}^n,k);R)$ provided $n$ is odd. Compare with Remark~\ref{4.6}. But there is no such an isomorphism if $n$ is even, in view of Theorem~\ref{isoinvariantespar}. 
\end{rem}

It is illuminating to look at the above results within the context of the Cohen-Taylor spectral sequence for a space $X$, i.e.~the Leray spectral sequence for the open inclusion $\Conf(X,k)\hookrightarrow X^k$. To be precise, in the following considerations we take cohomology with rational coefficients, and we let $M$ stand for a connected $m$-dimensional manifold with $m\geq2$. Recall from~\cite[Theorem~1]{totaro} (see also~\cite[pp.~117--119]{CTconf} and~\cite[Theorem~1.1]{Kriz}) that, if $M$ is  oriented and closed, the initial term $E_2$ and the first non-trivial differential $\delta_m$ in the Cohen-Taylor spectral sequence for $M$ depend only on the cohomology ring and orientation class of $M$. Explicitly,  
\begin{itemize}
\item[(a)] $E_2$ is the $H^*(M)^{\otimes k}$-algebra generated by exterior classes $A'_{i,j}\in E_2^{0,m-1}$, with $1\leq j<i\leq k$, subject to the relations in~(\ref{relacionesusual}) at the beginning of the next section together with the relations $(\pi_i^*(x)-\pi_j^*(x)) A_{i,j}=0$ where $\pi_\ell\colon X^k\to X$ denotes projection on the $\ell$-th coordinate, and
\item[(b)] all differentials $\delta_i$ with $2\leq i<m$ are trivial, while $\delta_m$ vanishes on $H^*(M)^{\otimes k}$ and, therefore, is determined by $\delta_m(A_{i,j})=\pi_{i,j}^*(\Delta_M)$ where $\Delta_M\in H^{m}(X)$ is the diagonal class of $M$, and $\pi_{i,j}\colon X^k\to X\times X$ is the projection $\pi_{i,j}(x_1,\ldots,x_k)=(x_i,x_j)$.
\end{itemize}
Totaro shows in addition that, when $M$ is a complex projective variety, the spectral sequence collapses from its $E_{m+1}$-term yielding an algebra isomorphism $H^*(\Conf(M,k))\cong H^*(E_2,\delta_m)$~(\cite[Theorem~3]{totaro}). More generally, as a consequence of~\cite[Theorem~3.2]{gibe}, Felix and Thomas prove in~\cite[Theorem~1]{FT} that such a collapsing property must hold when $M$ is rationally formal (they also prove that the collapsing fails when $M$ is a simply connected manifold carrying suitable non-trivial Massey products\footnote{We thank Professor Totaro for kindly pointing out that this is dealt with in~\cite{FT}.}). Since real projective spaces are rationally formal, the above collapsibility phenomenon holds in the case of $\mathbb{R}\mathrm{P}^n$ whenever $n$ is odd (so that the orientability requirement holds). This is of course compatible with Theorem~\ref{thminvariantesimpar}. Indeed, since the rational cohomology rings and orientation classes of $S^n$ and $\mathbb{R}\mathrm{P}^n$ agree, the Cohen-Taylor spectral sequences for these two spaces agree up to their $m$ stage, after which both collapse by rational formality. This yields the isomorphism in Remark~\ref{isomorfismoimpar}. Of course, we have noted such a ring isomorphism when coefficients other than the rationals are used (as long as 2 is invertible). Furthermore, in view of Theorem~\ref{isoinvariantespar} (and specially by the complexity of the relations defining $\mathcal{J}$), we would expect that an eventual description of the Cohen-Taylor spectral sequence for a non-orientable real projective space would be more elaborate that the one in items~(a) and~(b) above.

\smallskip
Even though our methods do not make direct use of the Fadell-Neuwirth fibration
\begin{equation}\label{fnf}
\Conf(\RP^n-\star,k)\to\Conf(\RP^n,k+1)\to\RP^n,
\end{equation}
our approach to Theorems~\ref{thminvariantesimpar} and \ref{isoinvariantespar} allows us to get information on the cohomology ring of configuration spaces on punctured real projective spaces.

\begin{thm}[Theorem~\ref{rpnponchadoimpar}]
Let $R$ be a commutative ring with unit where $2$ is invertible. For $n\geq 2$ odd, there is an isomorphism
\[
H^{*}(\Conf(\mathbb{R}\mathrm{P}^n-\star,k);R)\cong  R[\mathcal{C^+}]/\mathcal{K}
\] 
of $R$-algebras.
\end{thm}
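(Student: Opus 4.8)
The strategy is to run the argument behind Theorem~\ref{thminvariantesimpar} ``one coordinate lower''. For $n\ge 3$ one has $\pi_1(\RP^n-\star)=\Z/2$, with two-fold cover $S^n-\{\pm\star\}$ carrying the antipodal action, so the orbit configuration space $\operatorname{Conf}_{\Z/2}(S^n-\{\pm\star\},k)$ is a regular $(\Z/2)^k$-covering of $\Conf(\RP^n-\star,k)$, exactly as $\operatorname{Conf}_{\Z/2}(S^n,k)$ covers $\Conf(\RP^n,k)$. Since $2$ is invertible in $R$, transfer identifies $H^*(\Conf(\RP^n-\star,k);R)$ with the subalgebra of $(\Z/2)^k$-invariants of $H^*\bigl(\operatorname{Conf}_{\Z/2}(S^n-\{\pm\star\},k);R\bigr)$, so it suffices to compute the latter as a $(\Z/2)^k$-ring. (This is the orbit-configuration-space route; it does not use the Fadell--Neuwirth fibration~(\ref{fnf}) directly, but an orbit-space fibration over $S^n$ akin to~(\ref{fnf}) reappears below.)

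I would first compute this ring additively. The iterated Fadell--Neuwirth tower $\operatorname{Conf}_{\Z/2}(S^n-\{\pm\star\},j)\to\operatorname{Conf}_{\Z/2}(S^n-\{\pm\star\},j-1)$ has fibres of the form ``$S^n$ with $2j$ points deleted'', i.e.\ wedges of $(n-1)$-spheres, and for $n\ge 3$ every space in the tower is simply connected; since all cohomology then sits in degrees divisible by $n-1$ and $n-1\ge 2$, each Serre spectral sequence in the tower collapses at $E_2$ for degree reasons. Hence $H^*\bigl(\operatorname{Conf}_{\Z/2}(S^n-\{\pm\star\},k);R\bigr)$ is a free $R$-module concentrated in degrees divisible by $n-1$, with Poincar\'e polynomial $\prod_{j=1}^{k}\bigl(1+(2j-1)t^{\,n-1}\bigr)$; and, as for ordinary configuration spaces on Euclidean space, its ring is generated in degree $n-1$.

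To recognise this ring as $R[\mathcal C^+]/\mathcal K$ I would compare it with $\Conf(\RP^n,k+1)$ through the orbit-space fibration
\[
\operatorname{Conf}_{\Z/2}(S^n-\{\pm\star\},k)\ \xrightarrow{\ i\ }\ \operatorname{Conf}_{\Z/2}(S^n,k+1)\ \xrightarrow{\ q\ }\ S^n,
\]
where $q$ remembers the last point and $i$ is the inclusion of its fibre over $\star$ (on which the first $k$ deck transformations of $\operatorname{Conf}_{\Z/2}(S^n,k+1)$ restrict to the deck group of the covering above). As $H^*(S^n;R)$ lives only in degrees $0$ and $n$, the Serre spectral sequence of $q$ has only the columns $p=0$ and $p=n$ and collapses after the single differential $d_n\colon E_n^{0,q}\to E_n^{n,q-n+1}$ (equivalently, it reduces to the Wang sequence of $q$). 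Since $n$ is odd, a nowhere-zero vector field on $S^n$ permits placing the remaining $k$ points along a short geodesic arc issuing from the last one, which defines a section of $q$; thus $q^*$ is split injective, the base class $\iota_n$ spans a free rank-one summand, and $d_n$ vanishes on $H^{n-1}$ of the fibre, hence---since the fibre ring is generated in degree $n-1$---identically. So the bundle is totally non-homologous to zero:
\[
H^*\bigl(\operatorname{Conf}_{\Z/2}(S^n,k+1);R\bigr)\ \cong\ \Lambda(\iota_n)\otimes H^*\bigl(\operatorname{Conf}_{\Z/2}(S^n-\{\pm\star\},k);R\bigr)
\]
as $R$-algebras, with $\iota_n$ the base class and $i^*$ the projection onto the second factor. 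This isomorphism is equivariant for the deck groups---the $(k+1)$-st deck generator acting trivially, since $n$ is odd and the fibres over $\pm\star$ coincide---so passing to $(\Z/2)^{k+1}$-invariants and evaluating the left-hand side by Theorem~\ref{thminvariantesimpar} applied to $k+1$ points gives $\Lambda(\iota_n)\otimes R[\mathcal C^+]/\mathcal K\cong\Lambda(\iota_n)\otimes H^*(\Conf(\RP^n-\star,k);R)$ with $\iota_n$ matched on the two sides. Cancelling the exterior factor, and transporting the generators $\mathcal C^+$ and relations $\mathcal K$ along $i^*$ (so that the degree-$(n-1)$ generators become the fibre restrictions of those on $\Conf(\RP^n,k+1)$), yields the asserted isomorphism.

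The crux is the vanishing of $d_n$ on the entire fibre column---equivalently, surjectivity of the fibre restriction $H^*(\Conf(\RP^n,k+1);R)\to H^*(\Conf(\RP^n-\star,k);R)$. The section only controls the subalgebra generated in degree $n-1$, and the two surviving columns of the spectral sequence occupy the two distinct residues $0$ and $1$ modulo $n-1$, so a stray differential is not excluded on degree grounds; one genuinely needs the degree-$(n-1)$-generation of the fibre ring, or equivalently the already-available computation of $H^*(\operatorname{Conf}_{\Z/2}(S^n,k+1);R)$ together with a Poincar\'e-series comparison. A secondary chore is the bookkeeping of the several deck-group actions under fibre restriction, and the check---using $1/2\in R$---that the $\Z/2$-local system on the fibre cohomology contributes exactly its invariants, so that the final answer is $R[\mathcal C^+]/\mathcal K$ itself and not a proper sub- or quotient algebra.
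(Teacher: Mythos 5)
Your overall architecture---transfer for the $(\mathbb{Z}_2)^k$-cover $\Conf_{\mathbb{Z}_2}(S^n-\{\pm\star\},k)\to\Conf(\mathbb{R}\mathrm{P}^n-\star,k)$, followed by a comparison with $\Conf(\mathbb{R}\mathrm{P}^n,k+1)$ through the fibration over $S^n$---is essentially the paper's, though packaged less directly (the paper simply observes that the punctured case amounts to computing invariants under the subgroup $\langle\epsilon_2,\dots,\epsilon_{k+1}\rangle$ of the group $(\mathbb{Z}_2)^{k+1}$ already handled in Theorem~\ref{isoinvariantesimpar}). However, there is a genuine gap at the decisive step. You assert that the $(k+1)$-st deck generator (the one covering the antipodal map of the base $S^n$, i.e.\ $\epsilon_1$ in the paper's indexing) ``acts trivially, since $n$ is odd and the fibres over $\pm\star$ coincide''. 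That the two fibres coincide only tells you that this deck transformation restricts to a self-map of the fibre; it does not make that self-map act trivially on cohomology, and in fact it does not: by Theorem~\ref{accion} one has $\epsilon_1A_{i,0}=-A_{i,0}$ for every $i$ (equivalently $\epsilon_1C_{i,0}=-C_{i,0}$ in~(\ref{excep2})), for either parity of $n$. So the assertion on which you base the identification of the $(\mathbb{Z}_2)^{k+1}$-invariants of the fibre cohomology with its $(\mathbb{Z}_2)^{k}$-invariants is false as stated.

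What rescues the argument---and what the paper actually proves---is Lemma~\ref{lema3.3}: for $n$ odd the restricted map $\epsilon_1$ is homotopic to the composite $\epsilon_2\cdots\epsilon_{k+1}$, so any class invariant under $\epsilon_2,\dots,\epsilon_{k+1}$ is automatically invariant under $\epsilon_1$, and the two rings of invariants coincide. This is not a formal consequence of the fibres over $\pm\star$ coinciding: it uses the factorization of the antipodal involution of $\mathbb{R}^n-\{0\}$ through the inversion $x\mapsto x/\lVert x\rVert^2$ together with the fact that $-\mathrm{id}\in O(n)$ is homotopic through $O(n)$ to a reflection precisely when $n$ is odd. (Alternatively, one can check that the proof of Theorem~\ref{generadoresinvariantesimpar} only ever invokes $\epsilon_{i_0+1}$ with $i_0\geq1$, never $\epsilon_1$, so the subgroup already cuts the invariants down to $R[\mathcal{C}^+]/\mathcal{K}$.) Once that lemma is supplied, your detour through $H^*(\Conf(\mathbb{R}\mathrm{P}^n,k+1);R)$ and the cancellation of the $\Lambda(\iota_n)$ factor do go through, although they re-derive information about the fibre that Theorem~\ref{isoinvariantesimpar} already provides.
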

\begin{thm}[Theorem~\ref{rpnponchadopar}]
Let $R$ be a commutative ring with unit where $2$ is invertible. For $n\geq 2$ even, there is an $R$-algebra isomorphism
\[
H^{*}(\Conf(\mathbb{R}\mathrm{P}^n-\star,k);R)\cong R[\mathcal{E}']/\mathcal{J'}, 
\]
where the generators $\mathcal{E}'$  and the relations $\mathcal{J}'$ are detailed in Section \ref{section4}.
\end{thm}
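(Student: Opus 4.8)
The plan is to run, for $\mathbb{R}\mathrm{P}^n\setminus\star$, the same covering-space machinery that yields Theorem~\ref{isoinvariantespar}, at the cost of carrying along one extra layer of the orbit Fadell--Neuwirth tower. Write $S^n_\star$ for the preimage of $\mathbb{R}\mathrm{P}^n\setminus\star$ under the double cover $S^n\to\mathbb{R}\mathrm{P}^n$, equipped with the free antipodal $\mathbb{Z}/2$-action; it is $S^n$ with a pair of antipodal points deleted. Then the orbit configuration space $\Conf_{\mathbb{Z}/2}(S^n_\star,k)$ is a regular $2^k$-fold covering of $\Conf(\mathbb{R}\mathrm{P}^n\setminus\star,k)$ with deck group $(\mathbb{Z}/2)^k$, exactly as in the closed case. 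Since $2$ is invertible in $R$, averaging over the deck group gives a ring isomorphism
\[
H^*(\Conf(\mathbb{R}\mathrm{P}^n\setminus\star,k);R)\;\cong\;H^*\!\big(\Conf_{\mathbb{Z}/2}(S^n_\star,k);R\big)^{(\mathbb{Z}/2)^k},
\]
so the statement is reduced to computing the latter ring of invariants, together with an explicit presentation by generators and relations.

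To compute $H^*(\Conf_{\mathbb{Z}/2}(S^n_\star,k);R)$ as a ring carrying its $(\mathbb{Z}/2)^k$-action, I would use the orbit Fadell--Neuwirth fibration that remembers the first point,
\[
\Conf_{\mathbb{Z}/2}(S^n_\star,k)\;\longrightarrow\;\Conf_{\mathbb{Z}/2}(S^n,k+1)\;\longrightarrow\;\Conf_{\mathbb{Z}/2}(S^n,1)=S^n .
\]
Its fiber is precisely the covering space above, its base is the simply connected sphere $S^n$, and its total space is the $2^{k+1}$-fold cover of $\Conf(\mathbb{R}\mathrm{P}^n,k+1)$, whose cohomology ring was determined while proving Theorem~\ref{isoinvariantespar}. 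Because the base is simply connected the Serre spectral sequence has untwisted coefficients and is concentrated in the two columns $p=0$ and $p=n$, so it degenerates at $E_{n+1}$ and is controlled by a single transgression-type differential $d_n$. Unlike the odd-dimensional situation, this differential need not vanish when $n$ is even (indeed, for $n$ even the fibration $\Conf_{\mathbb{Z}/2}(S^n,2)\to S^n$ already fails to admit a section), so I would pin $d_n$ down by playing the known cohomology of the total space against the two-column shape of the spectral sequence and the multiplicative structure inherited from the closed case; this is exactly the point at which the ``primed'' generators and relations $\mathcal{E}',\mathcal{J}'$ enter, rather than the unprimed $\mathcal{E},\mathcal{J}$.

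Once $H^*(\Conf_{\mathbb{Z}/2}(S^n_\star,k);R)$ and its $(\mathbb{Z}/2)^k$-action are in hand, I would pass to invariants. The $(\mathbb{Z}/2)^{k+1}$-action on the total space $\Conf_{\mathbb{Z}/2}(S^n,k+1)$ splits as the ``first'' involution acting on the base $S^n$ times the residual $(\mathbb{Z}/2)^k$ acting on the fiber; on $H^n(S^n;R)$ the first involution is multiplication by $(-1)^{n+1}=-1$, so the fundamental class $e_n$ of $S^n$ does not by itself survive to the invariants, whereas $e_n$ multiplied by a suitable $(n-1)$-dimensional sign-class of the fiber does --- this product is the class $\omega_{2n-1}$ of Theorem~\ref{isoinvariantespar}. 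The complementary summand of the invariant ring is then $H^*(\Conf(\mathbb{R}\mathrm{P}^n\setminus\star,k);R)$, which explains at a structural level why puncturing kills the exterior generator $\omega_{2n-1}$, in perfect analogy with the disappearance of $\iota_n$ between Theorem~\ref{thminvariantesimpar} and Theorem~\ref{rpnponchadoimpar}. Reading off the generators of this summand gives $\mathcal{E}'$ (the restrictions of the $\mathcal{E}$-classes together with the lower-dimensional classes produced by the deleted orbit) and the relations $\mathcal{J}'$.

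I expect the homotopy-theoretic input above --- one covering space and one two-column spectral sequence --- to be comparatively soft; the main obstacle is the bookkeeping carried out in Section~\ref{section4}: identifying $\mathcal{E}'$ and writing down $\mathcal{J}'$ exactly, and then proving that the resulting surjection $R[\mathcal{E}']/\mathcal{J}'\twoheadrightarrow H^*(\Conf(\mathbb{R}\mathrm{P}^n\setminus\star,k);R)$ is an isomorphism by exhibiting an explicit additive $R$-basis for $R[\mathcal{E}']/\mathcal{J}'$ and checking that its ranks agree degree by degree with those read off from the invariant calculation, just as Theorem~\ref{invariantespar} does in the closed even case. Since the relations $\mathcal{J}$ already require a couple of pages to state, keeping track of how the deleted orbit perturbs them --- and of which admissible monomials remain linearly independent --- is where the real work lies.
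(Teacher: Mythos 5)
Your reduction steps agree with the paper: the $(\mathbb{Z}_2)^k$-fold cover $\rho'_{n,k}$ plus invertibility of $2$ identifies $H^*(\Conf(\mathbb{R}\mathrm{P}^n-\star,k);R)$ with the deck-group invariants in $H^*(\Conf_{\mathbb{Z}_2}(S^n-\{\pm\star\},k);R)$, and the paper likewise realizes that deck group as the subgroup $\langle\epsilon_2,\ldots,\epsilon_{k+1}\rangle$ of the $(\mathbb{Z}_2)^{k+1}$ acting on the fiber of $\Conf_{\mathbb{Z}_2}(S^n,k+1)\to S^n$, so that the punctured computation is the closed one with the $\epsilon_1$-condition dropped. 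The gap is in your middle step, where you propose to extract $H^*(\Conf_{\mathbb{Z}_2}(S^n-\{\pm\star\},k);R)$ from the two-column Serre spectral sequence of that fibration using the ``known'' cohomology of the total space. This inverts the paper's logic: the ring of the total space (Theorem~\ref{2.3}) is itself \emph{deduced} from that of the fiber, which is computed first and independently (Theorem~\ref{2.1}) by running up the tower of orbit Fadell--Neuwirth fibrations $\Conf_{\mathbb{Z}_2}(\mathbb{R}^n-\{0\},j)\to\Conf_{\mathbb{Z}_2}(\mathbb{R}^n-\{0\},j-1)$, whose fibers are wedges of $(n-1)$-spheres and whose spectral sequences all collapse with trivial coefficients for every $n\geq 2$ and every $R$; there is no transgression to pin down and no parity issue at that stage.

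Even taking Theorem~\ref{2.3} as independent input, your scheme cannot recover what you need. For $n$ even one has $d_n(A_{i,j})=2\iota_n$ for every generator of the fiber, so the restriction map from the total space hits only the subring $\mathbb{K}^*$ of permanent cycles; the class $A_{1,0}$, which supports a nontrivial transgression, is invisible from the total space. But $A_{1,0}$ is precisely one of the new generators in $\mathcal{E}'$, carrying the relation $A_{1,0}^2=0$ and the products of Lemma~\ref{relsmultIyD0} that enter $\mathcal{J}'$, and the whole summand $A_{1,0}\cdot\mathbb{K}^*$ of the fiber ring lies outside the image of restriction. Relatedly, your structural claim that $H^*(\Conf(\mathbb{R}\mathrm{P}^n-\star,k);R)$ is the summand of the closed invariant ring complementary to $\omega_{2n-1}$ is false: in degree $n-1$ the punctured ring has rank $k$ (spanned by $A_{1,0}$ and the $D_{i,0}$, which are invariant once $\epsilon_1$ is ignored), whereas $R[\mathcal{E}]/\mathcal{J}$ vanishes there since the $\mathcal{E}$-generators live in degree $2n-2$. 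So the two-column spectral sequence can at best reproduce the closed-case answer; the extra generators and relations that distinguish $\mathcal{E}',\mathcal{J}'$ from $\mathcal{E},\mathcal{J}$ must come from a direct computation of the fiber ring and of the $(\mathbb{Z}_2)^{k+1}$-action on it, which is what Sections~\ref{section2} and~\ref{section3} supply.
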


Just as in the case of non-punctured spaces, we describe explicit fully-working additive basis for the cohomology rings in Theorems~\ref{rpnponchadoimpar} and~\ref{rpnponchadopar} (see Theorem~\ref{refparbasespult}). In particular, the fact that all these cohomology groups are $R$-free of rank independent of the actual ring $R$, imply:
\begin{cor}\label{longisalva}
There is no odd torsion in the integral cohomology rings of $\Conf(\mathbb{R}\mathrm{P}^n,k)$ and $\Conf(\mathbb{R}\mathrm{P}^n-\star,k)$.
\end{cor}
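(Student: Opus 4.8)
The plan is to deduce Corollary~\ref{longisalva} directly from the explicit additive bases provided in Theorems~\ref{thminvariantesimpar}, \ref{isoinvariantespar}, \ref{rpnponchadoimpar}, \ref{rpnponchadopar} (as refined in Theorem~\ref{refparbasespult} and Theorem~\ref{invariantespar}), together with the universal coefficient theorem. First I would record the key structural feature established by those theorems: for every commutative ring $R$ with $1/2\in R$, each cohomology group in question is a free $R$-module on an explicitly described basis, and---crucially---the indexing set of that basis does not depend on $R$. In particular the ranks $\dim_R H^j(\Conf(\RP^n,k);R)$ and $\dim_R H^j(\Conf(\RP^n-\star,k);R)$ are the same for all such $R$.

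Next I would pass to the integral statement. Let $\A=\mathbb{Z}[1/2]$, and consider the space $X$ equal to either $\Conf(\RP^n,k)$ or $\Conf(\RP^n-\star,k)$. Since $X$ is a manifold (an open subset of a closed manifold, respectively of a compact manifold with boundary), it has the homotopy type of a finite CW complex, so all its cohomology groups are finitely generated. Write $H^j(X;\Z)\cong \Z^{b_j}\oplus T_j\oplus T'_j$, where $T_j$ is the $2$-primary torsion and $T'_j$ is the odd torsion; the claim is $T'_j=0$ for all $j$. The universal coefficient theorem gives, for any field $\mathbb{F}_p$ with $p$ odd,
\[
\dim_{\mathbb{F}_p} H^j(X;\mathbb{F}_p)=b_j+\dim_{\mathbb{F}_p}\!\big(T'_j\otimes\mathbb{F}_p\big)+\dim_{\mathbb{F}_p}\!\big(\operatorname{Tor}(T'_{j+1},\mathbb{F}_p)\big),
\]
since $T_j$ and $T_{j+1}$, being $2$-groups, contribute nothing mod $p$. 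On the other hand, applying the same theorem over $\mathbb{Q}$ (which contains $1/2$) gives $\dim_{\Q}H^j(X;\Q)=b_j$. Because $\mathbb{F}_p$ and $\Q$ both contain $1/2$, the rank-independence from the first paragraph forces $b_j=\dim_{\mathbb{F}_p}H^j(X;\mathbb{F}_p)$ for every odd prime $p$ and every $j$, whence $T'_j\otimes\mathbb{F}_p=0$ and $\operatorname{Tor}(T'_{j+1},\mathbb{F}_p)=0$ for all odd $p$; a finite abelian group killed by tensoring and $\operatorname{Tor}$ with $\mathbb{F}_p$ for all odd $p$ has no odd torsion, so $T'_j=0$. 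Finally I would translate the conclusion about cohomology into one about the cohomology ring as in the statement: the ring structure plays no role in the argument, but since the underlying additive groups are odd-torsion-free, a fortiori the integral cohomology rings carry no odd torsion.

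The only genuine content here is imported wholesale from the earlier theorems, so there is no real obstacle: the one point requiring a sentence of care is the assertion that the bases are indexed by an $R$-independent set---this must be read off from the explicit descriptions referenced (Theorems~\ref{refparbasespult} and~\ref{invariantespar}), where the basis elements are monomials in the generators $\iota_n$, $\omega_{2n-1}$, and the classes in $\mathcal{C}^+$, $\mathcal{E}$, $\mathcal{E}'$ subject to combinatorially defined admissibility conditions that make no reference to $R$. The rest is the standard universal-coefficients bookkeeping sketched above.
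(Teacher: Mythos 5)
Your proposal is correct and is essentially the paper's own argument: the paper derives the corollary precisely from the observation that the explicit additive bases in Theorems~\ref{thminvariantesimpar}, \ref{isoinvariantespar}, \ref{rpnponchadoimpar}, \ref{rpnponchadopar} (via Theorems~\ref{invariantespar} and~\ref{refparbasespult}) make the cohomology $R$-free of rank independent of the ring $R$ containing $1/2$, leaving the universal-coefficient comparison of $\mathbb{F}_p$ (odd $p$) with $\Q$ implicit. You have simply written out that standard bookkeeping in full.
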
 

Longoni and Salvatore show in \cite[Theorem~2]{SalvatoreLongoni} that there are $3$-dimensional twisted lens spaces having the same homotopy type, but whose $k$-points configuration spaces fail to be homotopy equivalent for  all $ k \geq 2 $. As a consequence of the main results of this paper, we get a new family of examples for which the homotopy invariance of configuration spaces fails. Namely, $\Conf(\mathbb{R}\textrm{P}^{n},k)$ and $\Conf(\mathbb{R}\textrm{P}^{n+1}-\star,k)$ are not homotopy equivalent when $k\geq 3$, even though $\mathbb{R}\textrm{P}^{n}\simeq\mathbb{R}\textrm{P}^{n+1}-\star$. Actually, $\Conf(\mathbb{R}\textrm{P}^{n},k)$ and $\Conf(\mathbb{R}\textrm{P}^{n+1}-\star,k)$ do not have isomorphic cohomology groups. Indeed, for $n$ odd and $k\geq 3$, Theorem \ref{thminvariantesimpar} implies $H^{n-1}(\Conf(\mathbb{R}\mathrm{P}^n,k))\neq 0$ while, by Theorem~\ref{rpnponchadopar}, $H^{n-1}(\Conf(\mathbb{R}\mathrm{P}^{n+1}-\star,k))=0$. Consequently, our results imply that 
\begin{equation}\label{ejemploestructurado}
\mbox{$\Conf(\mathbb{R}\mathrm{P}^n,k)$ and $\Conf(\mathbb{R}\mathrm{P}^{n+1}-\star,k)$ cannot even be {\it stably$$\hspace{.4mm}} homotopy equivalent.}
\end{equation}
Unlike the example by Longoni and Salvatore, the examples in~(\ref{ejemploestructurado}) fail to deal with closed manifolds of a fixed dimension. Yet our examples illustrate the importance of the two additional hypotheses in \cite[Theorem~A]{AoiuinaKlein} where Aouina and Klein prove the {\it stable} homotopy invariance of configuration spaces assuming that the manifolds to which one takes configurations are not only homotopy equivalent but are closed (PL) manifolds of a fixed dimension. Likewise, our results (and a standard argument using the Serre spectral sequence) show that $\Conf(\mathbb{R}\mathrm{P}^n,k)$ and $\Conf(\mathbb{R}\mathrm{P}^{n+1}-\star,k)$ cannot have homotopy equivalent loop spaces. This now illustrates the importance of the (implicit) additional hypotheses in \cite[Theorem~0.1]{Levitt} where Levitt proves (in particular) a homotopy equivalence between the loops spaces of the $k$-point configuration spaces associated to two homotopy equivalent closed manifolds of a fixed dimension.

\smallskip
Note that the configuration spaces $\Conf(\mathbb{R}^n,k)$ and $\Conf(\mathbb{R}^{n+1},k)$ also illustrate the two phenomena discussed in the previous paragraph. But our examples with real projective spaces are more interesting in at least two ways. For one, we use homotopically non-trivial manifolds. Secondly, our examples involve ``really'' different cohomology rings, in the sense that, unlike the examples with Euclidean spaces, the cohomology rings $H^*(\Conf(\mathbb{R}\mathrm{P}^n,k);R)$ and $H^*(\Conf(\mathbb{R}\mathrm{P}^{n+1}-\star,k);R)$ do not just differ by a degree rescaling.

\bigskip
Our proofs of the results described so far use the notion of an orbit configuration space. We describe the explicit spaces we need, as well as their cohomological properties, after recalling the general definition of an orbit configuration space.

For a positive integer $k$ and a topological space $X$ with a (say left) action of a group $G$, the orbit configuration space of $k$ ordered points on $X$, denoted by $\Conf_G(X,k)$, is the subspace of $X^k$ consisting of the $k$-tuples $(x_1,\ldots,x_k)\in X^k$ such that $G\cdot x_i \neq G\cdot x_j$ whenever $i\neq j$. For instance, we recover the definition of an (usual) ordered configuration space when $G$ is the trivial group. Note that the orbit configuration space $\Conf_G(X,k)$ inherits the coordinate-wise action of $G^k$ given by
\begin{equation}\label{coordwisetion}
(g_1,\ldots,g_k)\cdot (x_1,\ldots,x_k)=(g_1\cdot x_1,\ldots,g_k\cdot x_k).
\end{equation}

We are interested in the antipodal action of $\mathbb{Z}_2=\{\pm1\}$ on the sphere $S^n$ and on the double punctured sphere $S^n-\{\pm\star\}$, where $\star$ is some fixed base point of $S^n$. The cohomology rings of the orbit configuration spaces 
\begin{equation}\label{losdosespacios}
\Conf_{\mathbb{Z}_2}(S^n,k) \mbox{ \ \ \ and \ \ \ } \Conf_{\mathbb{Z}_2}(S^n-\{\pm\star\},k)
\end{equation}
and the resulting action of $(\mathbb{Z}_2)^k$ in their cohomology have been considered in~\cite[Theorems~12, 14, and~17, and Lemma~7]{FZ2} and in~\cite[Theorems~1.1 and~5.2 and Tables~1 and~2]{Xico}. Unfortunately both of these works contain a few subtle typos, mistakes, and gaps---see Remarks~\ref{xicoerror}, \ref{otroerrordeFZ}, \ref{remark2.4}, \ref{xicostable2}, and~\ref{3.6} for more precise indications. Our viewpoint corrects and extends the methods and results in~\cite{Xico} (and, indirectly, in~\cite{FZ2}) to prove, in particular, the following result (which encompasses Theorems~\ref{2.1},~\ref{2.2},~\ref{2.3} in this paper) on the cohomology of the orbit configuration spaces in~(\ref{losdosespacios})\footnote{The description of the action of $(\mathbb{Z}_2)^k$ on $H^*(\Conf_{\mathbb{Z}_2}(S^n,k);R)$ and on $H^*(\Conf_{\mathbb{Z}_2}(S^n-\{\pm\star\},k);R)$ requires additional preparatory considerations lying outside the displaying scope of this section. Details are provided in Section~\ref{section3} (Theorems~\ref{accion} and~\ref{accionpar} deal with the case of $\Conf_{\mathbb{Z}_2}(S^n-\{\pm\star\},k)$, while Corollaries~\ref{acciontotalnimpar} and~\ref{acciontotalnpar} deal with the case of $\Conf_{\mathbb{Z}_2}(S^n,k)$).}:

\begin{thm}\label{omnibucito}
Let $R$ denote a commutative ring with unit (where 2 is not necessarily invertible).
\begin{enumerate}
\item For $n\geq2$ odd\footnote{This restriction can be waived if the characteristic of $R$ is 2.}, there is an $R$-algebra isomorphism
$$
H^*(\Conf_{\mathbb{Z}_2}(S^n,k);R)\cong \Lambda[\iota_n]\otimes H^*(\Conf_{\mathbb{Z}_2}(S^n-\{\pm\star\},k-1);R)
$$
where $\iota_n$ has degree $n$ and is the image of a generator in $S^n$ under the projection on the first coordinate $\Conf_{\mathbb{Z}_2}(S^n,k)\to S^n$.
\item Assume that the characteristic of $R$ is either zero or an odd integer. For $n\geq 2$ even, there is an $R$-algebra isomorphism $$H^*(\Conf_{\mathbb{Z}_2}(S^n,k);R)\cong\left(\rule{0mm}{3mm}\Lambda(\iota_n,\omega_{2n-1})/(2\iota_n,\iota_n\hspace{.3mm}\omega_{2n-1})\right)\otimes R[\mathcal{B}]/J$$ where $\iota_n$ and $\omega_{2n-1}$ are as above, and where the generators in $\mathcal{B}$ (all having degree $n-1$) and the relations in $J$ are described in~(\ref{genersbgari}) and its immediate considerations.
\item For $n\geq2$, there is an $R$-algebra isomorphism 
$$
H^*(\Conf_{\mathbb{Z}_2}(S^n-\{\pm\star\},k-1);R)\cong R[\mathcal{A}]/I
$$
where the generators in $\mathcal{A}$ (all having degree $n-1$) and the relations in $I$ are defined in~(\ref{agarigoleada}) and its immediate considerations.
\end{enumerate}
\end{thm}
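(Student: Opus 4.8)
The plan is to derive all three statements from part~(3), which is itself proved by induction on the number of configured points, using throughout two Fadell--Neuwirth-type fibrations (available because $\mathbb{Z}_2$ acts freely and smoothly on $S^n$). The first is the map forgetting the last point,
\[
S^n\setminus\{2j\text{ points}\}\;\hookrightarrow\;\Conf_{\mathbb{Z}_2}(S^n-\{\pm\star\},j)\;\longrightarrow\;\Conf_{\mathbb{Z}_2}(S^n-\{\pm\star\},j-1),
\]
whose fibre is an $n$-sphere with an even number of punctures, hence homotopy equivalent to $\bigvee_{2j-1}S^{n-1}$; the second is projection onto the first coordinate,
\[
\Conf_{\mathbb{Z}_2}(S^n-\{\pm\star\},k-1)\;\hookrightarrow\;\Conf_{\mathbb{Z}_2}(S^n,k)\;\xrightarrow{\,\pi_1\,}\;S^n.
\]
(In the first fibration one deletes from $S^n-\{\pm\star\}$ the $2(j-1)$ extra points forming the orbits of the first $j-1$ configured points; the base case $j=1$ of part~(3) is $S^n-\{\pm\star\}\simeq S^{n-1}$, while the second fibration deletes the orbit $\{\pm\star\}$.)

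For the inductive step of part~(3) I would run the Serre spectral sequence of the first fibration. By induction the base has cohomology concentrated in degrees divisible by $n-1$ and the fibre in degrees $0$ and $n-1$, so for $n\geq3$ a bidegree count leaves no room for a differential: the sequence collapses at $E_2$ and the additive structure drops out. The real work is to upgrade this to the presentation $R[\mathcal{A}]/I$: take the classes of $\mathcal{A}$ as preferred lifts of the fibre generators, verify that the relations of $I$ --- the vanishing of squares together with Arnold-type three-term relations coming from the wedge structure of the fibre and from the interaction of consecutive stages of the induction --- hold on the nose (any correction term would sit in a degree not divisible by $n-1$, hence vanish), and then match a monomial basis of $R[\mathcal{A}]/I$ against the collapsed $E_\infty$-page to conclude that the evident surjection $R[\mathcal{A}]/I\twoheadrightarrow H^*$ is an isomorphism. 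The case $n=2$ demands a separate, hands-on treatment: there the fibres are no longer simply connected and the bidegree obstruction to a differential disappears, so one must compute directly, exploiting that $\Conf_{\mathbb{Z}_2}(S^2-\{\pm\star\},j)$ is assembled from configuration spaces of punctured surfaces.

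Part~(1) follows by feeding part~(3) into the Serre spectral sequence of the second fibration. For $n$ odd the crucial ingredient is an explicit section of $\pi_1$: writing $S^n=S^{2m-1}\subset\mathbb{C}^m$ and putting $\zeta=e^{\pi i/k}$, the assignment $z\mapsto(z,\zeta z,\zeta^2z,\dots,\zeta^{k-1}z)$ lands in $\Conf_{\mathbb{Z}_2}(S^n,k)$, because $\zeta^az=\pm\zeta^bz$ is impossible for $0\leq a\neq b\leq k-1$, and it splits $\pi_1$. The section makes the bottom row survive to $E_\infty$, which kills the only differential that can land there; since $H^*(S^n;R)=\Lambda(\iota_n)$ occupies only degrees $0$ and $n$ and $d_n\iota_n=0$, the derivation property (the fibre cohomology is generated in degree $n-1$) then forces $d_n\equiv0$ and the sequence collapses. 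The multiplicative extension is trivial --- $\iota_n^2=\pi_1^*(u)^2=0$ for a generator $u\in H^n(S^n;R)$, and products of fibre classes cannot acquire an $\iota_n$-correction for degree reasons --- which yields the claimed algebra isomorphism with $\Lambda(\iota_n)\otimes H^*(\Conf_{\mathbb{Z}_2}(S^n-\{\pm\star\},k-1);R)$. When $\mathrm{char}\,R=2$ the same argument applies for $n$ even, $H^*(S^n;R)$ again being $\Lambda(\iota_n)$ and the Euler obstruction below being trivial.

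Part~(2), with $n$ even, uses the second fibration again, but now there is no section and the transgression is non-trivial, governed by the Euler class $e(TS^n)=\chi(S^n)\,\iota_n=2\iota_n$; this is already visible at $k=2$, where $\Conf_{\mathbb{Z}_2}(S^n,2)$ is the unit tangent bundle of $S^n$ (so $\RP^3$ when $n=2$). I would compute $d_n$ on the degree-$(n-1)$ fibre generators explicitly --- the ``linking'' class of a pair $(1,j)$ transgressing, up to sign, to $2\iota_n$ --- pass to $d_n$-homology, and read off: $E_\infty^{n,0}=R/2R$, the source of the relation $2\iota_n=0$ (which collapses the exterior factor to $\Lambda(\omega_{2n-1})$ when $2\in R^\times$); a distinguished survivor in $E_\infty^{n,n-1}$, morally ``$\iota_n$ times a linking class'' but indecomposable in $H^*$ because every class in that slot becomes $\iota_n$-torsion, furnishing the generator $\omega_{2n-1}$; the relation $\iota_n\omega_{2n-1}=0$, forced by $\iota_n^2=0$; and the remaining generators $\mathcal{B}$ and relations $J$, extracted from what survives of $R[\mathcal{A}]/I$ after the transgression, together with the resulting multiplicative extensions. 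In every case the hard part is the same: upgrading the straightforward additive answer to a precise ring presentation --- nailing down the generators $\mathcal{A},\mathcal{B}$ and especially the relation ideals $I,J$, proving the presentations complete (not mere quotients of the true cohomology) by means of explicit monomial bases, and resolving all multiplicative extensions in the spectral sequences --- while the exceptional behaviour at $n=2$, where the wedge-of-spheres fibres degenerate to $1$-complexes, is a further wrinkle to be handled by hand.
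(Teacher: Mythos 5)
Your architecture coincides with the paper's: the same two fibrations, collapse by sparseness for $n>2$, a section of $\pi_1$ for odd $n$ (your explicit section $z\mapsto(z,\zeta z,\ldots,\zeta^{k-1}z)$ is a pleasant, more concrete alternative to the paper's nowhere-vanishing vector field), and the transgression $d_n(A_{i,j})=2\iota_n$ for even $n$. But there are two genuine gaps. First, your justification for the relations in $I$ --- that ``any correction term would sit in a degree not divisible by $n-1$, hence vanish'' --- is false: a product $A_{i,j_1}A_{i,j_2}$ of two classes from the same level dies in $E_\infty^{0,2(n-1)}$ but survives in higher filtration of total degree $2(n-1)$, which \emph{is} divisible by $n-1$, and the relations in Theorem~\ref{2.1}(c) have genuinely nonzero right-hand sides there. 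Determining those coefficients is exactly where the earlier literature (\cite{Xico}, \cite{FZ2}) went wrong; the paper does it by reducing to $k=4$ via the projections $\pi_{r,i,j}$ and importing the Arnold relation through the comparison map $\alpha(x,y,z)=(x,\tau y,z)$ into $\Conf(\mathbb{R}^n,3)$, which requires the explicit degree computations of Lemma~\ref{descgrados}. Your proposal supplies no mechanism for this step, so the presentation $R[\mathcal{A}]/I$ remains unproved for every $n$.

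Second, the case $n=2$, which you defer to a ``hands-on treatment,'' is precisely where the paper's argument has content beyond spectral-sequence bookkeeping, and your degree-based claims break down there. For the collapse and triviality of coefficients in part (3) at $n=2$ the paper uses a Leray--Hirsch criterion: the classes $A_{k-1,j}$ restrict to a basis of $H^{n-1}$ of the wedge-of-spheres fibre (Lemma~\ref{natargyque4}), so $H^*(E;R)\to H^*(F;R)$ is onto and the system of coefficients is forced to be simple. For the multiplicative extension in parts (1)--(2) at $n=2$, a product of two degree-$1$ fibre classes lands in total degree $2=n$, where $E_\infty^{2,0}\neq0$, so an $\iota_2$-correction is \emph{not} excluded ``for degree reasons'' as you assert; the paper rules it out by Brown representability (integral degree-$1$ classes are pulled back from $S^1$, hence square to zero, and naturality under $\mathbb{Z}\to R$ handles general coefficients). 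Without these two ingredients the proof is incomplete at $n=2$, which is the one case the paper singles out as its new contribution over \cite{Xico}.
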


Just as in the case of the standard configuration spaces, we will describe explicit fully-working additive basis for the cohomology rings of the orbit configuration spaces in Theorem~\ref{omnibucito}.
  
\smallskip
As an application, we study the Lusternik-Schnirelmann category (LScat) and the higher topological complexities (H$\TC$'s) of some of the orbit configuration spaces considered so far. We start with a brief review of these categorical concepts.

\smallskip
The LScat arose from Lusternik-Schnirelmann's study in the late 1920's of the calculus of variations in the large, in particular from their results on the existence of geodesics on topological spheres. In the case of a manifold $M$, LScat gives a lower bound for the number of critical points that {\it any} smooth real-valued function on $M$ can have. With an apparently different motivation, the notion of HTC has been recently motivated and actively studied due to its connection with the sequential motion planning problem in robotics. In both cases, the most espectacular achievements have come from the homotopy facets of these invariants. Despite their differences in time and motivation, both concepts are particular cases of Schwarz's notion of the sectional category (or genus) of a fibration.

\smallskip
The sectional category of a fibration $p\colon E \to B$, denoted by $\secat(p)$, is one less than the minimal number of open sets $U$ covering $B$ so that $p$ admits a local section on each~$U$. If no such finite cover of $B$ exists, we agree to set $\secat(p)=\infty$. In these terms, the LScat of a path connected space $X$, $\cat(X)$, is the sectional category of the evaluation map $e_1\colon P_0(X)\to X$, $\gamma\mapsto\gamma(1)$, where $P_0(X)$ is the space of paths $\gamma\colon[0,1]\to X$ sending $0$ to some chosen\footnote{This definition is independent of the chosen base point.} base point of $X$. Likewise, for an integer $s\geq2$, the $s$-th HTC of a path connected space $X$, $\TC_s(X)$, is the sectional category of the evaluation map $e_s\colon P(X)\to X^s$, $\gamma\mapsto\gamma(0,\frac1{s-1},\ldots,\frac{s-2}{s-1},1)$, where $P(X)$ stands for the space of all paths in $X$. The books~\cite{Cornea,MR2455573} contain a thorough discussion of the meaning, motivation, and general properties of these concepts, including a list of relevant references.

\smallskip
For our purposes, and in order to homogenize the following statements, it will be convenient to write $\TC_1(X)$ as a substitute for $\cat(X)$. We show that the LScat and all the HTC's of the orbit configurations spaces in item 3 of Theorem~\ref{omnibucito} can be computed cohomologically (Theorem~\ref{tceszcls} in this paper). For instance, Corollaries~\ref{forzudo1} and~\ref{highertc} in this paper can be combined into the following assertion:

\begin{cor}\label{s1genezado}
For positive integers $n$ and $s$ with $n>2$, $\TC_s(\Conf_{\mathbb{Z}_2}(S^n-\{\pm\star\},k))=sk-\delta_{n,s}$ where $\delta_{n,s}\in\{0,1\}$ and, in fact, $\delta_{n,s}=0$ if $s=1$ or $n$ is odd.
\end{cor}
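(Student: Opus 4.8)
The plan is to reduce the computation of $\TC_s\bigl(\Conf_{\mathbb{Z}_2}(S^n-\{\pm\star\},k)\bigr)$ to a cohomological zero-divisor cup-length estimate, using that by item~3 of Theorem~\ref{omnibucito} the $R$-cohomology ring of $Y:=\Conf_{\mathbb{Z}_2}(S^n-\{\pm\star\},k)$ is a quotient $R[\mathcal{A}]/I$ of a polynomial algebra on degree-$(n-1)$ generators, free over $R$ of rank independent of $R$. First I would establish the general upper bound: since $Y$ is an open subset of $(S^n-\{\pm\star\})^k\simeq (S^{n-1})^k$ which is a $k(n-1)$-dimensional CW complex of a manifold of that dimension, one gets $\hdim(Y)\le k(n-1)$ (in fact $Y$ is homotopy equivalent to a complex of that dimension), hence $\cat(Y)\le k$ and, by the standard dimension-connectivity bound for higher $\TC$, $\TC_s(Y)\le \dim(Y^s)/(\mathrm{conn}(Y)+1)\le sk(n-1)/(n-1)=sk$. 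This gives $\TC_s(Y)\le sk$ for all $s\ge1$, $n\ge2$.

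Next I would prove the matching lower bound $\TC_s(Y)\ge sk-1$ always, and $\ge sk$ in the asserted cases, via zero-divisor cup length in $H^*(Y^s;R)=H^*(Y;R)^{\otimes s}$ with $R$ a field of odd characteristic (or $\mathbb{Q}$), which suffices since the ranks are $R$-independent. The relevant classes are the ``top'' generators: for each of the $k$ coordinates one has a generator $a_j$ (one of the elements of $\mathcal{A}$, say $A_{j,\pm}$ in the notation set up near~(\ref{agarigoleada})) whose top power $a_j^{?}$ survives in $H^*(Y;R)$ — concretely, using the explicit additive basis promised after Theorem~\ref{omnibucito}, each coordinate contributes a nonzero class in the top degree $(n-1)$ of the relevant fiber factor, and their product is the fundamental-type class of $Y$ detecting $\cat(Y)=k$. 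For $\TC_s$ one forms, for each coordinate $j$ and each pair of the $s$ evaluation slots, the zero-divisor $\bar a_j = 1\otimes\cdots\otimes a_j\otimes\cdots - \cdots$, and shows the product over all $j$ and a spanning set of $s-1$ slot-differences of suitable powers is nonzero in $H^*(Y;R)^{\otimes s}$; the combinatorics here is exactly the same as in the standard computation of $\TC_s$ of products of spheres/odd spaces, so the cup-length is $\ge s k(n-1)/(n-1)=sk$ when $n$ is odd (so that the degree-$(n-1)$ generators behave like odd-degree classes, i.e.\ square to zero only up to the genuine relations, and the relevant external products are algebraically independent enough), and $\ge sk-1$ in general.

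The main obstacle I expect is the borderline ``$-\delta_{n,s}$'' term — i.e.\ deciding exactly when the zero-divisor cup length drops by one, forcing $\TC_s(Y)=sk-1$ rather than $sk$. This is a parity phenomenon: when $n$ is even the degree-$(n-1)$ generators have odd degree but the products one needs for the top $\TC_s$-class involve squares that may vanish for degree-parity reasons in the ring $H^*(Y;R)^{\otimes s}$, and one has to check against the explicit relations $I$ whether the single missing dimension can be recovered by another monomial. I would handle this by a careful bookkeeping argument on the explicit additive basis of $R[\mathcal{A}]/I$: compute the top nonvanishing degree of $H^*(Y^s;R)$ versus the maximal degree in which a nontrivial product of $sk$ zero-divisors can live, and show the gap is at most $1$ and is exactly $0$ precisely when $s=1$ (where it is $\cat$, handled by the direct $\cat(Y)=k$ computation) or $n$ is odd. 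The remaining (even $n$, $s\ge2$) case is then pinned between $sk-1$ and $sk$, which is exactly the content of allowing $\delta_{n,s}\in\{0,1\}$; I would either exhibit the extra class or invoke the dimensional upper bound to leave that single ambiguity, matching the statement. Finally, I would note the hypothesis $n>2$ is used only to guarantee simple-connectivity (so the connectivity bound for $\TC_s$ applies cleanly) and to avoid the exceptional low-dimensional behavior of $\Conf(\mathbb{R}\mathrm{P}^2,-)$-type spaces.
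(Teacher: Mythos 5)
Your overall framework (zero-divisor cup-length lower bound plus the dimension/connectivity upper bound of Proposition~\ref{lascotas}) is the same as the paper's, and your upper bound $\TC_s\le sk$ and the identification of the ambiguity $\delta_{n,s}$ for even $n$, $s\ge2$ match the intended statement. However, the essential computational content of the lower bound is missing. You assert that the product of the required $sk$ (resp.\ $sk-1$) zero-divisors is nonzero because ``the combinatorics here is exactly the same as in the standard computation of $\TC_s$ of products of spheres.'' It is not: $H^*(\Conf_{\mathbb{Z}_2}(\mathbb{R}^n-\{0\},k);R)\cong R[\mathcal{A}]/I$ is \emph{not} the cohomology of a product of spheres (its Poincar\'e polynomial is $\prod_{i=1}^{k}\bigl(1+(2i-1)t^{n-1}\bigr)$, and the generators satisfy the Arnold-type relations of Theorem~\ref{2.1}), so the nonvanishing of the relevant product of external zero-divisors requires a genuine argument against those relations. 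The paper's device here is the ring monomorphism $H^*(\Conf(\mathbb{R}^n,k+1))\hookrightarrow H^*(\Conf_{\mathbb{Z}_2}(\mathbb{R}^n-\{0\},k))$, $A'_{i,j}\mapsto A_{i-1,j-1}$, which (because both rings are torsion-free) induces monomorphisms on $s$-fold tensor powers and lets one import the known value $\zcl_s^{\mathbb{Z}}(\Conf(\mathbb{R}^n,k+1))=sk-1+\delta_n$ from \cite[Proposition~4.2]{GG15}. Without either this reduction or an explicit computation in $R[\mathcal{A}]/I$, your lower bound is unproved; this is the gap.

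Two secondary points. First, your justification of $\hdim\le k(n-1)$ is wrong: being an open subset of $(S^n-\{\pm\star\})^k$, a $kn$-dimensional manifold merely homotopy equivalent to $(S^{n-1})^k$, does not bound the homotopy dimension by $k(n-1)$; the paper instead deduces from the tower of fibrations~(\ref{fib2}) that the space is $(n-2)$-connected with free cohomology vanishing above degree $k(n-1)$, and then invokes \cite[Section~4.C]{Hatcher} (this is also where $n>2$ enters, to ensure simple connectivity). Second, your parity bookkeeping is reversed: for $n$ odd the generators have \emph{even} degree $n-1$, so the zero-divisors $\overline{a}=a\otimes1-1\otimes a$ satisfy $\overline{a}^{\,2}=-2\,a\otimes a\neq0$ and one gains the extra factor; for $n$ even they have odd degree and $\overline{a}^{\,2}=0$, which is the source of the possible drop by one. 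Also note that every generator squares to zero, so the class detecting $\cat=k$ is the product $A_{1,0}\cdots A_{k,0}$ of $k$ distinct generators, not a ``top power'' of a single one.
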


The paper ends with evidence (Theorem~\ref{jugoso} and Remark~\ref{jsuydbe}) for the conjecture that Corollary~\ref{s1genezado} can be 
\begin{itemize}
\item extended to consider the case $n=2$ and, at the same time, 
\item made fully precise by setting $\delta_{2,1}=0$ and $\delta_{n,s}=1$ if $n$ is even and $s\geq2$.
\end{itemize}

\section{Proof methodology}
\label{proofmethod}
The five theorems above describing cohomology rings have technically-involved and highly-compu\-tational proofs. As a result, a non-specialist reader might loose track of the global picture while checking all needed details. Thus, in order to increase readability of our work, we now describe our general strategy of proofs. We also take the chance to describe the organization of the paper, and to say a few words about the need of the various hypothesis made on the cohomology coefficients.

\smallskip
We have already noted the coordinate-wise $(\mathbb{Z}_2)^k$-action on  $\Conf_{\mathbb{Z}_2}(S^n,k)$ and, by restriction, on $\Conf_{\mathbb{Z}_2}(S^n-\{\pm\star\},k)$. These actions are clearly free. As noted in~\cite[Proposition~2.1]{Xico}, the corresponding orbit spaces are homeomorphic to $\Conf(\mathbb{R}\mathrm{P}^n,k)$ and $\Conf(\mathbb{R}\mathrm{P}^n-\star,k)$, respectively. We thus get $(\mathbb{Z}_2)^k$-fold covering projection maps
\begin{equation}\label{lascovrgsdosdeellas}
\rho_{n,k}\colon \Conf_{\mathbb{Z}_2}(S^n,k)\to\Conf(\mathbb{R}\mathrm{P}^n,k) \mbox{ \ and \ } \rho'_{n,k}\colon\Conf_{\mathbb{Z}_2}(S^n-\{\pm\star\},k)\to\Conf(\mathbb{R}\mathrm{P}^n-\star,k).
\end{equation}
We compute the cohomology of $\Conf(\mathbb{R}\mathrm{P}^n,k)$ and $\Conf(\mathbb{R}\mathrm{P}^n-\star,k)$
via the spectral sequences associated to the coverings $\rho_{n,k}$ and $\rho'_{n,k}$, that is, the Serre spectral sequences\footnote{In this paper we use without further notice the standard fact that, for a commutative ring with unit $R$, the Serre spectral sequence associated to a fibration $\pi\colon E\to B$ with fiber $F$ has $E_2$ term given by $E_2^{p,q}=H^p(B;H^q(F;R))$, where the system of coefficients can be non-trivial, and converges as an algebra to $H^*(E;R)$.} for 
the maps classifying these two covering projections.

The first needed ingredient is, then, a description of the cohomology rings of the source spaces of $\rho_{n,k}$ and $\rho'_{n,k}$. In the case of $\rho'_{n,k}$, this is attained by means of an inductive process based on the analysis of the Serre spectral sequences of the fibrations
$\Conf_{\mathbb{Z}_2}(S^n-\{\pm\star\},k)\to\Conf_{\mathbb{Z}_2}(S^n-\{\pm\star\},k-1)$ given by projection on the first $k-1$ coordinates. All these spectral sequences have simple systems of coefficients, collapse, and by sparseness cannot have additive extension problems (for any cohomology coefficients). The assembling of the multiplicative structure from that of their $E_\infty$ term has to be worked out carefully, though, and this is where the work in~\cite{Xico} has to be diligently reviewed and corrected. This initial step is completed in Section~\ref{section2}.

\smallskip
The information above allows us to compute the cohomology of the source space of $\rho_{n,k}$ via the Serre spectral sequence of the fibration
\begin{equation}\label{laqueeslafibra}
\Conf_{\mathbb{Z}_2}(S^n-\{\pm\star\},k-1)\to\Conf_{\mathbb{Z}_2}(S^n,k)\to S^n,
\end{equation}
where the second map is projection on the first coordinate. This spectral sequence has a simple system of coefficients (just because $n\geq2$) and collapses when $n$ is odd, but it has a single layer of non-trivial differentials when $n$ is even. Multiplication by 2 plays a key role in such differentials, and this is where we use the additional hypothesis in item 2 of Theorem~\ref{omnibucito}---so that multiplication by 2 yields a monomorphism. In characteristic 2, the spectral sequence collapses for an even $n$ too, yielding of course the output in item 1 of Theorem~\ref{omnibucito}. In either case, there are no (additive or multiplicative) extension problems in the resulting $E_\infty$ term, and this yields on the nose the multiplicative structure in item 2 of Theorem~\ref{omnibucito}. (The lack of extensions problems is forced by sparseness if $n>2$, but an additional argument using Brown representability is needed when $n=2$, which is our novel contribution in this step, as the case $n=2$ is not dealt with in~\cite[Theorem~5.2]{Xico}.) All this is reviewed in more detail in Section~\ref{secdediferenciales}.

\smallskip
Our fully novel work starts in Section~\ref{section3}, where we take the first steps in the determination of the $E_2$-terms of the Serre spectral sequences of the classifying maps of the two coverings $\rho_{n,k}$ and $\rho'_{n,k}$. Indeed, we describe the system of local coefficients in those spectral sequences, that is, the action of $(\mathbb{Z}_2)^k$ on the cohomologies of $\Conf_{\mathbb{Z}_2}(S^n,k)$ and $\Conf_{\mathbb{Z}_2}(S^n-\{\pm\star\},k)$. As explained in detail in Section~\ref{section3}, a very  important point in our approach arises from the observation that the cohomology of the latter space admits in fact an action of $(\mathbb{Z}_2)^{k+1}$. The extended action is better explained (after shifting the index $k$) by recalling that $\Conf_{\mathbb{Z}_2}(S^n-\{\pm\star\},k-1)$ is the fiber in~(\ref{laqueeslafibra}).

\smallskip
The explicit description of the $E_2$-term of the spectral sequence for $\rho_{n,k}$ is currently unknown (and might turn out to be a hard task to accomplish) in the general case, but its analysis is greatly simplified in this paper by assuming in Theorems~\ref{thminvariantesimpar} and~\ref{isoinvariantespar} that 2 is invertible. Indeed, since the cardinality of $(\mathbb{Z}_2)^k$---a power of 2---kills the positive dimensional cohomology of $(\mathbb{Z}_2)^k$ (with any coefficients), the invertibility of 2 implies that the spectral sequence for $\rho_{n,k}$ is concentrated on the ``fiber'' axis. The collapsing of the spectral sequence then comes for free, yielding that the cohomology ring of the target space of $\rho_{n,k}$ can be described as the subring of $(\mathbb{Z}_2)^k$-invariants. The explicit description of this subring of invariants is the central task in Section~\ref{secndinvtes}, where the hypothesis that 2 is invertible is used once again in order to simplify calculations. Indeed, in earlier versions of this work, the computation of $(\mathbb{Z}_2)^k$-invariants was worked out by brute force in several pages of hard calculations. But eventually we realized that the task can be drastically simplified with a suitable change of basis---which only makes sense when 2 is invertible (see~(\ref{2esinvertible1}) and~(\ref{2esinvertible2})).

\smallskip
Lastly, Section~\ref{section4} deals with calculations analogous to those in Section~\ref{secndinvtes}, but now in the ``punctured'' case, and Section~\ref{sectioncatTC} deals with the applications to the LScat and the HTC's.

\section{The cohomology of $\Conf_{\mathbb{Z}_2}(S^n-\{\pm\star\},k)$}\label{section2}

Readers wishing to use~\cite[Theorem~1.1]{Xico} need to be aware that there is a subtle (but critical) typo in the given presentation of the cohomology ring of $\Conf_{\mathbb{Z}_2}(S^n-\{\pm\star\},k)$ (see Remark~\ref{xicoerror}). Realizing the problem is not an easy task, as the descriptive arguments provided in~\cite{Xico} avoid explicit computational details. To mend the situation, in this section we review the methods in Sections~3 and~4 of~\cite{Xico}, going into great details in order to fix the cohomology presentation.

\smallskip
Let $R$ denote a commutative ring with unit where $2$ is not necessarily invertible. All cohomology rings in this section will be considered with coefficients in $R$ unless otherwise stated. Also, throughout this section $n$ will denote an integer greater than $1$. 

\smallskip
Let us start by recalling that the cohomology ring of $\Conf(\mathbb{R}^n,k)$ was first computed in~\cite{cinco}. The following description follows the notation in~\cite[p.~22]{MR1344842}. For $1\leq j<i\leq k$, consider the maps  $p'_{i,j}:\Conf(\mathbb{R}^n,k) \longrightarrow S^{n-1}$ given by
\[
p'_{i,j}(x_1,\ldots,x_k)=\frac{x_i-x_j}{\lVert x_i-x_j \rVert}.
\]
Let $\iota_{n-1}\in H^{n-1}(S^{n-1};R)$ denote the cohomology fundamental class of $S^{n-1}$, and set $A'_{i,j}={p_{i,j}'}^{*}(\iota_{n-1})$.

\begin{lem}\label{cohensdescripcion}
As an $R$-algebra, $H^*(\Conf(\mathbb{R}^n,k);R)$ is generated by the $(n-1)$-dimensional elements $A'_{i,j}$, for $1\leq j< i\leq k$, subject to the relations  
\begin{equation}\label{relacionesusual}
A'_{r,j}A'_{r,i}=A'_{i,j}(A'_{r,i}-A'_{r,j})
\end{equation} 
for $r>i\geq j$.
\end{lem}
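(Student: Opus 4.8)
The plan is to run the classical Fadell--Neuwirth induction on $k$, extract the additive structure from a Leray--Hirsch argument, verify the three-term relations by restricting to three points, and then compare ranks to see that (\ref{relacionesusual}) (together with the evident relations $(A'_{i,j})^2=0$) exhausts the ideal of relations.

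\emph{Additive structure.} First I would use the fibration $\mathbb{R}^n\setminus\{x_1,\dots,x_{k-1}\}\hookrightarrow\Conf(\mathbb{R}^n,k)\xrightarrow{\ \pi\ }\Conf(\mathbb{R}^n,k-1)$ obtained by forgetting the last point. It admits an evident section (send $(x_1,\dots,x_{k-1})$ to itself together with a point whose first coordinate exceeds $\max_i\lVert x_i\rVert$), and its fiber is homotopy equivalent to $\bigvee_{j=1}^{k-1}S^{n-1}$. The key point is that, under the fiber inclusion, $A'_{k,j}$ restricts to the class dual to the $j$-th wedge summand, so $\{1,A'_{k,1},\dots,A'_{k,k-1}\}$ maps to an $R$-basis of the fiber cohomology. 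Leray--Hirsch then yields that $H^*(\Conf(\mathbb{R}^n,k);R)$ is free as a module over $H^*(\Conf(\mathbb{R}^n,k-1);R)$ on that basis. Iterating from $\Conf(\mathbb{R}^n,1)=\mathbb{R}^n$, I obtain that $H^*(\Conf(\mathbb{R}^n,k);R)$ is $R$-free with basis the \emph{admissible monomials} $A'_{i_1,j_1}\cdots A'_{i_s,j_s}$ with $k\ge i_1>\cdots>i_s\ge 2$ and $1\le j_\ell<i_\ell$; in particular the $A'_{i,j}$ generate the $R$-algebra, and the Poincaré series is $\prod_{i=1}^{k-1}(1+it^{n-1})$.

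\emph{The relations hold.} For $r>i>j$ the three classes $A'_{r,j},A'_{r,i},A'_{i,j}$ are all pulled back along the forgetful map $\Conf(\mathbb{R}^n,k)\to\Conf(\mathbb{R}^n,3)$ recording the coordinates $(j,i,r)$, so by naturality it suffices to prove (\ref{relacionesusual}) in $H^*(\Conf(\mathbb{R}^n,3);R)$. There the Leray--Hirsch splitting over $\Conf(\mathbb{R}^n,2)\simeq S^{n-1}$, combined with $H^{2n-2}(S^{n-1};R)=0$, forces $A'_{3,1}A'_{3,2}$ to be an $R$-combination of $A'_{2,1}A'_{3,1}$ and $A'_{2,1}A'_{3,2}$; the involution swapping the two smaller points (it interchanges $A'_{3,1}\leftrightarrow A'_{3,2}$ and multiplies $A'_{2,1}$ by $(-1)^n$) forces the two coefficients to be negatives of one another, and restricting to an explicitly embedded product of two spheres (or comparing against a direct cellular computation) normalizes them to $\pm 1$. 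The same pullback argument gives $(A'_{i,j})^2=0$, since $A'_{i,j}$ comes from $H^{n-1}(S^{n-1})$ where $\iota_{n-1}^2=0$.

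\emph{Completeness of the relations, and the main obstacle.} Let $\mathcal{A}$ be the $R$-algebra generated by symbols $A'_{i,j}$ modulo (\ref{relacionesusual}) and $(A'_{i,j})^2=0$; the two previous steps give a surjection $\mathcal{A}\twoheadrightarrow H^*(\Conf(\mathbb{R}^n,k);R)$. Reading (\ref{relacionesusual}) as a rewriting rule --- replace $A'_{r,j}A'_{r,i}$ by $A'_{i,j}A'_{r,i}-A'_{i,j}A'_{r,j}$ whenever two factors of a monomial share their larger index --- and using the square-zero relations to discard repetitions, every monomial straightens to an admissible one; hence the admissible monomials span $\mathcal{A}$ over $R$. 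Since by the first step their images form an $R$-basis of the free module $H^*(\Conf(\mathbb{R}^n,k);R)$, the surjection $\mathcal{A}\to H^*$ must be an isomorphism. The genuinely technical point here is the verification that this straightening terminates and is confluent (equivalently, that the three-term relations form a Gröbner basis for a suitable term order) --- this is the classical Arnold/Orlik--Solomon-type normal form computation, and if one prefers, the entire statement may simply be quoted from~\cite{cinco}.
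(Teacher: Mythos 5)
Your proposal is correct and follows essentially the same route the paper indicates: the paper states Lemma~\ref{cohensdescripcion} as a citation of~\cite{cinco}, remarks that it "can be checked inductively using the Fadell--Neuwirth fibrations," and then carries out exactly this skeleton (fiber a wedge of spheres detected by the $A'_{k,j}$, collapse, ordered-monomial basis, relations imported from the three-point case by naturality) in its detailed proof of the orbit-configuration analogue, Theorem~\ref{2.1}. The two points you flag as deferred (normalizing the coefficients to $\pm1$ and the termination of the straightening) are precisely what the paper also leaves to the reference, so nothing is missing relative to the paper's own treatment.
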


Lemma~\ref{cohensdescripcion} can be checked inductively using the Fadell-Neuwirth fibrations $\Conf(\mathbb{R}^n,k)\to\Conf(\mathbb{R}^n,k-1)$ given by projection onto the first $k-1$ coordinates. The argument will be mimicked in the proof of Theorem~\ref{2.1} below---the central result in this section---in order to deal with $\Conf(S^n-\{\pm\star\},k)$. For this to work, we of course need to know that orbit configuration spaces satisfy:

\begin{lem}[{\cite[Lemma~2.3]{Xico}}]\label{prodisact}
Let $X$ be a manifold with a properly discontinuous action of a finite group $G$. If the orbit space $X/G$ is a manifold and $l<k$, then the projection $\Conf_G(X,k)\rightarrow \Conf_G(X,l)$ onto the first $l$ coordinates is a locally trivial bundle with fiber $\Conf_G(X-Q^G_l,k-l)$. Here $Q^G_l$ denotes the union of $l$ disjoint orbits.
\end{lem}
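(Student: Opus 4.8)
The plan is to adapt Fadell and Neuwirth's classical ambient-isotopy argument for ordinary configuration spaces, carried out $G$-equivariantly. Fix $l<k$ and write $p\colon\Conf_G(X,k)\to\Conf_G(X,l)$ for the projection onto the first $l$ coordinates. It suffices to produce, around every point $a=(a_1,\ldots,a_l)\in\Conf_G(X,l)$, an open neighbourhood $V$ together with a homeomorphism $\Phi\colon p^{-1}(V)\xrightarrow{\ \cong\ }V\times\Conf_G\big(X-Q^G_l,\,k-l\big)$ over $V$, where $Q^G_l=Ga_1\cup\cdots\cup Ga_l$ is a union of $l$ disjoint $G$-orbits; since the fibre of $p$ over $a$ is visibly $\{a\}\times\Conf_G(X-Q^G_l,k-l)$, this gives the assertion.

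Because $G$ is finite, $X$ is a manifold, and the orbits $Ga_1,\ldots,Ga_l$ are pairwise disjoint finite sets, proper discontinuity of the action lets me choose for each $i$ a coordinate disk $U_i\cong\R^{\dim(X)}$ centred at $a_i$ so small that all the translates $gU_i$ ($g\in G$, $1\le i\le l$) are pairwise disjoint; this is exactly where the hypotheses ``properly discontinuous'' and ``$X/G$ a manifold'' enter. Then $V:=U_1\times\cdots\times U_l$ lies inside $\Conf_G(X,l)$ (distinct $b_i\in U_i$ and $b_j\in U_j$ have $Gb_i\subseteq\bigcup_g gU_i$ disjoint from $Gb_j\subseteq\bigcup_g gU_j$), and it is the desired neighbourhood of $a$. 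For the trivialization I build an equivariant ``sweeping'' homeomorphism: choosing, continuously in $b_i\in U_i$ and with value the identity at $b_i=a_i$, a homeomorphism $\phi_{b_i}$ of $U_i$ supported in a slightly smaller disk and with $\phi_{b_i}(a_i)=b_i$, I define $H_b\colon X\to X$ for $b=(b_1,\ldots,b_l)\in V$ by setting $H_b=g\circ\phi_{b_i}\circ g^{-1}$ on $gU_i$ and $H_b=\mathrm{id}$ off $\bigcup_{g,i}gU_i$. Since the supports are pairwise disjoint and $H_b$ agrees with the $G$-translate of $\phi_{b_i}$ on each $gU_i$, $H_b$ is a well-defined $G$-equivariant self-homeomorphism of $X$, it depends continuously on $b$, $H_a=\mathrm{id}$, and $H_b(a_i)=b_i$.

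Now set $\Phi(b_1,\ldots,b_k)=\big((b_1,\ldots,b_l),\,(H_b^{-1}b_{l+1},\ldots,H_b^{-1}b_k)\big)$ with $b=(b_1,\ldots,b_l)$. Equivariance of $H_b$ gives $H_b^{-1}(Gb_i)=Ga_i$ for $i\le l$, so each $H_b^{-1}b_j$ with $j>l$ avoids $Q^G_l$, and $H_b^{-1}$ carries the pairwise distinct orbits $Gb_{l+1},\ldots,Gb_k$ to pairwise distinct orbits; hence $\Phi$ indeed takes values in $V\times\Conf_G(X-Q^G_l,k-l)$. Its inverse is $\big((b_1,\ldots,b_l),(y_{l+1},\ldots,y_k)\big)\mapsto(b_1,\ldots,b_l,H_by_{l+1},\ldots,H_by_k)$, and both maps are continuous because $b\mapsto H_b$ is. This exhibits $p$ as locally trivial with the stated fibre.

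The one genuinely delicate ingredient is the construction of the $G$-equivariant ambient homeomorphism $H_b$, together with the disjointness of the translated disks $gU_i$ supplied by proper discontinuity; everything else is formal bookkeeping. I would either carry this out in detail as above or simply appeal to the classical Fadell--Neuwirth proof with the single observation that each isotopy used there may be taken supported in one disk of a $G$-fundamental family and then spread over the orbit by the group action. (In the degenerate case $\dim(X/G)=0$ one also needs $X/G$ to have at least $k$ points for $p$ to be onto, but this is vacuous in all situations of interest here, where $X/G$ has positive dimension.)
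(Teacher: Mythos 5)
Your argument is correct and is essentially the proof on record: the paper gives no proof of this lemma, deferring entirely to \cite[Lemma~2.3]{Xico}, where the local trivialization is obtained by exactly the equivariant Fadell--Neuwirth construction you carry out (disjoint orbit neighbourhoods from proper discontinuity, a sweeping homeomorphism supported in one disk per orbit and spread over the translates by the group action). Nothing further is needed.
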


\begin{rem}\label{renotacion}
It will be convenient to work with the following slight reinterpretation of the orbit configuration spaces $\Conf_{\mathbb{Z}_2}(S^n-\{\pm\star\},k)$. Stereographic projection from $\star$ yields a homoeomorphism $S^n-\{\pm\star\}\cong \mathbb{R}^n-\lbrace 0 \rbrace$. In these terms, the $\mathbb{Z}_2$-antipodal action on the punctured sphere takes the form $\tau(x)=-\frac{x}{\lVert x \rVert^2}$ for $x$ in the punctured Euclidean space $\mathbb{R}^n-\{0\}$, where $\tau\in\mathbb{Z}_2$ stands for the generator (see~\cite[p.~4]{Xico}). This is the action to be considered on the left hand side of the resulting homeomorphism $\Conf_{\mathbb{Z}_2}(\mathbb{R}^n-\lbrace 0\rbrace,k) \cong \Conf_{\mathbb{Z}_2}(S^n-\{\pm\star\},k)$.
\end{rem}

\begin{rem}\label{notacionrecorrida}
As explained in the paragraph containing~(\ref{laqueeslafibra}), a central point in this paper is the determination of the cohomology of $\Conf_{\mathbb{Z}_2}(S^n,k)$ from knowledge of the cohomology of $\Conf_{\mathbb{Z}_2}(\mathbb{R}^n-\{0\},k-1)$. With this in mind, and in order to avoid readjusting notation later in the paper, in this section we set notation to study the orbit configuration space $\Conf_{\mathbb{Z}_2}(\mathbb{R}^n-\{0\},k-1)$---rather than the orbit configuration space in the title of this section.
\end{rem}

With this preparation, the last ingredient before stating the main result in this section is the definition of elements $A_{i,j}\in H^{n-1}(\Conf_{\mathbb{Z}_2}(\mathbb{R}^n-\{0\},k-1);R)$ playing a role analogous to that played by the elements~$A'_{i,j}$. Explicitly,  for $0\leq |j| <i<k$, we set $A_{i,j}=p^*_{i,j}(\iota_{n-1})$ where, as above, $\iota_{n-1}$ is the cohomology fundamental class of $S^{n-1}$, and the maps $p_{i,j}:\Conf_{\mathbb{Z}_2}(\mathbb{R}^n-\lbrace 0 \rbrace,k-1)\rightarrow S^{n-1}$ are given by
\begin{eqnarray}
p_{i,0}(x_1,\ldots,x_{k-1})&=&\frac{x_i}{\lVert x_i\rVert};\nonumber\\
p_{i,j}(x_1,\ldots,x_{k-1})&=&\frac{x_i-x_j}{\lVert x_i-x_j\rVert},\qquad\;\;\mbox{if $j>0$;}\label{laspes}\\
p_{i,-j}(x_1,\ldots,x_{k-1})&=&\frac{x_i-\tau x_j}{\lVert x_i-\tau x_j\rVert}\nonumber,\qquad\mbox{if $j>0$.}
\end{eqnarray}
Lastly, we put 
\begin{equation}\label{agarigoleada}
\mathcal{A}=\lbrace A_{i,j},\,|\,1\leq j<i<k \rbrace\cup \lbrace A_{i,-j} \,|\,1\leq j<i<k \rbrace\cup \lbrace A_{i,0} \,|\,1\leq i<k \rbrace.
\end{equation}

\begin{thm}[{\cite[Theorem~1.1]{Xico}}]\label{2.1}
For $n,k\geq 2$, there is a graded $R$-algebra isomorphism
\begin{equation}\label{descrixico}
H^*(\Conf_{\mathbb{Z}_2}(\mathbb{R}^n-\lbrace 0 \rbrace,k-1);R)\cong  R[\mathcal{A}]/I
\end{equation}
where $I$ denotes the ideal generated by the following relations: 
\begin{enumerate}[(a)] 
\item \label{relaciona}For $0\leq j<i<k$,
\[
A^2_{i,j}=A^2_{i,-j}=0.
\]
\item \label{relacionb}For $1\leq i< r<k$,
\begin{eqnarray*}
A_{r,0}A_{r,i}&=&A_{i,0}(A_{r,i}-A_{r,0}),\\
A_{r,0}A_{r,-i}&=&(-1)^n A_{i,0}(A_{r,-i}-A_{r,0}),\\
A_{r,i}A_{r,-i}&=&(-1)^n A_{i,0}(A_{r,-i}-A_{r,i}).
\end{eqnarray*}
\item \label{relacionc}For $1\leq j<i<r< k$,
\begin{eqnarray*}
A_{r,j}A_{r,i}&=&A_{i,j}(A_{r,i}-A_{r,j}),\\
A_{r,j}A_{r,-i}&=&(-1)^n (A_{j,0}+A_{i,0}-A_{i,-j})(A_{r,-i}-A_{r,j}),\\
A_{r,i}A_{r,-j}&=&(-1)^nA_{i,-j}(A_{r,-j}-A_{r,i}),\\
A_{r,-j}A_{r,-i}&=&(-1)^n (A_{i,0}-A_{i,j}+(-1)^nA_{j,0})(A_{r,-i}-A_{r,-j}).
\end{eqnarray*}
\end{enumerate}
Further,~(\ref{descrixico}) is $R$-free and an additive basis is given by the monomials $A_{i_1,j_1} \cdots A_{i_r,j_r}$ with $i_\ell<i_{\ell'}$ whenever $\ell<\ell'$.
\end{thm}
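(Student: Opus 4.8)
The plan is to argue by induction on the number of points $m:=k-1$. The inductive engine is the bundle furnished by Lemma~\ref{prodisact} for the space $X=\mathbb{R}^n-\{0\}$ with the free $\mathbb{Z}_2$-action $\tau$ of Remark~\ref{renotacion} and $l=m-1$: projection onto the first $m-1$ coordinates yields a locally trivial bundle
\[
F\longrightarrow\Conf_{\mathbb{Z}_2}(\mathbb{R}^n-\{0\},m)\longrightarrow\Conf_{\mathbb{Z}_2}(\mathbb{R}^n-\{0\},m-1),
\]
with $F=(\mathbb{R}^n-\{0\})-Q^{\mathbb{Z}_2}_{m-1}$. Since the action is free, $Q^{\mathbb{Z}_2}_{m-1}$ consists of $2(m-1)$ points, so $F$ is $\mathbb{R}^n$ with $2m-1$ points removed, whence $F\simeq\bigvee_{2m-1}S^{n-1}$ and $H^*(F;R)$ is $R$-free, concentrated in degrees $0$ and $n-1$ with $H^{n-1}(F;R)\cong R^{2m-1}$. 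The base case $m=1$ is clear: $\Conf_{\mathbb{Z}_2}(\mathbb{R}^n-\{0\},1)=\mathbb{R}^n-\{0\}\simeq S^{n-1}$, the map $p_{1,0}(x)=x/\|x\|$ is a homotopy equivalence, so $H^*\cong\Lambda(A_{1,0})$, which is exactly $R[\mathcal{A}]/I$ here since $\mathcal{A}=\{A_{1,0}\}$, relation~(\ref{relaciona}) reads $A_{1,0}^2=0$, and~(\ref{relacionb}),~(\ref{relacionc}) are vacuous.

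For the inductive step, note first that restricted to the fiber the maps $p_{m,0},p_{m,j},p_{m,-j}$ ($1\le j\le m-1$) become the $2m-1$ linking maps $F\to S^{n-1}$ about the removed points, so the classes $A_{m,0},A_{m,\pm j}$ restrict to an $R$-basis of $H^{n-1}(F;R)$. Consequently: (i) the local coefficient system of the Serre spectral sequence is simple---for $n\ge3$ the base is simply connected, being the complement of codimension-$n$ submanifolds in the simply connected space $(\mathbb{R}^n-\{0\})^{m-1}$, while for $n=2$ the monodromy of $\pi_1$ of the base acts on $H^{n-1}(F;R)$ through point-pushing homeomorphisms, which are trivial on cohomology; and (ii) since $H^*(F;R)$ sits in only two rows, the single possibly nonzero differential is $d_n\colon E_n^{p,n-1}\to E_n^{p+n,0}$, and it vanishes because the $(n-1)$-row is generated over $E_2^{*,0}=H^*(\Conf_{\mathbb{Z}_2}(\mathbb{R}^n-\{0\},m-1);R)$ by the permanent cycles $A_{m,\bullet}$. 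Hence $E_2=E_\infty$; as the base cohomology is $R$-free by induction there are no additive extension problems, so $H^*(\Conf_{\mathbb{Z}_2}(\mathbb{R}^n-\{0\},m);R)$ is $R$-free and, as a module over the base cohomology, is free on $1$ and the classes $A_{m,0},A_{m,\pm j}$. Combining this with the inductive monomial basis of the base cohomology and appending the extra factor produces exactly the stated basis of monomials $A_{i_1,j_1}\cdots A_{i_r,j_r}$ with strictly increasing first indices.

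It remains to pin down the ring structure. Generation by $\mathcal{A}$ is immediate from the module statement just obtained together with the inductive ring generation of the base cohomology. The relations in $I$ hold: relation~(\ref{relaciona}) is geometric, $A_{i,j}^2=p_{i,j}^*(\iota_{n-1}^2)=0$ because $H^{2(n-1)}(S^{n-1};R)=0$ for $n\ge2$; relations~(\ref{relacionb}) and~(\ref{relacionc}) with $r<m$ are pulled back from the base, where they hold by induction. The genuinely new input is~(\ref{relacionb}),~(\ref{relacionc}) with $r=m$: here the product of any two of $A_{m,0},A_{m,\pm j}$ restricts to $0$ in $H^{2(n-1)}(F;R)=0$, so by the collapsed spectral sequence it equals a class of $H^{n-1}(\mathrm{base})$ times a single $A_{m,\bullet}$, plus a class of $H^{2(n-1)}(\mathrm{base})$; the precise coefficients are then extracted either by naturality with respect to the coordinate-forgetting maps, reducing to the minimal cases $m=2$ (for~(\ref{relacionb})) and $m=3$ (for~(\ref{relacionc})), which are direct spectral-sequence computations, or by pulling the Euclidean relations~(\ref{relacionesusual}) of Lemma~\ref{cohensdescripcion} back along the antipodal-doubling embedding $\Conf_{\mathbb{Z}_2}(\mathbb{R}^n-\{0\},m)\to\Conf(\mathbb{R}^n,2m+1)$, $(x_1,\ldots,x_m)\mapsto(x_1,\ldots,x_m,\tau x_1,\ldots,\tau x_m,0)$, and simplifying with~(\ref{relaciona})---the recurring signs $(-1)^n$ being precisely the degree of the antipodal self-map of $S^{n-1}$, which relates the classes built from $x_j$ to those built from $\tau x_j$. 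Finally, to see that $R[\mathcal{A}]/I$ equals (and is not just a quotient mapping onto) $H^*$, I would straighten: relation~(\ref{relaciona}) removes repeated factors, relations~(\ref{relacionb}),~(\ref{relacionc}) rewrite any product of two generators with a common first index $r$ as a sum of terms (generator of first index $<r$)$\,\cdot\,$(generator of first index $r$), and graded-commuting the factors then expresses every element of $R[\mathcal{A}]/I$ in terms of strictly-increasing-first-index monomials; since these already map onto an $R$-basis of $H^*$, the surjection $R[\mathcal{A}]/I\twoheadrightarrow H^*$ is an isomorphism. I expect the bookkeeping for the $r=m$ relations---nailing every $(-1)^n$ and every three-term expression such as $A_{j,0}+A_{i,0}-A_{i,-j}$---together with the termination of the straightening process, to be the true difficulty, and it is exactly here that the arguments of~\cite{Xico} must be carefully re-examined and corrected.
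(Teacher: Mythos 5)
Your overall architecture coincides with the paper's: induction over the tower of fibrations supplied by Lemma~\ref{prodisact}, collapse of the Serre spectral sequence because the classes $A_{m,\bullet}$ restrict to a basis of the fiber's cohomology (Lemma~\ref{natargyque4}; note that the paper deduces triviality of the local system for $n=2$ from this same surjectivity via Leray--Hirsch, which is cleaner than your point-pushing assertion and uses only what you have already established), the resulting ordered-monomial additive basis, and the importation of the $r=m$ products from Euclidean configuration spaces via a comparison map involving $\tau$. The concluding straightening argument is also fine.

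The genuine gap is in the step you defer as ``bookkeeping'': the determination of the precise right-hand sides in items~(\ref{relacionb}) and~(\ref{relacionc}). When you pull back the Euclidean relation~(\ref{relacionesusual}) along your antipodal-doubling embedding (or along the paper's map $\alpha$ of Lemma~\ref{mapalphaparti}), the classes that appear include $[N(\tau x_i - x_j)]$ and $[N(\tau x_i - \tau x_j)]$ with $j<i$, and these are \emph{not} among the generators $A_{i,j},A_{i,-j},A_{i,0}$, nor are they obtained from them by composing with the antipodal map; the degree-$(-1)^n$ observation only yields $[N(\tau x_i)]=(-1)^nA_{i,0}$. Expressing $[N(\tau x_i - x_j)]$ in the $\mathcal{A}$-basis is exactly the computation $\alpha^*(A'_{2,1})=(-1)^n(A_{1,0}+A_{2,0}-A_{2,-1})$, which the paper extracts from the detecting maps $f_{i,j}$ of~(\ref{lasefes}) together with the explicit two-stage homotopies of Lemma~\ref{descgrados}; a collapsed spectral sequence by itself locates the product in the right filtration but cannot produce these coefficients, so your alternative of ``direct spectral-sequence computations'' for $m=2,3$ does not close this either. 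This is precisely where the error of~\cite[Theorem~1.1]{Xico} noted in Remark~\ref{xicoerror} occurs, so the three-term expressions such as $A_{j,0}+A_{i,0}-A_{i,-j}$ and their signs cannot be taken on faith: as written, your argument establishes that relations of the displayed general shape hold, but not the specific relations asserted in the theorem.
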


\begin{rem}\label{xicoerror}
The relations given in \cite[Theorem~1.1]{Xico} for the products of the generators $A_{i,j}$ contain a (small but critical) typo. Namely, the product $A_{r,j}A_{r,-i}$ is not equal to $(-1)^n (A_{j,0}+A_{i,0}-A_{i,j})(A_{r,-i}-A_{r,j})$, as it is claimed there, but rather to the expression in (c) above.
\end{rem}

Before going into full details, we describe the plan of proof for Thereom~\ref{2.1}. Since the case $k=2$ is obvious, we can safely assume $k\geq3$. We first work inductively with the fibration 
\begin{equation}
\label{fib2}
(\mathbb{R}^n-\lbrace 0 \rbrace)-Q^{\mathbb{Z}_2}_{k-2}  \rightarrow \Conf_{\mathbb{Z}_2}(\mathbb{R}^n-\lbrace 0\rbrace,k-1) \rightarrow \Conf_{\mathbb{Z}_2}(\mathbb{R}^n-\lbrace 0\rbrace,k-2)
\end{equation}
given by projection onto the first $k-2$ coordinates (Lemma~\ref{prodisact}). We show that the corresponding Serre spectral sequence has a trivial system of coefficients and collapses from its $E_2$ term (Lemma~\ref{locsyscoe}). This yields the additive structure in~(\ref{descrixico}). The determination of the multiplicative structure requires dealing, in a term-by-term basis, with each relation defining $I$. The method is the same in all cases, and we only give full details in a representative situation, namely the one correcting the typo in~\cite[Theorem~1.1]{Xico} observed in Remark~\ref{xicoerror}. The general idea is to reduce the problem to the case $k=4$ by using a naturality argument (based on Lemma~\ref{estnatarg}); a suitable map $\Conf_{\mathbb{Z}_2}(\mathbb{R}^n-\{0\},3)\to\Conf(\mathbb{R}^n,3)$ (defined in Lemma~\ref{mapalphaparti}) then allows us to ``import'' the desired relation from the known multiplicative structure in Lemma~\ref{cohensdescripcion}.

Completing the proof details for the strategy just sketched requires an additional key ingredient. Namely, we consider maps $f_{i,j}:S^{n-1}\longrightarrow \Conf_{\mathbb{Z}_2}(\mathbb{R}^n-\lbrace 0 \rbrace,3)$, $0\leq |j|<i<4$, defined by 
\begin{align}
f_{1,0}(x)&= (x,2e,3e), & f_{2,0}(x) &= (e,\frac{x}{2},3e),& f_{2,1}(x)&= (e,e+\frac{x}{2},3e),	\nonumber	\\[.5em]
f_{2,-1}(x) &= (e,-e+\frac{x}{2},3e),&
f_{3,0}(x)&=(e,\frac{3}{2}e,\frac{x}{2}),&
f_{3,1}(x)&=(e,\frac{3}{2}e,e+\frac{x}{3}),\label{lasefes}\\[.5em]
f_{3,2}(x)&=(e,\frac{3}{2}e,\frac{3}{2}e+\frac{x}{4}),&
f_{3,-1}(x)&=(e,\frac{3}{2}e,-e+\frac{x}{4}),&
f_{3,-2}(x)&=(e,\frac{3}{2}e,-\frac{2}{3}e+\frac{x}{4}),\nonumber
\end{align} 
where $e=(1,0\ldots,0)\in\mathbb{R}^n$. The straightforward verification of the following result is left as an exercise for the reader.
\begin{lem}\label{elauxi1}
Let $0\leq |j|<i<4$ and $0\leq |s|<r<4$. The composite $p_{r,s}f_{i,j}$ is nullhomotopic unless $r=i$ and $s=j$ in which case $p_{r,s}f_{r,s}\simeq\mbox{identity}$.
\end{lem}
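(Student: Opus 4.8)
The plan is to verify Lemma~\ref{elauxi1} by direct inspection of the nine explicit maps $f_{i,j}$ in~(\ref{lasefes}) against the nine maps $p_{r,s}$ in~(\ref{laspes}) (restricted to the case $k=4$, so $0\le|s|<r<4$), using the following two elementary observations. First, a map $S^{n-1}\to S^{n-1}$ of the form $x\mapsto \frac{v+cx}{\|v+cx\|}$ with $c\ne 0$ a fixed scalar and $v$ a fixed vector is nullhomotopic when $v\ne 0$ (since then it factors through the contractible space $\mathbb{R}^n\setminus\{-v/c\}$, or more simply extends over the disk via $t\mapsto\frac{v+ctx}{\|v+ctx\|}$ for $t\in[0,1]$, which is well defined because $v\ne 0$), and is homotopic to $\pm\mathrm{id}$ when $v=0$ (it equals $x\mapsto\mathrm{sign}(c)\,x$). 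Second, a map whose numerator $x\mapsto N(x)$ is an \emph{affine} function of $x$ of the form $N(x)=v+Lx$ with $L$ a nonzero scalar multiple of the identity falls under the first observation; when $L$ is a nonzero scalar times $\tau=-\|\cdot\|^{-2}(\cdot)$ composed appropriately, I must be slightly more careful, but in every one of the nine cases the relevant middle or third coordinate was chosen precisely so that the composite simplifies to one of these affine forms.

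Concretely, I would organize the check as a table. For a fixed $f_{i,j}$, I substitute its three coordinate functions into each $p_{r,s}$ and record the resulting map $S^{n-1}\to S^{n-1}$. For instance, $p_{2,1}f_{2,1}(x)=\frac{(e+x/2)-e}{\|(e+x/2)-e\|}=\frac{x/2}{\|x/2\|}=x$, giving the identity, as claimed; whereas $p_{3,0}f_{2,1}(x)=\frac{3e}{\|3e\|}$ is \emph{constant}, hence nullhomotopic; and $p_{2,-1}f_{2,1}(x)=\frac{(e+x/2)-(-e)}{\|(e+x/2)+e\|}=\frac{2e+x/2}{\|2e+x/2\|}$, which is nullhomotopic by the first observation since $2e\ne 0$. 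The terms involving $\tau$ require writing out $\tau x_j=-x_j/\|x_j\|^2$; because the $x_j$ appearing are always of the form (fixed nonzero vector) $+$ (small multiple of the sphere variable), $\tau x_j$ is a smooth $S^{n-1}$-family staying in a contractible region, and subtracting it from another such vector still yields a numerator that never vanishes and never wraps around the origin, so the composite extends over the disk. The constants $2e,3e,\frac32e,\frac34e$ etc.\ and the shrinking denominators $\frac{x}{2},\frac{x}{3},\frac{x}{4}$ were clearly engineered so that all the relevant differences stay well away from $0$; I would simply note the uniform lower bound on the norm of each numerator that makes the straight-line homotopy $t\mapsto N(tx)/\|N(tx)\|$, or $t\mapsto N_t(x)/\|N_t(x)\|$ scaling the non-$x$ part, go through.

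The diagonal cases $r=i$, $s=j$ are the ones that must come out to the identity, and these I would check one by one: each $f_{r,s}$ was built so that its coordinates differ (in the slot selected by $p_{r,s}$) by exactly a positive multiple of $x$ (e.g.\ $f_{3,2}$ has third minus (affine part of) second coordinate equal to $x/4$ after the cancellation $\frac32e-\frac32e$, and $f_{3,-2}$ has $p_{3,-2}f_{3,-2}(x)=\frac{(-\frac23e+\frac x4)-\tau(\frac32e)}{\|\cdots\|}=\frac{(-\frac23e+\frac x4)-(-\frac23 e)}{\|\cdots\|}=\frac{x/4}{\|x/4\|}=x$, using $\tau(\frac32 e)=-\frac{(3/2)e}{(3/2)^2}=-\frac23 e$). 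Verifying these identities $\tau(\lambda e)=-\lambda^{-1}e$ and doing the cancellations carefully is the only place where an arithmetic slip is possible, so that bookkeeping is the ``hard part'' — though it is entirely routine. Since there is essentially nothing conceptual beyond the two observations above, I agree with the authors that the full verification can be left to the reader; the proof proposal is simply to lay out the table of the $81$ composites, note that exactly the $9$ diagonal ones reduce to the identity and the rest are either literally constant or of the nullhomotopic affine type, and invoke the elementary homotopy-extension argument uniformly.
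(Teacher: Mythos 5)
The paper offers no written proof of Lemma~\ref{elauxi1} (it is explicitly left as an exercise), and your plan --- tabulating all $81$ composites, observing that each off-diagonal one is either literally constant or has numerator an affine function of $x$ whose constant part dominates, and checking the nine diagonal cases by the cancellations $\tau(\lambda e)=-\lambda^{-1}e$ --- is exactly the computation the authors intend; it is also the same technique they carry out in detail in the proof of Lemma~\ref{descgrados} (explicit homotopies of the numerator plus non-surjectivity arguments). Your diagonal computations are correct, and I verified that every off-diagonal composite does reduce to a constant map, to an affine form $x\mapsto N(v+cx)$ with $\|v\|>|c|$, or (in the few cases where $\tau$ hits a non-constant coordinate, such as $p_{3,-2}f_{2,\pm1}$) to a map whose numerator satisfies $\langle\,\cdot\,,e\rangle>0$ throughout; so the conclusion holds in all cases.

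However, your first ``elementary observation'' is false as stated, and its justification is wrong: the map $x\mapsto N(v+cx)$ is \emph{not} nullhomotopic merely because $v\neq0$. If $0<\|v\|<|c|$ the origin is enclosed by the image sphere and the map has degree $\pm1$ (e.g.\ $v=e/10$, $c=1$ gives a map homotopic to the identity). Correspondingly, the disk extension $t\mapsto N(v+ctx)$ is well defined only when $\|v\|>|c|$, not merely when $v\neq0$, since at $t=\|v\|/|c|$ the numerator vanishes for some $x$. The correct criterion is $\|v\|>|c|$ (equivalently, the image of the numerator lies in an open half-space through the origin, so the normalized map is non-surjective and hence nullhomotopic), and this is the inequality you must actually record in each entry of your table --- it is precisely what the constants $2e,3e,\tfrac32e,-\tfrac23e$ and the coefficients $\tfrac12,\tfrac13,\tfrac14$ in~(\ref{lasefes}) were engineered to guarantee (the tightest instances being $\|{\pm}\tfrac13e\|>\tfrac14$ and $\|3e\|>2$ after $\tau(x/2)=-2x$). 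The same caveat applies to your treatment of the genuinely non-affine $\tau$-cases: ``the numerator never vanishes'' is not sufficient for nullhomotopy; what you need, and what holds, is that the numerator stays in a fixed open half-space. Since every case you would actually encounter satisfies the stronger hypothesis, your proof is repairable by a one-line correction of the stated principle, but as written the principle would not survive scrutiny.
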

With all the pieces in place, we finally start with the detailed arguments leading to the planned proof of Theorem~\ref{2.1}. To begin with, note that the fiber in~(\ref{fib2}) has the homotopy type of a wedge of $2k-3$ copies of the $(n-1)$-dimensional sphere:
\begin{equation}\label{hotopytype}
(\mathbb{R}^n-\lbrace 0 \rbrace)-Q^{\mathbb{Z}_2}_{k-2}\simeq\bigvee_{2k-3}S^{n-1}.
\end{equation}
\begin{lem}\label{natargyque4}
The classes $A_{k-1,j}\in H^{n-1}(\Conf_{\mathbb{Z}_2}(\mathbb{R}^n-\{0\},k-1);R)$, $1-k<j<k-1$, restrict under the fiber inclusion of~(\ref{fib2}) to the standard basis of $H^{n-1}(\bigvee_{2k-3}S^{n-1})$.
\end{lem}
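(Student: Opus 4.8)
The plan is to identify the fiber inclusion of~(\ref{fib2}) with a concrete inclusion of the punctured Euclidean space, and then to trace through the defining formulas~(\ref{laspes}) of the maps $p_{k-1,j}$. First I would fix a point $(b_1,\ldots,b_{k-2})\in\Conf_{\mathbb{Z}_2}(\mathbb{R}^n-\{0\},k-2)$ in the base of~(\ref{fib2}); by definition of the locally trivial bundle in Lemma~\ref{prodisact}, the fiber over this point is $(\mathbb{R}^n-\{0\})-Q^{\mathbb{Z}_2}_{k-2}$, where $Q^{\mathbb{Z}_2}_{k-2}=\{\pm b_1,\ldots,\pm b_{k-2}\}$ (here $\tau b_\ell=-b_\ell/\lVert b_\ell\rVert^2$ in the reinterpretation of Remark~\ref{renotacion}), and the fiber inclusion $\jmath$ sends $y$ to $(b_1,\ldots,b_{k-2},y)$. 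Thus $\jmath^*A_{k-1,j}=\jmath^*p_{k-1,j}^*(\iota_{n-1})=(p_{k-1,j}\circ\jmath)^*(\iota_{n-1})$, and the three cases of~(\ref{laspes}) give $p_{k-1,j}\circ\jmath$ as, respectively, $y\mapsto y/\lVert y\rVert$ (for $j=0$), $y\mapsto (y-b_j)/\lVert y-b_j\rVert$ (for $0<j<k-1$), and $y\mapsto (y-\tau b_j)/\lVert y-\tau b_j\rVert$ (for $-(k-1)<j<0$, with $|j|=j$).

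The second step is to recognize each of these $2k-3$ maps $(\mathbb{R}^n-\{0\})-Q^{\mathbb{Z}_2}_{k-2}\to S^{n-1}$ as the ``linking'' map around the corresponding deleted point. Under the standard homotopy equivalence~(\ref{hotopytype}), each of the $2k-2$ removed points $\pm b_\ell$ together with the puncture at $0$ contributes a wedge summand---except that one of the $2k-1$ potential summands is redundant (a wedge of $m$ spheres is the complement of $m+1$ points in $S^n$, equivalently of $m$ points in $\mathbb{R}^n$), which is exactly why the count is $2k-3$ and not $2k-1$; the precise bookkeeping is that removing the puncture $0$ and the $2(k-2)$ points $\pm b_\ell$ from $S^n$ leaves $2k-3$ generators, and the maps $p_{k-1,0},p_{k-1,\pm j}$ for $0<j<k-1$ are precisely the $2k-3$ meridian maps, each of which pulls $\iota_{n-1}$ back to the corresponding standard generator. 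I would make this rigorous by an explicit deformation retraction of $(\mathbb{R}^n-\{0\})-Q^{\mathbb{Z}_2}_{k-2}$ onto a wedge of $(n-1)$-spheres, one small sphere linking each of $0,b_1,\tau b_1,\ldots,b_{k-2},\tau b_{k-2}$ with one relation collapsing the product of all meridians (so the classes dual to the $0$-meridian and the $2(k-2)$ point-meridians span, subject to one relation---equivalently, drop any one, say keep all but note linear dependence), and observe that $p_{k-1,j}$ restricted to the meridian linking the point indexed by $j$ has degree $\pm1$ and is nullhomotopic on every other meridian. This last fact is the orbit-configuration analogue of Lemma~\ref{elauxi1}, and in fact can be deduced by the same naturality/explicit-path argument used there.

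The main obstacle, and the step deserving care, is the bookkeeping of \emph{which} $2k-3$ classes among the $2k-1$ candidates $\{A_{k-1,0}\}\cup\{A_{k-1,\pm j}:0<j<k-1\}$ form the standard basis, i.e.\ pinning down the single linear relation among the $2k-1$ meridian classes in $H^{n-1}\big((\mathbb{R}^n-\{0\})-Q^{\mathbb{Z}_2}_{k-2}\big)$ forced by~(\ref{hotopytype}). I would handle this by a dimension count: the left-hand side of~(\ref{descrixico}) in degree $n-1$ has rank equal to $|\mathcal{A}|$ restricted to top index $k-1$, which must match the rank $2k-3$ of $H^{n-1}(\bigvee_{2k-3}S^{n-1})$---forcing the relation to be visible already in the presentation, and indeed relations~(\ref{relacionb}) and~(\ref{relacionc}) of Theorem~\ref{2.1} (in degree $2(n-1)$) together with an Euler-characteristic/Poincar\'e-series argument show the degree-$(n-1)$ part has exactly rank $2k-3$, so the $2k-3$ linking classes, being nonzero and ``independent at the level of meridians'' by the nullhomotopy statement, must be a basis. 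Alternatively, and perhaps more cleanly, one avoids the relation entirely: the fiber has the homotopy type of a wedge of $2k-3$ spheres, its $(n-1)$-st cohomology is free of rank $2k-3$, and the $2k-1$ restricted classes $\jmath^*A_{k-1,j}$ span it with the one relation being irrelevant for the \emph{statement}, which only asserts that these classes restrict to \emph{a} standard basis after choosing $2k-3$ of them compatibly with a wedge decomposition---so it suffices to exhibit a wedge decomposition of the fiber whose $2k-3$ sphere summands are detected, one each, by $2k-3$ of the maps $p_{k-1,j}$, which the meridian picture furnishes directly.
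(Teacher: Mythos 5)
Your first two paragraphs are essentially the paper's own argument: realize the fiber of~(\ref{fib2}) over a fixed base point as $(\mathbb{R}^n-\{0\})-Q^{\mathbb{Z}_2}_{k-2}$, index the wedge summands of~(\ref{hotopytype}) by small meridian spheres around the removed points, and check from~(\ref{laspes}) that $p_{k-1,j'}$ composed with the inclusion of the meridian around the point indexed by $j$ is nullhomotopic for $j\neq j'$ and has degree $\pm1$ for $j=j'$. Up to that point the proposal is correct and coincides with the paper's proof.

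The third paragraph, however, is built on a miscount and addresses a non-issue. The orbit set $Q^{\mathbb{Z}_2}_{k-2}$ consists of $k-2$ free orbits, hence $2(k-2)=2k-4$ points (you wrote $2k-2$); together with the puncture at $0$ this gives exactly $2k-3$ removed points of $\mathbb{R}^n$, hence exactly $2k-3$ wedge summands, with no redundant summand and no linear relation to pin down. On the other side, the index range $1-k<j<k-1$ in the statement gives exactly $2k-3$ classes $A_{k-1,j}$ (namely $j=0$ and $j=\pm1,\ldots,\pm(k-2)$), not $2k-1$. So the correspondence $j\leftrightarrow q_j$ between classes and removed points is a bijection, and the lemma follows immediately from your degree computation; there is no ``single linear relation among $2k-1$ meridian classes'' to worry about. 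I would also caution that the fallback argument you sketch for this phantom relation does not work as stated: the relations~(\ref{relacionb}) and~(\ref{relacionc}) of Theorem~\ref{2.1} live in degree $2(n-1)$ and cannot cut down the rank of the degree-$(n-1)$ part, and the restriction $H^{n-1}(\Conf_{\mathbb{Z}_2}(\mathbb{R}^n-\{0\},k-1))\to H^{n-1}(\bigvee_{2k-3}S^{n-1})$ is far from injective (all $A_{i,j}$ with $i<k-1$ are pulled back from the base and die on the fiber), so a rank comparison of the ambient degree-$(n-1)$ groups proves nothing. Deleting the third paragraph and correcting the count turns your proposal into the paper's proof.
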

\begin{proof}
Think of the indicated fiber as $\left\{q_1\right\}\times\cdots\times\{q_{k-2}\}\times\left((\mathbb{R}^n-\{0\})-Q_{k-2}^{\mathbb{Z}_2}\right)$, i.e.~as lying over a fixed point $\left(q_1,\ldots,q_{k-2}\right)\in\Conf_{\mathbb{Z}_2}(\mathbb{R}^n-\{0\},k-2)$ where $Q_{k-2}^{\mathbb{Z}_2}=\{q_1,\tau\cdot q_1,\ldots,q_{k-2},\tau\cdot q_{k-2}\}$. In these terms, the inclusions of the wedge-summand spheres appearing in the homotopy equivalence~(\ref{hotopytype}) can be indexed by points in $\mathcal{Q}:=\{0\}\cup Q_{k-2}^{\mathbb{Z}_2}$. Namely, for $q\in\mathcal{Q}$, the inclusion $\iota_q$ of the $q$-th wedge summand sphere into the fiber is realized up to homotopy by the obvious degree-1 map which sends $S^n$ to $\left\{q_1\right\}\times\cdots\times\{q_{k-2}\}\times\Sigma_q$, where $\Sigma_q$ is the sphere of radius $r$ centered at $q$, and $r$ is small enough so that no point in $\mathcal{Q}-\{q\}$ lies in the convex hull determined by $\Sigma_q$. If we set $q_0=0$ and $q_j=\tau\cdot q_{-j}$ for $2-k\leq j<0$, so that $\mathcal{Q}=\{q_j\}_{2-k\leq j\leq k-2}$, then we see directly from the definition of the maps $p_{k-1,j}$ that the composite $p_{k-1,j'}\circ i_{q_j}$ is nullhomotopic if $j\neq j'$, but has degree $\pm1$ if $j=j'$. Consequently, $A_{k-1,j}$ restricts under the fiber inclusion to the generator corresponding to the $q_j$-th sphere in~(\ref{hotopytype}). 
\end{proof}

\begin{lem}\label{locsyscoe}
The three spaces in~(\ref{fib2}) are $(n-2)$-connected and have cohomology concentrated in dimensions divisible by $n-1$. Furthermore, the system of local coefficients in~(\ref{fib2}) is trivial, and the corresponding Serre spectral sequence collapses.
\end{lem}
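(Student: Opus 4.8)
The plan is to establish the three assertions in the order stated, leveraging the homotopy equivalence~(\ref{hotopytype}) together with an induction on $k$. For the first assertion, note that $\Conf_{\mathbb{Z}_2}(\mathbb{R}^n-\{0\},0)$ is a point, and that the base space $\Conf_{\mathbb{Z}_2}(\mathbb{R}^n-\{0\},k-2)$ appearing in~(\ref{fib2}) is obtained, by Lemma~\ref{prodisact}, as the total space of an iterated bundle whose fibers are each of the form $(\mathbb{R}^n-\{0\})-Q^{\mathbb{Z}_2}_\ell$, i.e.~homotopy equivalent to a wedge of $(n-1)$-spheres by the same argument giving~(\ref{hotopytype}). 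Hence by induction on the number of bundle stages — using the long exact sequence of homotopy groups of a fibration, and the fact that a wedge of $(n-1)$-spheres is $(n-2)$-connected with reduced cohomology concentrated in degrees that are positive multiples of $n-1$ — all three spaces in~(\ref{fib2}) are $(n-2)$-connected. The statement about cohomology being concentrated in degrees divisible by $n-1$ then follows for the base (and the fiber) by the same induction: the Serre spectral sequence of each bundle stage has $E_2^{p,q}=H^p(\text{base};H^q(\text{fiber}))$ with both $p$ and $q$ restricted to multiples of $n-1$, so the abutment is concentrated in multiples of $n-1$ as well; granting moreover the collapsing claim (proved next) there are no convergence subtleties. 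For the total space $\Conf_{\mathbb{Z}_2}(\mathbb{R}^n-\{0\},k-1)$ itself, the same conclusion drops out once the spectral sequence of~(\ref{fib2}) is shown to collapse.

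For triviality of the system of local coefficients, I would argue that $\pi_1$ of the base acts trivially on $H^*(\text{fiber})$. The cleanest route is to use the $(\mathbb{Z}_2)^{k-2}$-action and the fact that the bundle~(\ref{fib2}) is pulled back (coordinate-wise) from a product situation; more directly, since the base is $(n-2)$-connected with $n\geq 2$, it is connected, and when $n\geq 3$ it is simply connected, so there is nothing to prove. The only case requiring care is $n=2$, where the base may have nontrivial fundamental group; here one checks that the monodromy action on $H^{n-1}$ of the fiber permutes (up to sign) the basis elements indexed by the orbit points, and in fact fixes each of them, because moving the first $k-2$ points along a loop in the base carries the radius-$r$ sphere $\Sigma_{q}$ around a loop that is canonically trivialized once $r$ is taken small — this is exactly the picture set up in the proof of Lemma~\ref{natargyque4}, and the classes $A_{k-1,j}$ are globally defined on the total space, hence their restrictions to the fiber are monodromy-invariant. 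Since those restrictions form a basis of $H^{n-1}(\text{fiber})$ by Lemma~\ref{natargyque4}, the action on $H^{n-1}$ is trivial, and $H^0$ and the top group carry trivial action automatically; by the ring structure (the cohomology of a wedge of equidimensional spheres is a square-zero extension generated in degree $n-1$) the action is trivial in all degrees.

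Finally, collapsing: by the first assertion the $E_2$-page $E_2^{p,q}=H^p(\text{base})\otimes H^q(\text{fiber})$ is concentrated in bidegrees with $p,q$ both divisible by $n-1$, so every differential $d_r$ has a source and target differing by a bidegree $(r,1-r)$ that cannot connect two such lattice points unless $r\equiv 0$ and $1-r\equiv 0$ modulo $n-1$ simultaneously, which is impossible for $n\geq 2$ (it would force $r\equiv 0$ and $r\equiv 1$ mod $n-1$). Hence $d_r=0$ for all $r\geq 2$ and the spectral sequence collapses at $E_2$. I expect the main obstacle to be the $n=2$ case of the local-coefficient triviality: there the base genuinely has nontrivial $\pi_1$, and one must argue carefully — as sketched above — that the globally-defined classes $A_{k-1,j}$ restrict to a monodromy-fixed basis of the fiber's degree-$(n-1)$ cohomology, which is the crux that makes the simple-system-of-coefficients claim go through uniformly in $n$.
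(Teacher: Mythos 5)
Your treatment of the connectivity, the concentration of cohomology, and the triviality of the local system follows the paper's argument closely: induction on $k$ through the tower of fibrations with wedge-of-spheres fibers, simple connectivity disposing of everything when $n\geq 3$, and, for $n=2$, the observation that the globally defined classes $A_{k-1,j}$ restrict by Lemma~\ref{natargyque4} to a basis of $H^{n-1}$ of the fiber, so that the monodromy fixes a basis and hence acts trivially. That part is sound, and it is essentially the route the paper takes (the paper packages this last step as the standard fact that surjectivity of $H^*(E;R)\to H^*(F;R)$ forces a simple system of coefficients).

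There is, however, a genuine gap in your collapse argument at $n=2$. Your sparseness computation requires $(n-1)\mid r$ and $(n-1)\mid (r-1)$ simultaneously, hence $(n-1)\mid 1$; this is a contradiction only when $n\geq 3$. When $n=2$ every bidegree is ``divisible by $n-1=1$,'' so sparseness rules out nothing, and a priori there could be nonzero differentials $d_2\colon E_2^{p,1}\to E_2^{p+2,0}$. Your claim that the congruences are ``impossible for $n\geq 2$'' is therefore false in exactly the case you yourself flag as the crux. The fix is already in your hands: the same surjectivity of $H^*(E;R)\to H^*(F;R)$ in degree $n-1$ that you established via Lemma~\ref{natargyque4} for the local-coefficient claim also forces the collapse, by the Leray--Hirsch-type theorem that the paper invokes (if the restriction to the fiber is surjective, then the Serre spectral sequence collapses \emph{and} has a simple system of coefficients). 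You should route the $n=2$ collapse through that statement rather than through sparseness.
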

\begin{proof}
The base space in~(\ref{fib2}) has the homotopy type of $S^{n-1}$ when $k=3$. Therefore, the assertion about the connectivity of the spaces follows from an inductive argument (on $k$) using~(\ref{hotopytype}) and the long exact sequence in homotopy groups of~(\ref{fib2}).

Since the cohomology assertion is obvious for $n=2$, it suffices to prove it for $n>2$. In such a case, all spaces in~(\ref{fib2}) are simply connected, so that the local system of coefficients is forced to be trivial. Assume inductively that, just as for the fiber, the cohomology of the base space in~(\ref{fib2}) is concentrated in dimensions divisible by $n-1$. Then the spectral sequence under consideration collapses by sparseness, and the cohomology of the total space is forced to be concentrated in dimensions divisible by $n-1$ too.

The previous argument takes care of the assertions about the behavior of the spectral sequence when $n>2$ (trivial local coefficients and collapsibility). In any case, the assertion for $n\geq2$ follows from Lemma~\ref{natargyque4} and the fact that, if $F\to E\to B$ is a fibration for which $H^*(E;R)\to H^*(F;R)$ is surjective, then the corresponding Serre spectral sequence collapses and has a trivial system of local coefficients~(see~\cite[Theorem~14.1]{MR0221507} or~\cite[Theorem~4.4]{MR1122592}).
\end{proof}

\begin{rem}\label{otroerrordeFZ}
The final task in the proof above, namely the surjectivity of in cohomology of the fiber inclusion in~(\ref{fib2}) when $n=2$, is addressed in~\cite[Remark~10]{FZ2}. However, that argument is flawed as it uses~\cite[Lemma~7(iv)]{FZ2} which, as explained in Remark~\ref{3.6}, leads to inconsistencies.
\end{rem}
\begin{proof}[Proof of the additive assertion in Theorem~\ref{2.1}]
We need to assemble the cohomological information coming from the spectral sequences for each of the (vertical) fibrations in the tower
\[\xymatrix{
\Conf_{\mathbb{Z}_2}(\mathbb{R}^n-\{0\},k-1) \ar[d] & 
\bigvee_{2k-3}S^{n-1} \ar[l]\\
\Conf_{\mathbb{Z}_2}(\mathbb{R}^n-\{0\},k-2) \ar[d] & 
\bigvee_{2k-5}S^{n-1} \ar[l]\\
\vdots\ar[d]\\
\Conf_{\mathbb{Z}_2}(\mathbb{R}^n-\{0\},2) \ar[d] & 
\bigvee_{3}S^{n-1} \ar[l]\\
\mathbb{R}^n-\{0\}\cong S^{n-1}.}
\]
For the fibration in the bottom of the tower, we know that $E_2=E_\infty$, which evidently is a (doubly graded) free $R$-module. In particular, all possible extensions are trivial when recovering $H^*(\Conf_{\mathbb{Z}_2}(\mathbb{R}^n-\{0\},2);R)$ from its ($R$-free) graded associated $E_\infty$. Consequently, $H^*(\Conf_{\mathbb{Z}_2}(\mathbb{R}^n-\{0\},2);R)$ is $R$-free too. Working our way upwards in the tower, the argument above repeats,  and we deduce that, just as in the case of the bottom fibration, 
none of the spectral sequences for the fibrations in the tower has non-trivial extensions, and 
$H^*(\Conf_{\mathbb{Z}_2}(\mathbb{R}^n-\{0\},k-1))$ is $R$-free.
In fact, we get an $R$-module isomorphism
\begin{equation}\label{isomodulos}
H^*(\Conf_{\mathbb{Z}_2}(\mathbb{R}^n-\lbrace 0 \rbrace, k-1);R)\cong M_1 \otimes M_2 \otimes \cdots \otimes M_{k-1}
\end{equation}
where $M_i$ is the $R$-free module generated by the zero dimensional class $1$ and by the $(n-1)$-dimensional spherical classes $\lbrace A_{i,0} \rbrace \cup \lbrace A_{i,j},A_{i,-j} \rbrace_{1\leq j < i} $. This gives the assertion about the additive structure in Theorem~\ref{2.1}.
\end{proof}
The remaining of the section deals with the ideas leading to the multiplicative structure in Theorem~\ref{2.1}, thus, we assume $k\geq4$ (the multiplicative structure is evidently trivial for $k=3$). The tensor products in~(\ref{isomodulos}) reflects part of the structure, but this does not account for products of elements in a given $M_i$. Namely, although a product $A_{i,j_1}A_{i,j_2}$ is trivial at the level of the spectral sequence, it is usually non-zero at the cohomology level, and we need to give its expression in terms of the additive basis.  As already noted, we will determine in full detail such expression only in the representative case of the relation fixing the typo in~\cite[Theorem~1.1]{Xico}. Namely:
\begin{prop}\label{casodecorreccion}
Let $0<j<i<r<k$. Then $A_{r,j}A_{r,-i}=(-1)^n (A_{j,0}+A_{i,0}-A_{i,-j})(A_{r,-i}-A_{r,j})$. 
\end{prop}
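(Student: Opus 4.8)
The plan is to reduce the identity, which lives in $H^*(\Conf_{\mathbb{Z}_2}(\mathbb{R}^n-\{0\},k-1);R)$, to a statement about $H^*(\Conf_{\mathbb{Z}_2}(\mathbb{R}^n-\{0\},3);R)$ and from there to $H^*(\Conf(\mathbb{R}^n,3);R)$, where the relation~(\ref{relacionesusual}) is already known from Lemma~\ref{cohensdescripcion}. First I would invoke the naturality argument alluded to in the paragraph after Remark~\ref{xicoerror}: there is a coordinate-forgetting/relabeling map $\Conf_{\mathbb{Z}_2}(\mathbb{R}^n-\{0\},k-1)\to\Conf_{\mathbb{Z}_2}(\mathbb{R}^n-\{0\},3)$ picking out the coordinates indexed by $j$, $i$, $r$ (in that order, after an order-preserving reindexing sending these three to $1,2,3$), under which each $p_{s,t}$ of the target pulls back to the corresponding $p_{s',t'}$ of the source. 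Since all our classes are pullbacks of $\iota_{n-1}$ along these $p$-maps, and since multiplicativity of induced maps in cohomology is automatic, it suffices to prove the claimed relation for $k-1=3$, i.e.\ for $A_{3,1}A_{3,-2}$ in $H^*(\Conf_{\mathbb{Z}_2}(\mathbb{R}^n-\{0\},3);R)$.

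Next I would pass from the orbit configuration space on the punctured plane to an ordinary configuration space on the plane. The key tool is the map $\alpha\colon\Conf_{\mathbb{Z}_2}(\mathbb{R}^n-\{0\},3)\to\Conf(\mathbb{R}^n,4)$ (announced as the map ``$\alpha$'' defined in Lemma~\ref{mapalphaparti}) sending $(x_1,x_2,x_3)$ to, roughly, $(0,x_1,x_2,x_3)$ — the orbit constraint $\mathbb{Z}_2\cdot x_a\neq\mathbb{Z}_2\cdot x_b$ together with $x_a\neq 0$ guarantees the four points are pairwise distinct after a harmless reparametrization if needed to respect the inversive form of $\tau$. Pulling back the standard generators $A'_{s,t}$ of $H^*(\Conf(\mathbb{R}^n,4);R)$ along $\alpha$ recovers our $A_{s,t}$'s: concretely $\alpha^*(A'_{s+1,1})=A_{s,0}$, $\alpha^*(A'_{s+1,t+1})=A_{s,t}$ for $t>0$, while the classes $A_{s,-t}$ do \emph{not} pull back from $\Conf(\mathbb{R}^n,4)$ directly but are handled by a second map or by composing $\alpha$ with the involution $\tau$ in the relevant coordinate. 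The precise bookkeeping of which $A'$-class maps to which $A$-class (with which sign, coming from $p_{i,-j}$ involving $x_i-\tau x_j$ and the degree-$(-1)^n$ effect of $\tau$ on $\iota_{n-1}$) is exactly the $(-1)^n$ factor in the statement.

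With these two reductions in place, the final step is pure algebra inside $H^*(\Conf(\mathbb{R}^n,4);R)$ (or a mild variant thereof): expand $A_{3,1}A_{3,-2}$ using the pullback dictionary, rewrite it in terms of the $A'$-generators, apply the three-term relations~(\ref{relacionesusual}) (and the squaring relations $(A')^2=0$, which hold since $\iota_{n-1}^2=0$ in $H^*(S^{n-1})$), then translate back. This will produce $(-1)^n(A_{j,0}+A_{i,0}-A_{i,-j})(A_{r,-i}-A_{r,j})$ on the nose after the index specialization $(j,i,r)\mapsto(1,2,3)$. Because the relation must respect degrees and the two sides both live in $H^{2(n-1)}$, and because the additive basis from the already-proved additive part of Theorem~\ref{2.1} lets one match coefficients unambiguously, no convergence or extension subtleties arise here.

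The main obstacle I anticipate is not the spectral-sequence machinery (that is done) nor the naturality reduction (routine), but rather pinning down the map $\alpha$ and its effect on cohomology with the correct signs — in particular tracking how the antipodal map's inversive presentation $\tau(x)=-x/\lVert x\rVert^2$ interacts with the normalization in the $p_{i,-j}$ and contributes the overall $(-1)^n$, and making sure the images of $A_{i,-j}$ under the relevant composite are identified with the right signed combination. This is precisely the point where \cite{Xico} introduced its typo, so the care is in the term-by-term sign audit, guided at each stage by Lemma~\ref{elauxi1} (which tells us which composites $p_{r,s}f_{i,j}$ are nullhomotopic) to detect and rule out spurious cross terms.
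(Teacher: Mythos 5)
Your proposal is correct and follows essentially the same route as the paper: reduce to three points by naturality (Lemma~\ref{estnatarg}), then import relation~(\ref{relacionesusual}) via a map to an ordinary configuration space that applies $\tau$ in one coordinate (the paper's $\alpha(x,y,z)=(x,\tau y,z)$ into $\Conf(\mathbb{R}^n,3)$; your variant adjoining the origin to land in $\Conf(\mathbb{R}^n,4)$ is a cosmetic difference). The one substantive step you defer --- showing that the pullback of $A'_{2,1}$ is the three-term combination $(-1)^n(A_{1,0}+A_{2,0}-A_{2,-1})$ rather than a single signed generator --- is exactly where the paper's work lies (Lemma~\ref{descgrados}, the explicit degree computations for the composites $p'_{2,1}\alpha f_{i,j}$ detected via Lemma~\ref{elauxi1}), and you correctly identify it as the main obstacle.
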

Proposition~\ref{casodecorreccion} follows immediately from Lemmas~\ref{estnatarg} and~\ref{mapalphaparti} below using~(\ref{relacionesusual}) with $(r,i,j)=(3,2,1)$.
\begin{lem}\label{estnatarg}
Let $0<j<i<r<k$. The map $\pi_{r,i,j}:\Conf_{\mathbb{Z}_2}(\mathbb{R}^n - 0,k-1)\to \Conf_{\mathbb{Z}_2}(\mathbb{R}^n - 0,3)$ given by $\pi_{r,i,j}(x_1,\ldots,x_{k-1})=(x_j,x_i,x_r)$ satisfies
\[\begin{array}{rclrclrcl}
\pi_{r,i,j}^*(A_{1,0})&=&A_{j,0},\;\;\;\;\;\;\;&
\pi_{r,i,j}^*(A_{2,0})&=&A_{i,0},&
\pi_{r,i,j}^*(A_{2,1})&=&A_{i,j},\\
\pi_{r,i,j}^*(A_{2,-1})&=&A_{i,-j},&
\pi_{r,i,j}^*(A_{3,0})&=&A_{r,0},&
\pi_{r,i,j}^*(A_{3,1})&=&A_{r,j},\\
\pi_{r,i,j}^*(A_{3,2})&=&A_{r,i},&
\pi_{r,i,j}^*(A_{3,-1})&=&A_{r,-j},\;\;\;\;&
\pi_{r,i,j}^*(A_{3,-2})&=&A_{r,-i}.\\
\end{array}\]
\end{lem}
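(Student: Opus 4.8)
The statement to prove is Lemma~\ref{estnatarg}: the projection $\pi_{r,i,j}$ onto the coordinates $(x_j,x_i,x_r)$ pulls back the nine generators $A_{s,t}$ of $H^{n-1}(\Conf_{\mathbb{Z}_2}(\mathbb{R}^n-0,3);R)$ to the indicated generators of $H^{n-1}(\Conf_{\mathbb{Z}_2}(\mathbb{R}^n-0,k-1);R)$.

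\medskip

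The plan is to reduce everything to the definition of the $A$-classes as pullbacks of the fundamental class $\iota_{n-1}\in H^{n-1}(S^{n-1})$ along the explicit maps $p_{s,t}$ of~(\ref{laspes}), and then to check that each relevant composite $p_{s,t}\circ\pi_{r,i,j}$ literally equals (or is homotopic to) the corresponding map $p_{s',t'}$ on the big configuration space. First I would recall that, by naturality of pullback, $\pi_{r,i,j}^*(A_{s,t})=\pi_{r,i,j}^*\,p_{s,t}^*(\iota_{n-1})=(p_{s,t}\circ\pi_{r,i,j})^*(\iota_{n-1})$, so the whole lemma follows once one identifies $p_{s,t}\circ\pi_{r,i,j}$ with the appropriate $p_{s',t'}$ on $\Conf_{\mathbb{Z}_2}(\mathbb{R}^n-0,k-1)$. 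Then I would go through the nine cases one by one. Writing $\pi_{r,i,j}(x_1,\dots,x_{k-1})=(x_j,x_i,x_r)$, so that ``coordinate $1$'' of the target is $x_j$, ``coordinate $2$'' is $x_i$, and ``coordinate $3$'' is $x_r$, each composite is immediate: for example $p_{2,1}\circ\pi_{r,i,j}$ sends $(x_1,\dots,x_{k-1})$ to $(x_i-x_j)/\lVert x_i-x_j\rVert$, which is exactly $p_{i,j}$; and $p_{3,-2}\circ\pi_{r,i,j}$ sends the tuple to $(x_r-\tau x_i)/\lVert x_r-\tau x_i\rVert=p_{r,-i}$, using the explicit form of the involution $\tau$ from Remark~\ref{renotacion}. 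The cases $p_{1,0},p_{2,0},p_{3,0}$ reduce to the ``distance from the origin'' maps $p_{j,0},p_{i,0},p_{r,0}$; the cases $p_{2,-1},p_{3,1},p_{3,2},p_{3,-1}$ are handled the same way as $p_{2,1}$ and $p_{3,-2}$.

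\medskip

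The only mild subtlety is bookkeeping of signs in the negative-index cases: one must verify that the roles of the two arguments in $p_{r,-i}$ versus $p_{3,-2}\circ\pi_{r,i,j}$ match, i.e.\ that in both maps it is the \emph{second} (larger-indexed) point that gets hit by $\tau$, not the first. This is forced by the indexing convention $0\le|t|<s$ used throughout, together with the fact that $j<i<r$, so under $\pi_{r,i,j}$ the smaller index always lands in a lower target coordinate; hence $p_{3,-1}$ (involving $\tau$ on coordinate $1$, the one holding $x_j$) composes to $p_{r,-j}$, and $p_{3,-2}$ (involving $\tau$ on coordinate $2$, holding $x_i$) composes to $p_{r,-i}$, exactly as claimed. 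There is no genuine obstacle here—the lemma is a direct unwinding of definitions—so the ``hard part'' is merely organizing the nine verifications cleanly and not transposing any indices. One clean way to present it is a single display of the nine identities $p_{s,t}\circ\pi_{r,i,j}=p_{s',t'}$ followed by the remark that applying $(-)^*(\iota_{n-1})$ to each yields the table in the statement.
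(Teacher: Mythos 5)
Your proof is correct and takes the same route as the paper, which offers no separate argument for this lemma but simply states that all its equalities ``follow from the definitions''---precisely the unwinding $A_{s,t}=p_{s,t}^*(\iota_{n-1})$ together with the identifications $p_{s,t}\circ\pi_{r,i,j}=p_{s',t'}$ that you carry out. (One cosmetic slip: in the negative-index cases $\tau$ hits the \emph{smaller}-indexed point $x_{|t|}$, not the ``larger-indexed'' one as your parenthetical says, but your concrete conclusions $p_{3,-1}\circ\pi_{r,i,j}=p_{r,-j}$ and $p_{3,-2}\circ\pi_{r,i,j}=p_{r,-i}$ are stated correctly.)
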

\begin{lem}\label{mapalphaparti}
The map $\alpha:\Conf_{\mathbb{Z}_2}(\mathbb{R}^n-\lbrace 0 \rbrace,3)\longrightarrow \Conf(\mathbb{R}^n,3)$ given by $\alpha(x,y,z)=(x,\tau y ,z)$ satisfies $\alpha^*(A'_{3,2})=A_{3,-2}$,
$\;\;\alpha^*(A'_{3,1})=A_{3,1},\;$ and
$\;\;\alpha^*(A'_{2,1})=(-1)^{n}(A_{1,0}+A_{2,0}-A_{2,-1})$.
\end{lem}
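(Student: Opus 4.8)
The plan is to compute each pullback $\alpha^*(A'_{i,j})$ directly from the explicit formulas for $\alpha$, the $p'_{i,j}$, and the $p_{i,j}$, by identifying $\alpha^*(A'_{i,j}) = (p'_{i,j}\circ\alpha)^*(\iota_{n-1})$ and comparing $p'_{i,j}\circ\alpha$ with the maps $p_{r,s}$ on $\Conf_{\mathbb{Z}_2}(\mathbb{R}^n-\{0\},3)$. Since all the relevant classes are spherical (pulled back from $H^{n-1}(S^{n-1})$), it suffices to identify the relevant composites up to homotopy as maps into $S^{n-1}$ and to track degrees and signs. First I would handle the two easy cases. For $\alpha(x,y,z)=(x,\tau y,z)$ we have $p'_{3,2}\circ\alpha\,(x,y,z) = \frac{z-\tau y}{\lVert z-\tau y\rVert} = p_{3,-2}(x,y,z)$ on the nose, so $\alpha^*(A'_{3,2}) = A_{3,-2}$ immediately; similarly $p'_{3,1}\circ\alpha\,(x,y,z) = \frac{z-x}{\lVert z-x\rVert} = p_{3,1}(x,y,z)$, giving $\alpha^*(A'_{3,1}) = A_{3,1}$.

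The substantive case is $\alpha^*(A'_{2,1})$. Here $p'_{2,1}\circ\alpha\,(x,y,z) = \frac{\tau y - x}{\lVert \tau y - x\rVert}$, where $\tau y = -y/\lVert y\rVert^2$ by Remark~\ref{renotacion}. This composite is not literally one of the $p_{i,j}$'s, so the strategy is to express its class in $H^{n-1}$ in terms of the additive basis of $H^{n-1}(\Conf_{\mathbb{Z}_2}(\mathbb{R}^n-\{0\},3);R)$, which by Theorem~\ref{2.1} is spanned in degree $n-1$ by $A_{1,0}, A_{2,0}, A_{2,1}, A_{2,-1}$ together with the degree-$(n-1)$ classes indexed by the third coordinate. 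Since $p'_{2,1}\circ\alpha$ factors through the projection onto the first two coordinates $\Conf_{\mathbb{Z}_2}(\mathbb{R}^n-\{0\},3)\to\Conf_{\mathbb{Z}_2}(\mathbb{R}^n-\{0\},2)$, the class $\alpha^*(A'_{2,1})$ lies in the image of $H^{n-1}(\Conf_{\mathbb{Z}_2}(\mathbb{R}^n-\{0\},2);R)$, hence is an $R$-linear combination $a\,A_{1,0} + b\,A_{2,0} + c\,A_{2,1} + d\,A_{2,-1}$ (in the $k=3$, i.e.\ two-point, labels). To pin down the coefficients I would use the test maps $f_{i,j}\colon S^{n-1}\to\Conf_{\mathbb{Z}_2}(\mathbb{R}^n-\{0\},3)$ of~(\ref{lasefes}) together with Lemma~\ref{elauxi1}: pulling back along $f_{1,0}$, $f_{2,0}$, $f_{2,1}$, $f_{2,-1}$ isolates, respectively, the coefficients $a$, $b$, $c$, $d$, each of which is then read off as the degree of the self-map $p'_{2,1}\circ\alpha\circ f_{i,j}$ of $S^{n-1}$. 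One computes $p'_{2,1}\circ\alpha\circ f_{1,0}(x) = \frac{-\frac{2e}{\lVert 2e\rVert^2}-x}{\lVert\cdot\rVert}$, whose $x$-degree is $(-1)^n$; likewise $p'_{2,1}\circ\alpha\circ f_{2,0}$, $p'_{2,1}\circ\alpha\circ f_{2,1}$, $p'_{2,1}\circ\alpha\circ f_{2,-1}$ have $x$-degrees $(-1)^n$, $0$, $-(-1)^n$ respectively (the middle one being nullhomotopic because $\alpha f_{2,1}$ moves $y$ away from the line through $e$ in a way that keeps $\tau y - x$ bounded away from $0$). This yields $a = b = (-1)^n$, $c = 0$, $d = -(-1)^n$, i.e.\ $\alpha^*(A'_{2,1}) = (-1)^n(A_{1,0}+A_{2,0}-A_{2,-1})$, as claimed.

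The main obstacle is the sign and degree bookkeeping in the last case: one must carefully track (i) the orientation convention implicit in $\iota_{n-1}$, (ii) the sign $(-1)^n$ arising from the antipodal-type map $v\mapsto -v$ on $S^{n-1}$ (which has degree $(-1)^n$), and (iii) the effect of the inversion $y\mapsto -y/\lVert y\rVert^2$ on the relevant radial spheres in the test-map computations, making sure the linearized behavior near each test sphere is correctly identified. A secondary point to verify is that $p'_{2,1}\circ\alpha\circ f_{3,*}$ is nullhomotopic for every choice of the third-coordinate label, so that no basis element indexed by the third coordinate contributes; this is immediate from Lemma~\ref{elauxi1} once one observes that $p'_{2,1}\circ\alpha$ does not depend on $z$. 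Everything else is a routine unwinding of definitions.
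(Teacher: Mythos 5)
Your proposal is correct and follows essentially the same route as the paper: the first two identities are read off directly from the definitions, and $\alpha^*(A'_{2,1})$ is expanded in the additive basis by computing the degrees of the composites $p'_{2,1}\alpha f_{i,j}$ and invoking Lemma~\ref{elauxi1}, which is exactly the paper's Lemma~\ref{descgrados}, and your degree values $(-1)^n$, $(-1)^n$, $0$, $(-1)^{n+1}$ agree with the paper's. The only nitpick is that the vanishing of the degree of $p'_{2,1}\alpha f_{2,1}$ should be justified by non-surjectivity (the image misses $e\in S^{n-1}$), not by $\tau y - x$ staying bounded away from $0$, which only gives well-definedness of the normalized map.
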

All equalities in Lemmas~\ref{estnatarg} and~\ref{mapalphaparti} follow from the definitions, except for the formula $\alpha^*(A'_{2,1})=(-1)^{n}(A_{1,0}+A_{2,0}-A_{2,-1})$ which follows from Lemma~\ref{elauxi1} and:
\begin{lem}\label{descgrados}
For $0\leq|j|<i<4$, let $d_{i,j}$ stand for the degree of the composition $p'_{2,1}\alpha f_{i,j}:S^{n-1}\to S^{n-1}$. Then $d_{i,j}=0$ except for $d_{1,0}=d_{2,0}=(-1)^n$ and $d_{2,-1}=(-1)^{n+1}$.
\end{lem}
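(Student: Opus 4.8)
The plan is to compute each degree $d_{i,j}=\deg(p'_{2,1}\circ\alpha\circ f_{i,j})$ directly from the explicit formulas for the maps involved. Recall $p'_{2,1}(u,v,w)=\dfrac{v-u}{\lVert v-u\rVert}$, that $\alpha(x,y,z)=(x,\tau y,z)$ with $\tau(y)=-\dfrac{y}{\lVert y\rVert^2}$, and that the $f_{i,j}$ are given in~(\ref{lasefes}). Writing out the composite, for each pair $(i,j)$ we obtain a self-map of $S^{n-1}$ of the form
\[
x\longmapsto \frac{\tau\bigl(y(x)\bigr)-x(x)}{\lVert\tau(y(x))-x(x)\rVert},
\]
where $x(x)$ and $y(x)$ are the first two coordinates of $f_{i,j}(x)$; most of these will be visibly nullhomotopic because either $x(x)$ or $\tau(y(x))$ is constant while the other traces out a small sphere that stays in a contractible region of $\mathbb R^n-\{0\}$, and the subtraction followed by normalization factors through a contractible set. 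First I would dispose of all the vanishing cases this way: whenever the first coordinate of $f_{i,j}$ is the constant $e$ (i.e.\ $i\in\{2,3\}$) and the second coordinate is also constant or varies only in the $z$-slot, the map $p'_{2,1}\circ\alpha\circ f_{i,j}$ is constant up to homotopy, giving $d_{i,j}=0$.

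The substantive cases are exactly those where the relevant ``difference of first two coordinates'' genuinely depends on $x$ after applying $\alpha$, namely $(i,j)=(1,0)$, $(2,0)$, and $(2,-1)$. For $f_{1,0}(x)=(x,2e,3e)$ we get $\alpha f_{1,0}(x)=(x,\tau(2e),3e)=(x,-\tfrac12 e,3e)$, so the composite is $x\mapsto \dfrac{-\tfrac12 e - x}{\lVert -\tfrac12 e - x\rVert}$; this is the antipodal map followed by a translation-normalization that is homotopic to the identity on the source sphere, and one checks it has degree $(-1)^n$. For $f_{2,0}(x)=(e,\tfrac{x}{2},3e)$ we get $\alpha f_{2,0}(x)=(e,\tau(\tfrac{x}{2}),3e)=(e,-2\tfrac{x}{\lVert x\rVert^2},3e)$; since $\lVert x\rVert=1$ on $S^{n-1}$ this is $(e,-2x,3e)$, and the composite is $x\mapsto\dfrac{-2x-e}{\lVert-2x-e\rVert}$, again the antipodal map up to a degree-one adjustment, so $d_{2,0}=(-1)^n$. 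For $f_{2,-1}(x)=(e,-e+\tfrac{x}{2},3e)$ we get $\alpha f_{2,-1}(x)=(e,\tau(-e+\tfrac{x}{2}),3e)$; here $\tau$ is an orientation-reversing diffeomorphism of $\mathbb R^n-\{0\}$ (inversion in the unit sphere composed with the antipode), and one traces through that the resulting self-map of $S^{n-1}$ has degree $(-1)^{n+1}$, the extra sign coming precisely from the orientation reversal of $\tau$ on the relevant small sphere not being undone by another antipode, in contrast to the $(2,0)$ case.

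The main obstacle I anticipate is bookkeeping the signs correctly in the case $(2,-1)$: one must keep careful track of how $\tau=-(\cdot)/\lVert\cdot\rVert^2$ acts on the tangent orientation of the small parametrizing sphere $\{-e+\tfrac{x}{2}\}$, since inversion in a sphere is orientation-reversing in every dimension while the central antipode contributes $(-1)^n$, and the two effects must be combined with the fixed normalization $v\mapsto v/\lVert v\rVert$. A clean way to organize this is to note that near any point $p\neq 0$, the derivative of $\tau$ is a (negative) scalar multiple of the reflection through the line $\R p$, hence has determinant of sign $(-1)^{n-1}\cdot(\text{sign of the scalar})$; applying this at $p=-e$ and combining with the degree of the ``project away from a basepoint'' Gauss-type map $u\mapsto (u-c)/\lVert u-c\rVert$ on a small sphere about $u=-e$ yields the stated value. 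Once the three nonzero degrees are pinned down and all remaining composites are checked to be nullhomotopic by the contractibility argument above (this is the routine part, and is exactly the content underlying Lemma~\ref{elauxi1}), the lemma follows.
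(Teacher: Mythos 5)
There is a genuine gap: your case analysis misses $(i,j)=(2,1)$. Your dichotomy is ``vanishing cases = those where the first two coordinates of $f_{i,j}$ are constant (or vary only in the $z$-slot)'' versus ``substantive cases $=(1,0),(2,0),(2,-1)$''. But $f_{2,1}(x)=(e,e+\tfrac{x}{2},3e)$ has a genuinely non-constant second coordinate, so the composite $p'_{2,1}\alpha f_{2,1}(x)=N\bigl(\tau(e+\tfrac{x}{2})-e\bigr)$ depends on $x$ and is not covered by your constancy argument; yet you also do not list it among the substantive cases. Its vanishing degree is not free: one must argue that this non-constant map is nullhomotopic, which the paper does by observing it is not surjective ($e$ is not enclosed by the image of the small sphere $e+\tfrac{x}{2}$ under $\tau$, so the difference vector misses a direction). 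Without an argument of this kind, the value $d_{2,1}=0$ is unproved. Secondary, smaller issues: for $(1,0)$ and $(2,0)$ the claim that $N(-\tfrac{e}{2}-x)$ and $N(-2x-e)$ are ``the antipodal map up to a degree-one adjustment'' still needs the (easy) check that the linear homotopy killing the translate $\pm e/2$ never passes through $0$; and in your $D\tau$ heuristic the sign is off --- $D\tau(p)=\tfrac{1}{\lVert p\rVert^{2}}\bigl(2\hat p\hat p^{T}-I\bigr)$ is a \emph{positive} multiple of the reflection through the line $\mathbb{R}p$ (equivalently a negative multiple of the hyperplane reflection), with determinant of sign $(-1)^{n+1}$; your stated formula ``$(-1)^{n-1}\cdot(\text{sign of the scalar})$'' with a negative scalar does not reproduce this.

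That said, your route for the one hard case $(2,-1)$ --- computing the local degree via $\det D\tau(-e)$ and the fact that $\tau(-e)=e$ --- is a genuinely different and in principle cleaner argument than the paper's, which instead builds two explicit homotopies (through the family $F(t,-)$) and verifies by hand that neither passes through $0$. To make your version rigorous you would need to justify that the radius-$\tfrac12$ sphere about $-e$ is small enough that the straight-line homotopy from $\tau(-e+\tfrac{x}{2})-e$ to $D\tau(-e)\cdot\tfrac{x}{2}$ avoids the origin; this requires an actual estimate (the sphere is not infinitesimally small relative to the distance from $-e$ to the singularity of $\tau$), and is the analogue of the well-definedness checks the paper carries out.
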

\begin{proof}A straightforward calculation gives
\begin{align}
p'_{2,1}\alpha f_{1,0}(x)&&&=&&N(\frac{-e}{2}-x)&&=&&-N(\frac{e}{2}+x),\label{ecp21alfaf10}\\[.5em]
p'_{2,1}\alpha f_{2,0}(x)&&&=&&N(-2x-e)&&=&&-N(2x+e),\label{ecp21alfaf20}\\[.5em]
p'_{2,1}\alpha f_{2,1}(x)&&&=&&N(\tau(e+\frac{x}{2})-e),\label{ecp21alfaf21}\\[.5em]
p'_{2,1}\alpha f_{2,-1}(x)&&&=&&N(\tau(-e+\frac{x}{2})-e)&&=&&-N(-e+\frac{x}{2}+\lVert  -e+\frac{x}{2}\rVert^2 e),\label{ecp21alfaf2-1}\\[.5em]
p'_{2,1}\alpha f_{3,0}(x)&&&=&&p'_{2,1}\alpha f_{3,1}(x)&&=&&p'_{2,1}\alpha f_{3,2}(x)\label{ecp21alfaf30}\\[.5em]
&&&=&&p'_{2,1}\alpha f_{3,-1}(x)&&=&&p'_{2,1}\alpha f_{3,-2}(x),\nonumber
\end{align}
where $N:\mathbb{R}^n-\lbrace 0 \rbrace\longrightarrow S^{n-1}$ is the normalization map and, as in~(\ref{lasefes}), $e=(1,0,\ldots,0)\in\mathbb{R}^n$. The maps in (\ref{ecp21alfaf10}) and (\ref{ecp21alfaf20}) are obviously homotopic to the antipodal map so that $d_{1,0}=d_{2,0}=(-1)^n$. On the other hand, $d_{2,1}=0$ since the map in (\ref{ecp21alfaf21}) is homotopic to the constant map. Actually, $p'_{2,1}\alpha f_{2,1}$ is not a surjective map: $e$ is not in the image because $e$ is not enclosed by the image of $\tau(e+\frac{x}{2})$. All the maps in~(\ref{ecp21alfaf30}) are obviously constant, so the corresponding degrees $d_{3,j}$ are trivial. Lastly and most interesting is the identification of the degree of the map in (\ref{ecp21alfaf2-1}), the bulk of this proof.
\newline\indent
Let $F:\mathbb{R}\times \mathbb{R}^n \longrightarrow \mathbb{R}^n$ be the map given by $F(t,(t_1,t_2,\ldots,t_n))=(tt_1,t_2,\ldots,t_n)$. Note that $F(1,x)=x$ and $F(-1,x)$ is $x$ reflected across the hyperplane $t_1=0$. As a map $S^{n-1}\longrightarrow S^{n-1}$,~(\ref{ecp21alfaf2-1}) becomes
\begin{eqnarray*}
-N\left(-e+\frac{x}{2}+\left\lVert  -e+\frac{x}{2}\right\rVert^2 e\right)&=&-N\left(F\left(1,\frac{x}{2}\right)+\left(-1+\left\lVert  -e+\frac{x}{2}\right\rVert ^2\right)e\right)\\ &\simeq&-N\left(F\left(-1,\frac{x}{2}\right)+\left(-1+\left\lVert  -e+\frac{x}{2}\right\rVert^2\right) e\right).
\end{eqnarray*}
The homotopy is given by $$-N\left(F\left(t,\frac{x}{2}\right)+\left(-1+\left\lVert  -e+\frac{x}{2}\right\rVert ^2\right)e\right),\quad \mbox{for $t\in[-1,1]$,}$$ and we next check it is well defined: Suppose there exist $t\in [-1,1]$ and $x=(t_1,\ldots,t_n )\in S^{n-1}$ such that $F(t,\frac{x}{2})+(-1+\lVert  -e+\frac{x}{2}\rVert^2) e=0$. Then $F(t,\frac{x}{2})=(1-\lVert -e+\frac{x}{2} \rVert^2)e$ and so we have $\frac{tt_1}{2}=1-\lVert -e+\frac{x}{2} \rVert^2$ and $t_i=0$ for $i>1$. The latter condition, in turn, implies $t_1=\pm 1$.
\begin{itemize}
\item[] \;\; If $t_1=1$, then $\frac{t}{2}=1-\lVert -e+\frac{e}{2}  \rVert^2 =1-\lVert -\frac{e}{2} \rVert^2=\frac{3}{4}$, so $t=\frac{3}{2}>1$.
\item[] \;\; If $t_1=-1$, then $-\frac{t}{2}=1-\lVert -e-\frac{e}{2}  \rVert^2 =1-\lVert -\frac{3e}{2} \rVert^2=-\frac{5}{4}$, so $t=\frac{5}{2}>1$.
\end{itemize}
Both assumptions lead to a contradiction, so the homotopy is well defined.
\newline\indent
Next we prove that, as maps $S^{n-1}\to S^{n-1}$,
\[
-N\left(F\left(-1,\frac{x}{2}\right)+\left(-1+\left\lVert  -e+\frac{x}{2}\right\rVert^2\right) e\right)\simeq -N\left(F\left(-1,\frac{x}{2}\right)\right).
\]
This time the homotopy is $$-N\left(F\left(-1,\frac{x}{2}\right)+t\left(-1+\left\lVert  -e+\frac{x}{2}\right\rVert^2\right) e\right),\quad\mbox{ for } t\in[0,1],$$ which is well defined: Suppose $F(-1,\frac{x}{2})+t(-1+\lVert  -e+\frac{x}{2}\rVert^2) e=0$ for some $x=(t_1,\ldots,t_n )\in S^{n-1}$ and some $t\in [0,1]$. Then $F(-1,\frac{x}{2})=t(1-\lVert  -e+\frac{x}{2}\rVert^2) e$ and so we have $-\frac{t_1}{2}=t(1-\lVert  -e+\frac{x}{2}\rVert^2) $ and $t_i=0$ for $i>1$. The latter condition, in turn, implies $t_1=\pm 1$.
\begin{itemize}
\item[ \;\; ] If $t_1=1$, then $-\frac{1}{2}=t(1-\lVert -e+\frac{e}{2}  \rVert^2) =t(1-\lVert -\frac{e}{2} \rVert^2)=t\frac{3}{4}$, so $t=-\frac{2}{3}<0$.
\item[ \;\; ] If $t_1=-1$, then $\frac{1}{2}=t(1-\lVert -e-\frac{e}{2}  \rVert^2) =t(1-\lVert -\frac{3e}{2} \rVert^2)=-t\frac{5}{4}$, so  $t=-\frac{2}{5}<0$.
\end{itemize}
Both assumptions lead to a contradiction, so the homotopy is well defined.
We conclude
\[
p'_{2,1}\alpha f_{2,-1}(x)\simeq -N\left(F\left(-1,\frac{x}{2}\right)\right)=-N\left(F\left(-1,x\right)\right)
\]
which is clearly a map of degree $(-1)^{n+1}$.
\end{proof}

\section{The cohomology of $\Conf_{\mathbb{Z}_2}(S^n,k)$}\label{secdediferenciales}
In this section we review the third named author's work in Sections~4 and~5 of~\cite{Xico} where the cohomology algebra $H^*(\Conf_{\mathbb{Z}_2}(S^n,k);R)$ is computed for $n>2$. There is nothing to correct this time, so we omit proof details. Yet, the global picture is reviewed in enough detail so to be in position of extending the computation for $n\geq2$. This section's strategy is rather straightforward, and has been described in a fairly detailed way in Section~\ref{proofmethod}. We therefore get into business right away.

\smallskip
Xicotencatl's method is to look at the Serre spectral sequence associated to the fibration in~(\ref{laqueeslafibra}) which, in the current notation, takes the form
\begin{equation}
\label{fib1}
\Conf_{\mathbb{Z}_2}(\mathbb{R}^n-\lbrace 0\rbrace,k-1) \rightarrow \Conf_{\mathbb{Z}_2}(S^n,k)\rightarrow S^n.
\end{equation}
Since $n\geq 2$,  $S^n$ is simply connected, so the corresponding system of local coefficients is trivial. Also, $S^n$ has torsion-free cohomology, so that the spectral sequence starts with 
\[
E_2^{p,q}\cong H^p(S^n;H^q(\Conf_{\mathbb{Z}_2}(\mathbb{R}^n-\lbrace 0 \rbrace,k-1);R)) \cong H^p(S^n;R)\otimes H^q(\Conf_{\mathbb{Z}_2}(\mathbb{R}^n-\lbrace 0 \rbrace,k-1);R).
\]
For $n$ odd, a nowhere vanishing vector field on $S^n$ easily yields a section for (\ref{fib1}), consequently the spectral sequence collapses (cf.~\cite[Proposition 13]{FZ2} and~\cite[Proposition~4.1]{Xico}), and we have:
\begin{thm}[{\cite[Proposition 14]{FZ2}},{\cite[Proposition 5.2(a)]{Xico}}]\label{2.2}
For $n\geq2$ odd, there is an $R$-algebra isomorphism $H^*(\Conf_{\mathbb{Z}_2}(S^n,k);R)\cong H^*(S^n;R)\otimes H^*(\Conf_{\mathbb{Z}_2}(\mathbb{R}^n-\lbrace 0 \rbrace,k-1);R)$.
\end{thm}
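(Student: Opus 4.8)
The plan is to analyze the Serre spectral sequence of the fibration in~(\ref{fib1}) for odd $n$ and show it collapses multiplicatively. First I would observe that, since $n\geq 2$, the base $S^n$ is simply connected, so the local coefficient system is trivial; since $H^*(S^n;R)$ is $R$-free (a free exterior algebra on one generator $\iota_n$ of degree $n$ when $n$ is odd), the $E_2$-term splits as a tensor product $H^*(S^n;R)\otimes H^*(\Conf_{\mathbb{Z}_2}(\mathbb{R}^n-\{0\},k-1);R)$, with the fiber cohomology known explicitly by Theorem~\ref{2.1}.

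Next I would produce a section of~(\ref{fib1}). For $n$ odd, $S^n$ admits a nowhere-vanishing tangent vector field $v$; normalizing, we get a unit vector field. The map $s\colon S^n\to \Conf_{\mathbb{Z}_2}(S^n,k)$ sending a point $p$ to $(p, v_1(p), \ldots, v_{k-1}(p))$, where the $v_i(p)$ are chosen to be $k-1$ distinct points lying on the great circle through $p$ in the direction $v(p)$ at small distinct angular displacements (none of them antipodal to $p$ or to each other modulo the antipodal action, which is arranged by taking the angles small and positive), is a continuous section of the projection onto the first coordinate. This is the section already invoked in \cite[Proposition 13]{FZ2} and \cite[Proposition 4.1]{Xico}; I would simply cite it. The existence of this section forces the edge homomorphism $H^*(\Conf_{\mathbb{Z}_2}(S^n,k);R)\to H^*(S^n;R)$ to be surjective, hence $\iota_n$ survives to $E_\infty$ and is a permanent cycle; and since the only classes in positive base-degree are multiples of $\iota_n$ in $H^n(S^n;R)\otimes H^*(F;R)$, all differentials out of the fiber axis must vanish too (a differential $d_r$ lands in base-degree $r\geq 2$, and the only nonzero such groups sit in base-degree $n$, but those are hit by multiplication by the permanent cycle $\iota_n$ from base-degree $0$, forcing the differential from base-degree $0$ into base-degree $n$ to be zero by comparing with the section, and there are no classes in base-degree strictly between $0$ and $n$). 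Therefore $E_2=E_\infty$.

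Finally I would resolve the (additive and multiplicative) extension problems. Additively, $E_\infty = H^*(S^n;R)\otimes H^*(F;R)$ is $R$-free concentrated in base-degrees $0$ and $n$, so there are no additive extension problems and $H^*(\Conf_{\mathbb{Z}_2}(S^n,k);R)$ is $R$-free with the displayed associated graded. Multiplicatively, the section $s$ splits off the subalgebra $\Lambda[\iota_n]\cong H^*(S^n;R)$: the composite $S^n\xrightarrow{s}\Conf_{\mathbb{Z}_2}(S^n,k)\xrightarrow{\pi_1}S^n$ is the identity, so $\pi_1^*\colon H^*(S^n;R)\to H^*(\Conf_{\mathbb{Z}_2}(S^n,k);R)$ is a ring monomorphism with $s^*$ as retraction. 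Pulling the fiber classes $A_{i,j}$ back along a chosen trivialization over a cell (or, more cleanly, observing that the inclusion of the fiber $F\hookrightarrow \Conf_{\mathbb{Z}_2}(S^n,k)$ is surjective in cohomology because the spectral sequence collapses onto the fiber axis) gives a ring map splitting, and the tensor-product multiplicative structure follows from the fact that $\iota_n^2=0$ (degree reasons, $n$ odd) kills any potential correction terms. Hence $H^*(\Conf_{\mathbb{Z}_2}(S^n,k);R)\cong H^*(S^n;R)\otimes H^*(\Conf_{\mathbb{Z}_2}(\mathbb{R}^n-\{0\},k-1);R)$ as $R$-algebras.

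The main obstacle, such as it is, is the multiplicative extension: showing that there are no nontrivial products of the form $\iota_n\cdot A_{i,j}$ hiding extension terms, and that the fiber classes $A_{i,j}$ admit a compatible lift to genuine cohomology classes in the total space satisfying the relations of Theorem~\ref{2.1} rather than those relations only modulo the ideal generated by $\iota_n$. Both are handled cleanly here because the spectral sequence collapses entirely onto the two base-degrees $0$ and $n$ with $\iota_n^2=0$, so the associated-graded ring already has the structure of a tensor product and no filtration-jumping products are possible for degree reasons; the section provides the splitting of the $\Lambda[\iota_n]$ factor. This is exactly the content of \cite[Proposition 5.2(a)]{Xico}, which I would cite, only adding the observation that the argument makes no use of $n>2$.
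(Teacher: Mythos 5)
Your route is essentially the paper's: the Serre spectral sequence of~(\ref{fib1}), trivial coefficients because $S^n$ is simply connected, a section built from a nowhere-vanishing vector field forcing collapse, and then the extension problems. Two caveats on the collapse step, both of which the paper also leaves implicit: the section only directly kills $d_n$ on $E_n^{0,n-1}$ (its target is the bottom row, which must survive because $\pi_1^*$ is split injective); to kill $d_n$ on all of $E_n^{0,\ast}$ you need that the fiber cohomology is multiplicatively generated in degree $n-1$ (Theorem~\ref{2.1}) together with the Leibniz rule. Your phrasing in terms of ``multiples of $\iota_n$'' gestures at this but does not quite say it.

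The place where your justification actually goes astray is the multiplicative extension. A surjective restriction $H^*(E)\to H^*(F)$ does not produce a ring map $H^*(F)\to H^*(E)$; what one gets is lifts $a_{i,j}$ of the generators $A_{i,j}$, and one must check that the relations of Theorem~\ref{2.1} hold for the lifts on the nose rather than merely modulo the filtration submodule $F^nH^*(E)$. The reason they do is not that $\iota_n^2=0$: it is that $H^*(F)$ is concentrated in degrees divisible by $n-1$, so the potential correction term for a product of fiber classes lies in $E_\infty^{n,q}\cong H^n(S^n)\otimes H^{q}(F)$ in total degree $n+q\equiv 1\pmod{n-1}$, which is never a nonzero multiple of $n-1$ once $n\geq 3$. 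This is exactly the sparseness argument of Remark~\ref{sparsenessreference}, and it genuinely uses $n\geq 3$; your closing claim that the argument makes no use of $n>2$ is therefore wrong for this step (the paper must resort to Brown representability when $n=2$), although it is harmless here because an odd $n\geq 2$ is automatically at least $3$.
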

\begin{rem}\label{sparsenessreference}
Part of the assertion in Theorem~\ref{2.2} is, of course, that the multiplicative structure in $E_\infty=E_2$, which is the tensor product of the multiplicative structures for the base and fiber, gives the multiplicative structure of $H^*(\Conf_{\mathbb{Z}_2}(S^n,k);R)$. Such a fact can be seen by sparseness. Namely, by Lemma~\ref{locsyscoe}, the non-zero groups in the spectral sequence are located in spots indicated by bullets in the picture

\smallskip\begin{picture}(10,140)(-180,-15)
\put(0,0){\vector(0,0){110}}
\put(-5,0){\vector(1,0){130}}
\put(70,0){\line(0,0){110}}
\put(-2.4,-2.4){$\bullet$}
\put(67.6,-2.4){$\bullet$}
\put(-2.4,27.6){$\bullet$}
\put(67.6,27.6){$\bullet$}
\put(-2.4,57.6){$\bullet$}
\put(67.6,57.6){$\bullet$}
\put(-2.4,87.6){$\bullet$}
\put(67.6,87.6){$\bullet$}
\put(0,30){\line(2,-1){80}}
\put(0,30){\line(-2,1){20}}
\put(0,60){\line(2,-1){80}}
\put(0,60){\line(-2,1){20}}
\put(0,90){\line(2,-1){80}}
\put(0,90){\line(-2,1){20}}
\put(70,0){\line(2,-1){20}}
\put(70,0){\line(-2,1){80}}
\put(70,30){\line(2,-1){20}}
\put(70,30){\line(-2,1){80}}
\put(70,60){\line(2,-1){20}}
\put(70,60){\line(-2,1){80}}
\put(55,90){$\vdots$}
\put(-2,-12){\scriptsize$0$}
\put(-13,-2){\scriptsize$0$}
\put(68,-12){\scriptsize$n$}
\put(-29,27){\scriptsize$n-1$}
\put(-33,57){\scriptsize$2n-2$}
\put(-33,87){\scriptsize$3n-3$}
\end{picture}

\noindent Here slanted lines indicate families of groups in a fixed total degree (i.e.~slope $-1$). But since $n\geq3$, we see $n-1<n<2n-2<2n-1<3n-3<3n-2<4n-4<\cdots$, so that there is a single non-trivial group in each slanted line. Consequently, a product that is trivial in the $E_\infty$ term, has to be trivial also in the cohomology of the total space.\end{rem}

\smallskip
For $n$ even, the only family of possibly non-trivial differentials
$d^{0,i(n-1)}_n:E^{0,i(n-1)}_n\to E^{n,(i-1)(n-1)}_{n}$ is determined by $d_n(A_{i,j})=2\iota_n$ for all $A_{i,j}\in \mathcal{A}$ (cf.~\cite[Proposition 13]{FZ2} and~\cite[Proposition~4.1]{Xico}). In particular, if the characteristic of $R$ is 2, the conclusion of (and argument for) Theorem \ref{2.2} holds also for any even $n$, yielding the output in item 1 of Theorem~\ref{omnibucito} (but the sparseness argument given in Remark~\ref{sparsenessreference} for the multiplicative structure does not apply when $n=2$; instead we use the argument based on Brown representability given in the proof of Theorem \ref{2.3} below). 

We close the section with a description of the $R$-cohomology algebra of $\Conf_{\mathbb{Z}_2}(S^n,k)$ for $n$ even under the additional hypothesis that the characteristic of $R$ is either zero (e.g.~$R=\mathbb{Z}$ or $R=\mathbb{Q}$) or an odd integer (e.g.~$R=\mathbb{Z}_t$, odd $t$), so that the map $R\to R$ given by multiplication by 2 is injective. Both hypothesis, on $n$ and on $R$, will be in force throughout the rest of this section.

It will be convenient to make a change of basis by defining $B_{i,j}=A_{i,j}-A_{1,0}$, for $|j|<i<k$, and
\begin{equation}\label{genersbgari}
\mathcal{B}=\lbrace B_{i,j}\,|\,|j|<i<k \textnormal{ and }1<i \rbrace.
\end{equation}
A straightforward computation shows that a product of two given elements in 
$\mathcal{B}$ satisfies the exact same relation holding for the product of the corresponding two elements in $\mathcal{A}$ (keeping in mind that, by definition, $B_{1,0}=0$). For instance, the case $i=1$ in the first relation in item (b) of Theorem~\ref{2.1} becomes $B_{r,0}B_{r,1}=0$ for $1<r<k$. Let us denote by $J$ the resulting set of relations among the $B_{i,j}$'s. In particular, it is clear that, starting with the basis described in Theorem~\ref{2.1}, namely the monomials $A_{i_1,j_1}\cdots A_{i_r,j_r}$ which are ordered in the sense that $i_\ell<i_{\ell'}$ whenever $\ell<\ell'$, a new basis for $H^*(\Conf_{\mathbb{Z}_2}(\mathbb{R}^n-\lbrace 0 \rbrace,k-1);R)$ is obtained by replacing each factor $A_{i,j}$ with $i>1$ by the corresponding $B_{i,j}$ (but factors $A_{1,0}$ remain unchanged). There result two types of new basis elements depending on whether or not $A_{1,0}$ appears in the monomial. It is also clear from the hypothesis on the characteristic of $R$, and from the fact that the differential $d_n$ sends every $A_{i,j}$ to $2\iota_n$ that, in the new basis, monomials not including $A_{1,0}$ as a factor are permanent cycles in the spectral squence, whereas the rest of the new basis elements inject under $d_n$ onto $2\iota_n\cdot\overline{\mathcal{B}}$ where $\overline{\mathcal{B}}$ is multiplicatively generated by $\mathcal{B}$. This situation is best organized as follows: Let $\mathbb{K}=\mbox{ker } d_n^{0,n-1}$, the (free) $R$-module with basis $\mathcal{B}$, and let $\mathbb{K}^j$ denote the $R$-module generated by products of $j$ factors in $\mathbb{K}$, where $\mathbb{K}^0$ and $\mathbb{K}^{-1}$ are set to be  $R$ and $0$ respectively. Then a basis for $\mathbb{K}^j$ is given by the elements of degree $j(n-1)$ in the above modified basis for $H^*(\Conf_{\mathbb{Z}_2}(\mathbb{R}^n -\lbrace 0 \rbrace ,k-1);R)$ which do not contain the factor $A_{1,0}$, i.e.~by the ordered monomials in the $B$'s with $j$ factors. Note in particular that $\mathbb{K}^{k-1}=0$. 

It is then clear that the only non-trivial terms in the $(n+1)$-stage of the spectral sequence are given by
\[
\begin{array}{lll}
E^{0,j(n-1)}_{n+1}=\mathbb{K}^j,&&\textnormal{for }0\leq j \leq k-2;\\
E^{n,j(n-1)}_{n+1}=\iota_n A_{1,0}\mathbb{K}^{j-1}\oplus (\iota_n \mathbb{K}^{j})_2,&&\textnormal{for }0\leq j\leq k-1;
\end{array}
\]
where $(-)_2$ denotes the mod 2 reduction of the given module (that is, tensoring with $\mathbb{Z}_2$). The spectral sequence obviously collapses from this point on and, since the groups $E_\infty^{0,q}$ are $R$-free, there are no extension problems when assembling the cohomology of $\Conf_{\mathbb{Z}_2}$ out of $E_\infty$. Lastly, just as in Remark~\ref{sparsenessreference}, sparseness implies that, for $n>2$, the multiplicative structure of the cohomology of the total space agrees with that in the $E_\infty$-term of the spectral sequence. We thus get the $n>2$ case of:
\begin{thm}\label{2.3}
Assume that the characteristic of $R$ is either zero or an odd integer. For even $n\geq 2$ there is an isomorphism of graded $R$-algebras $$H^*(\Conf_{\mathbb{Z}_2}(S^n,k);R)\cong R[\mathcal{B}]/J\otimes\Lambda(\lambda,\omega)/(2\lambda,\lambda\omega)$$ where $\lambda$ and $\omega$ are represented in the spectral sequence by $\iota_n$ and $\iota_nA_{1,0}$ respectively (thus the degrees of $\lambda$ and $\omega$ are $n$ and $2n-1$ respectively).
\end{thm}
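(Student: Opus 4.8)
The plan is to upgrade the $n>2$ case of Theorem~\ref{2.3}, already established by the sparseness argument reviewed above, to the remaining case $n=2$. First I would observe that everything preceding the final sparseness remark works verbatim for $n=2$: the fibration~(\ref{fib1}) has simply connected base $S^2$, Theorem~\ref{2.1} supplies the fiber cohomology $R[\mathcal{A}]/I$ together with its $R$-free additive basis, the change of basis to the $B_{i,j}$'s and the set of relations $J$ make sense for any $n$, and the single family of differentials $d_2(A_{i,j})=2\iota_2$ (together with the hypothesis that $2$ is not a zero divisor in $R$) produces exactly the $E_{n+1}=E_3$ page displayed before the statement, namely $E_3^{0,j}=\mathbb{K}^j$ and $E_3^{2,j}=\iota_2 A_{1,0}\mathbb{K}^{j-1}\oplus(\iota_2\mathbb{K}^j)_2$. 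Since these are the only two nonzero columns, the spectral sequence collapses at $E_3$, and because the groups $E_\infty^{0,*}$ are $R$-free there are no additive extension problems; this already gives the $R$-module structure of $H^*(\Conf_{\mathbb{Z}_2}(S^2,k);R)$.

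The real content is the multiplicative assertion, i.e.~that the product in $H^*(\Conf_{\mathbb{Z}_2}(S^2,k);R)$ agrees on the nose with the product in the associated graded $E_\infty$, equivalently that there are no multiplicative extension problems. For $n>2$ this was forced by the numerical gaps between total degrees; for $n=2$ those gaps disappear (the diagram at the relevant place collapses), so a different mechanism is needed. The idea, flagged in Section~\ref{proofmethod} as ``the argument based on Brown representability,'' is to realize the putative answer as the cohomology of an actual space and compare. Concretely, I would build a map $\Conf_{\mathbb{Z}_2}(S^2,k)\to Y$, where $Y$ is a product of Eilenberg--MacLane spaces (or a product of spheres, spaces $K(R,2)$, $K(R,1)$, and $K(\mathbb{Z}_2,\text{--})$'s, as dictated by the generators $\lambda$, $\omega$, and $\mathcal{B}$), inducing on cohomology a surjection whose kernel is exactly the ideal of relations; that the relevant generators exist as honest cohomology classes follows from the spectral sequence computation (the permanent cycles $B_{i,j}$, together with $\iota_2$ representing $\lambda$ and $\iota_2A_{1,0}$ representing $\omega$), and Brown representability lets one assemble them into the single map to $Y$. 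One then checks that this map induces an isomorphism $H^*(Y;R)/(\text{relations}) \xrightarrow{\ \cong\ } H^*(\Conf_{\mathbb{Z}_2}(S^2,k);R)$ by comparing with the already-known additive structure, rank by rank.

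The main obstacle I expect is verifying that the chosen generators really do satisfy (in $H^*(\Conf_{\mathbb{Z}_2}(S^2,k);R)$, not merely in $E_\infty$) all the relations listed in $R[\mathcal{B}]/J\otimes\Lambda(\lambda,\omega)/(2\lambda,\lambda\omega)$ --- in particular the relations $\lambda^2=0$, $2\lambda=0$, $\lambda\omega=0$, and the quadratic relations inherited from $I$ --- so that the comparison map to $Y$ is actually defined and so that it is surjective with the expected kernel. The relations $2\lambda=0$ and $\lambda^2=0$ come from the fact that $\lambda$ is detected in filtration $2$ on a $2$-dimensional base, hence $\lambda^2$ has filtration $4$ which is zero on $S^2$, and $2\lambda$ lies in the image of $d_2$; the relation $\lambda\omega=0$ and the $B$-relations require checking that no filtration jump occurs, which is where one genuinely needs that the target $Y$ has cohomology concentrated so that the candidate relations have no room to be corrected by lower-filtration terms. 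Once the map to $Y$ is in hand and seen to be an $R$-cohomology isomorphism (via a degreewise rank count against the additive answer from the collapsed spectral sequence), the multiplicative structure of $H^*(\Conf_{\mathbb{Z}_2}(S^2,k);R)$ is read off from that of $H^*(Y;R)$, completing the $n=2$ case and hence Theorem~\ref{2.3}.
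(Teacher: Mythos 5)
Your reduction is the right one: the additive computation, the collapse at $E_{3}$, and the absence of additive extension problems all go through for $n=2$ exactly as you say, and the paper likewise isolates the multiplicative extension problem as the only remaining issue, to be handled by Brown representability. But your proposal stops short of the one concrete fact that actually closes the argument, and the mechanism you sketch in its place would not close it. The dangerous extension is this: for $n=2$ the group $E_\infty^{2,0}$ contains the $2$-torsion class represented by $\iota_2$ (your $\lambda$), so a product of two of the degree-one classes $B_{i,j}$ that vanishes in $E_\infty^{0,2}$ could a priori equal $\lambda$ in $H^2$ of the total space; graded commutativity only gives $2B_{i,j}^2=0$, which is useless precisely because $2\lambda=0$. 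The paper kills this by observing that each $B_{i,j}$ with integral coefficients is a one-dimensional class, hence is induced by a map to $K(\mathbb{Z},1)=S^1$, and since $H^2(S^1;\mathbb{Z})=0$ its cup square vanishes on the nose in $H^*(\Conf_{\mathbb{Z}_2}(S^2,k);\mathbb{Z})$; naturality along $\mathbb{Z}\to R$ then handles arbitrary $R$. That single universal relation in $H^*(S^1)$ is the entire content of the ``Brown representability'' step.

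Your version --- mapping to a large product of Eilenberg--MacLane spaces $Y$ and comparing ranks --- does not substitute for this. The cohomology of such a $Y$ is far bigger than the free graded-commutative algebra on the fundamental classes (polynomial generators from $K(R,2)$, Steenrod operations from the $K(\mathbb{Z}_2,-)$ factors), so the rank count is against the wrong object; more importantly, a map $X\to Y$ only forces the images of the generators to satisfy relations holding universally in $H^*(Y)$, and the relations in $J$ (other than the squares) are not universal, so the map to $Y$ cannot impose them. Your remark that the relations ``have no room to be corrected by lower-filtration terms'' is also backwards: a product vanishing in the associated graded is corrected by terms of \emph{higher} filtration, and for $n=2$ that room is exactly the nonzero group $E_\infty^{2,0}$. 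So the gap is real: you have named the right tool but not the statement that does the work, namely that a degree-one integral cohomology class has zero cup square because it factors through $S^1$.
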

\begin{proof}
It only remains to argue the assertion about the multiplicative structure when $n=2$. (The issue is mentioned without an explanation by Feichtner and Ziegler on the first half of page 100 in \cite{FZ2}.) So, assuming $n=2$, we have to show that the square of any $B_{i,j}$ is still zero {\it as a class in the cohomology of the total space}. In other words, we have to argue the impossibility that
\begin{equation}\label{parlacontrtn}
\mbox{some element $B_{i,j}^2$ is represented in the spectral sequence by the (non-trivial!) class of $\iota_2$.}
\end{equation}
(Note that such a possibility cannot be overruled just by anticommutativity: Since each $B_{i,j}$ is a 1-dimensional class, the equality $2B_{i,j}^2=0$ holds even in the cohomology of the total space. But of course we know $2\iota_2=0$ in the cohomology of the total space.) The fact that~(\ref{parlacontrtn}) does not hold comes from Brown representability when the coefficients are $\mathbb{Z}$. That is, 1-dimensional cohomology classes with coefficients in $\mathbb{Z}$ are spherical, and so their squares are forced to be trivial. For other coefficients $R$ the assertion holds since the definition of the classes $B_{i,j}$ is natural with respect to the canonical ring morphism $\mathbb{Z}\to R$.
\end{proof}

Note that $R[\mathcal{B}]/J=\bigoplus_{0\leq j\leq k-2}\mathbb{K}^j$, a basis of which has already been described. In the $E_\infty$ term of the spectral sequence, this $R$-subalgebra corresponds to the left hand side tower supported by 1. Besides, two additional ``copies'' of this tower show up: one copy (tensored with $\mathbb{Z}_2$) is supported by $\lambda$, and a second copy  (shifted one level up) is supported by $\omega$, as shown in Figure~\ref{fig1}. 

\smallskip\begin{figure}[h]
\centering
\begin{tikzpicture}[scale=1,font=\scriptsize,line width=1pt,>=stealth]
\draw [very thin,->,black] (0,0) -- (0,5);
\draw [very thin,->] (0,0) -- (5,0);
\draw[very thick] (0,0) -- (0,4);
\draw[very thick] (1,0) -- (1,4);
\draw[very thick] (1.2,1) -- (1.2,5);
\draw (0,1) node[left] {$n-1$};
\draw (0,2) node[left] {$2(n-1)$};
\draw (0,4) node[left] {$(k-2)(n-1)$};
\draw (0,5) node[left] {$(k-1)(n-1)$};
\draw (-.1,3) node[left] {$\vdots$};
\draw (.9,3) node[left] {$\vdots$};
\draw (1.3,3) node[right] {$\vdots$};
\draw[black] (0,-.1) node[below] {$1$};
\draw[black] (6,5);
\draw (1,-.1) node[below] {$\lambda$};
\draw (1.3,1) node[right] {$\omega$};
\draw[fill] (0,0) circle (2pt) ;
\draw[fill] (0,1) circle (2pt) ;
\draw[fill] (0,2) circle (2pt) ;
\draw[fill] (0,4) circle (2pt) ;
\draw[fill] (1,0) circle (2pt) ;
\draw[fill] (1,1) circle (2pt) ;
\draw[fill] (1,2) circle (2pt) ;
\draw[fill] (1,4) circle (2pt) ;
\draw[fill] (1.2,1) circle (2pt) ;
\draw[fill] (1.2,2) circle (2pt) ;
\draw[fill] (1.2,4) circle (2pt) ;
\draw[fill] (1.2,5) circle (2pt) ;
\end{tikzpicture}
\caption{$H^*(\Conf_{\mathbb{Z}_2}(S^n,k);R)$ for even $n$.}
\label{fig1}
\end{figure}
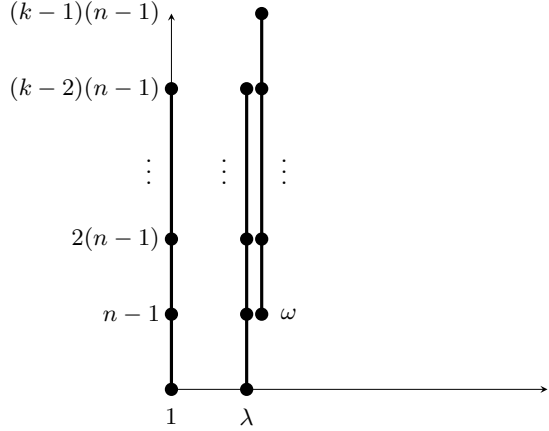

\begin{rem}\label{remark2.4}
The additive version of Theorem \ref{2.3} is obtained in \cite[Theorem 5.2, items (b) and (c)]{Xico} assuming implicitly $n>2$. On the other hand, for $n>2$, the multiplicative relations among generators in Theorems \ref{2.1} and \ref{2.3} (indirectly) correct those found in \cite[Propositions~11 and~16, and Theorem~17]{FZ2}. In fact, the multiplicative relations described by Feichtner and Ziegler in \cite[Proposition 11]{FZ2} for their generators in the cohomology of $\Conf_{\mathbb{Z}_2}(\mathbb{R}^n-\{0\},k-1))$ lead to inconsistencies. We illustrate the problem using Feichtner-Ziegler's notation, which the reader is assumed to be familiar with. (In particular, the notation for the fiber in (\ref{fib1}) will momentarily change to $F_{\langle\varphi\rangle}(\mathbb{R}^k\setminus\{0\},n)$). Take $1\leq i < j\leq n$, and let $k$ be odd (so that the generators $c_i,  c^+_{i,j},  c^-_{i,j}$ are even dimensional and, therefore, commute without introducing signs). Then Lemma~7 and Proposition~11 in \cite{FZ2} imply $$c^-_{i,j}c^+_{i,j}+c_i(c^+_{i,j}+c^-_{i,j})=0=A_i(0)=A_i\left(c^-_{i,j}c^+_{i,j}+c_i(c^+_{i,j}+c^-_{i,j})\right)=c^-_{i,j}c^+_{i,j}-c_i(c^+_{i,j}+c^-_{i,j}).$$
This yields $c_ic^+_{i,j}+c_ic^-_{i,j}=0$, if we work with integral coefficients. However the latter relation contradicts the second item in Proposition~8 of~\cite{FZ2}. 
\end{rem}

\section{($\mathbb{Z}_2)^k$-action}\label{section3}
As in previous sections, $R$ stands for a commutative ring with unit. Let $(\mathbb{Z}_2)^k = \langle \epsilon_1,\epsilon_2,\ldots, \epsilon_k \rangle$ where $\epsilon_i$ is the generator in the $i$-th coordinate, and recall the coordinate-wise $(\mathbb{Z}_2)^k$-action~(\ref{coordwisetion}) on $\Conf_{\mathbb{Z}_2}(S^n,k)$. By abuse of notation we also use the notations
$$
\epsilon_i:\Conf_{\mathbb{Z}_2}(S^n,k)\to\Conf_{\mathbb{Z}_2}(S^n,k)\;\;\;\;\; \mbox{ and }\;\;\;\;\;\epsilon_i:H^*(\Conf_{\mathbb{Z}_2}(S^n,k);R)\to H^*(\Conf_{\mathbb{Z}_2}(S^n,k);R)
$$
for the corresponding induced maps. A formula for the resulting $(\mathbb{Z}_2)^k$-action on the cohomology ring $H^*(\Conf_{\mathbb{Z}_2}(S^n,k);R)$ was stated for $k\leq 3$ in~\cite[Tables~1 and~2]{Xico} with most details omitted. Here we generalize Xicot\'encatl's result for all~$k$, providing full details, and correcting his description for $k=3$. The task will be attained by working, once again, with the Serre spectral sequence of ($\ref{fib1}$). In short, we determine the action of each $\epsilon_i$ on the cohomology of the total space of~(\ref{fib1}) by first understanding its action on the cohomology of the base and the fiber spaces.

The fact that $(\mathbb{Z}_2)^k$ acts on the cohomology of the base and fiber of~(\ref{fib1}) (and, for that matter, on the corresponding spectral sequence) deserves some explanation. The situation for $\epsilon_i$ with $i>1$ is elementary as, then, $\epsilon_i\colon\Conf_{\mathbb{Z}_2}(S^n,k)\to\Conf_{\mathbb{Z}_2}(S^n,k)$ lies over the identity on $S^n$ and so preserves all the fibers in~(\ref{fib1}). By abuse of notation we also write $\epsilon_i$ for the identity on $S^n$, for any of the restricted maps to fibers, as well as for any of the corresponding induced maps in cohomology. In fact, by functoriality, $\epsilon_i\colon\Conf_{\mathbb{Z}_2}(S^n,k)\to\Conf_{\mathbb{Z}_2}(S^n,k)$ determines a $\mathbb{Z}_2$-action, also referred to as $\epsilon_i$, on the whole spectral sequence of~(\ref{fib1}). For instance, as observed above, this spectral sequence $\mathbb{Z}_2$-action is the identity on classes coming from the base space. The important fact to note is that, again by functoriality, the $\epsilon_i$-action on the spectral sequence converges to the $\epsilon_i$-action on the cohomology of the total space.

The above simple situation does not hold for $\epsilon_1$, and we next review the way this issue is dealt with in \cite[Section~6]{Xico}. Start by noticing that $\epsilon_1$ covers the antipodal map. Consider the rotation 
\[R=\left( \begin{matrix}
I_{n-1}&0\\
0&-I_2
\end{matrix}\right)  \in SO(n+1) 
\]
that interchanges the north and south poles $N=(0,\ldots,0,1)$ and $S=(0,\ldots,0,-1)\in S^n$. The restriction of $R$ to $S^n$ is $\mathbb{Z}_2$-equivariant and it is $\mathbb{Z}_2$-equivariantly isotopic to the identity. Therefore it induces a map $R^{\times k}:\Conf_{\mathbb{Z}_2}(S^n,k)\longrightarrow\Conf_{\mathbb{Z}_2}(S^n,k)$ homotopic to the identity fitting in the commutative diagram
\[
\xymatrix{
\Conf_{\mathbb{Z}_2}(S^n,k) \ar[d]_{\pi_{1}}\ar[r]^{R^{\times k}\circ \epsilon_1} & \Conf_{\mathbb{Z}_2}(S^n,k)\ar[d]^{\pi_{1}}\\
S^n\ar[r]^{-R}&S^n.
}
\]
Since $-R$ fixes the north pole $N$, $R^{\times k}\circ\epsilon_1$ restricts to a self map $\epsilon'_1$ on the corresponding fiber. Then we are in a situation similar to the one in the previous paragraph. Namely, $R^{\times k}\circ\epsilon_1\colon\Conf_{\mathbb{Z}_2}(S^n,k)\to\Conf_{\mathbb{Z}_2}(S^n,k)$ induces a self map of the spectral sequence of~(\ref{fib1}) which, at the level of the base space is induced by $-R$ and, at the level of the fiber space is induced by $\epsilon'_1$. As above, funtoriality implies that this self map of spectral sequences converges to the self map induced by $R^{\times k}\circ\epsilon_1$. But the latter self map is simply the one induced by $\epsilon_1\colon\Conf_{\mathbb{Z}_2}(S^n,k)\to\Conf_{\mathbb{Z}_2}(S^n,k)$, as $R^{\times k}$ is homotopic to the identity. Consequently, the action of $\epsilon_1$ on the cohomology of the total space of~(\ref{fib1}) can be traced back in the spectral sequence via the actions of $\epsilon'_1$ on the cohomology of the fiber, and of $-R$ on the cohomology of the base space of~(\ref{fib1}). By (the now standard) abuse of notation, we will use the notation $\epsilon_1$ for the above two $\mathbb{Z}_2$-actions on the cohomologies of the base and fiber spaces of~(\ref{fib1}), and for the spectral sequence $\mathbb{Z}_2$-action described in this paragraph.

\smallskip
Note that $\epsilon'_1$---i.e.~$R^{\times k}\circ \epsilon_1$ restricted to the fiber $\Conf_{\mathbb{Z}_2}(S^n-\{\pm\star\},k-1)$---is given by $R^{\times (k-1)}$. An easy check shows that, after removing the poles and taking into account the stereographic projection, the map $R$ induces the map $\tilde{R}:\mathbb{R}^n-\lbrace 0 \rbrace \longrightarrow \mathbb{R}^n-\lbrace 0 \rbrace$ given by $\tilde{R}(x)=\bar{x}\left/\,\lVert x \rVert^2\right.$ where $\bar{x}=(t_1,\ldots,t_{n-1},-t_n)$ for $x=(t_1,\ldots,t_n)$. Thus, in terms of the homeomorphism explained at the end of Remark~\ref{renotacion}, the self map $\epsilon'_1\colon\Conf_{\mathbb{Z}_2}(\mathbb{R}^n-\{0\},k-1)\to\Conf_{\mathbb{Z}_2}(\mathbb{R}^n-\{0\},k-1)$ takes the form $$\epsilon'_1(x_1,\ldots,x_{k-1})=(\tilde{R}(x_1),\ldots,\tilde{R}(x_{k-1}))$$ which, by (a final) abuse of notation, we denote by $\epsilon_1$ from now on. Likewise, from the comments at the beginning of Remark~\ref{renotacion}, we see that, for $2\leq i\leq k$, the restricted map $\epsilon_i\colon\Conf_{\mathbb{Z}_2}(\mathbb{R}^n-\{0\},k-1)\to\Conf_{\mathbb{Z}_2}(\mathbb{R}^n-\{0\},k-1)$ is given by $$\epsilon_i(x_1,\ldots,x_{k-1})=(x_1,\ldots,x_{i-2},\tau x_{i-1},x_{i+1},\ldots,x_{k-1}).$$
\indent
For future reference we record the following immediate consequence of the discussion above:
\begin{cor}\label{accionenlabaseref}
Let $1\leq i\leq k$. The spectral sequence $\mathbb{Z}_2$-action $\epsilon_i$ is trivial on the cohomology of the base space of~(\ref{fib1}) unless $i=1$ in which case it is multiplication by $(-1)^{n+1}$.
\end{cor}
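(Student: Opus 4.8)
The plan is to read the statement straight off the construction of the spectral-sequence $\mathbb{Z}_2$-actions $\epsilon_i$ recalled just above, combined with the elementary degree computation for a rotation and for the antipodal map of $S^n$. First I would note that the fiber $\Conf_{\mathbb{Z}_2}(\mathbb{R}^n-\{0\},k-1)$ of~(\ref{fib1}) is connected: by Lemma~\ref{locsyscoe} it is $(n-2)$-connected and $n\geq2$. Hence the bottom row $E_2^{*,0}$ of the Serre spectral sequence of~(\ref{fib1}) is exactly $H^*(S^n;R)$ with constant coefficients, and this is precisely what ``the cohomology of the base space'' refers to here; no local-coefficient subtlety intervenes.

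Next, for $2\leq i\leq k$, I would invoke the observation already made in the preceding discussion: the self-map $\epsilon_i$ of $\Conf_{\mathbb{Z}_2}(S^n,k)$ commutes with $\pi_1$ and covers the identity of $S^n$. By naturality of the Leray--Serre spectral sequence, the induced self-map of the spectral sequence restricts on $E_2^{p,0}=H^p(S^n;R)$ to $(\mathrm{id}_{S^n})^*=\mathrm{id}$; hence $\epsilon_i$ is trivial on the cohomology of the base, as claimed.

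For $i=1$, recall that the spectral-sequence action $\epsilon_1$ is, by construction, the one induced by $R^{\times k}\circ\epsilon_1$, whose effect on the base is the map $-R\colon S^n\to S^n$ sitting as the bottom arrow of the commutative square displayed above. It then remains only to compute $(-R)^*$ on $H^*(S^n;R)$. Every continuous self-map of $S^n$ acts trivially on $H^0$, so only $H^n(S^n;R)\cong R$ is at issue, where the induced map is multiplication by the degree. Since $R\in SO(n+1)$, its restriction to $S^n$ is orientation preserving and therefore has degree $+1$; composing with the antipodal map $-\mathrm{id}_{S^n}$, of degree $(-1)^{n+1}$, gives $\deg(-R)=(-1)^{n+1}$. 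Thus $\epsilon_1$ acts on the cohomology of the base by multiplication by $(-1)^{n+1}$, which finishes the argument.

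I do not expect a genuine obstacle here: the content of the corollary is already contained in the preceding discussion together with the standard fact $\deg(-\mathrm{id}_{S^n})=(-1)^{n+1}$. The only point that deserves care is bookkeeping---making sure that the self-map of the base entering the $i=1$ case is $-R$ (not $R$ or $-\mathrm{id}$), which one reads off from the bottom arrow of the commutative square above, and remembering that the rotation $R$, lying in $SO(n+1)$, contributes a factor $+1$ rather than affecting the sign.
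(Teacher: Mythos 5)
Your argument is correct and is essentially the paper's own: the corollary is stated there as an ``immediate consequence of the discussion above,'' namely that $\epsilon_i$ for $i>1$ lies over the identity of $S^n$ while the spectral-sequence action $\epsilon_1$ lies over $-R$, whose degree is $(-1)^{n+1}$ since $R\in SO(n+1)$. Your write-up just makes these steps explicit, including the (correct) bookkeeping that the relevant self-map of the base is $-R$ rather than $-\mathrm{id}$ or $R$.
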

Our fully novel work starts at this point. As a first step we state our cohomological description of the several topological $(\mathbb{Z}_2)^k$-actions settled above. Proofs appear later in the section.

\begin{thm}\label{accion}
For $n\geq 2$, the $(\mathbb{Z}_2)^k$-action on $H^*(\Conf_{\mathbb{Z}_2}(\mathbb{R}^n-\lbrace 0 \rbrace,k-1);R)$ is given by
\begin{equation}\label{eqaccion}
\epsilon_l A_{i,j}=
\begin{cases} 
(-1)^{n-1}A_{j,0}-A_{i,0}+A_{i,j}, & \textnormal{ if $l=1$, $j>0$};\\
-A_{|j|,0}-A_{i,0}+A_{i,j}, & \textnormal{ if $l=1$, $j<0$};\\
-A_{i,0}, &\textnormal{ if $l=1$, $j=0$, $i \geq 1$};\\
A_{i,-j}, & \textnormal{ if $l>1$, $|j|=l-1$};\\
(-1)^nA_{i,0}, & \textnormal{ if $l>1$, $i=l-1$, $j=0$};\\
(-1)^nA_{j,0}+(-1)^nA_{i,0}+(-1)^{n-1}A_{i,-j}, & \textnormal{ if $l>2$, $i=l-1$, $j>0$};\\
A_{|j|,0}+(-1)^nA_{i,0}+(-1)^{n-1}A_{i,|j|}, & \textnormal{ if $l>2$, $i=l-1$, $j<0$};\\
A_{i,j}, & \textnormal{ otherwise}.
\end{cases}
\end{equation}
\end{thm}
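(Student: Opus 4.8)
The plan is to compute, for each $\epsilon_l$ and each generator $A_{i,j}=p_{i,j}^*(\iota_{n-1})$ of $H^*(\Conf_{\mathbb{Z}_2}(\mathbb{R}^n-\{0\},k-1);R)$, the induced class $\epsilon_l^*(A_{i,j})=(p_{i,j}\circ\epsilon_l)^*(\iota_{n-1})$ by first identifying the homotopy class of the composite $p_{i,j}\circ\epsilon_l\colon\Conf_{\mathbb{Z}_2}(\mathbb{R}^n-\{0\},k-1)\to S^{n-1}$. This class lies in $H^{n-1}$, which by the additive part of Theorem~\ref{2.1} (every product of two positive-degree generators sits in degree $\geq 2(n-1)$) is the free $R$-module on $\{A_{a,b}\mid 0\leq|b|<a<k\}$; so the task reduces to reading off, for each basis index $(a,b)$, the coefficient of $A_{a,b}$ in $\epsilon_l^*(A_{i,j})$. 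Several lines of~\eqref{eqaccion} are immediate. Recall that, for $l>1$, the map $\epsilon_l$ alters only the $(l-1)$-st coordinate, via the involution $\tau$; hence when $l-1\notin\{i,|j|\}$ one has $p_{i,j}\circ\epsilon_l=p_{i,j}$, which is the ``otherwise'' line. When $l-1=|j|$, the composite is $N(x_i-\tau x_{l-1})$ if $j>0$ and $N(x_i-\tau\tau x_{l-1})=N(x_i-x_{l-1})$ if $j<0$ (since $\tau^2=\mathrm{id}$), giving $\epsilon_lA_{i,j}=A_{i,-j}$. When $i=l-1$ and $j=0$, the composite is $N(\tau x_i)=-N(x_i)$, the antipode applied to $p_{i,0}$, of degree $(-1)^n$; and when $l=1$ and $j=0$ it is $N(\tilde R x_i)=N(\bar{x_i})$, the single‑coordinate reflection of $p_{i,0}$, of degree $-1$. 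This settles the last four lines of~\eqref{eqaccion}.

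For the remaining ``linear'' lines --- $l=1$ with $j\neq0$, and $l>2$ with $i=l-1$ and $j\neq0$ --- the stated answer only ever involves $A_{|j|,0}$, $A_{i,0}$ and $A_{i,\pm j}$, all of which are pulled back from the pair of coordinates $(x_{|j|},x_i)$. I would exploit this by a naturality reduction. The projection $\pi\colon\Conf_{\mathbb{Z}_2}(\mathbb{R}^n-\{0\},k-1)\to\Conf_{\mathbb{Z}_2}(\mathbb{R}^n-\{0\},2)$ onto the coordinates $(x_{|j|},x_i)=:(y_1,y_2)$ is $\epsilon_l$-equivariant for the relevant $\epsilon_l$ --- because $\epsilon_1$ acts coordinate‑wise by $\tilde R$, and for $l>1$ the map $\epsilon_l$ acts by $\tau$ on the single coordinate $x_{l-1}$ --- and, exactly as in Lemma~\ref{estnatarg}, $\pi^*$ carries the degree‑$(n-1)$ generators $A_{1,0},A_{2,0},A_{2,1},A_{2,-1}$ of $\Conf_{\mathbb{Z}_2}(\mathbb{R}^n-\{0\},2)$ to $A_{|j|,0},A_{i,0},A_{i,|j|},A_{i,-|j|}$ respectively. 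Hence each of these lines of~\eqref{eqaccion} follows from the single computation of the $\mathbb{Z}_2$‑action on $H^*(\Conf_{\mathbb{Z}_2}(\mathbb{R}^n-\{0\},2);R)$ where the acting self‑map is $(y_1,y_2)\mapsto(\tilde R y_1,\tilde R y_2)$ (case $l=1$) or $(y_1,y_2)\mapsto(y_1,\tau y_2)$ (case $i=l-1$).

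This last computation is carried out exactly as in the proof of Lemma~\ref{descgrados}. By Lemma~\ref{elauxi1}, the maps $f_{1,0},f_{2,0},f_{2,1},f_{2,-1}$ of~\eqref{lasefes} (with their third, irrelevant, coordinate dropped) satisfy $f_{c,d}^*(A_{r,s})=\delta_{(r,s),(c,d)}\,\iota_{n-1}$, so the coefficient of $A_{c,d}$ in $\epsilon^*(A_{r,s})$ equals the degree of the self‑map $p_{r,s}\circ\epsilon\circ f_{c,d}$ of $S^{n-1}$. Each such composite is written out explicitly from $\tilde R$, $\tau$, $N$ and $x\mapsto\bar x$, and is then straightened --- by a homotopy of the form $N(w+t\,v)$, checked to avoid $0$ exactly as in Lemma~\ref{descgrados} --- to a constant map, or to a composite of a single‑coordinate reflection, an antipodal map, and/or the rotation negating the first and last coordinates; its degree is accordingly $0$, or one of $\pm1$, $(-1)^n$, $(-1)^{n\pm1}$. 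Collecting these degrees, transporting them back along $\pi^*$, and going through the eight cases produces~\eqref{eqaccion}. (Indeed the $A_{2,-1}$‑coefficient in the case ``$\tau$ on $y_2$'' is literally the degree computed around~\eqref{ecp21alfaf2-1}.)

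The conceptual content here is slim; the difficulty is the bookkeeping, and that is the \emph{main obstacle}. Concretely, one must carry out a handful of genuinely different degree computations, each requiring a correctly‑checked well‑defined homotopy that avoids the origin, while keeping the sign conventions straight: $(-1)^n$ factors come from antipodal maps, bare $-1$'s from single‑coordinate reflections, and $(-1)^{n-1}=(-1)^{n+1}$'s from their composites, so a sign slip is easy --- note, for instance, the non‑obvious asymmetry between the $l=1,\,j>0$ and $l=1,\,j<0$ lines of~\eqref{eqaccion}. A secondary, purely organizational, hurdle is the index bookkeeping: one must consistently track the shift between $\epsilon_l$ acting on $\Conf_{\mathbb{Z}_2}(S^n,k)$ and on the fiber $\Conf_{\mathbb{Z}_2}(\mathbb{R}^n-\{0\},k-1)$ of~\eqref{fib1} (coordinate $l$ versus $l-1$), and the different ways the sign of $j$ interacts with $\tilde R$ and $\tau$.
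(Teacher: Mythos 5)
Your proposal follows the paper's proof essentially verbatim: the paper likewise reduces the general case to the two-point orbit configuration space via the coordinate projections (Proposition~\ref{ultredsec6}), and then reads off each coefficient as the degree of a composite $p_{r,s}\circ\epsilon_\ell\circ f_{c,d}$, straightened by homotopies of the form $N(w+tv)$ checked to miss the origin, exactly as you describe. Your direct treatment of the four ``easy'' lines and your detection argument via Lemma~\ref{elauxi1} are correct; the only work you defer is the explicit execution of the remaining degree computations, which the paper carries out in full (and which is precisely where the signs correcting \cite{Xico} and \cite{FZ2} are pinned down).
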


\begin{thm}\label{accionpar}
For $n\geq 2$ even, the $(\mathbb{Z}_2)^k$-action on the permanent cycles $\mathbb{K}^*\subseteq H^*(\Conf_{\mathbb{Z}_2}(\mathbb{R}^n-\lbrace 0 \rbrace,k-1);R)$ is given by
\begin{equation}\label{eqaccionpar}
\epsilon_l B_{i,j}=
\begin{cases} 
-B_{|j|,0}-B_{i,0}+B_{i,j}, & \textnormal{if $l=1$, $|j|>0$}; \\
-B_{i,0}, &\textnormal{if $l=1$, $j=0$, $i>1$};\\
B_{i,-j}, & \textnormal{if $l>1$, $|j|=l-1$};\\
B_{|j|,0}+B_{i,0}-B_{i,-j}, & \textnormal{if $l>2$, $i=l-1$, $|j|>0$};\\
B_{i,j}, &\textnormal{otherwise.}
\end{cases}
\end{equation}
\end{thm}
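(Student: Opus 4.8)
The plan is to obtain Theorem~\ref{accionpar} as a direct consequence of Theorem~\ref{accion} together with the change of basis $B_{i,j}=A_{i,j}-A_{1,0}$ introduced in Section~\ref{secdediferenciales}. First I would record that the $(\mathbb{Z}_2)^k$-action on $H^*(\Conf_{\mathbb{Z}_2}(\mathbb{R}^n-\{0\},k-1);R)$ is by ring automorphisms commuting with $d_n$ (this is exactly the spectral-sequence $\mathbb{Z}_2$-actions set up earlier in this section), so it carries the kernel $\mathbb{K}=\mathbb{K}^1$ of $d_n^{0,n-1}$ to itself and hence preserves the whole subalgebra $\mathbb{K}^*=\bigoplus_j\mathbb{K}^j$ of permanent cycles. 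Since $\mathbb{K}^*$ is generated as an $R$-algebra by $\mathbb{K}$, whose $R$-basis is $\mathcal{B}$, it then suffices to evaluate $\epsilon_l B_{i,j}$ for each generator.

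The evaluation is $\epsilon_l B_{i,j}=\epsilon_l A_{i,j}-\epsilon_l A_{1,0}$, so the first step is to pin down $\epsilon_l A_{1,0}$ from~(\ref{eqaccion}): one gets $\epsilon_1 A_{1,0}=-A_{1,0}$, while for every $l>1$ one gets $\epsilon_l A_{1,0}=A_{1,0}$ --- for $l=2$ this is the line $i=l-1$, $j=0$ of~(\ref{eqaccion}), which reads $(-1)^nA_{1,0}=A_{1,0}$ because $n$ is even, and for $l>2$ it is the ``otherwise'' line. After that I would run through the five cases of~(\ref{eqaccionpar}) one at a time, substitute the corresponding case of~(\ref{eqaccion}) (using $(-1)^n=1$ and $(-1)^{n-1}=-1$ throughout, which already merges the $j>0$ and $j<0$ subcases of Theorem~\ref{accion}), and rewrite every $A$ on the right-hand side in terms of $B$'s via $A_{i,0}=B_{i,0}+A_{1,0}$, $A_{i,\pm j}=B_{i,\pm j}+A_{1,0}$, $A_{|j|,0}=B_{|j|,0}+A_{1,0}$, with the convention $B_{1,0}=0$. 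In each case the several contributions of $A_{1,0}$ --- one from expanding $\epsilon_l A_{i,j}$, one from $-\epsilon_l A_{1,0}$, and the rest from the ``error'' terms $A_{j,0},A_{i,0},\dots$ --- cancel, leaving precisely the stated formula.

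Two bookkeeping points are the only places requiring care. First, when $|j|=1$ the term $B_{|j|,0}$ equals $B_{1,0}=0$, so one must confirm the generic formula specializes correctly; this is automatic since the rewriting $A_{1,0}=B_{1,0}+A_{1,0}$ holds with $B_{1,0}=0$. Second, because $\mathcal{B}$ contains only $B_{i,j}$ with $i>1$, the configuration $i=l-1$ forces $l>2$ (a relation $|j|<i=l-1=1$ is impossible), so on $\mathbb{K}$ it is cases (6)--(7) of Theorem~\ref{accion}, never case (5), that govern the fourth line of~(\ref{eqaccionpar}); this is why no $j=0$ exception shows up there. I expect the only real obstacle to be keeping the sign and index conventions straight; once $\epsilon_l A_{1,0}$ is fixed and the parity of $n$ is used, every case is a one-line cancellation.
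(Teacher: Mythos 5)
Your proposal is correct and coincides with the paper's own treatment: the paper explicitly states that Theorem~\ref{accionpar} ``is a straightforward consequence (whose verification is left to the reader) of the definitions and Theorem~\ref{accion},'' and your computation---pinning down $\epsilon_l A_{1,0}$ (namely $-A_{1,0}$ for $l=1$ and $A_{1,0}$ for $l>1$ when $n$ is even), expanding $\epsilon_l B_{i,j}=\epsilon_l A_{i,j}-\epsilon_l A_{1,0}$ via~(\ref{eqaccion}) with $(-1)^n=1$, and rewriting in the $B$'s with $B_{1,0}=0$---is exactly that verification. Your two bookkeeping remarks (the specialization at $|j|=1$ and the fact that $i=l-1$ forces $l>2$ on $\mathcal{B}$) are also the right points to watch, so nothing is missing.
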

 Note that $B_{1,0}=0$ in (\ref{eqaccionpar}), and that the formulas in~(\ref{eqaccionpar}) agree with those in~(\ref{eqaccion}) for $n$ even and replacing each $A$ with $B$.

\begin{cor}\label{acciontotalnimpar}
For $n\geq2$ odd, the action of $(\mathbb{Z}_2)^k$ on 
\[H^*(\Conf_{\mathbb{Z}_2}(S^n,k);R)\cong H^*(S^n;R)\otimes H^*(\Conf_{\mathbb{Z}_2}(\mathbb{R}^n-\lbrace 0 \rbrace,k-1);R)= \Lambda(\iota_{n})\otimes R[\mathcal{A}]/I\] is the tensor product of the corresponding actions on each factor of the tensor product.
\end{cor}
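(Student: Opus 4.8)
The plan is to deduce the statement from the collapse of the Serre spectral sequence of the fibration~(\ref{fib1}) when $n$ is odd (Theorem~\ref{2.2}), combined with the functoriality of that spectral sequence under the self-maps $\epsilon_i$ set up in Section~\ref{section3}. First I would recall from that section that each $\epsilon_i$ (composed, when $i=1$, with the fiberwise self-map $R^{\times k}$, which is homotopic to the identity) induces a multiplicative self-map of the entire spectral sequence of~(\ref{fib1}), and that by functoriality this self-map converges to the $\epsilon_i$-action on $H^*(\Conf_{\mathbb{Z}_2}(S^n,k);R)$. On the base edge $E_2^{n,0}=H^n(S^n;R)$ this self-map is induced by the degree-$(-1)^{n+1}$ map $-R$ of $S^n$ (Corollary~\ref{accionenlabaseref}), hence is the identity since $n$ is odd; on the fiber edge $E_2^{0,*}\cong H^*(\Conf_{\mathbb{Z}_2}(\mathbb{R}^n-\{0\},k-1);R)$ it is the $(\mathbb{Z}_2)^k$-action computed in Theorem~\ref{accion}.

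Next I would work on $E_\infty=E_2$. For $n$ odd the spectral sequence collapses, so by Theorem~\ref{2.2} one has $E_\infty=E_2=H^*(S^n;R)\otimes H^*(\Conf_{\mathbb{Z}_2}(\mathbb{R}^n-\{0\},k-1);R)=\Lambda(\iota_n)\otimes R[\mathcal{A}]/I$ as a bigraded $R$-algebra, generated by $\iota_n\in E_2^{n,0}$ together with the classes $A_{i,j}\in E_2^{0,n-1}$. Since the induced $\epsilon_i$ is a ring endomorphism, its effect on $E_\infty$ is determined by its restrictions to these two sets of generators; by the previous paragraph it is therefore precisely the tensor product of the (trivial) action on the $\Lambda(\iota_n)$ factor and the action of Theorem~\ref{accion} on the $R[\mathcal{A}]/I$ factor.

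The only remaining point --- and the one I expect to require care --- is to lift this conclusion from the associated graded $E_\infty$ to $H^*(\Conf_{\mathbb{Z}_2}(S^n,k);R)$ itself, since a priori a filtration-preserving self-map inducing the tensor-product action on $E_\infty$ could differ from it by lower-filtration correction terms. This is ruled out by sparseness, exactly as in Remark~\ref{sparsenessreference}: by Lemma~\ref{locsyscoe} the fiber cohomology is concentrated in degrees divisible by $n-1$, so the only nonzero columns of $E_2$ are $p=0$ and $p=n$, and since $n\equiv1\pmod{n-1}$, for $n\geq3$ no total degree $d$ can satisfy both $(n-1)\mid d$ and $(n-1)\mid(d-n)$. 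Hence each group $H^d(\Conf_{\mathbb{Z}_2}(S^n,k);R)$ sits in a single filtration level, there are no additive or multiplicative extension problems, the generators $\iota_n$ and $A_{i,j}$ lift uniquely to $H^*$, and ``$\epsilon_i$ preserves the Serre filtration and induces the tensor-product action on $E_\infty$'' forces $\epsilon_i$ to be the tensor-product action on $H^*(\Conf_{\mathbb{Z}_2}(S^n,k);R)$. Multiplicativity of $\epsilon_i$ together with the fact that $\iota_n$ and the $A_{i,j}$ generate then finishes the proof.
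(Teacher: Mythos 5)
Your proposal is correct and follows essentially the same route as the paper: the paper likewise deduces the corollary from functoriality of the $\epsilon_i$-actions on the Serre spectral sequence of~(\ref{fib1}), reading the action on classes coming from the base via the edge morphisms (trivial since $(-1)^{n+1}=1$ for $n$ odd) and the action on classes restricting nontrivially to the fiber from the concentration of total degree $n-1$ on the fiber axis, then invoking multiplicativity. Your extra care about ruling out filtration correction terms by sparseness is exactly the point the paper compresses into its reference to the chart in Remark~\ref{sparsenessreference}.
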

\begin{cor}\label{acciontotalnpar}
Assume that the characteristic of $R$ is either zero or an odd integer. For $n\geq 2$ even, the action of $(\mathbb{Z}_2)^k$ on 
$$
H^*(\Conf_{\mathbb{Z}_2}(S^n,k);R)\cong R[\mathcal{B}]/J\otimes \Lambda(\lambda,\omega)/(2\lambda,\lambda\omega)
$$ 
is determined by
\[
\epsilon_l(\lambda)=\begin{cases}
-\lambda, & \textnormal{ if $l=1$; }\\
\lambda, & \textnormal{ if $l>1$},
\end{cases}
\]
\[
\epsilon_l(\omega)=\omega,\textnormal{$\qquad$ $\forall$ $l\geq 1$,}
\]
and the fact that it restricts to the action of $(\mathbb{Z}_2)^k$ on $\mathcal{B}$ stated in Theorem \ref{accionpar}.
\end{cor}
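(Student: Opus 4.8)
The plan is to read off the asserted $(\mathbb{Z}_2)^k$-action from the Serre spectral sequence of the fibration~(\ref{fib1}), in exactly the spirit of Theorems~\ref{accion} and~\ref{accionpar}. Two structural facts make this routine once set up. First, each $\epsilon_l$ acts as an $R$-algebra automorphism of $H^*(\Conf_{\mathbb{Z}_2}(S^n,k);R)$, and the target ring $R[\mathcal{B}]/J\otimes\Lambda(\lambda,\omega)/(2\lambda,\lambda\omega)$ is generated as an $R$-algebra by $\mathcal{B}\cup\{\lambda,\omega\}$; hence it suffices to evaluate each $\epsilon_l$ on these three families of generators, which is precisely the content of the ``determined by'' clause. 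Second, by the discussion in Section~\ref{section3} (using the rotation trick for $\epsilon_1$), each $\epsilon_l$ induces a self-map of the spectral sequence of~(\ref{fib1}) which on the base is the action on $H^*(S^n;R)$ recorded in Corollary~\ref{accionenlabaseref}, on the fiber is the action on $H^*(\Conf_{\mathbb{Z}_2}(\mathbb{R}^n-\{0\},k-1);R)$ of Theorem~\ref{accion}, and which converges to the topological action on the total space.

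First I would handle the generators in $\mathcal{B}$. By the collapse picture of Section~\ref{secdediferenciales}, in degree $n-1$ the only non-vanishing $E_\infty$-spot is $E_\infty^{0,n-1}=\mathbb{K}^1$, and the base $S^n$ contributes nothing below degree $n$; thus $H^{n-1}(\Conf_{\mathbb{Z}_2}(S^n,k);R)$ sits in filtration $0$ and is carried isomorphically, by the ($(\mathbb{Z}_2)^k$-equivariant) fiber-restriction edge map, onto the permanent cycles $\mathbb{K}^1\subseteq H^{n-1}(\Conf_{\mathbb{Z}_2}(\mathbb{R}^n-\{0\},k-1);R)$. Hence $\epsilon_l$ acts on $\mathcal{B}$ exactly as in Theorem~\ref{accionpar}. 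Next, $\lambda$ is represented by $\iota_n\in E_\infty^{n,0}=(\iota_n\,\mathbb{K}^0)_2$, again a single filtration spot, so $\epsilon_l(\lambda)$ is computed from the action of $\epsilon_l$ on $H^n(S^n;R)$. By Corollary~\ref{accionenlabaseref} this is the identity for $l>1$ and multiplication by $(-1)^{n+1}=-1$ for $l=1$ (using that $n$ is even), yielding the stated formula for $\epsilon_l(\lambda)$.

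It remains to treat $\omega$, represented by the product $\iota_nA_{1,0}\in E_\infty^{n,n-1}$. In total degree $2n-1$ the base $S^n$ forces the filtration of $H^{2n-1}$ to have only the two steps $p=0$ and $p=n$, and $F^{n+1}H^{2n-1}=0$; consequently $\omega$ lies in the deepest piece $F^nH^{2n-1}\cong E_\infty^{n,n-1}$ and is literally the element $\iota_nA_{1,0}$ there. Since $\epsilon_l$ preserves the filtration, $\epsilon_l(\omega)$ is the image of $\iota_n\otimes A_{1,0}$ under the tensor-product action on $E_2^{n,n-1}=H^n(S^n;R)\otimes H^{n-1}(\Conf_{\mathbb{Z}_2}(\mathbb{R}^n-\{0\},k-1);R)$. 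Reading off the relevant cases of~(\ref{eqaccion}) gives $\epsilon_1A_{1,0}=-A_{1,0}$ and $\epsilon_lA_{1,0}=A_{1,0}$ for $l>1$, so $\epsilon_1(\iota_n\otimes A_{1,0})=\bigl((-1)^{n+1}\iota_n\bigr)\otimes(-A_{1,0})=(-1)^n\,\iota_n\otimes A_{1,0}=\iota_n\otimes A_{1,0}$ (again because $n$ is even), and $\epsilon_l(\iota_n\otimes A_{1,0})=\iota_n\otimes A_{1,0}$ for $l>1$. Hence $\epsilon_l(\omega)=\omega$ for all $l$, and combining the three computations with the multiplicativity of each $\epsilon_l$ completes the proof.

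The one point needing genuine care is verifying that transporting these spectral-sequence computations to $H^*$ of the total space involves no filtration or extension ambiguity, i.e.\ that each of $\mathcal{B}$, $\lambda$, $\omega$ really is detected in a single filtration degree and is identified on the nose with the indicated $E_\infty$-class. For $n>2$ this is immediate from the sparseness already used in Remark~\ref{sparsenessreference}. For $n=2$ it still holds, because the $S^n$-base leaves only the filtration jumps $p=0$ and $p=n$, with $\mathcal{B}$ living below degree $n$ and $\lambda,\omega$ pinned in the top filtration $F^n$ (where $F^{n+1}=0$); the delicate $n=2$ multiplicative issue settled via Brown representability in the proof of Theorem~\ref{2.3} does not re-enter here, since we only need the $R$-linear, automorphic action on generators. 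This compatibility is the main obstacle; everything else is bookkeeping with~(\ref{eqaccion}), (\ref{eqaccionpar}), and Corollary~\ref{accionenlabaseref}.
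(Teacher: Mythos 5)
Your proposal is correct and follows essentially the same route as the paper: the paper likewise deduces the corollary from the Serre spectral sequence of~(\ref{fib1}), reading the action on $\lambda$ off the base via the edge morphism (Corollary~\ref{accionenlabaseref}) and the action on $\mathcal{B}$ off the fiber restriction (Theorem~\ref{accionpar}), with no extension ambiguity because each generator sits in a single filtration spot. Your explicit filtration argument for $\omega$ (that it lives in the bottom piece $F^nH^{2n-1}\cong E_\infty^{n,n-1}$, where the tensor-product action gives $(-1)\cdot(-1)=1$ for $\epsilon_1$) is a welcome spelling-out of a detail the paper leaves implicit, since $\omega$ is neither pulled back from the base nor detected on the fiber.
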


As explained in the discussion preparing the grounds for this section, the use of the Serre spectral sequence of~(\ref{fib1}), Corollary~\ref{accionenlabaseref}, and Theorems~\ref{accion} and~\ref{accionpar} yield Corollaries~\ref{acciontotalnimpar} and~\ref{acciontotalnpar}. Indeed, the asserted $(\mathbb{Z}_2)^k$-action on cohomology classes of the total space {\it coming} from the base space follows from standard considerations with the edge morphisms. On the other hand, since in total dimension $n-1$ the spectral sequence is concentrated in the ``fiber axis'' (see the chart in Remark~\ref{sparsenessreference}), it follows that the $(\mathbb{Z}_2)^k$-action on cohomology classes of the total space {\it mapping} (non-trivially) to the fiber can be read from the corresponding action on their images. 

In addition, Theorem~\ref{accionpar} is a straightforward consequence (whose verification is left to the reader) of the definitions and Theorem~\ref{accion}. So, the only remaining fact to prove in this section is Theorem~\ref{accion}. The case $k=2$ in Theorem~\ref{accion} is elementary; we next prove that the case $k>3$ follows from the case $k=3$.
\begin{prop}\label{ultredsec6}
The formula in~(\ref{eqaccion}) for $k>3$ is a formal consequence of the corresponding formula for $k=3$.
\end{prop}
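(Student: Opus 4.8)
The plan is to deduce~(\ref{eqaccion}) for $k>3$ from the $k=3$ case by a naturality argument based on the coordinate projections onto the two-point model $\Conf_{\mathbb{Z}_2}(\mathbb{R}^n-\{0\},2)$, on which $(\mathbb{Z}_2)^3=\langle\epsilon_1,\epsilon_2,\epsilon_3\rangle$ acts with $\epsilon_1$ diagonal through $\tilde{R}$, with $\epsilon_2$ through $\tau$ on the first point, and with $\epsilon_3$ through $\tau$ on the second point. For distinct $a,b\in\{1,\dots,k-1\}$ I would consider $\pi_{a,b}\colon\Conf_{\mathbb{Z}_2}(\mathbb{R}^n-\{0\},k-1)\to\Conf_{\mathbb{Z}_2}(\mathbb{R}^n-\{0\},2)$, $\pi_{a,b}(x_1,\dots,x_{k-1})=(x_a,x_b)$, which is a well-defined map of orbit configuration spaces (cf.~Lemma~\ref{prodisact}). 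Two elementary observations will do the work. First, directly from the explicit descriptions of the maps $\epsilon_l$ recorded earlier in this section one gets $\pi_{a,b}\circ\epsilon_l=\sigma_l\circ\pi_{a,b}$, where $\sigma_l$ is, respectively, $\epsilon_1$ when $l=1$, $\epsilon_2$ when $l=a+1$, $\epsilon_3$ when $l=b+1$, and the identity when $l\ge 2$ and $l-1\notin\{a,b\}$ (all maps on the right acting on the two-point model). Second, comparing the maps $p_{i,j}$ of~(\ref{laspes}) in the two spaces gives $p_{b,a}=p_{2,1}\circ\pi_{a,b}$, $p_{b,-a}=p_{2,-1}\circ\pi_{a,b}$, and $p_{a,0}=p_{1,0}\circ\pi_{a,m}$ for any $m\ne a$; hence $\pi_{a,b}^{*}$ sends the generators $A_{1,0},A_{2,0},A_{2,1},A_{2,-1}$ of $H^{n-1}(\Conf_{\mathbb{Z}_2}(\mathbb{R}^n-\{0\},2);R)$ to $A_{a,0},A_{b,0},A_{b,a},A_{b,-a}$.

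Combining these, fix $0\le|j|<i<k$ and $1\le l\le k$, and choose the projection $\pi$ picking out the coordinate(s) appearing in $A_{i,j}$: take $\pi=\pi_{j,i}$ if $j>0$, $\pi=\pi_{|j|,i}$ if $j<0$, and $\pi=\pi_{i,m}$ for a suitable auxiliary index $m\ne i$ if $j=0$. In each case $A_{i,j}=\pi^{*}(\widetilde{A})$ for a generator $\widetilde{A}\in\{A_{2,1},A_{2,-1},A_{1,0}\}$ of the two-point model, so that
\[
\epsilon_l\,A_{i,j}=\epsilon_l\,\pi^{*}(\widetilde{A})=\pi^{*}\!\left(\sigma_l\,\widetilde{A}\right),
\]
and $\sigma_l\widetilde{A}$ is read off from the $k=3$ instance of~(\ref{eqaccion}). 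Expanding via $\pi^{*}$ and the generator correspondence above then reproduces the asserted formula in each of the eight cases. For example, for $j>0$ and $l=i+1$ one has $\sigma_l=\epsilon_3$, and the $k=3$ formula gives $\epsilon_3A_{2,1}=(-1)^nA_{1,0}+(-1)^nA_{2,0}+(-1)^{n-1}A_{2,-1}$, whose image under $\pi_{j,i}^{*}$ is $(-1)^nA_{j,0}+(-1)^nA_{i,0}+(-1)^{n-1}A_{i,-j}$, i.e.~the sixth case of~(\ref{eqaccion}); the cases $l=1$, $l=|j|+1$, and $l-1\notin\{i,j\}$ (where $\sigma_l=\mathrm{id}$, so $\epsilon_lA_{i,j}=A_{i,j}$) are entirely analogous, and the subcases $j=0$ and $j<0$ use the other two generators $A_{1,0},A_{2,-1}$ in the same way. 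Since each $A_{i,j}$ involves at most two of the coordinates and $\epsilon_l$ disturbs at most one further coordinate---and when that further coordinate lies outside $\{i,j\}$ the value is trivially $A_{i,j}$---the two-point model suffices, and no genuine three-point reduction is needed.

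The main obstacle will be purely organizational bookkeeping: one must track which of $x_i,x_j$ occupies the first versus the second slot of $\Conf_{\mathbb{Z}_2}(\mathbb{R}^n-\{0\},2)$ under $\pi$, since this is what decides whether $\epsilon_l$ corresponds to $\epsilon_2$ or to $\epsilon_3$; and in the $j=0$ case the auxiliary index $m$ must be chosen with $m\ne i$ and, when $l\ge 2$ and $l-1\ne i$, also $m\ne l-1$, which is possible precisely because $k-1\ge 3$. The equivariance identity $\pi_{a,b}\circ\epsilon_l=\sigma_l\circ\pi_{a,b}$ and the composition identities for the $p_{i,j}$ are immediate from the explicit formulas, and matching the eight cases of~(\ref{eqaccion}) against the corresponding pullbacks at level $k=3$ is then routine.
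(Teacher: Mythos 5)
Your proposal is correct and follows essentially the same route as the paper: the paper's proof likewise uses the two-coordinate projections $\pi_{i,j}(x_1,\ldots,x_{k-1})=(x_j,x_i)$ onto $\Conf_{\mathbb{Z}_2}(\mathbb{R}^n-\{0\},2)$, the identities $\pi_{i,j}^*(A_{1,0},A_{2,0},A_{2,1},A_{2,-1})=(A_{j,0},A_{i,0},A_{i,j},A_{i,-j})$, and the commutation $\pi_{i,j}\circ\epsilon_\ell=\bar\epsilon\circ\pi_{i,j}$ with $\bar\epsilon\in\{\epsilon_1,\epsilon_2,\epsilon_3,\mathrm{id}\}$ determined exactly as in your $\sigma_l$. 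Your extra care with the auxiliary index $m$ in the $j=0$ case is fine but not strictly needed, since $\epsilon_3$ fixes $A_{1,0}$ anyway.
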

\begin{proof}
The maps $\pi_{i,j} \colon \Conf_{\mathbb{Z}_2}(\mathbb{R}^n-\{0\},k-1) \to \Conf_{\mathbb{Z}_2}(\mathbb{R}^n-\{0\},2)$ given by $\pi_{i,j}(x_1,\ldots,x_{k-1}) = (x_j,x_i)$ (for $ 0 < j < i < k $) can be used to ``import'' the $(\mathbb{Z}_2)^k$-action on $\Conf_{\mathbb{Z}_2}(\mathbb{R}^n-\{0\},k-1)$ from the $(\mathbb{Z}_2)^3$-action on $\Conf_{\mathbb{Z}_2}(\mathbb{R}^n-\{0\},2)$ because of the following two easily-checked facts: Firstly, $\pi_{i,j}^*$ sends $A_{1,0}$, $A_{2,0}$, $A_{2,1}$, and $A_{2,-1}$ respectively to $A_{j,0}$, $A_{i,0}$, $A_{i,j}$, and $A_{i,-j}$. Secondly, for $1 \leq \ell \leq k$, $\pi_{i,j}$ fits in the commutative diagram 
 \[
\xymatrix{
\Conf_{\mathbb{Z}_2}(\mathbb{R}^n - \{0\},k-1) \ar[d]^{\pi_{i,j}}\ar[r]^{\epsilon_\ell} & \Conf_{\mathbb{Z}_2}(\mathbb{R}^n - \{0\},k-1)\ar[d]^{\pi_{i,j}}\\
\Conf_{\mathbb{Z}_2}(\mathbb{R}^n - \{0\},2)\ar[r]^{\bar{\epsilon}}&\Conf_{\mathbb{Z}_2}(\mathbb{R}^n - \{0\},2),
}
\]
where 
\[\bar{\epsilon}(x,y)=\begin{cases} \epsilon_3(x,y), & \text{ if $i= \ell-1$};\\
\epsilon_2( x,y),& \text{ if $j=\ell-1$};\\
\epsilon_1(x,y), & \text{ if $\ell=1$};\\
(x,y), & \text{ otherwise}.
\end{cases}
\]
\end{proof}

The rest of this section is devoted to proving Theorem \ref{accion} in the remaining case $k=3$, i.e.~to the proof of the following set of equalities: 
\begin{eqnarray}
\epsilon_1 A_{1,0} &=& -A_{1,0},\label{r110}\\ 
\epsilon_1 A_{2,0} &=& -A_{2,0},\label{r120}\\ 
\epsilon_1 A_{2,1} &=& (-1)^{n-1}A_{1,0} -A_{2,0}+A_{2,1},\label{r121}\\
\epsilon_1 A_{2,-1}&=& -A_{1,0}  -A_{2,0} + A_{2,-1},\label{r12m1}\\
\epsilon_2 A_{1,0} &=& (-1)^{n}A_{1,0},\label{r210}\\ 
\epsilon_2 A_{2,0} &=& A_{2,0},\label{r220}\\ 
\epsilon_2 A_{2,1} &=& A_{2,-1},\label{r221}\\
\epsilon_2 A_{2,-1}&=& A_{2,1},\label{r22m1}\\
\epsilon_3 A_{1,0} &=& A_{1,0},\label{r310}\\ 
\epsilon_3 A_{2,0} &=& (-1)^{n}A_{2,0},\label{r320}\\ 
\epsilon_3 A_{2,1} &=& (-1)^{n}A_{1,0}+(-1)^nA_{2,0}+ (-1)^{n-1}A_{2,-1},\label{r321}\\
\epsilon_3 A_{2,-1}&=& A_{1,0} + (-1)^nA_{2,0} + (-1)^{n-1}A_{2,1}.\label{r32m1}
\end{eqnarray}
\begin{rem}\label{xicostable2}
The above description corrects the action reported in~\cite[Table~2]{Xico}.
\end{rem}

The proof strategy for the set of relations in~(\ref{r110})--(\ref{r32m1}) is similar to that in the proof of Lemma~\ref{descgrados}: Recall the maps $p_{i,j}$ and $f_{r,s}$ introduced in~(\ref{laspes}) and~(\ref{lasefes}). By abuse of notation, for $| j | < i \leq 2$,  we will denote by $f_{i,j}$  the composition $\pi_{2,1}f_{i,j}:S^{n-1}\longrightarrow \Conf_{\mathbb{Z}_2}(\mathbb{R}^n-\lbrace 0 \rbrace,2)$. It is easy to check that these maps, together with the corresponding maps $p_{r,s}$, detect the generators for $\Conf_{\mathbb{Z}_2}(\mathbb{R}^n-\lbrace 0 \rbrace,2)$ in the sense of Lemma~\ref{elauxi1}. Then, in order to prove the above set of relations, we only need to compute the degree of the compositions
\[
\xymatrix{
S^{n-1}\ar[r]^-{f_{ij}} & \Conf_{\mathbb{Z}_2}(\mathbb{R}^n - \{0\},2)\ar[r]^{\epsilon_\ell} &\Conf_{\mathbb{Z}_2}(\mathbb{R}^n - \{0\},2)\ar[r]^-{p_{r,s}}&S^{n-1}
}
\]
for $1\leq\ell\leq 3$, $|j|<i\leq 2$, and $|s|<r\leq 2$.
\begin{proof}[Proof of relations~(\ref{r110})--(\ref{r32m1})]
We start by computing the action of $\epsilon_1$, i.e.~relations~(\ref{r110})--(\ref{r12m1}).
\begin{enumerate} 
\item $\epsilon_1 A_{1,0}$: We have
\[\begin{array}{lcllcllcllcl}
p_{1,0}\epsilon_1 f_{1,0}(x)&=&\bar{x} ,&
p_{1,0}\epsilon_1 f_{2,0}(x)&=&e,  &
p_{1,0}\epsilon_1 f_{2,1}(x)&=&e,  &
p_{1,0}\epsilon_1 f_{2,-1}(x)&=&e. \\
\end{array}\]
The first map is a reflection and the rest are constant maps, therefore
\[\begin{array}{lcllcllcllcl}
\text{deg}(p_{1,0}\epsilon_1 f_{1,0}) = -1,&
 \text{deg}(p_{1,0}\epsilon_1 f_{2,0}) = 0,&
   \text{deg}(p_{1,0}\epsilon_1 f_{2,1}) =0,&
   \text{deg}(p_{1,0}\epsilon_1 f_{2,-1})=0.
\end{array}\]
Thus, $\epsilon_1 A_{1,0}=-A_{1,0}$.
\item $\epsilon_1 A_{2,0}$: Clearly, $p_{2,0}\epsilon_1 f_{1,0}(x)=N(\frac{\bar{e}}{2})$, which implies $\text{deg}(p_{2,0}\epsilon_1 f_{1,0}) = 0$. On the other hand, note that $N(\tilde{R} (y))=N(\bar{y}) $ for all $y\in \mathbb{R}^n- \lbrace 0 \rbrace$, therefore
\[\begin{array}{lclcl}
p_{2,0}\epsilon_1 f_{2,0}(x)&=&N(\bar{x})&=&\bar{x},\\
p_{2,0}\epsilon_1 f_{2,1}(x)&=&N(\tilde{R}(e+\frac{x}{2}))&=&N({e+\frac{\bar{x}}{2}}), \\
p_{2,0}\epsilon_1 f_{2,-1}(x)&=&N(\tilde{R}(-e+\frac{x}{2}) )&=&N({-e+\frac{\bar{x}}{2}});
\end{array}\]
The second and third maps are not surjective, therefore we have
\[\begin{array}{lcllcllcl}
\text{deg}(p_{2,0}\epsilon_1 f_{2,0})=-1,&
\text{deg}(p_{2,0}\epsilon_1 f_{2,1})=0,&
\text{deg}(p_{2,0}\epsilon_1 f_{2,-1})=0.\\
\end{array}\]
Thus $\epsilon_1 A_{2,0}=-A_{2,0}$.
\item $\epsilon_1 A_{2,1}$: We have
\[\begin{array}{lclclcl}
p_{2,1}\epsilon_1 f_{1,0}(x)&=&N(\frac{e}{2}-\bar{x})&=&-N(\bar{x}-\frac{e}{2})&\simeq&-\bar{x},
\end{array}\]
therefore $\text{deg}(p_{2,1}\epsilon_1 f_{1,0}) = (-1)^{n-1}$. We also have
\[\begin{array}{lclcl}
p_{2,1}\epsilon_1 f_{2,0}(x)&=&N(2\bar{x}-e)&=&N(\bar{x}-\frac{e}{2}),
\end{array}\]
so $\text{deg}(p_{2,1}\epsilon_1 f_{2,0}) = -1$.
Further, in terms of the (already used) homotopy $F$ given by $F(t,(t_1,\ldots,t_n))=(tt_1,t_2,\ldots,t_n)$, we have
\begin{eqnarray}
p_{2,1}\epsilon_1 f_{2,1}(x)&=&N\left(\tilde{R}\left(e+\frac{x}{2}\right)-e\right)\nonumber
\\&=&N\left(e+\frac{F(1,\bar{x})}{2}-\left\lVert e+\frac{\bar{x}}{2}\right\rVert^2 e\right)\nonumber\\
&\simeq&N\left(e+\frac{F(-1,\bar{x})}{2}-\left\lVert e+\frac{\bar{x}}{2}\right\rVert^2 e\right)\label{hotounova}\\&\simeq&N\left(\frac{F(-1,\bar{x})}{2}\right).\label{hotodosva}
\end{eqnarray}
The homotopy in~(\ref{hotounova}) is given by $N(e+\frac{F(t,\bar{x})}{2}-\lVert e+\frac{\bar{x}}{2}\rVert^2 e)$ with  $t\in [-1,1]$. As before, we have to check that this homotopy is well defined: Suppose there exist $t\in [-1,1]$ and $x=(t_1,\ldots,t_n )\in S^{n-1}$ such that $F(t,\frac{\bar{x}}{2})+(1-\lVert  e+\frac{\bar{x}}{2}\rVert^2) e=0$. Then $F(t,\frac{\bar{x}}{2})=(-1+\lVert e+\frac{\bar{x}}{2} \rVert^2)e$ and so we have $\frac{tt_1}{2}=-1+\lVert e+\frac{\bar{x}}{2} \rVert^2$ and $t_i=0$ for $i>1$. This, in turn, implies $t_1=\pm 1$.
\begin{itemize}
\item[] \;\; If $t_1=1$, then $\frac{t}{2}=-1+\lVert e+\frac{e}{2}  \rVert^2 =-1+\lVert \frac{3e}{2} \rVert^2=\frac{5}{4}$, so $t=\frac{5}{2}>1$.
\item[] \;\; If $t_1=-1$, then $-\frac{t}{2}=-1+\lVert e-\frac{e}{2}  \rVert^2 =-1+\lVert \frac{e}{2} \rVert^2=-\frac{3}{4}$, so $t=\frac{3}{2}>1$.
\end{itemize}\medskip

\noindent
Both assumptions lead to a contradiction, so the homotopy is well defined. The homotopy in~(\ref{hotodosva}) is $N(\frac{F(-1,\bar{x})}{2}+t(1-\lVert e+\frac{\bar{x}}{2}\rVert^2)e)$ with $t\in [0,1].$ Let us verify that this homotopy is well defined: Suppose there exist $t\in [0,1]$ and $x=(t_1,\ldots,t_n )\in S^{n-1}$ such that $\frac{F(-1,\bar{x})}{2}+t(1-\lVert  e+\frac{\bar{x}}{2}\rVert^2) e=0$. Then $\frac{F(-1,\bar{x})}{2}=t(-1+\lVert  e+\frac{\bar{x}}{2}\rVert^2) e$ and so we have $-\frac{t_1}{2}=t(-1+\lVert  e+\frac{\bar{x}}{2}\rVert^2) $ and $t_i=0$ for $i>1$. This, in turn, implies $t_1=\pm 1$.
\begin{itemize}
\item[] \;\; If $t_1=1$, then $-\frac{1}{2}=t(-1+\lVert e+\frac{e}{2}  \rVert^2) =t(-1+\lVert \frac{3e}{2} \rVert^2)=t\frac{5}{4}$, so $t=-\frac{2}{5}<0$.
\item[] \;\; If $t_1=-1$, then $\frac{1}{2}=t(-1+\lVert e-\frac{e}{2}  \rVert^2) =t(-1+\lVert \frac{e}{2} \rVert^2)=-t\frac{3}{4}$, so  $t=-\frac{2}{3}<0$.
\end{itemize}\medskip

\noindent Both assumptions lead to a contradiction, consequently the homotopy is well defined. Therefore $p_{2,1}\epsilon_1 f_{2,1}$ is homotopic to a composition of two reflections. Thus
\[\begin{array}{lcl}
\text{deg}(p_{2,1}\epsilon_1 f_{2,1}) = 1.
\end{array}\]
Lastly, since $e$ is not enclosed by the image of $\tilde{R}(-e+\frac{x}{2})$, we see that the map
$p_{2,1}\epsilon_1 f_{2,-1}(x)=N(\tilde{R}(-e+\frac{x}{2})-e)$ is not surjective. Therefore
\[\begin{array}{lcl}
\text{deg}(p_{2,1}\epsilon_1 f_{2,-1}) =0,
\end{array}\]
and we conclude that $\epsilon_1 A_{2,1}=(-1)^{n-1}A_{1,0}-A_{2,0}+A_{2,1}$.

\item $\epsilon_1 A_{2,-1}$: We clearly have 
\[\begin{array}{lclclcl}
p_{2,-1}\epsilon_1 f_{1,0}(x)&=&N(\frac{e}{2}-\tau(\bar{x}))&=&N(\bar{x}+\frac{e}{2})&\simeq&\bar{x}
\end{array}\]
and
\[\begin{array}{lclclcl}
p_{2,-1}\epsilon_1 f_{2,0}(x)&=&N(2\bar{x}+e)&=&N(\bar{x}+\frac{e}{2})&\simeq& \bar{x}.
\end{array}\]
Therefore
\[\begin{array}{lcl}
\text{deg}(p_{2,-1}\epsilon_1 f_{2,0}) = \text{deg}(p_{2,-1}\epsilon_1 f_{1,0}) = -1.
\end{array}\]
On the other hand, since $-e$ is not enclosed by the image of $\tilde{R}(e+\frac{x}{2})$, $p_{2,-1}\epsilon_1 f_{2,1}(x)=N(\tilde{R}(e+\frac{x}{2})+e)$ is not surjective. Therefore
\[\begin{array}{lcl}
\text{deg}(p_{2,-1}\epsilon_1 f_{2,1}) = 0.
\end{array}\]
Lastly, 
\[\begin{array}{lclcl}
p_{2,-1}\epsilon_1 f_{2,-1}(x)&=&N(\tilde{R}(-e+\frac{x}{2})+e)&\simeq&N(-e+\frac{F(-1,\bar{x})}{2}+\lVert -e+\frac{\bar{x}}{2}\rVert^2 e)\\
&\simeq&N(\frac{F(-1,\bar{x})}{2})&\simeq&x,
\end{array}\]
where the first homotopy is given by $N(-e+\frac{F(t,\bar{x})}{2}+\lVert -e+\frac{\bar{x}}{2}\rVert^2 e)$ with $t\in [-1,1]$, and the second one by $N(\frac{F(-1,\bar{x})}{2}+t(-1+\lVert -e+\frac{\bar{x}}{2}\lVert^2)e)$, with $t\in [0,1].$ We can show that these homotopies are well defined in a similar fashion to the previous case.
Therefore, 
\[\begin{array}{lcl}
\text{deg}(p_{2,-1}\epsilon_1 f_{2,-1}) = 1,
\end{array}\]
And we conclude that $\epsilon_1 A_{2,-1}=-A_{1,0}-A_{2,0}+A_{2,-1}.$
\end{enumerate}

The analysis for $\epsilon_2$ and $\epsilon_3$ below is entirely analogous to the computation of the action of $\epsilon_1$ that we just have done in full detail. Therefore we will just record the results of the computations, without writing out details.

Next we consider $\epsilon_2$, i.e.~relations~(\ref{r210})--(\ref{r22m1}).
\begin{enumerate}
\item $\epsilon_2 A_{1,0}:$ We have
\[\begin{array}{lcllcllcllcl}
p_{1,0}\epsilon_2 f_{1,0}(x)&\!\!=\!\!&-x,  &
p_{1,0}\epsilon_2 f_{2,0}(x)&\!\!=\!\!&-e,  &
p_{1,0}\epsilon_2 f_{2,1}(x)&\!\!=\!\!&-e,  &
p_{1,0}\epsilon_2 f_{2,-1}(x)&\!\!=\!\!&-e, \\
\end{array}\]
therefore
\[\begin{array}{lcllcllcllcl}
\text{deg}(p_{1,0}\epsilon_2 f_{1,0}) = (-1)^n, &
 \text{deg}(p_{1,0}\epsilon_2 f_{2,0}) = 0,&
   \text{deg}(p_{1,0}\epsilon_2 f_{2,1}) =0, &
   \text{deg}(p_{1,0}\epsilon_2 f_{2,-1})=0.
\end{array}\]
Thus, $\epsilon_2 A_{1,0}=(-1)^n A_{1,0}.$

\item $\epsilon_2 A_{2,0}:$ We have
\[\begin{array}{lclcl}
p_{2,0}\epsilon_2 f_{1,0}(x)&=&e,  \\
p_{2,0}\epsilon_2 f_{2,0}(x)&=&N(\frac{x}{2}) &=& x,\\
p_{2,0}\epsilon_2 f_{2,1}(x)&=&N(e+\frac{x}{2})&\simeq&0, \\
p_{2,0}\epsilon_2 f_{2,-1}(x)&=&N(-e+\frac{x}{2})&\simeq&0; \\
\end{array}\]
therefore
\[\begin{array}{lcllcllcllcl}
\text{deg}(p_{2,0}\epsilon_2 f_{1,0}) = 0, &
 \text{deg}(p_{2,0}\epsilon_2 f_{2,0}) = 1,&
   \text{deg}(p_{2,0}\epsilon_2 f_{2,1}) =0, &
   \text{deg}(p_{2,0}\epsilon_2 f_{2,-1})=0.
\end{array}\]
Thus, $\epsilon_2 A_{2,0}= A_{2,0}$.

\item $\epsilon_2 A_{2,1}:$ We have
\[\begin{array}{lclclcl}
p_{2,1}\epsilon_2 f_{1,0}(x)&=&N(2e+x)&\simeq&0,  \\
p_{2,1}\epsilon_2 f_{2,0}(x)&=&N(\frac{x}{2}+e) &\simeq& 0,\\
p_{2,1}\epsilon_2 f_{2,1}(x)&=&N(e+\frac{x}{2}+e)&\simeq&0, \\
p_{2,1}\epsilon_2 f_{2,-1}(x)&=&N(-e+\frac{x}{2}+e)&=&N(\frac{x}{2})&=&x; \\
\end{array}\]
therefore
\[\begin{array}{lcllcllcllcl}
\text{deg}(p_{2,1}\epsilon_2 f_{1,0}) = 0, &
 \text{deg}(p_{2,1}\epsilon_2 f_{2,0}) = 0,&
   \text{deg}(p_{2,1}\epsilon_2 f_{2,1}) =0, &
   \text{deg}(p_{2,1}\epsilon_2 f_{2,-1})=1.
\end{array}\]
Thus $\epsilon_2 A_{2,1}=A_{2,-1}.$

\item $\epsilon_2 A_{2,-1}:$ Note that $\epsilon_2^2=\text{identity}$. Application of the previous case yields $\epsilon_2 A_{2,-1}=A_{2,1}$.
\end{enumerate}

Lastly, we consider $\epsilon_3$, i.e.~relations~(\ref{r310})--(\ref{r32m1}).
\begin{enumerate}
\item $\epsilon_3 A_{1,0}:$ We have
\[\begin{array}{lcllcllcllcl}
p_{1,0}\epsilon_3 f_{1,0}(x)&=&x,  &
p_{1,0}\epsilon_3 f_{2,0}(x)&=&e,  &
p_{1,0}\epsilon_3 f_{2,1}(x)&=&e,  &
p_{1,0}\epsilon_3 f_{2,-1}(x)&=&e; \\
\end{array}\]
therefore
\[\begin{array}{lcllcllcllcl}
\text{deg}(p_{1,0}\epsilon_3 f_{1,0}) = 1,&
 \text{deg}(p_{1,0}\epsilon_3 f_{2,0}) = 0,&
   \text{deg}(p_{1,0}\epsilon_3 f_{2,1}) =0,&
   \text{deg}(p_{1,0}\epsilon_3 f_{2,-1})=0.
\end{array}\]
Thus, $\epsilon_3 A_{1,0}=A_{1,0}.$

\item $\epsilon_3 A_{2,0}:$ We have
\[
\begin{array}{lcllcllcl}
p_{2,0}\epsilon_3 f_{1,0}(x)&=&N(-\frac{e}{2}),  \\
p_{2,0}\epsilon_3 f_{2,0}(x)&=&N(\tau(\frac{x}{2}))&=& -x, \\
p_{2,0}\epsilon_3 f_{2,1}(x)&=&N(\tau(e+\frac{x}{2}))&\simeq&0,  \\
p_{2,0}\epsilon_3 f_{2,-1}(x)&=&N(\tau(-e+\frac{x}{2}))&\simeq&0;
\end{array}
\]
therefore
\[\begin{array}{lcllcl}
\text{deg}(p_{2,0}\epsilon_3 f_{1,0})&\!\!=\!\!& 0,&
 \text{deg}(p_{2,0}\epsilon_3 f_{2,0}) &\!\!=\!\!& (-1)^n,
\end{array}\]
\[\begin{array}{lcllcl}
   \text{deg}(p_{2,0}\epsilon_3 f_{2,1}) &\!\!=\!\!&0, &
   \text{deg}(p_{2,0}\epsilon_3 f_{2,-1})&\!\!=\!\!&0.
\end{array}\]
Thus, $\epsilon_3 A_{2,0}=(-1)^n A_{2,0}.$

\item $\epsilon_3 A_{2,1}:$
We have
\[\begin{array}{lclclclcl}
p_{2,1}\epsilon_3 f_{1,0}(x)&=&N(-\frac{e}{2}-x)&=&-N(\frac{e}{2}+x)&\simeq&-x,  \\
p_{2,1}\epsilon_3 f_{2,0}(x)&=&N(-2x-e)&=&-N(2x+e)&\simeq&-x,  \\
p_{2,1}\epsilon_3 f_{2,1}(x)&=&N(\tau(e+\frac{x}{2})-e)&\simeq&0,
\end{array}\]
therefore
\[\begin{array}{lcllcllcl}
\text{deg}(p_{2,1}\epsilon_3 f_{1,0}) = (-1)^n,&
\text{deg}(p_{2,1}\epsilon_3 f_{2,0}) = (-1)^n,&
 \text{deg}(p_{2,1}\epsilon_3 f_{2,1}) = 0.
\end{array}\]
On the other hand,
\[\begin{array}{lclclclcl}
p_{2,1}\epsilon_3 f_{2,-1}(x)&=&N(\tau(-e+\frac{x}{2})-e)&\simeq&N(e+\frac{F(-1,-x)}{2}-\lVert -e+\frac{x}{2}\rVert^2 e)\\
&\simeq&N(\frac{F(-1,-x)}{2})&=&-F(-1,x),
\end{array}\]
where the first homotopy is given by $N(e+\frac{F(t,-x)}{2}-\lVert -e+\frac{x}{2}\rVert^2 e)$ with $t\in [-1,1]$, and the second one is given by $N(\frac{F(-1,-x)}{2}+t(1-\lVert -e+\frac{x}{2}\rVert^2)e)$, with $t\in [0,1].$ Therefore
\[\begin{array}{lclcl}
 \text{deg}(p_{2,1}\epsilon_3 f_{2,-1}) = (-1)^{n-1}.
 \end{array}\]
And we conclude $\epsilon_3 A_{2,1} = (-1)^nA_{1,0}+(-1)^n A_{2,0}+ (-1)^{n-1} A_{2,-1}.$

\item $\epsilon_3 A_{2,-1}:$ Note that $\epsilon_3^2 =\text{identity}$. By our previous computations,
\[
\begin{array}{lclclclcl}
A_{2,1}=\epsilon_3(\epsilon_3 A_{2,1}) &=& \epsilon_3((-1)^nA_{1,0}+(-1)^n A_{2,0}+ (-1)^{n-1} A_{2,-1})\\
&=&(-1)^nA_{1,0}+A_{2,0}+(-1)^{n-1}\epsilon_3 A_{2,-1}.
\end{array}
\]
Therefore $\epsilon_3 A_{2,-1} = A_{1,0}+(-1)^n A_{2,0}+ (-1)^{n-1} A_{2,1}.$
\end{enumerate}
This finishes the verification of relations~(\ref{r110})--(\ref{r32m1}).
\end{proof}
\begin{rem}\label{3.6}
Theorem~\ref{accion} corrects~\cite[Lemma~7]{FZ2}. The situation is closely related to our discussion, in Remark~\ref{remark2.4}, of the existence of inconsistencies in Feichtner-Ziegler's determination of a presentation for the cohomology ring of the fiber and base spaces in (\ref{fib1}). As described next, the problem can be traced back to the description in \cite[Lemma 7(iv)]{FZ2} of the action of the various $\epsilon_i$ on cohomology rings. To simplify the explanation, once again we adopt momentarily Feichtner-Ziegler's notation in \cite{FZ2}---which the reader is assumed to be familiar with. The proof of Lemma~7(iv) in \cite{FZ2} is based on the asserted equality $(A_2\circ A_1)^*(c^+_{1,2})=(-1)^kc^+_{1,2}$ whose proof, in turn, is reduced to showing that the obvious map
\begin{equation}\label{auxmap}
A_2\circ A_1\colon\mathcal{M}(\{U_1,U_2,U^+_{1,2}\})\to\mathcal{M}(\{U_1,U_2,U^+_{1,2}\})
\end{equation}
satisfies
\begin{equation}\label{aux}
(A_2\circ A_1)^*(\widetilde{c}_{1,2})=(-1)^k\widetilde{c}_{1,2}.
\end{equation}
(Note that (\ref{auxmap}) is not to be understood as a composition of maps from $\mathcal{M}(\{U_1,U_2,U^+_{1,2}\})$ to itself.) Feichtner-Ziegler's argument for (\ref{aux}) then proceeds by considering the central sphere $S$ (of radius $\sqrt{2}$) in ${}^\perp U^+_{1,2}\setminus\{0\}$ which retracts from $\mathcal{M}(\{U_1,U_2,U^+_{1,2}\})$ (with retraction $p$). It is observed that 
\begin{equation}\label{flawed}
\mbox{\it$(\ref{auxmap})$ restricts on $S$ as the antipodal map}
\end{equation}
and, from this, (\ref{aux}) is concluded. But such a conclusion cannot hold: The assertion in~(\ref{flawed}) is right, and gives the (strict) commutativity of the diagram
$$
\xymatrix{
 S \ar@{^{(}->}[r] \ar[d]_{\mathrm{antipodal}} 
 & \mathcal{M}(\{U_1,U_2,U^+_{1,2}\})  \ar[d]^{A_2\circ A_1}  \\
 S \ar@{^{(}->}[r] & \mathcal{M}(\{U_1,U_2,U^+_{1,2}\}).
 }
$$
But (\ref{aux}) cannot be drawn from this, since the map induced in cohomology by the inclusion $S\hookrightarrow\mathcal{M}(\{U_1,U_2,U^+_{1,2}\})$ has a nontrivial kernel. Indeed, $H^{k-1}(\mathcal{M}(\{U_1,U_2,U^+_{1,2}\}))$ is free of rank 3, while $H^{k-1}(S)$ is free of rank 1. Instead, what would certainly give (\ref{aux}) is the existence of a commutative diagram (at least up to homotopy)
$$
\xymatrix{
 \mathcal{M}(\{U_1,U_2,U^+_{1,2}\})  \ar[d]_{A_2\circ A_1}  \ar[r]^{\hspace{1.5cm}p} & S
 \ar[d]^{\mathrm{antipodal}} \\
 \mathcal{M}(\{U_1,U_2,U^+_{1,2}\})    \ar[r]^{\hspace{1.5cm}p} & S.
 }
$$
But (\ref{aux}) is false according to Theorem \ref{accion}, so that such a diagram is impossible. 
\end{rem}

We close the section with a technical result that will be used latter in the paper. Namely, the maps $\epsilon_1, \epsilon_2, \cdots,\epsilon_k \colon \Conf_{\mathbb{Z}_2}(\mathbb{R}^n-\{0\},k-1) \to \Conf_{\mathbb{Z}_2}(\mathbb{R}^n-\{0\},k-1)$ are related as follows:

\begin{lem}\label{lema3.3}
For $n$ odd, $\epsilon_1 \simeq  \epsilon_2\cdots \epsilon_{k}$. For $n$ even, $\epsilon_1 \simeq    h^{\times (k-1)}\epsilon_2\cdots \epsilon_{k}$, with $h:\mathbb{R}^n-\lbrace 0 \rbrace\to \mathbb{R}^n-\lbrace 0 \rbrace$ given by $h(x)=\bar{x}$.
\end{lem}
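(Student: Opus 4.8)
The plan is to work at the level of the concrete self-maps of $\Conf_{\mathbb{Z}_2}(\mathbb{R}^n-\{0\},k-1)$ recorded at the end of Section~\ref{section3}, where $\epsilon_1(x_1,\ldots,x_{k-1})=(\tilde R(x_1),\ldots,\tilde R(x_{k-1}))$ with $\tilde R(x)=\bar x/\lVert x\rVert^2$, while for $2\le i\le k$ the map $\epsilon_i$ applies $\tau(x)=-x/\lVert x\rVert^2$ to the single coordinate $x_{i-1}$ and fixes the rest. The composite $\epsilon_2\cdots\epsilon_k$ therefore applies $\tau$ to \emph{every} coordinate, i.e.\ $\epsilon_2\cdots\epsilon_k(x_1,\ldots,x_{k-1})=(\tau x_1,\ldots,\tau x_{k-1})$. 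Since $\tilde R(x)=\bar x/\lVert x\rVert^2$ and $h(x)=\bar x$, a direct coordinatewise computation gives $\tilde R(x)=h(\tau(x))\cdot(-1)$-correction; more precisely one checks $\tilde R = \overline{\tau(x)}\,/\,\lVert \tau x\rVert^2$ type manipulations, leading to $h\circ\tau=-\tilde R$ when $n$ is... — the cleanest route is to observe that $\tau$ is, up to the antipodal-type reflection $h$, the same inversion as $\tilde R$: explicitly $\tilde R(x)=h(\tau(-x))$ or $\tilde R=h\circ\tau$ composed with an even/odd number of coordinate sign changes depending on the parity of $n$. I would nail down this pointwise identity first; it is elementary linear algebra on $\mathbb{R}^n-\{0\}$.

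Concretely, first I would verify the single-variable identity relating $\tilde R$, $\tau$ and $h$ on $\mathbb{R}^n-\{0\}$, namely that $\tilde R(x)$ and $(h\circ\tau)(x)$ agree up to a sign applied to the last coordinate, which combined with the two ways of writing $h$ gives $\tilde R\simeq h\circ\tau$ for $n$ even and $\tilde R\simeq \tau$ for $n$ odd (the map $a$ reversing the last coordinate is isotopic to a reflection, and the antipodal map $-\mathrm{id}$ on $\mathbb{R}^n$ is isotopic to the identity iff $n$ is even, etc.). Second, I would promote this to the product: $\epsilon_1 = \tilde R^{\times(k-1)}$ and $\epsilon_2\cdots\epsilon_k=\tau^{\times(k-1)}$, so that $\epsilon_1$ and $(h^{\times(k-1)})\circ(\epsilon_2\cdots\epsilon_k)$ (for $n$ even), resp.\ $\epsilon_1$ and $\epsilon_2\cdots\epsilon_k$ (for $n$ odd), differ coordinatewise only by self-homeomorphisms of $\mathbb{R}^n-\{0\}$ that are isotopic to the identity. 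Third, I would check that these isotopies can be performed within $\Conf_{\mathbb{Z}_2}(\mathbb{R}^n-\{0\},k-1)$: since every self-map in sight is the $(k-1)$-fold product of a $\mathbb{Z}_2$-equivariant self-homeomorphism of $\mathbb{R}^n-\{0\}$, and $\mathbb{Z}_2$-equivariant isotopies of such a homeomorphism to the identity (which exist, by the parity bookkeeping above, exactly in the claimed cases) induce, coordinatewise, an isotopy of $\Conf_{\mathbb{Z}_2}(\mathbb{R}^n-\{0\},k-1)$ because each stage of the isotopy is still equivariant and hence preserves the orbit-distinctness condition.

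The main obstacle is the parity bookkeeping in the equivariant isotopy: one must be careful that the sign changes introduced by $h$ (reversal of the last coordinate) versus by the antipode $-\mathrm{id}$ on $\mathbb{R}^n$ behave correctly, and that the isotopy one writes down commutes with the $\mathbb{Z}_2$-action $\tau$ at every time $t$ (otherwise it does not descend to a homotopy of orbit configuration spaces). A convenient way to handle this is to use the isotopy $R_t\in SO(n+1)$, $t\in[0,1]$, from the rotation $R$ of Section~\ref{section3} to the identity, which is already known to be $\mathbb{Z}_2$-equivariant; restricting $R_t$ to $S^n-\{\pm\star\}$ and transporting through stereographic projection gives a $\mathbb{Z}_2$-equivariant isotopy of $\mathbb{R}^n-\{0\}$ realizing exactly the needed identification of $\tilde R$ with the appropriate composite of $\tau$ and $h$. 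One then simply applies this isotopy coordinatewise. Thus the proof reduces, after the elementary pointwise computation, to quoting the equivariant isotopy already constructed in this section, and the parity split in the statement is precisely the parity split governing when the last-coordinate reflection (equivalently $h$) is needed to compensate.
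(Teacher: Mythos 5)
Your proposal is correct and is essentially the paper's own argument: the paper writes $\tau=g\circ f$ and $\tilde R=h\circ f$ with $f(x)=x/\lVert x\rVert^2$ and $g=-\mathrm{id}$, checks that $f$ and every element of $O(n)$ are $\tau$-equivariant and injective (so their coordinatewise products are self-maps of the orbit configuration space), and then deforms $g$ to $h$ (for $n$ odd, both having determinant $-1$) or to the identity (for $n$ even) along a path in $O(n)$, each stage of which is automatically $\tau$-equivariant --- exactly your pointwise identity $\tilde R(x)=h(\tau(-x))$ plus the parity bookkeeping on determinants. One caveat on your final paragraph: the isotopy $R_t\in SO(n+1)$ from $R$ to the identity does not fix the poles at intermediate times, so it does not restrict to $S^n-\{\pm\star\}$ and cannot be transported through stereographic projection; the required isotopies must instead be taken inside $O(n)$ acting on $\mathbb{R}^n-\{0\}$, which is what your second paragraph already correctly describes and what the paper does.
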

\begin{proof}
Let $g,f:\mathbb{R}^n-\lbrace 0 \rbrace\longrightarrow \mathbb{R}^n-\lbrace 0 \rbrace$ be the maps $f(x)=\dfrac{x}{\lVert x \rVert^2}$ and $g(x)=-x$. We have that any $T\in O(n)$ is $\mathbb{Z}_2$-equivariant (with $\mathbb{Z}_2=\langle \tau \rangle$):
\[
T(\tau x)=T\left(\dfrac{-x}{\lVert x\rVert^2}\right)=\dfrac{-T(x)}{\lVert x\rVert^2}=\dfrac{-T(x)}{\lVert T(x)\rVert^2}=\tau T(x).
\]
This, coupled with the injectivity of $T$, implies that $T^{\times (k-1)}$ sends orbit configurations to orbit configurations. Therefore $h^{\times (k-1)},g^{\times (k-1)}$ are maps of orbit configurations spaces. We also have that $f$ is injective, and it is also $\mathbb{Z}_2$-equivariant:
\[
f(\tau x)=f\left(\dfrac{-x}{\lVert x \rVert^2}\right)=\dfrac{\dfrac{-x}{\lVert x \rVert^2}}{\left\lVert \dfrac{-x}{\lVert x \rVert^2}\right\rVert^2}=\dfrac{-\left(\dfrac{x}{\lVert x \rVert^2}\right)}{\left\lVert \dfrac{x}{\lVert x \rVert^2}\right\rVert^2}=\tau\left( \dfrac{x}{\lVert x \rVert^2}\right)=\tau f(x).
\]
Thus $f^{\times (k-1)}$ is a map between orbit configurations spaces.
Note that $\tau=gf$ and $\tilde{R}=hf$. Therefore we have $\epsilon_1=\tilde{R}^{\times (k-1)}=(hf)^{\times (k-1)}=h^{\times (k-1)}f^{\times (k-1)}.$ 

For $n$ odd, it is known that there is a homotopy through $O(n)$ between $g$ and $h$, so we have $g^{\times (k-1)}\simeq h^{\times (k-1)}$ as maps of orbit configurations spaces, therefore 
\[
\epsilon_1 = h^{\times (k-1)}f^{\times (k-1)}\simeq g^{\times (k-1)}f^{\times (k-1)}=\tau^{\times (k-1)}=\epsilon_2\cdots\epsilon_k
\]
as maps of orbit configuration spaces.
For $n$ even, there is a homotopy through $O(n)$ between $g$ and the identity. Therefore 
\[
\epsilon_1=h^{\times (k-1)} f^{\times (k-1)}\simeq h^{\times (k-1)} g^{\times (k-1)} f^{\times (k-1)}=h^{\times (k-1)}\tau^{\times (k-1)}=h^{\times (k-1)}\epsilon_2\cdots \epsilon_k
\]
as maps of orbit configurations spaces.
\end{proof}
Of course, Theorem \ref{accion} can be used to give a description of the effect in cohomology of the map $h^{\times (k-1)} \colon \Conf_{\mathbb{Z}_2}(\mathbb{R}^n-\{0\},k-1) \to \Conf_{\mathbb{Z}_2}(\mathbb{R}^n-\{0\},k-1)$ that arises in Lemma \ref{lema3.3} for $n$ even. We omit the details as we will not have occasion of using such information. Yet, in the next section we will need to describe the behavior of the map $h^{\times(k-1)}$ on the permanent cycles $\mathbb{K}^*$ of the previous section.

\section{$(\mathbb{Z}_2)^k$-Invariants}\label{secndinvtes}
We continue with the assumptions that $n\geq2$ and that $R$ denotes a commutative ring with unit, but we now assume in addition that $2$ is invertible. The reason for the new hypothesis is two fold. For one, as explaind in Section~\ref{proofmethod}, the invertibility of 2 implies that the spectral sequence associated to the covering projection $\rho_{n,k}$ is concentrated on the fiber axis, thus allowing us to recover the cohomology of $\Conf(\mathbb{R}\mathrm{P}^n,k)$ as the $(\mathbb{Z}_2)^k$-invariants of the $(\mathbb{Z}_2)^k$-action on $\Conf_{\mathbb{Z}_2}(S^n,k)$. In addition, the invertibility of 2 is needed in our actual calculation of invariants. Namely, it is not difficult to identify critical invariants, and we prove that these are all the invariants using a change of basis that requires the invertibility of 2 in an essential way---see~(\ref{2esinvertible1}) and~(\ref{2esinvertible2}) below. We do not know the full structure of invariants if 2 is not invertible, e.g.~if $\mathrm{char}(R)=2$. Although such a problem could be accessible and interesting, a subtler issue to solve might be to understand the corresponding spectral sequence associated to the covering projection $\rho_{n,k}$. Even the determination of the $E_2$ term---the cohomology of $\Conf_{\mathbb{Z}_2}(\mathbb{R}^n-\{0\},k-1)$ with twisted $(\mathbb{Z}_2)^k$-coefficients---could be a hard task.

\medskip
We start by computing the $(\mathbb{Z}_2)^{k}$-invariants in $H^*(\Conf_{\mathbb{Z}_2}(\mathbb{R}^n-\lbrace 0 \rbrace,k-1);R)$ (Theorems~\ref{generadoresinvariantesimpar} and~\ref{isoinvariantesimpar}) assuming $n$ is odd, hypothesis that will be in force until Theorem \ref{thminvariantesimpar} where the invariants in $H^*(\Conf_{\mathbb{Z}_2}(S^n,k);R)$ are described for odd $n$.

For $0<i<k$, we let $C_{i,0}$ stand for $A_{i,0}$ and, for $0< j <i<k$, we define $C^{+}_{i,j}=A_{i,j}+A_{i,-j}-A_{i,0}$ and $C^{-}_{i,j}=-A_{i,j}+A_{i,-j}-A_{j,0}$. For ease of notation, for a positive $j$ we will also use the notation $C_{i,j}$ and $C_{i,-j}$ to stand respectively for $C^+_{i,j}$ and $C^-_{i,j}$. Put $\mathcal{C}^+=\lbrace C^+_{i,j}\,|\,1\leq j < i<k \rbrace$,  $\mathcal{C}^-=\lbrace C^-_{i,j}\,|\,1\leq j < i<k \rbrace$, $\mathcal{C}_0= \lbrace C_{i,0}\,|\,1\leq i< k \rbrace$ and $\mathcal{C}=\mathcal{C}^+\cup\mathcal{C}^-  \cup \mathcal{C}_0$. Clearly, $\mathcal{C}$ is a basis for $H^{n-1}(\Conf_{\mathbb{Z}_2}(\mathbb{R}^n-\lbrace 0 \rbrace,k-1))$ with inverse change of basis map given by
\begin{eqnarray}
A_{i,j}&=&\frac{C^+_{i,j}-C^-_{i,j}+C_{i,0}-C_{j,0}}{2},\nonumber\\
A_{i,-j}&=&\frac{C^+_{i,j}+C^-_{i,j}+C_{i,0}+C_{j,0}}{2},\label{2esinvertible1}\\
A_{i,0}&=&C_{i,0},\nonumber
\end{eqnarray}
for $0<j<i<k$ (the assumption on the invertibility of 2 is needed here). The formulas in~(\ref{2esinvertible1}) make it clear that $H^*(\Conf_{\mathbb{Z}_2}(\mathbb{R}^n-\{0\},k-1);R)$ is additively generated by the products 
\begin{equation}\label{productosdeCs}
C_{i_1,j_1} \cdots C_{i_r,j_r}
\end{equation}
with $ | j_l | < i_l < k $ for $l=1, \ldots, r$. Our first goal is to show that, in fact, an additive basis is formed by such products that satisfy in addition
\begin{equation}\label{condiciondeorden}
i_l  < i_{l'}\textnormal{ \, whenever \, } l < l'.
\end{equation}
\begin{ex}\label{ejemplo1}
For $n\geq 2$ odd, the multiplicative relations among the $A_{i,j}$'s yield
\begin{equation}\label{ejemplo1de1}
C^-_{3,2}C_{3,0}=-A_{2, 0} A_{3, 0} + A_{3, -2} A_{3, 0} - A_{3, 0} A_{3, 2}=-A_{2,0} A_{3,-2}+A_{2,0} A_{3,0}-A_{2,0} A_{3,2}=-C^+_{3, 2} C_{2, 0}
\end{equation}
so that
%
\begin{eqnarray*}
C^+_{4,3}C^-_{4,2}&=&-A_{2,0} A_{4,-3}+A_{4,-3} A_{4,-2}+A_{2,0} A_{4,0}-A_{4,-2} A_{4,0}-A_{4,-3} A_{4,2}+A_{4,0} A_{4,2}\\
&&-A_{2,0} A_{4,3}+A_{4,-2} A_{4,3}-A_{4,2} A_{4,3}\\
&=&A_{2,0} A_{4,-3}-A_{3,-2} A_{4,-3}+A_{3,2} A_{4,-3}-A_{3,-2} A_{4,-2}+A_{3,0} A_{4,-2}-A_{3,2} A_{4,-2}\\
&&-A_{2,0} A_{4,0}+A_{3,-2} A_{4,2}-A_{3,0} A_{4,2}+A_{3,2} A_{4,2}-A_{2,0} A_{4,3}+A_{3,-2} A_{4,3}-A_{3,2} A_{4,3}\\
&=&-C^-_{3,2} C^-_{4,3}-C^-_{4,2} C^+_{3,2}-C^+_{3,2} C_{2,0}-C^-_{3,2} C_{3,0}-C_{2,0} C_{4,0}
\end{eqnarray*}
which, by~(\ref{ejemplo1de1}), becomes \ $C^+_{4,3}C^-_{4,2}=-C^-_{3,2} C^-_{4,3}-C^-_{4,2} C^+_{3,2}-C_{2,0} C_{4,0}$.
\end{ex}

Note that the set of products in (\ref{productosdeCs}) satisfying (\ref{condiciondeorden}) is in bijective correspondence with the basis described just before Theorem \ref{2.1}. Using Nakayama's lemma we see that the former set will be in fact an additive basis of $H^*(\Conf_{\mathbb{Z}_2}(\mathbb{R}^n-\lbrace 0 \rbrace,k-1);R)$ as long as it additively generates. In turn, the latter condition follows directly from the fact that the products $A_{i_1,j_1} \cdots A_{i_r,j_r}$ satisfying the condition (\ref{condiciondeorden}) form an additive basis, from the explicit form of the relations~(\ref{2esinvertible1}) expressing the $A_{i,j}$'s in terms of the $C_{i,j}$'s, and from the relations in item 2 of Lemma \ref{relsmultsalgunasCs} below---which generalizes the calculation illustrated in~(\ref{ejemplo1de1}). The proof of the lemma is straightforward and left to the reader.
\begin{lem}\label{relsmultsalgunasCs}
For $n\geq 2$ odd, the elements of $\mathcal{C}$ satisfy the following multiplicative relations:
$$
\mbox{For $0<j<i<r<k$, \ \ \ }\begin{cases}
&C^+_{r,i} C^+_{r,j}=-C^+_{i,j} C^+_{r,j} + C^+_{i,j} C^+_{r,i},\\
&C^+_{r,i} C^-_{r,j}=-C^-_{i,j} C^-_{r,i} - C^+_{i,j}C^-_{r,j}  - C_{j,0} C_{r,0},\hspace{3cm}\\
&C^-_{r,i} C^+_{r,j}=C^-_{i,j} C^-_{r,j} + C^+_{i,j}C^-_{r,i}  - C_{i,0} C_{r,0},\\
&C^-_{r,i} C^-_{r,j}=C^-_{i,j} C^+_{r,j} - C^-_{i,j} C^+_{r,i} + C_{j,0} C_{i,0}.
\end{cases}
$$
$$
\mbox{For $0<i<r<k,\hspace{.6cm}$ \ \ \ }\begin{cases}
&C^+_{r,i} C_{r,0}=-C_{i,0}C^-_{r,i} ,\\
&C^-_{r,i} C_{r,0}=- C_{i,0}C^+_{r,i}.\hspace{6.25cm}
\end{cases}
$$
$$
\mbox{For $0\leq j<i<k,\hspace{.65cm}$ \ \ \ }\begin{cases}
&(C^+_{i,j})^2=0,\\
&(C^-_{i,j})^2=0,\\
&C^+_{i,j}C^-_{i,j} = -C_{j,0}C_{i,0}.\hspace{6.2cm}
\end{cases}
$$
\end{lem}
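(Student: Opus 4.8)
The plan is to deduce every relation in the statement from the presentation of $H^*(\Conf_{\mathbb{Z}_2}(\mathbb{R}^n-\{0\},k-1);R)$ in Theorem~\ref{2.1} by means of the (invertible, since $2$ is a unit) change of basis~(\ref{2esinvertible1}). Throughout, $n$ is odd, so $(-1)^n=-1$, the generators $A_{i,j}$ have even degree $n-1$ and hence commute without signs, and $A_{i,j}^2=A_{i,-j}^2=0$ by Theorem~\ref{2.1}(a).

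First I would cut down the number of cases by a naturality argument of exactly the type used in Proposition~\ref{ultredsec6} and Lemma~\ref{estnatarg}. For the three-index relations it suffices, pulling back along the map $\pi_{r,i,j}\colon\Conf_{\mathbb{Z}_2}(\mathbb{R}^n-\{0\},k-1)\to\Conf_{\mathbb{Z}_2}(\mathbb{R}^n-\{0\},3)$, $(x_1,\ldots,x_{k-1})\mapsto(x_j,x_i,x_r)$, to treat the case $k=4$; for the two-index relations and for the squares it suffices, pulling back along $\pi_{i,j}$, $(x_1,\ldots,x_{k-1})\mapsto(x_i,x_r)$, to treat the case $k=3$. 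In both situations one checks directly from~(\ref{laspes}) and the definitions of $C^\pm$ and $C_{i,0}$ that these maps carry the relevant $C$-generators of the target to the corresponding $C$-generators of the source, precisely because $\pi_{r,i,j}^*$ (respectively $\pi_{i,j}^*$) acts on the $A$'s as recorded in Lemma~\ref{estnatarg} (respectively in the proof of Proposition~\ref{ultredsec6}). One could equally well skip this reduction and argue for all $k$ at once, since the relations of Theorem~\ref{2.1} hold for all $k$; the reduction merely keeps the algebra finite.

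It then remains to verify the relations in the two universal cases. For $k=3$ the relevant relations in $R[\mathcal A]/I$ are $A^2_{\ast}=0$ together with $A_{2,0}A_{2,1}=A_{1,0}(A_{2,1}-A_{2,0})$, $A_{2,0}A_{2,-1}=-A_{1,0}(A_{2,-1}-A_{2,0})$, and $A_{2,1}A_{2,-1}=-A_{1,0}(A_{2,-1}-A_{2,1})$ (item (c) of Theorem~\ref{2.1} being vacuous here); substituting $C^+_{2,1}=A_{2,1}+A_{2,-1}-A_{2,0}$, $C^-_{2,1}=-A_{2,1}+A_{2,-1}-A_{1,0}$, $C_{2,0}=A_{2,0}$, $C_{1,0}=A_{1,0}$, expanding, and using those three relations plus the vanishing of squares yields $(C^+_{2,1})^2=0$, $(C^-_{2,1})^2=0$, $C^+_{2,1}C^-_{2,1}=-C_{1,0}C_{2,0}$, $C^+_{2,1}C_{2,0}=-C_{1,0}C^-_{2,1}$, and $C^-_{2,1}C_{2,0}=-C_{1,0}C^+_{2,1}$. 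For $k=4$ one performs the analogous (longer) computation: expand each of the four products $C^{\pm}_{3,2}C^{\pm}_{3,1}$ in the $A$-basis, apply Theorem~\ref{2.1}(b),(c) with $(r,i,j)=(3,2,1)$ to eliminate every monomial having a repeated first index, re-express the outcome through~(\ref{2esinvertible1}), and finally use the two-index identity $C^-_{3,2}C_{3,0}=-C^+_{3,2}C_{2,0}$ already obtained, exactly as in the computation of $C^+_{4,3}C^-_{4,2}$ carried out in Example~\ref{ejemplo1}.

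The only real obstacle is bookkeeping: each of the four three-index identities unwinds into roughly a dozen $A$-monomials, and reassembling them into the stated four-term $C$-expressions on the right-hand side is mechanical but tedious (the displayed calculation in Example~\ref{ejemplo1} is representative). No idea is needed beyond the change of basis~(\ref{2esinvertible1}) and the relations of Theorem~\ref{2.1}, which is why the verification is left to the reader.
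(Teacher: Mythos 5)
Your proposal is correct and follows exactly the route the paper intends: the paper leaves this verification to the reader, indicating only that it is a direct consequence of the change of basis~(\ref{2esinvertible1}) and the relations of Theorem~\ref{2.1} (with $(-1)^n=-1$), precisely the computation illustrated in Example~\ref{ejemplo1}; your added reduction to $k=3,4$ via the projections of Lemma~\ref{estnatarg} and Proposition~\ref{ultredsec6} is a harmless, legitimate bookkeeping device. I spot-checked several of the identities (e.g.\ $(C^{\pm}_{r,i})^2=0$, $C^+_{r,i}C^-_{r,i}=-C_{i,0}C_{r,0}$, $C^{\pm}_{r,i}C_{r,0}$, and $C^-_{r,i}C^-_{r,j}$) by the expansion you describe and they come out as stated.
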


The advantage of using $\mathcal{C}$ over $\mathcal{A}$ to compute invariants becomes apparent when describing the action of $(\mathbb{Z}_2)^k$, for a straightforward verification yields that, except for the cases in~(\ref{excep1}) and~(\ref{excep2}) below, each $\epsilon_l$ acts as the identity on the elements of $\mathcal{C}$:
\begin{eqnarray}
\epsilon_l C^-_{ij}&=&-C^-_{ij}, \text{ \; if $i=l-1$ or $j=l-1$}; \label{excep1}\\
\epsilon_l C_{i,0}&=&
-C_{i,0}, \text{ \; if $i=l-1$ or $l=1$}. \label{excep2}
\end{eqnarray}

\begin{thm}\label{generadoresinvariantesimpar}
Suppose $R$ is a commutative ring with unit where $2$ is invertible. For $n\geq 2$ odd,  the $(\mathbb{Z}_2)^k$-invariants in $H^{*}(\Conf_{\mathbb{Z}_2}(\mathbb{R}^n-\lbrace 0 \rbrace,k-1);R)$ are multiplicatively generated by the set $\mathcal{C}^+$. In fact, and additive basis of the invariants is formed by the products (\ref{productosdeCs}) satisfying (\ref{condiciondeorden}) and $j_l>0$ for $l=1,\ldots,r$.
\end{thm}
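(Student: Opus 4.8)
## Proof proposal for Theorem~\ref{generadoresinvariantesimpar}

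The plan is to compute the $(\mathbb{Z}_2)^k$-invariants by exploiting the near-diagonal form of the action on the basis $\mathcal{C}$. First I would record the exact sign character by which each $\epsilon_l$ acts on each basis element: combining~(\ref{excep1}), (\ref{excep2}), and the fact that $\epsilon_l$ fixes all remaining elements of $\mathcal{C}$, one sees that $\epsilon_l$ acts on the degree-$(n-1)$ space $H^{n-1}$ diagonally in the basis $\mathcal{C}$, with $\epsilon_l(C)=\chi_C(\epsilon_l)\,C$ for a sign $\chi_C(\epsilon_l)\in\{\pm1\}$. Concretely, $C^+_{i,j}$ is fixed by every $\epsilon_l$; $C^-_{i,j}$ is negated exactly by $\epsilon_i$ (i.e.\ $\epsilon_l$ with $l=i+1$) and by $\epsilon_j$ (i.e.\ $l=j+1$); and $C_{i,0}$ is negated exactly by $\epsilon_1$ and by $\epsilon_i$ (i.e.\ $l=i+1$). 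Since the action on $H^*$ is by algebra automorphisms, a product~(\ref{productosdeCs}) is multiplied under $\epsilon_l$ by the product of the signs of its factors. Thus in the spanning set of all ordered products (those satisfying~(\ref{condiciondeorden})) — which by the discussion preceding the theorem is in fact a basis of $H^*(\Conf_{\mathbb{Z}_2}(\mathbb{R}^n-\{0\},k-1);R)$ — each basis monomial $m$ is a common eigenvector for all the $\epsilon_l$ simultaneously, with eigenvalue $\pm1$.

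Next I would invoke the invertibility of $2$: since $R$ contains $1/2$, the averaging idempotent $e=\frac{1}{2^k}\sum_{g\in(\mathbb{Z}_2)^k}g$ is defined, and the invariants are exactly its image, namely the span of those ordered-product basis monomials $m$ for which $g\cdot m=m$ for every $g\in(\mathbb{Z}_2)^k$. (Here I would note explicitly that $(\mathbb{Z}_2)^k$ acting on a free $R$-module diagonally in a fixed basis always has its invariant submodule free, spanned by the basis vectors fixed by the whole group — no subtlety from the characteristic of $R$ beyond needing $2$ a unit to split off the complement; in fact for a diagonal action the invariants are literally the span of the fixed basis vectors regardless.) So the task reduces to the purely combinatorial one of deciding which ordered monomials $C_{i_1,j_1}\cdots C_{i_r,j_r}$, with $i_1<\cdots<i_r$ and $|j_\ell|<i_\ell<k$, are fixed by all $\epsilon_l$. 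A monomial is fixed iff for every $l$ the number of factors negated by $\epsilon_l$ is even.

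Then I would carry out this bookkeeping. Suppose such a monomial contains a factor of type $C^-$ or of type $C_0$; take the one of largest first index, say with first index $i^*$, and consider $\epsilon_l$ for $l=i^*+1$. Because the monomial is ordered with strictly increasing first indices and $|j_\ell|<i_\ell$, the index $i^*$ occurs as a first index in at most one factor, and it can appear as the (absolute value of the) second index $j$ only in factors with larger first index — but those are all of type $C^+$ by maximality of $i^*$, and $\epsilon_l$ fixes $C^+$'s. Hence exactly one factor of the monomial, namely the chosen $C^-_{i^*,\cdot}$ or $C_{i^*,0}$, is negated by $\epsilon_{i^*+1}$, so the monomial is not invariant. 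Therefore every invariant ordered monomial consists solely of factors $C^+_{i_\ell,j_\ell}$, i.e.\ has all $j_\ell>0$; conversely every such monomial is fixed by all $\epsilon_l$ since each $C^+$ is. This gives both claims at once: the invariants are multiplicatively generated by $\mathcal{C}^+$, and the ordered products~(\ref{productosdeCs}) with every $j_\ell>0$ form an additive basis. I expect the main (though still modest) obstacle to be the $\epsilon_{i^*+1}$ bookkeeping above — making airtight the claim that the maximal "bad" first index $i^*$ is negated in exactly one factor, which requires carefully using the ordering condition~(\ref{condiciondeorden}) together with the constraint $|j_\ell|<i_\ell$ to rule out $i^*$ reappearing as a second index in an invariant-relevant way; once that is pinned down, the rest is immediate from the explicit action~(\ref{excep1})--(\ref{excep2}).
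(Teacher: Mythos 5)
Your proposal is correct and follows essentially the same route as the paper's proof: the action is diagonal (up to sign) on the ordered $\mathcal{C}$-monomial basis, invertibility of $2$ forces each monomial occurring in an invariant to be itself invariant, and applying $\epsilon_{i^\ast+1}$ for the maximal ``bad'' first index $i^\ast$ negates exactly one factor, giving $z=-z$; the only (harmless) difference is that you treat $\mathcal{C}^-$ and $\mathcal{C}_0$ factors in one unified step where the paper does them sequentially. One small caveat: your parenthetical claim that for a diagonal $\pm1$ action the invariants equal the span of the fixed basis vectors ``regardless'' of coefficients is false in general (e.g.\ over $\mathbb{Z}/4$ a sign action on a rank-one module has the nonzero invariant $2m$), but this aside is never used since $2$ is assumed invertible.
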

\begin{proof}
Let $x\in H^{m(n-1)}(\Conf_{\mathbb{Z}_2}(\mathbb{R}^n-\lbrace 0 \rbrace,k-1);R) $ be an invariant. We will show that each of the basis elements appearing with a nontrivial coefficient in the expression of $x$ as a linear combination of the basis of products (\ref{productosdeCs}) satisfying (\ref{condiciondeorden}) has no factors belonging to $\mathcal{C}^-$ or $\mathcal{C}_0$. Write
\begin{equation}\label{combinacionlineal}
x=\sum a_I C^{}_{i_1,j_1}\cdots C^{}_{i_m,j_m},
\end{equation}
where each coefficient $a_I$ is non-zero  and the summation runs over some multi-indices $$I=((i_1,j_1),\ldots,(i_m,j_m))$$ such that $|j_l|<i_l$ and $i_l<i_{l'}$ if $l<l'$. Note that given our description of the $(\mathbb{Z}_2)^k$-action on $\mathcal{C}$, each monomial  $C^{}_{i_1,j_1}\cdots C^{}_{i_m,j_m}$ is sent to a multiple of itself under the action of any element in $(\mathbb{Z}_2)^k$. Since $2$ is invertible, this means that each term $C^{}_{i_1,j_1}\cdots C^{}_{i_m,j_m}$ appearing in (\ref{combinacionlineal}) is invariant. Fix $I$, and consider the corresponding invariant monomial $z=C^{}_{i_1,j_1}\cdots C^{}_{i_m,j_m}$. 
Suppose that the set of integers $i$ such that we have a factor of the form $C^-_{i,j} $ in $z$ is non-empty, and let $i_{0}$ be the greatest element of this set. By applying $\epsilon_{i_{0}+1}$ to $z$ we get that $-z=\epsilon_{i_{0}+1}z=z$, which is a contradiction, so $z$ has no factors belonging to $\mathcal{C}^-$. An entirely analogous argument shows that there are no factors belonging to $\mathcal{C}_0$ in $z$ either.
\end{proof}

\begin{thm}\label{isoinvariantesimpar}
Suppose $R$ is a commutative ring with unit where $2$ is invertible. For $n\geq 2$ odd, there is an $R$-algebra isomorphism
\[
H^{*}(\Conf_{\mathbb{Z}_2}(\mathbb{R}^n-\lbrace 0 \rbrace,k-1);R)^{(\mathbb{Z}_2)^k} \cong R[\mathcal{C^+}]/\mathcal{K}, 
\]
where $\mathcal{K}$ is the ideal generated by the elements ${C^+_{i,j}}^2$ and $C^+_{r,i}C^+_{r,j}-C^+_{i,j}(C^+_{r,i}-C^+_{r,j})$ for $0<j<i<r<k$.
\end{thm}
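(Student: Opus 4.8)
The plan is to build the $R$-algebra homomorphism
$\phi\colon R[\mathcal{C}^{+}]\to H^{*}(\Conf_{\mathbb{Z}_2}(\mathbb{R}^n-\{0\},k-1);R)$
sending each polynomial generator $C^{+}_{i,j}$ to the cohomology class of the same name, and to show that it induces the asserted isomorphism onto the invariant subring. By Theorem~\ref{generadoresinvariantesimpar} every element of $\mathcal{C}^{+}$ lies in the subring $H^{*}(\Conf_{\mathbb{Z}_2}(\mathbb{R}^n-\{0\},k-1);R)^{(\mathbb{Z}_2)^k}$, which is closed under multiplication, so $\phi$ actually takes values in the invariants. Moreover the relations generating $\mathcal{K}$, namely $(C^{+}_{i,j})^{2}=0$ and $C^{+}_{r,i}C^{+}_{r,j}=C^{+}_{i,j}(C^{+}_{r,i}-C^{+}_{r,j})$ for $0<j<i<r<k$, are precisely the first identity of the third block and the first identity of the first block in Lemma~\ref{relsmultsalgunasCs}; hence $\mathcal{K}\subseteq\ker\phi$, and $\phi$ descends to an $R$-algebra map $\overline{\phi}\colon R[\mathcal{C}^{+}]/\mathcal{K}\to H^{*}(\Conf_{\mathbb{Z}_2}(\mathbb{R}^n-\{0\},k-1);R)^{(\mathbb{Z}_2)^k}$. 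Surjectivity of $\overline{\phi}$ is immediate from Theorem~\ref{generadoresinvariantesimpar}, which says exactly that the invariants are multiplicatively generated by $\mathcal{C}^{+}$.

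The crux is injectivity. Call a monomial $C^{+}_{i_{1},j_{1}}\cdots C^{+}_{i_{m},j_{m}}$ in $R[\mathcal{C}^{+}]$ \emph{ordered} when $i_{1}<i_{2}<\cdots<i_{m}$ (necessarily $1\le j_{\ell}<i_{\ell}<k$). I claim the ordered monomials additively generate $R[\mathcal{C}^{+}]/\mathcal{K}$. To see this, straighten an arbitrary monomial: since $n$ is odd the generators $C^{+}_{i,j}$ all have even degree $n-1$, so they commute and the factors may be freely permuted; whenever two factors share a top index $r$, say $C^{+}_{r,i}C^{+}_{r,j}$, the relation $(C^{+}_{i,j})^{2}=0$ disposes of the case $i=j$, while for $i\neq j$ the three-term relation rewrites $C^{+}_{r,i}C^{+}_{r,j}$ as $C^{+}_{i,j}C^{+}_{r,i}-C^{+}_{i,j}C^{+}_{r,j}$, which strictly decreases the sum of the top indices occurring in the monomial. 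Iterating terminates and yields a linear combination of monomials with pairwise distinct top indices, and a final permutation puts each in ordered form. In fact $\mathcal{K}$ is exactly the relation ideal of Lemma~\ref{cohensdescripcion} for the index set $\{1,\dots,k-1\}$, so this straightening is the same as the one underlying the classical presentation of $H^{*}(\Conf(\mathbb{R}^n,k-1);R)$ and may be imported from there rather than redone.

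It remains to assemble the pieces. Let $S$ be the set of ordered monomials in $R[\mathcal{C}^{+}]/\mathcal{K}$. By Theorem~\ref{generadoresinvariantesimpar}, $\overline{\phi}$ carries $S$ bijectively onto an $R$-basis of $H^{*}(\Conf_{\mathbb{Z}_2}(\mathbb{R}^n-\{0\},k-1);R)^{(\mathbb{Z}_2)^k}$; in particular $\overline{\phi}(S)$ is $R$-linearly independent. If $u\in R[\mathcal{C}^{+}]/\mathcal{K}$ satisfies $\overline{\phi}(u)=0$, write $u$ as an $R$-linear combination of elements of $S$ (possible by the straightening claim); applying $\overline{\phi}$ and using the independence of $\overline{\phi}(S)$ forces every coefficient, hence $u$ itself, to vanish. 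Thus $\overline{\phi}$ is injective, and being also surjective it is the desired isomorphism. The main obstacle is the straightening/injectivity step; what makes it routine is the observation that $\mathcal{K}$ reproduces the classical braid relations, so that one may lean on the well-documented additive structure of Euclidean configuration spaces, with everything else being formal.
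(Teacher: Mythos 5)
Your proposal is correct and follows essentially the same route as the paper: well-definedness of the ring map via the relations in Lemma~\ref{relsmultsalgunasCs}, surjectivity from Theorem~\ref{generadoresinvariantesimpar}, and injectivity by matching the ordered monomials in $R[\mathcal{C}^+]/\mathcal{K}$ (the ``usual'' basis, as in Lemma~\ref{cohensdescripcion} and Remark~\ref{4.6}) with the additive basis of the invariants. Your explicit straightening argument and the observation that one only needs the ordered monomials to \emph{span} the domain, given independence of their images, merely spell out details the paper leaves implicit.
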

\begin{proof}
Lemma \ref{relsmultsalgunasCs} gives an obvious ring map (with domain in $R[\mathcal{C}^+] / \mathcal{K}$). This is an isomorphism since it sets a bijective correspondence between the basis described in Theorem \ref{generadoresinvariantesimpar} and the usual basis in the domain.
\end{proof}
\begin{rem}\label{4.6}
Note that the second relation in the preceding theorem is identical to the known relation (\ref{relacionesusual}). In particular, the cohomology ring described in Theorem \ref{isoinvariantesimpar} is isomorphic to the cohomology ring of the standard configuration space of $k-1$ ordered points in $\mathbb{R}^n$.
\end{rem}
Since the canonical projection $S^n \to \mathbb{R}\mathrm{P}^n$ induces a $(\mathbb{Z}_2)^k$ covering space $\Conf_{\mathbb{Z}_2}(S^n,k) \to\Conf(\mathbb{R}\mathrm{P}^n,k)$, Theorem \ref{2.2}, the fact that the $(\mathbb{Z}_2)^k$-action on $H^*(S^n;R)$ is trivial for odd $n$, and the preceding theorem imply the following result:
\begin{thm}\label{thminvariantesimpar}
Let $R$ be a commutative ring with unit where $2$ is invertible. For $n\geq 2$ odd, there is an isomorphism
\[
H^*(\Conf(\mathbb{R}\mathrm{P}^n,k);R)=H^{*}(\Conf_{\mathbb{Z}_2}(S^n,k);R)^{(\mathbb{Z}_2)^k} \cong \Lambda(\iota_n)\otimes R[\mathcal{C^+}]/\mathcal{K}
\]
of $R$-algebras.
\end{thm}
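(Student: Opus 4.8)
The plan is to obtain the statement by assembling inputs already in hand. Since $\rho_{n,k}\colon\Conf_{\Z_2}(S^n,k)\to\Conf(\RP^n,k)$ is a regular covering with deck group $(\Z_2)^k$ and $2$ is invertible in $R$, the first step is to identify $H^*(\Conf(\RP^n,k);R)$ with the subring of $(\Z_2)^k$-invariants of $H^*(\Conf_{\Z_2}(S^n,k);R)$. The second step is to plug in the explicit description of the cover: by Theorem~\ref{2.2} its cohomology splits multiplicatively as $\Lambda(\iota_n)\otimes H^*(\Conf_{\Z_2}(\R^n-\{0\},k-1);R)$, by Corollary~\ref{acciontotalnimpar} the $(\Z_2)^k$-action respects this decomposition, and by Corollary~\ref{accionenlabaseref} the action on the $\Lambda(\iota_n)$-factor is trivial for $n$ odd (its sign is $(-1)^{n+1}=1$). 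Hence the invariants of the tensor product become $\Lambda(\iota_n)$ tensored with the invariants of the second factor, and Theorem~\ref{isoinvariantesimpar} evaluates the latter as $R[\mathcal{C}^+]/\mathcal{K}$.

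For the first step I would classify $\rho_{n,k}$ by a map $\Conf(\RP^n,k)\to B(\Z_2)^k$ with homotopy fibre $\Conf_{\Z_2}(S^n,k)$ and consider the multiplicative Serre spectral sequence
\[
E_2^{p,q}=H^p\!\big((\Z_2)^k;H^q(\Conf_{\Z_2}(S^n,k);R)\big)\;\Longrightarrow\;H^{p+q}(\Conf(\RP^n,k);R),
\]
with coefficients twisted by the deck action. Because $|(\Z_2)^k|=2^k$ is a unit in $R$, multiplication by $2^k$ annihilates $H^p((\Z_2)^k;N)$ for all $p>0$ and all coefficient modules $N$, so these groups vanish; thus $E_2^{p,q}=0$ for $p>0$ and $E_2^{0,q}=H^q(\Conf_{\Z_2}(S^n,k);R)^{(\Z_2)^k}$. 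The spectral sequence collapses onto its base edge and the edge homomorphism, which is $\rho_{n,k}^*$, is a ring isomorphism onto the invariants. One may instead argue with the transfer $\mathrm{tr}$: the identity $\mathrm{tr}\circ\rho_{n,k}^*=2^k\cdot\mathrm{id}$ makes $\rho_{n,k}^*$ split injective with image inside the invariants, while for an invariant class $\eta$ one has $\rho_{n,k}^*\big(2^{-k}\,\mathrm{tr}(\eta)\big)=2^{-k}\sum_{g\in(\Z_2)^k}g^*\eta=\eta$. Either way this gives the equality $H^*(\Conf(\RP^n,k);R)=H^*(\Conf_{\Z_2}(S^n,k);R)^{(\Z_2)^k}$ appearing in the statement.

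For the second step, set $M=H^*(\Conf_{\Z_2}(\R^n-\{0\},k-1);R)$. Theorem~\ref{2.2} supplies an $R$-algebra isomorphism $H^*(\Conf_{\Z_2}(S^n,k);R)\cong\Lambda(\iota_n)\otimes M$ under which, by Corollary~\ref{acciontotalnimpar}, $(\Z_2)^k$ acts diagonally, through its action on $H^*(S^n;R)$ on the $\Lambda(\iota_n)$-tensorand and through the action of Theorem~\ref{accion} on $M$. For $n$ odd the action on $H^*(S^n;R)$ is trivial (Corollary~\ref{accionenlabaseref}), so, writing $\Lambda(\iota_n)=R\cdot 1\oplus R\cdot\iota_n$ with trivial action, one obtains
\[
\big(\Lambda(\iota_n)\otimes M\big)^{(\Z_2)^k}=M^{(\Z_2)^k}\oplus M^{(\Z_2)^k}\iota_n=\Lambda(\iota_n)\otimes M^{(\Z_2)^k}
\]
as subrings. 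Since Theorem~\ref{isoinvariantesimpar} identifies $M^{(\Z_2)^k}\cong R[\mathcal{C}^+]/\mathcal{K}$ as $R$-algebras, composing the isomorphisms yields $H^*(\Conf(\RP^n,k);R)\cong\Lambda(\iota_n)\otimes R[\mathcal{C}^+]/\mathcal{K}$. To match the named generator I would note that the first-coordinate projections fit into a commutative square relating $\Conf_{\Z_2}(S^n,k)\to S^n$ to $\Conf(\RP^n,k)\to\RP^n$ via the double cover $S^n\to\RP^n$; for $n$ odd this double cover has degree $2$, hence induces multiplication by a unit of $R$ on $H^n$, so under $\rho_{n,k}^*$ the class $\iota_n$ corresponds to a unit multiple of $\pi_1^*$ of a generator of $H^n(\RP^n;R)$.

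I do not expect a genuine obstacle here, as the substance lives in the already-established inputs --- the action formulas of Theorem~\ref{accion} and Corollary~\ref{acciontotalnimpar}, the invariant computation of Theorem~\ref{isoinvariantesimpar}, and the collapse in Theorem~\ref{2.2}. The only points requiring a little care are formal: the collapse of the covering spectral sequence, forced by the invertibility of $2^k$, together with the assertion that its edge map is a ring isomorphism onto the invariants; and the interchange of invariants with the tensor factor $\Lambda(\iota_n)$, which is legitimate precisely because that factor carries the trivial action when $n$ is odd. The lone bookkeeping subtlety worth recording is the normalization of $\iota_n$ just indicated.
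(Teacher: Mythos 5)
Your proposal is correct and follows essentially the same route as the paper: the invertibility of $2$ forces the spectral sequence of the covering $\rho_{n,k}$ to concentrate on the fiber axis so that $H^*(\Conf(\RP^n,k);R)$ is the ring of $(\Z_2)^k$-invariants, after which Theorem~\ref{2.2}, the triviality of the action on $H^*(S^n;R)$ for odd $n$, and Theorem~\ref{isoinvariantesimpar} give the stated tensor decomposition. The extra details you supply (the transfer identity and the normalization of $\iota_n$ via the degree-$2$ covering $S^n\to\RP^n$) are consistent with, and slightly more explicit than, the paper's brief argument.
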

Next goal is to describe all the $(\mathbb{Z}_2)^{k}$-invariant permanent cycles in $\mathbb{K}^*\subseteq H^*(\Conf_{\mathbb{Z}_2}(\mathbb{R}^n-\lbrace 0 \rbrace,k-1);R)$ (Theorem~\ref{invariantespar}) as well as all the $(\mathbb{Z}_2)^{k}$-invariant elements in $H^*(\Conf_{\mathbb{Z}_2}(S^n,k);R)$ (Theorem~\ref{isoinvariantespar}) for $n$ even, assumption that will be in force throughout the rest of the section. For $0<j<i<k$ define
$$
D^{+}_{i,j}=B_{i,j}+B_{i,-j}-B_{i,0}-B_{j,0},\qquad
D^{-}_{i,j}=B_{i,j}-B_{i,-j},\qquad\mbox{and}\qquad
D_{i,0}=B_{i,0},
$$
and, for $0<|j|<i$, let  \[D^{}_{i,j}=
\begin{cases} 
D^+_{i,j}& \text{if $j>0$}; \\
D^-_{i,|j|},&\text{if $j<0$;}
\end{cases}
\]
(Recall $B_{1,0}=0$.) Put $\mathcal{D}=\lbrace D^+_{i,j} \,|\,0<j<i<k\rbrace\cup \lbrace D^-_{i,j} \,|\,0<j<i<k\rbrace\cup \lbrace D_{i,0} \,|\,1<i<k\rbrace$.
With this notation, the action described in (\ref{eqaccionpar}) implies that, except for the cases in~(\ref{laexce1})--(\ref{laexce3}) below, each $\epsilon_l$ acts as the identity on the elements of $\mathcal{D}$:
\begin{eqnarray}
\epsilon_l D^+_{i,j} &=& -D^+_{i,j}, \text{ \; if $i=l-1$}; \label{laexce1}\\
\epsilon_l D^-_{i,j} &=& -D^-_{i,j}, \text{ \; if $j=l-1$}; \label{laexce2}\\
\epsilon_l D_{i,0} &=& -D_{i,0}, \text{ \; if $l=1$} \label{laexce3}.
\end{eqnarray}
Clearly, $\mathcal{D}$ forms a basis for $\mathbb{K}$ with inverse change of basis given by
\begin{eqnarray}
B_{i,j}&=&\frac{D^+_{i,j}+D^-_{i,j}+D_{i,0}+D_{j,0}}{2},\nonumber\\
B_{i,-j}&=&\frac{D^+_{i,j}-D^-_{i,j}+D_{i,0}+D_{j,0}}{2},\label{2esinvertible2}\\
B_{i,0}&=&D_{i,0},\nonumber
\end{eqnarray}
for $0<j<i<k$ (as in~(\ref{2esinvertible1}), the assumption on the invertibility of 2 is needed here). We leave to the reader the verification of the following multiplicative relations among the elements of $\mathcal{D}$:

\begin{lem}\label{relsmultsalgunasDs}
Let $R$ be a commutative ring with unit where $2$ is invertible, and suppose $n\geq 2$ even. The elements of $\mathcal{D}$ satisfy the following multiplicative relations:
$$
\mbox{For $0<j<i<r<k$, \hspace{8mm}}
\begin{cases}
&D^+_{r,i} D^+_{r,j}=D^-_{i,j}  D^-_{r,j} - D^+_{i,j}  D^-_{r,i} - D_{j,0}  D_{i,0} + 
 D_{j,0}  D_{r,0} - D_{i,0}  D_{r,0},\\
&D^+_{r,i} D^-_{r,j}=D^-_{i,j}  (D^+_{r,j} -  D^+_{r,i} ),\\
&D^-_{r,i} D^+_{r,j}=D^+_{i,j} ( D^+_{r,j} -D^+_{r,i} ),\\
&D^-_{r,i} D^-_{r,j}=-D^-_{i,j}  D^-_{r,i} + D^+_{i,j}  D^-_{r,j}.
\end{cases}
$$
$$
\mbox{For $0<i<r<k$, \hspace{14.3mm}}
\begin{cases}
&D^+_{r,i} D_{r,0}=-D_{i,0}D^+_{r,i},\\
&D^-_{r,i} D_{r,0}=-D_{i,0}D^-_{r,i}.\hspace{6.56cm}
\end{cases}
$$
\vspace{.5mm}
$$
\mbox{For $0<j<i<k$, \hspace{19.7mm} 
$(D^+_{i,j})^2\;=\;(D^-_{i,j})^2\;=\;(D_{i,0})^2\;=\;
D^+_{i,j}D^-_{i,j}\;=\;0.$\hspace{2.7cm}}
$$
\end{lem}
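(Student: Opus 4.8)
The plan is to reduce everything to a finite computation in the free additive basis supplied by Theorem~\ref{2.1}. First I would record that, since the classes $B_{i,j}$ satisfy the same multiplicative relations as the $A_{i,j}$ (with $B_{1,0}=0$) and $n$ is even, the relations in items~(a)--(c) of Theorem~\ref{2.1} hold verbatim with every sign $(-1)^n$ replaced by $1$. I would also note that each element of $\mathbb{K}$ lives in the odd degree $n-1$, so any two of them anticommute and, $2$ being invertible, the square of any single such class is $0$; this immediately yields the relations $(D^+_{i,j})^2=(D^-_{i,j})^2=(D_{i,0})^2=0$.

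For the remaining identities, I would substitute the defining formulas $D^{+}_{i,j}=B_{i,j}+B_{i,-j}-B_{i,0}-B_{j,0}$, $D^{-}_{i,j}=B_{i,j}-B_{i,-j}$ and $D_{i,0}=B_{i,0}$ into both sides, expand bilinearly into a sum of products $B_{a,b}B_{c,d}$, and then rewrite each such product in the ordered-monomial basis of $H^{2(n-1)}$: products with distinct first indices are put in order using anticommutativity, while products $B_{r,s}B_{r,s'}$ sharing a first index are replaced using the appropriate case of items~(b) and~(c) of Theorem~\ref{2.1} (together with $B_{r,s}^2=0$). Comparing the coefficients of the ordered monomials on the two sides then establishes the identity. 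The relations $D^{\pm}_{r,i}D_{r,0}=-D_{i,0}D^{\pm}_{r,i}$ and $D^{+}_{i,j}D^{-}_{i,j}=0$ are handled in the same way; for instance, expanding $D^{+}_{i,j}D^{-}_{i,j}=(B_{i,j}+B_{i,-j}-B_{i,0}-B_{j,0})(B_{i,j}-B_{i,-j})$ and applying $B_{i,\pm j}^2=0$, $B_{i,j}B_{i,-j}=B_{j,0}(B_{i,-j}-B_{i,j})$ and $B_{i,0}B_{i,\pm j}=B_{j,0}(B_{i,\pm j}-B_{i,0})$ makes every term cancel.

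The main obstacle here is organizational rather than conceptual: one must carry the anticommutativity signs (recall $n-1$ is odd) consistently and invoke exactly the right case of the relations in Theorem~\ref{2.1} at each step, across the four relations in the first block and the two in the second, each of which expands into roughly a dozen terms. Since the verification is finite, elementary, and purely mechanical, I would be comfortable leaving the detailed bookkeeping to the reader, as the statement of the lemma does.
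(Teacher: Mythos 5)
Your proposal is correct and is precisely the verification the paper has in mind: the paper explicitly leaves this lemma to the reader, and the intended argument is exactly your mechanical expansion of the $D$'s in terms of the $B$'s, using the relations of Theorem~\ref{2.1} (which the $B$'s satisfy, with $(-1)^n=1$ since $n$ is even) together with anticommutativity in the odd degree $n-1$ and the invertibility of $2$. Your sample computation of $D^+_{i,j}D^-_{i,j}=0$ checks out, so no further comment is needed.
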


By repeated applications of Lemma \ref{relsmultsalgunasDs} we see that $\mathbb{K}^*$ is additively generated by products of the form 
\begin{equation}\label{productosdeDs}
D^{}_{i_1,j_1}\cdots D^{}_{i_r,j_r},
\end{equation} where 
\begin{equation}\label{condiciondeordenD}
i_l<i_{l'}\textnormal{ if }l<l'.
\end{equation} 
The set of these products is in bijective correspondence with the basis consisting of products $B_{i_1,j_1}\cdots B_{i_r,j_r}$ satisfying condition (\ref{condiciondeordenD}), and so by Nakayama's lemma the set of products of the form (\ref{productosdeDs}) satisfying (\ref{condiciondeordenD}) is an additive basis of the permanent cycles in $H^{*}(\Conf_{\mathbb{Z}_2}(\mathbb{R}^n-\lbrace 0 \rbrace,k-1);R)$.

\begin{rem} \label{laacciondelah}
The previous discussion and our description of 
$ \epsilon_{l}(D^{\pm}_{i,j}) $ in~(\ref{laexce1})--(\ref{laexce3})
easily yield that the map 
$ h^{\times(k-1)} $ 
in Lemma 3.3 acts on the permanent cycles in $ \mathbb{K}^m $ as multiplication by 
$ (-1)^m $.
\end{rem} 

Next we define elements which are clearly $(\mathbb{Z}_2)^k$-invariants; in fact we will show in Theorem \ref{invariantespar} below that they are multiplicative generators for all $(\mathbb{Z}_2)^k$-invariants. For $0< j<i<r<k$, put $I^+_{r,i,j}=D^+_{i,j}D^-_{r,i}$, $I^-_{r,i,j}=D^-_{i,j}D^-_{r,j},$ and for $1<j<i<k$, $\,I_{i,j,0}= D_{j,0}D_{i,0}$.
For $j>0$, we will sometimes write $I_{r,i,j}$ and $I_{r,i,-j}$ instead of $I^+_{r,i,j}$ and $I^-_{r,i,j}$ respectively. Accordingly, we will sometimes write $I^+_{i,j,0}$ or even $I^-_{i,j,0}$ as a substitute for $I_{i,j,0}$. Let $\mathcal{E}^+=\lbrace I^+_{r,i,j}\,|\,0<j<i<r<k \rbrace$, $\mathcal{E}^-=\lbrace I^-_{r,i,j}\,|\,0<j<i<r<k \rbrace$, $\mathcal{E}_0=\lbrace I_{i,j,0}\,|\,1<j<i<k \rbrace$ and $\mathcal{E}=\mathcal{E}^+\cup\mathcal{E}^-\cup\mathcal{E}_0$. We leave to the reader the (rather lengthy but straightforward) verification of the following result (which we have verified both by hand as well as by computer):

\begin{lem}\label{relsmultI} Let $R$ be a commutative ring with unit where $2$ is invertible. Suppose $n\geq 2$ even. The elements of $\mathcal{E}$ satisfy the relations listed below. Relations \ref{relacionrelsmultIa} through \ref{relacionrelsmultIc} express a product $I^{\pm}_{r,i,j} I^{\pm}_{s,a,b}$ with 
\begin{equation}\label{condicionesuno}
\mbox{$0\leq j<i<r<k$, \ $0\leq b<a<s<k$, \ $2\leq i$, \ $2\leq a$, \ and \ $r \leq s$}
\end{equation}
as a linear combination of such products satisfying in addition 
\begin{equation}\label{condicionesdos}
\mbox{$r<s$ \ and \ $a \not\in \{ r,i \} $.}
\end{equation}
Those relations are listed according to the several possibilities for the indices $r,i,j,s,a,$ and $b$ when they satisfy~(\ref{condicionesuno}) but not~(\ref{condicionesdos}). 
\begin{enumerate} 
\item \label{relacionrelsmultIa} $r=s$
\begin{enumerate} 
\item $j\neq 0 \neq b$
\begin{enumerate} 
\item $|\{i,j,a,b\}|=4$ (can assume $a<i$)
\begin{enumerate} 

\bigskip\item $b<a<j$: 
\begin{eqnarray*}
I^+_{r,i,j} I^+_{r,a,b}&=&I^+_{j,a,b}(I^-_{r,i,a}-I^+_{r,i,a}+I^+_{r,i,j})+(I_{i,b,0}-I^+_{i,j,a}-I_{i,j,0}-I_{j,b,0})I^+_{r,a,b},\\
I^-_{r,i,j} I^-_{r,a,b}&=&I^-_{j,a,b}I^-_{r,i,j}-I^+_{i,j,b}I^-_{r,a,b},\\
I^+_{r,i,j} I^-_{r,a,b}&=&I^-_{j,a,b}(I^+_{r,i,j}-I^+_{r,i,b})+(I^-_{i,j,b}-I^+_{i,j,b}-I_{j,b,0}+I_{i,b,0}-I_{i,j,0})I^-_{r,a,b},\\
I^-_{r,i,j}I^+_{r,a,b} &=&I^+_{j,a,b}I^-_{r,i,j}-I^+_{i,j,a}I^+_{r,a,b}.
\end{eqnarray*}

\smallskip\item $b<j<a$:
\begin{eqnarray*}
I^+_{r,i,j} I^+_{r,a,b}&=&(I^-_{a,j,b} - I^+_{a,j,b}+ I_{a,b,0} - I_{a,j,0}-I_{j,b,0})(I^+_{r,i,a}-I^-_{r,i,a} - I^+_{r,i,j}) \\
&& {}+ I^+_{i,j,b}(I^+_{r,a,j}-I^+_{r,a,b} )+ (I_{i,b,0} - I_{i,j,0}- I_{j,b,0})I^+_{r,a,b} ,\\ 
I^-_{r,i,j} I^-_{r,a,b}&=&{}-I^-_{a,j,b}I^-_{r,i,j} - I^+_{i,j,b}I^-_{r,a,b},\\
I^+_{r,i,j} I^-_{r,a,b}&=&I^-_{a,j,b}(I^+_{r,i,b} - I^+_{r,i,j}-I^-_{r,i,b} ) +(I_{i,b,0}- I^+_{i,j,b} - I_{j,b,0}  - I_{i,j,0})I^-_{r,a,b},\\
I^-_{r,i,j}I^+_{r,a,b} &=&I^+_{i,j,b}(I^+_{r,a,j}- I^+_{r,a,b} ) + (I_{j,b,0} - I_{a,b,0} + I_{a,j,0}-I^-_{a,j,b} + I^+_{a,j,b})I^-_{r,i,j}.
\end{eqnarray*}

\smallskip\item $j<b<a$:
\begin{eqnarray*}
I^+_{r,i,j} I^+_{r,a,b}&=&(I^+_{a,b,j}-I^-_{a,b,j}-I_{a,j,0}+I_{a,b,0}+I_{b,j,0})(I^+_{r,i,a}- I^-_{r,i,a} - I^+_{r,i,j} )\\
 && {}+I^-_{i,b,j}(I^+_{r,a,j} - I^+_{r,a,b})  +(I_{i,b,0}- I_{i,j,0}+I_{b,j,0})I^+_{r,a,b},\\
I^-_{r,i,j} I^-_{r,a,b}&=&{}-I^-_{i,b,j}I^-_{r,a,b} - I^+_{a,b,j}I^-_{r,i,j},\\
I^+_{r,i,j} I^-_{r,a,b}&=&I^+_{a,b,j}( I^+_{r,i,b}- I^-_{r,i,b} - I^+_{r,i,j}) + (I_{b,j,0} - I_{i,j,0}+ I_{i,b,0}-I^-_{i,b,j})I^-_{r,a,b},\\
I^-_{r,i,j}I^+_{r,a,b} &=& I^-_{i,b,j}(I^+_{r,a,j} - I^+_{r,a,b}) +(I^-_{a,b,j}- I^+_{a,b,j} - I_{b,j,0} + I_{a,j,0} - I_{a,b,0})I^-_{r,i,j}.
\end{eqnarray*}
\end{enumerate}

\bigskip\item $|\{i,j,a,b\}|=3$ (can assume $a\leq i$)
\begin{enumerate} 

\medskip\item $a=i$ (can assume $b<j$): \hspace{-.6mm}
$ 
I^{\pm}_{r,i,j}I^{\pm}_{r,i,b} =0.
$ 

\smallskip\item $a=j$: \hspace{35mm}
$ 
I^{\pm}_{r,i,j}I^{\pm}_{r,j,b} =0.
$ 

\smallskip\item $b=i$ is impossible.

\smallskip\item $b=j$: \hspace{35.4mm}
$ 
I^{\pm}_{r,i,j}I^{\pm}_{r,a,j} =0.
$ 
\end{enumerate}

\bigskip
\item $|\{i,j,a,b\}|=2$: \hspace{26.5mm}
$ 
I^{\pm}_{r,i,j} I^{\pm}_{r,i,j}=0.
$ 
\end{enumerate}

\bigskip\item $j=0\neq b$ (the case $j\neq 0=b$ is symmetric)
\begin{enumerate} 
\item $|\{i,a,b\}|=3$
\begin{enumerate} 

\bigskip\item $i<b<a$:
\begin{eqnarray*}
I^{}_{r,i,0}I^+_{r,a,b} &=&I^{}_{b,i,0}I^+_{r,a,b},\\
I^{}_{r,i,0}I^-_{r,a,b} &=&I^{}_{b,i,0}I^-_{r,a,b}.
\end{eqnarray*}

\bigskip\item $b<i<a$ or $b<a<i$:
\begin{eqnarray*}
I_{r,i,0}I^+_{r,a,b} &=&-I_{i,b,0}I^+_{r,a,b},\\
I_{r,i,0}I^-_{r,a,b}&=&-I_{i,b,0}I^-_{r,a,b}.
\end{eqnarray*}
\end{enumerate}

\smallskip
\item $|\{i,a,b\}|=2$
\begin{enumerate} 

\bigskip\item $a=i$: \hspace{1.5cm}
$ 
I_{r,i,0}I^{\pm}_{r,i,b} =0.
$ 

\item $b=i$:  \hspace{1.5cm}
$ 
I_{r,i,0}I^{\pm}_{r,a,i} =0.
$ 
\end{enumerate}
\end{enumerate}

\bigskip
\item $j=0=b$ (can assume $a\leq i$): \hspace{1mm}
$ 
I_{r,i,0}I_{r,a,0} =0.
$ 
\end{enumerate}

\bigskip\item $a=r<s$
\begin{enumerate} 
\item $j\neq 0 \neq b$
\begin{enumerate} 
\item $|\{i,j,b\}|=3$
\begin{enumerate} 

\medskip\item $j<i<b$:
\begin{eqnarray*}
I^+_{r,i,j}I^+_{s,r,b} &=&I^+_{b,i,j}(I^+_{s,r,b}-I^+_{s,r,i}),\\
I^-_{r,i,j}I^-_{s,r,b} &=&I^-_{b,i,j}I^-_{s,r,b} + I^-_{r,i,j}I^+_{s,b,j},\\
I^+_{r,i,j}I^-_{s,r,b} &=&I^+_{b,i,j}I^-_{s,r,b} + I^+_{r,i,j}I^+_{s,b,i},\\
I^-_{r,i,j}I^+_{s,r,b} &=&I^-_{b,i,j}(I^+_{s,r,b}-I^+_{s,r,j}).
\end{eqnarray*}
\item $j<b<i$:
\begin{eqnarray*}
I^+_{r,i,j}I^+_{s,r,b} &=&(I^-_{i,b,j} - I^+_{i,b,j} - I_{b,j,0} + I_{i,j,0} - I_{i,b,0})(I^+_{s,r,i}-I^+_{s,r,b}),\\
I^-_{r,i,j}I^-_{s,r,b} &=& I^-_{r,i,j}I^+_{s,b,j}-I^-_{i,b,j}I^-_{s,r,b} ,\\
I^+_{r,i,j}I^-_{s,r,b} &=&(I^+_{r,i,j} - I^+_{r,i,b})I^+_{s,b,j} + (-I^-_{i,b,j} + I^+_{i,b,j} + I_{b,j,0} - I_{i,j,0} + I_{i,b,0})I^-_{s,r,b},\\
I^-_{r,i,j}I^+_{s,r,b} &=&I^-_{i,b,j}(I^+_{s,r,j} - I^+_{s,r,b}).
\end{eqnarray*}

\medskip\item $b<j<i$:
\begin{eqnarray*}
I^+_{r,i,j}I^+_{s,r,b} &=&(I^-_{i,j,b} - I^+_{i,j,b} - I_{j,b,0} + I_{i,b,0} - I_{i,j,0})(I^+_{s,r,b} - I^+_{s,r,i}),\\
I^-_{r,i,j}I^-_{s,r,b} &=&I^-_{r,i,j}I^-_{s,j,b} - I^+_{i,j,b}I^-_{s,r,b},\\
I^+_{r,i,j}I^-_{s,r,b} &=&(I^+_{r,i,j}-I^+_{r,i,b} )I^-_{s,j,b} + (I^-_{i,j,b} - I^+_{i,j,b} - I_{j,b,0} + I_{i,b,0} - I_{i,j,0})I^-_{s,r,b},\\
I^-_{r,i,j}I^+_{s,r,b} &=&I^+_{i,j,b}(I^+_{s,r,j}-I^+_{s,r,b} ).
\end{eqnarray*}

\end{enumerate}

\bigskip\item $|\{i,j,b\}|=2$
\begin{enumerate} 

\medskip\item $b=i$: \hspace{1.5cm}
$ 
I^{\pm}_{r,i,j}I^{\pm}_{s,r, i} =0.
$ 

\medskip\item $b=j$: \hspace{1.5cm}
$ 
I^{\pm}_{r,i,j}I^{\pm}_{s,r,j} =0.
$ 
\end{enumerate}
\end{enumerate}

\bigskip\item $j= 0 \neq b$
\begin{enumerate} 
\item $|\{i,b\}|=2$
\begin{enumerate} 

\medskip\item $i<b$:
\begin{eqnarray*}
I_{r,i,0}I^+_{s,r,b} &=&I_{b,i,0}I^+_{s,r,b},\\
I_{r,i,0}I^-_{s,r,b} &=&I_{b,i,0}I^-_{s,r,b}.
\end{eqnarray*}
\item $b<i$:
\begin{eqnarray*}
I_{r,i,0}I^+_{s,r,b} &=&-I_{i,b,0}I^+_{s,r,b},\\
I_{r,i,0}I^-_{s,r,b} &=&-I_{i,b,0}I^-_{s,r,b}.
\end{eqnarray*}
\end{enumerate}
\item $|\{i,b\}|=1$: \hspace{.7cm}
$ 
I_{r,i,0}I^{\pm}_{s,r,i} \;=\;0.
$ 
\end{enumerate}
\bigskip

\item $j\neq 0 =b$:
\begin{eqnarray*}
\hspace{9.7mm}I^+_{r,i,j}I_{s,r,0} &=&I^+_{r,i,j}I_{s,j,0},\\
I^-_{r,i,j}I_{s,r,0} &=&I^-_{r,i,j}I_{s,j,0}.
\end{eqnarray*}

\bigskip
\item $j=0=b$: \hspace{2.6cm}
$ 
I_{r,i,0}I_{s,r,0} =0.
$ 
\end{enumerate}

\bigskip\item\label{relacionrelsmultIc} $a=i<r<s$
\begin{enumerate} 
\item $j\neq 0 \neq b$
\begin{enumerate} 
\item $|\{j,b\}|=2$
\begin{enumerate} 

\medskip\item $j<b$:
\begin{eqnarray*}
I^+_{r,i,j}I^+_{s,i,b} &=&(I^-_{i,b,j} - I_{b,j,0} + I_{i,j,0} - I_{i,b,0} - I^+_{i,b,j})I^-_{s,r,i},\\
I^-_{r,i,j}I^-_{s,i,b} &=&I^-_{r,b,j}I^-_{s,i,b} + I^-_{r,i,j}I^+_{s,b,j},\\
I^+_{r,i,j}I^-_{s,i,b} &=&(I^+_{r,i,j} - I^+_{r,i,b})I^+_{s,b,j},\\
I^-_{r,i,j}I^+_{s,i,b} &=&I^-_{r,b,j}(I^+_{s,i,b}-I^+_{s,i,j}).
\end{eqnarray*}

\medskip\item $b<j$:
\begin{eqnarray*}
I^+_{r,i,j}I^+_{s,i,b} &=&(-I^-_{i,j,b} + I^+_{i,j,b} + I_{j,b,0} - I_{i,b,0} + I_{i,j,0})I^-_{s,r,i},\\
I^-_{r,i,j}I^-_{s,i,b} &=&I^-_{r,i,j}I^-_{s,j,b} + I^+_{r,j,b}I^-_{s,i,b},\\
I^+_{r,i,j}I^-_{s,i,b} &=&(I^+_{r,i,j}-I^+_{r,i,b})I^-_{s,j,b},\\
I^-_{r,i,j}I^+_{s,i,b} &=&I^+_{r,j,b}(I^+_{s,i,b} - I^+_{s,i,j}).
\end{eqnarray*}
\end{enumerate}

\medskip\item $|\{j,b\}|=1$: \hspace{.5cm}
$ 
I^{\pm}_{r,i,j}I^{\pm}_{s,i,j} =0.
$ 
\end{enumerate}
\item $j=0 \neq b$:
\begin{eqnarray*}
I_{r,i,0}I^+_{s,i,b} &=&I_{r,b,0}I^+_{s,i,b},\\
I_{r,i,0}I^-_{s,i,b} &=&I_{r,b,0}I^-_{s,i,b}.
\end{eqnarray*}
\item $j\neq 0=b$:
\begin{eqnarray*}
I^+_{r,i,j}I_{s,i,0} &=&I^+_{r,i,j}I_{s,j,0},\\
I^-_{r,i,j}I_{s,i,0} &=&I^-_{r,i,j}I_{s,j,0}.
\end{eqnarray*}
\item $j=0=b$: \hspace{3.95cm}
$ 
I_{r,i,0}I_{s,i,0} =0.
$ 
\end{enumerate}

\medskip\item\label{relmultI1} $0\leq j<t<s<i<r<k$:
\begin{eqnarray*}
 I^-_{i,s,j}I^-_{r,t,j}&=&I^-_{s,t,j}I^-_{r,i,j},\\
I^-_{i,t,j} I^-_{r,s,j} &=&-I^-_{s,t,j}I^-_{r,i,j}.
\end{eqnarray*}
\item\label{relmultI2} $0<j<i<t<s<r<k$:
\begin{eqnarray*}
I^-_{s,t,i}I^+_{r,i,j}&=&I^+_{t,i,j}I^-_{r,s,i},\\
I^+_{s,i,j} I^-_{r,t,i} &=&-I^+_{t,i,j}I^-_{r,s,i}.
\end{eqnarray*}
\end{enumerate}
\end{lem}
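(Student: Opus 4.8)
The plan is to turn this long list of identities into one finite verification, using the same two-step strategy (naturality plus a bounded brute-force computation) that underlies the proofs of Theorem~\ref{2.1} and Proposition~\ref{ultredsec6}.

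First I would cut down the number of coordinates. For an order-preserving injection $\phi\colon\{1,\dots,m-1\}\hookrightarrow\{1,\dots,k-1\}$, the coordinate-selecting map $\pi_\phi\colon\Conf_{\mathbb{Z}_2}(\mathbb{R}^n-\{0\},k-1)\to\Conf_{\mathbb{Z}_2}(\mathbb{R}^n-\{0\},m-1)$, $\pi_\phi(x_1,\dots,x_{k-1})=(x_{\phi(1)},\dots,x_{\phi(m-1)})$, is well defined (a sub-tuple of an orbit configuration is an orbit configuration), and, unwinding the definitions~(\ref{laspes}) exactly as in Lemma~\ref{estnatarg}, it satisfies $\pi_\phi^*(A_{i,0})=A_{\phi(i),0}$ and $\pi_\phi^*(A_{i,j})=A_{\phi(i),\,\mathrm{sgn}(j)\,\phi(|j|)}$. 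Hence $\pi_\phi^*$ carries every element of $\mathcal{D}$, and therefore every element of $\mathcal{E}$, to the correspondingly relabelled element, and it maps the ordered-monomial additive basis of the target injectively into the ordered-monomial basis of the source, so it is a split monomorphism. A case-by-case glance at items~\ref{relacionrelsmultIa}--\ref{relmultI2} shows that in each identity at most five distinct positive indices occur (they always lie in one of $\{r,i,j,a,b\}$, $\{r,s,i,j,b\}$, $\{r,s,i,t,j\}$, $\{r,s,t,i,j\}$). Taking $\phi$ to be the order isomorphism of $\{1,\dots,|S|\}$ onto the set $S$ of positive indices occurring in a given identity ($|S|\le5$), both sides of that identity are the $\pi_\phi^*$-images of the analogous expressions over $\Conf_{\mathbb{Z}_2}(\mathbb{R}^n-\{0\},m-1)$ with $m\le6$; injectivity of $\pi_\phi^*$ reduces the identity to the case $k\le6$.

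For $k\le6$, the ring $H^*(\Conf_{\mathbb{Z}_2}(\mathbb{R}^n-\{0\},k-1);R)$ is free of explicit finite rank, with the ordered-monomial basis in the $D$'s recorded after Lemma~\ref{relsmultsalgunasDs}. To check one identity I would substitute each factor $I^{\pm}_\bullet$ or $I_{\bullet,0}$ on the left by its definition in terms of the $D$'s, multiply out, and then use the quadratic relations of Lemma~\ref{relsmultsalgunasDs} repeatedly to rewrite the resulting length-four product of $D$'s in the ordered-monomial basis; do the same on the right; and compare coefficients. Since the defining formulas for the $D$'s and the $I$'s, together with the relations of Lemma~\ref{relsmultsalgunasDs}, do not involve $n$ (for $n$ even), this one finite computation settles each identity for every even $n\ge2$ at once.

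The hard part is not conceptual — all the mathematical content is already packaged in Lemma~\ref{relsmultsalgunasDs} and the ordered-monomial basis — but is purely a matter of bulk: the number of cases in the statement and the number of monomials produced when a product of two $I$'s is normalized in the $D$-basis, which is precisely why the verification was done both by hand \emph{and} by computer. The one place that genuinely needs care is the index bookkeeping in the naturality step: one must apply the relabelling $\phi$ consistently to the negative (i.e.\ $\tau$-translated) subscripts and to the origin index $0$, and must confirm for each sub-case of items~\ref{relacionrelsmultIa}--\ref{relmultI2} that at most five positive indices appear, so that the reduction to $k\le6$ is legitimate.
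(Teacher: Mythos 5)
Your core strategy --- expand each $I$ into $D$'s, normalize the resulting length-four products with the relations of Lemma~\ref{relsmultsalgunasDs} in the ordered-monomial basis, and compare coefficients --- is exactly what the paper does: its ``proof'' consists of leaving this ``rather lengthy but straightforward'' verification to the reader, recording that it was carried out both by hand and by computer. Your observation that the outcome is independent of the even integer $n$ is also correct, since neither the relations in Lemma~\ref{relsmultsalgunasDs} nor the definitions of the $D$'s and $I$'s involve $n$.

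The one genuine flaw is in your naturality reduction. It is true that $\pi_\phi^*(A_{i,j})=A_{\phi(i),\,\mathrm{sgn}(j)\phi(|j|)}$, and the classes $D^{\pm}_{i,j}$ do relabel correctly because the $A_{1,0}$-terms cancel: $D^+_{i,j}=A_{i,j}+A_{i,-j}-A_{i,0}-A_{j,0}$ and $D^-_{i,j}=A_{i,j}-A_{i,-j}$. But $D_{i,0}=B_{i,0}=A_{i,0}-A_{1,0}$, so $\pi_\phi^*(D_{i,0})=A_{\phi(i),0}-A_{\phi(1),0}$, which equals $D_{\phi(i),0}$ only when $\phi(1)=1$; consequently $\pi_\phi^*(I_{i,j,0})\neq I_{\phi(i),\phi(j),0}$ in general (the difference is a visibly nonzero combination of ordered monomials). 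Since the classes $I_{\cdot,\cdot,0}$ occur throughout items \ref{relacionrelsmultIa}--\ref{relacionrelsmultIc}, your claim that ``$\pi_\phi^*$ carries every element of $\mathcal{D}$, and therefore every element of $\mathcal{E}$, to the correspondingly relabelled element'' is false, and the reduction to $k\leq 6$ breaks whenever the smallest index occurring in an identity exceeds $1$. The repair is immediate: adjoin $1$ to the index set $S$ and require $\phi(1)=1$, reducing to $k\leq 7$ instead. Alternatively, drop the topological relabelling map altogether and observe that the normalization procedure only ever invokes relations among the indices actually present, so the computation is verbatim the same for every $k$ once the relative order of the indices is fixed --- which is all the uniformity you need.
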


\begin{rem}
Relations \ref{relmultI1} and \ref{relmultI2} in the previous lemma are  not a consequence of the multiplicative relations among the elements in $\mathcal{D}$, but rather a consequence of the fact that, in some cases, there are different alternatives for associating four $D$'s to form a product of two $I$'s. 
\end{rem}
The relations in Lemma \ref{relsmultI} imply that every product 
\begin{equation}\label{productosdeIs}
\begin{array}{cccc}
&I_{r_1,i_1,j_1}\cdots I_{r_m,i_m,j_m} &\textnormal{ with }|j_l|<i_l<r_l<k\textnormal{ and }1<i_l\textnormal{ for }l=1,\dots,m
\end{array}
\end{equation}
can be written as a linear combination of products of the form (\ref{productosdeIs}) satisfying in addition
\begin{eqnarray}
&r_l < r_{l'} \mbox{ if } l < l',&\label{condiciondeordendeIs} \\
&\mbox{the sets } \{i_l,r_l\}, \mbox{ with } 1\leq l\leq m,
\mbox{ are pairwise disjoint,} & 
\label{condiciondeinterseccion} \\
&\mbox{if } j_a=j_b\leq0, \mbox{ say with } r_a < r_b, \mbox{ then in fact } r_a < i_b,& \label{condicionmenosmenos} \\
&\mbox{if } j_a>0 \mbox{ and } i_a=-j_b, \mbox{ then in fact } r_a<i_b.& \label{condicionmasmenos}
\end{eqnarray}
\begin{thm}\label{invariantespar}
Suppose $R$ is a commutative ring with unit where $2$ is invertible. For $n\geq 2$ even, the $(\mathbb{Z}_2)^k$-invariants in $\mathbb{K}^*\subseteq H^{*}(\Conf_{\mathbb{Z}_2}(\mathbb{R}^n-\lbrace 0 \rbrace,k-1);R)$ are multiplicatively generated by the set $\mathcal{E}$. In fact, an additive basis for the invariants is given by all products of the form (\ref{productosdeIs}) satisfying (\ref{condiciondeordendeIs})--(\ref{condicionmasmenos}).
\end{thm}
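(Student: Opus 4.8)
The plan is to run the argument of Theorem~\ref{generadoresinvariantesimpar} one dimension higher, over the basis of ordered $\mathcal{D}$-monomials rather than of $\mathcal{A}$-monomials. First I would reformulate the action~(\ref{laexce1})--(\ref{laexce3}) in terms of a \emph{charge} assigned to each generator of $\mathcal{D}$, namely $c(D^+_{i,j})=i+1$, $c(D^-_{i,j})=j+1$, and $c(D_{i,0})=1$; then $\epsilon_l$ negates precisely the factors of charge $l$ and fixes the rest. Hence every ordered monomial~(\ref{productosdeDs}) is a common $\pm1$-eigenvector of the $\epsilon_l$, and since $2\in R^{\times}$ the eigenspace decomposition splits over $R$ (the coefficient of a non-fixed basis monomial in an invariant class must be killed by $2$, hence vanish), so the $(\mathbb{Z}_2)^k$-invariant submodule of $\mathbb{K}^*$ is free on the set $\mathcal{S}$ of ordered $\mathcal{D}$-monomials in which every charge occurs an even number of times. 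Because the first indices of an ordered monomial are pairwise distinct, a given charge $\geq2$ is carried by at most one $D^+$-factor, and charge $1$ is carried only by $D_{*,0}$-factors.

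The heart of the proof is to match $\mathcal{S}$ bijectively with the reduced $I$-products. Note first that each element of $\mathcal{E}$ is invariant and is a product of two $\mathcal{D}$-generators \emph{of equal charge}: $I^+_{r,i,j}=D^+_{i,j}D^-_{r,i}$ and $I^-_{r,i,j}=D^-_{i,j}D^-_{r,j}$ sit in charge $i+1$ and $j+1$ respectively, and $I_{i,j,0}=D_{j,0}D_{i,0}$ in charge $1$. Given $z\in\mathcal{S}$, I therefore pair up its factors within each charge class: the $D_{*,0}$'s (an even number) become $I_{*,*,0}$'s; in a class $l\geq2$ the unique $D^+_{l-1,*}$, if present, is paired with the $D^-_{*,l-1}$ of smallest first index (evenness forces an odd, hence positive, number of $D^-$'s once the $D^+$ is present), and the remaining $D^-_{*,l-1}$'s are paired non-crossingly in the order of their first indices. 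The index inequalities required by each $I$ hold for free, and one checks directly that the resulting $I$-product satisfies~(\ref{condiciondeordendeIs})--(\ref{condicionmasmenos}); since this $I$-product has exactly the $\mathcal{D}$-factors of $z$, it equals $\pm z$ after reordering (a legitimate operation producing a sign because $n-1$ is odd). Conversely I would argue that conditions~(\ref{condicionmenosmenos}) and~(\ref{condicionmasmenos}) force any reduced $I$-product whose $\mathcal{D}$-expansion is $\pm z$ to use exactly this non-crossing pairing, while~(\ref{condiciondeordendeIs}) fixes the order of its factors, so it is the one just constructed. This yields a bijection between the products~(\ref{productosdeIs}) satisfying~(\ref{condiciondeordendeIs})--(\ref{condicionmasmenos}) and $\mathcal{S}$.

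To finish: a reduced $I$-product, being a product of $\mathcal{D}$-generators with $2m$ distinct first indices by~(\ref{condiciondeinterseccion}), reorders (with a sign, as $n-1$ is odd) into one of the ordered $\mathcal{D}$-monomials forming the basis of $\mathbb{K}^*$; it is therefore nonzero and invariant, and under the bijection above it is $\pm$ the corresponding element of $\mathcal{S}$. Hence the reduced $I$-products are, up to signs, exactly the basis $\mathcal{S}$ of the invariant submodule, so they form an $R$-basis of it; multiplicative generation by $\mathcal{E}$ follows, and the relations of Lemma~\ref{relsmultI} together with anticommutativity (the discussion preceding the theorem) exhibit how a general $\mathcal{E}$-product is expanded in this basis. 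The main obstacle is the uniqueness half of the bijection: proving that~(\ref{condicionmenosmenos}) and~(\ref{condicionmasmenos}) remove \emph{precisely} the ambiguity in pairing four $D^-$'s (or a $D^+$ with several $D^-$'s) of a common charge — which is the content of relations~\ref{relmultI1} and~\ref{relmultI2} of Lemma~\ref{relsmultI} — and keeping the reordering signs coherent throughout.
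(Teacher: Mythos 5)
Your proposal is correct and follows essentially the same route as the paper's proof: reduce to monomial invariants using the invertibility of $2$ and the fact that each $\epsilon_l$ acts diagonally on the ordered $\mathcal{D}$-monomial basis, characterize the invariant monomials by an evenness condition in each ``charge'' class, and then match factors of equal charge in pairs to produce a unique reduced $I$-product, whose uniqueness (forced by~(\ref{condicionmenosmenos}) and~(\ref{condicionmasmenos})) gives linear independence as a subset of the monomial basis. Your charge bookkeeping is a compact restatement of the paper's case analysis (an even number of $D_{*,0}$'s, and for each $l$ either zero or one factor $D^+_{l-1,*}$ with correspondingly even or odd many $D^-_{*,l-1}$'s), with the minor advantage that the vanishing of odd-degree invariants falls out automatically instead of requiring the separate appeal to Lemma~\ref{lema3.3} via Remark~\ref{laacciondelah}.
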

\begin{proof}
Suppose $m$ odd and let $x\in \mathbb{K}^m$ be an invariant. By Remark \ref{laacciondelah}, we have that $x=\epsilon_1 x = -\epsilon_2\cdots \epsilon_k x=-x$, so $x=0$.

Suppose now $m$ is even and, as above, let $x \in \mathbb{K}^*$ be an invariant. Write
\[
x=\sum a_I D^{}_{i_1,j_1}\cdots D^{}_{i_m,j_m},
\]
where each $a_I$ is non-zero  and the summation runs over all multi-indices
$I=((i_1,j_1),\ldots,(i_m,j_m))$ such that $|j_l|<i_l$ for $l=1,\ldots,m$ and $i_l<i_{l'}$ if $l<l'$. Recall that an $\epsilon_{l}$ sends each monomial $D^{}_{i_1,j_1}\cdots D^{}_{i_m,j_m}$ to a multiple of itself, therefore, since $2$ is invertible, each $D^{}_{i_1,j_1}\cdots D^{}_{i_m,j_m}$ is invariant. Fix $I$, and consider the corresponding monomial $z=D^{}_{i_1,j_1}\cdots D^{}_{i_m,j_m}$. Note that the action of $\epsilon_1$ on $z$ implies that an even number of factors in $z$ are of the  form $ D_{i,0} $. Further, such factors can be matched in pairs to yield a product of the form 
\begin{equation}\label{condicioncero}
I_{i_1,j_1,0} I_{i_2,j_2,0} \cdots \mbox{ where } j_1 < i_1 < j_2 < i_2 < \cdots.
\end{equation}
Likewise, for each $l$ between $2$ and $k$, we have two possibilities:
\begin{enumerate}
\item There is no factor $D^+_{l-1,*}$ in $z$ (e.g. if $l=2$). In this case, there is an even number of factors of the form $D^-_{*,l-1}$, because otherwise we would have $z=\epsilon_{l}z=-z$.
\item There is (exactly) one factor $D^+_{l-1,*}$ in $z$. In this case, there is an odd number of factors of the form $D^-_{*,l-1}$ in $z$.
\end{enumerate}
The first case allows us to associate products of the form $D^-_{i,j}D^-_{r,j}$, and the second allows us to associate a product of the form $D^+_{i,j}D^-_{r,i}$ and products of the form $D^-_{i,j}D^-_{r,j}$.
Further, just as with~(\ref{condicioncero}), the new matchings can be done so to yield, together with~(\ref{condicioncero}), a unique expression of $D_{i_1,j_1} \cdots D_{i_m,j_m}$ as a product of the form (\ref{productosdeIs}) satisfying in addition (\ref{condiciondeordendeIs})--(\ref{condicionmasmenos}). \smallskip

The above analysis shows that the $(\mathbb{Z}_2)^k$-invariants in $\mathbb{K}^*$ are generated by the products of the form (\ref{productosdeIs}) satisfying in addition (\ref{condiciondeordendeIs})--(\ref{condicionmasmenos}). In fact, this is a basis, since such generators are a subset of the additive basis of $\mathbb{K}^*$ given by the products (\ref{productosdeDs}) satisfying (\ref{condiciondeordenD}).
\end{proof}
We arrive at the complete description of the invariants for the case $n$ even.
\begin{thm}\label{isoinvariantespar}
Let $R$ be a commutative ring with unit where $2$ is invertible. For $n$ even, there is an $R$-algebra isomorphism
\[
H^*(\Conf(\mathbb{R}\mathrm{P}^n,k);R)=H^{*}(\Conf_{\mathbb{Z}_2}(S^n,k);R)^{(\mathbb{Z}_2)^k} \cong \Lambda(\omega)\otimes R[\mathcal{E}]/\mathcal{J}, 
\]
where $\mathcal{J}$ is the ideal generated by the relations in Lemma \ref{relsmultI}.
\end{thm}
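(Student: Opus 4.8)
The plan is to deduce the statement from the machinery already assembled. Since $2$ is invertible in $R$, the group $(\mathbb{Z}_2)^k$ has order prime to the characteristic, so its higher cohomology with any coefficients vanishes; hence the Serre spectral sequence of the classifying map of the $(\mathbb{Z}_2)^k$-covering $\rho_{n,k}\colon\Conf_{\mathbb{Z}_2}(S^n,k)\to\Conf(\RP^n,k)$ is concentrated on its fibre axis and collapses, identifying $H^*(\Conf(\RP^n,k);R)$ with the subalgebra of $(\mathbb{Z}_2)^k$-invariants of $H^*(\Conf_{\mathbb{Z}_2}(S^n,k);R)$. By Theorem~\ref{2.3}, $H^*(\Conf_{\mathbb{Z}_2}(S^n,k);R)\cong R[\mathcal{B}]/J\otimes\Lambda(\lambda,\omega)/(2\lambda,\lambda\omega)$; but $2\lambda=0$ forces $\lambda=0$ once $2$ is invertible, so the total cohomology collapses to $\mathbb{K}^*\otimes\Lambda(\omega)$, where $\mathbb{K}^*=R[\mathcal{B}]/J$ is the algebra of permanent cycles and $\omega$ has degree $2n-1$.

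Next I would feed in Corollary~\ref{acciontotalnpar}: the class $\omega$ is $(\mathbb{Z}_2)^k$-invariant, and the $(\mathbb{Z}_2)^k$-action on the tensor factor $\mathbb{K}^*$ is the one recorded in Theorem~\ref{accionpar} (which visibly preserves $\mathbb{K}^*$, since it sends each $B_{i,j}$ to an $R$-combination of $B$'s). Writing the total cohomology as the direct sum $\mathbb{K}^*\oplus\omega\mathbb{K}^*$ of $R$-modules, the action preserves each summand because $\epsilon_l(\omega z)=\omega\,\epsilon_l(z)$; therefore the invariants decompose as $(\mathbb{K}^*)^{(\mathbb{Z}_2)^k}\oplus\omega\,(\mathbb{K}^*)^{(\mathbb{Z}_2)^k}$, which as an $R$-algebra is $\Lambda(\omega)\otimes(\mathbb{K}^*)^{(\mathbb{Z}_2)^k}$. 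By Theorem~\ref{invariantespar}, the ring $(\mathbb{K}^*)^{(\mathbb{Z}_2)^k}$ is multiplicatively generated by $\mathcal{E}$, and has as an additive basis the products~(\ref{productosdeIs}) satisfying~(\ref{condiciondeordendeIs})--(\ref{condicionmasmenos}).

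Finally, Lemma~\ref{relsmultI} exhibits all the relations defining $\mathcal{J}$ as identities holding among the generators in $\mathcal{E}$, so there is a well-defined surjective $R$-algebra homomorphism $R[\mathcal{E}]/\mathcal{J}\to(\mathbb{K}^*)^{(\mathbb{Z}_2)^k}$. To see that it is injective, I would invoke the reduction already implicit in the setup: the relations of Lemma~\ref{relsmultI}, together with the pairing-up bookkeeping of~(\ref{condicioncero}) used in the proof of Theorem~\ref{invariantespar}, rewrite every product~(\ref{productosdeIs}) as an $R$-linear combination of monomials in the normal form~(\ref{condiciondeordendeIs})--(\ref{condicionmasmenos}). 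Hence the classes of these normal-form monomials span $R[\mathcal{E}]/\mathcal{J}$, while under the homomorphism they go bijectively onto the additive basis of $(\mathbb{K}^*)^{(\mathbb{Z}_2)^k}$ supplied by Theorem~\ref{invariantespar}. A surjection of $R$-modules carrying a spanning set bijectively to a basis of the target is an isomorphism, so the homomorphism is an isomorphism; tensoring with $\Lambda(\omega)$ yields the displayed isomorphism and finishes the proof.

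The main obstacle is the rewriting step: one must be certain that the relations of Lemma~\ref{relsmultI} genuinely reduce an arbitrary product of $I$'s to the claimed normal form — i.e.\ that the associated rewriting system terminates and has normal forms exactly the monomials~(\ref{productosdeIs}) obeying~(\ref{condiciondeordendeIs})--(\ref{condicionmasmenos}) — rather than merely listing some valid identities. This is largely pre-packaged in the organization of Lemma~\ref{relsmultI}: its relations are indexed precisely by the ways in which a product can fail~(\ref{condicionesdos}), and each relation strictly decreases a natural complexity measure (for instance the multiset of top indices $r_l$, refined by the number of coincidences among the sets $\{i_l,r_l\}$). What remains is the routine check that no further ambiguities occur; relations~\ref{relmultI1} and~\ref{relmultI2} are exactly the extra input needed to resolve the two ways of grouping four $D$'s into a product of two $I$'s, so after those are accounted for the normal form is unambiguous.
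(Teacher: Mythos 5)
Your proposal is correct and follows essentially the same route as the paper: reduce Theorem~\ref{2.3} using the invertibility of $2$ (so $\lambda=0$), note from Corollary~\ref{acciontotalnpar} that $\omega$ is invariant, and then invoke Theorem~\ref{invariantespar} together with the rewriting afforded by Lemma~\ref{relsmultI} to identify $\bigl(R[\mathcal{B}]/J\bigr)^{(\mathbb{Z}_2)^k}$ with $R[\mathcal{E}]/\mathcal{J}$. The only difference is that you spell out the spanning-set-onto-basis argument and flag the confluence of the rewriting system explicitly, both of which the paper leaves implicit in the sentence ``the result follows from Theorem~\ref{invariantespar}.''
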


\begin{proof}
Since we are assuming that $2$ is a unit in $R$, the isomorphism of Theorem \ref{2.3} reduces to 
\[
H^{*}(\Conf_{\mathbb{Z}_2}(S^n,k);R)\cong \Lambda(\omega)\otimes R[\mathcal{B}]/J. 
\]
Recall from Corollary \ref{acciontotalnpar} that $\omega$ is fixed by the action of $(\mathbb{Z}_2)^k$. The result follows from Theorem \ref{invariantespar}, which implies that the subring of $\mathbb{Z}_2$-invariants in the tensor factor $ R [ \mathcal{B} ] / J $ has the presentation 
$R [ \mathcal{E} ] / \mathcal{J} $.
\end{proof}

\section{Punctured real projective spaces}\label{section4}
In this section we keep the assumption that $n$ is an integer greater than or equal to 2, and that $R$ is a commutative ring with unit where $2$ is invertible (the latter hypothesis is needed for the same reasons as in Section~\ref{secndinvtes}). 

In terms of the homeomorphism at the end of Remark~\ref{renotacion}, the $(\mathbb{Z}_2)^k$-fold covering projection $\rho'_{n,k}$ in~(\ref{lascovrgsdosdeellas}) takes the form
$
\Conf_{\mathbb{Z}_2}(\mathbb{R}^n-\{0\},k)
\longrightarrow \Conf(\mathbb{R}\mathrm{P}^n-\star,k) 
$
which, given that $2$ is invertible in $R$, induces an isomorphism
\begin{equation}\label{jduhkcks}
H^{*}(\Conf(\mathbb{R}\mathrm{P}^n-\star,k);R)\cong
H^*(\Conf_{\mathbb{Z}_2}(\mathbb{R}^n-\{0\},k);R)^{(\mathbb{Z}_2)^{k}}.
\end{equation}

We can reuse the notation of the previous section by noticing that the action in~(\ref{jduhkcks}) corresponds to that of the subgroup $ (\mathbb{Z}_2) ^{k}=\langle \epsilon_2,\ldots,\epsilon_{k+1}\rangle < (\mathbb{Z}_2)^{k+1}$. This means that, when computing invariants, we just have to ignore the action of $\epsilon_1$. Lemma~\ref{lema3.3} and Theorem~\ref{isoinvariantesimpar} then yield:
\begin{thm}\label{rpnponchadoimpar}
Let $R$ be a commutative ring with unit where $2$ is invertible. For $n\geq 2$ odd, there is an isomorphism
\[
H^{*}(\Conf(\mathbb{R}\mathrm{P}^n-\star,k);R)\cong H^*(\Conf_{\mathbb{Z}_2}(\mathbb{R}^n-\{0\},k);R)^{(\mathbb{Z}_2)^{k}}  \cong  R[\mathcal{C^+}]/\mathcal{K}. 
\] 
of $R$-algebras.
\end{thm}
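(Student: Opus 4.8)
Looking at this statement, Theorem~\ref{rpnponchadoimpar}, the plan is to combine two facts already established in the excerpt: the cohomology-as-invariants identification in~(\ref{jduhkcks}), which uses the invertibility of $2$ to kill the positive-dimensional cohomology of the covering group, and the description of $(\mathbb{Z}_2)^k$-invariants of $H^*(\Conf_{\mathbb{Z}_2}(\mathbb{R}^n-\{0\},k-1);R)$ already obtained in Theorem~\ref{isoinvariantesimpar}. The only genuine issue is that there is an index shift: here we take configurations of $k$ points in $\mathbb{R}^n-\{0\}$ (equivalently, in $S^n-\{\pm\star\}$), and we invoke only the subgroup of $(\mathbb{Z}_2)^{k+1}$ generated by $\epsilon_2,\dots,\epsilon_{k+1}$, ignoring $\epsilon_1$.

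First I would note that~(\ref{jduhkcks}) already gives the first isomorphism in the statement, so it remains to identify $H^*(\Conf_{\mathbb{Z}_2}(\mathbb{R}^n-\{0\},k);R)^{(\mathbb{Z}_2)^k}$ with $R[\mathcal{C}^+]/\mathcal{K}$. The key observation is that for odd $n$, the action of $\epsilon_1$ is (up to homotopy) the composite $\epsilon_2\cdots\epsilon_{k+1}$ by Lemma~\ref{lema3.3}. Therefore the $\langle\epsilon_2,\dots,\epsilon_{k+1}\rangle$-invariants automatically coincide with the full $\langle\epsilon_1,\epsilon_2,\dots,\epsilon_{k+1}\rangle$-invariants of $H^*(\Conf_{\mathbb{Z}_2}(\mathbb{R}^n-\{0\},k);R)$: if an element is fixed by $\epsilon_2,\dots,\epsilon_{k+1}$, then it is fixed by their product, hence by $\epsilon_1$. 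Thus the invariants we must compute are precisely the $(\mathbb{Z}_2)^{k+1}$-invariants appearing in Theorem~\ref{isoinvariantesimpar} (after renaming $k$ to $k+1$ there), which is the ring $R[\mathcal{C}^+]/\mathcal{K}$.

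I would therefore structure the proof as: (1) recall~(\ref{jduhkcks}), which reduces the computation to a ring of invariants; (2) observe, via Lemma~\ref{lema3.3} in the odd case, that ignoring $\epsilon_1$ does not change the ring of invariants, so that the relevant invariants are exactly the $(\mathbb{Z}_2)^{k+1}$-invariants of $H^*(\Conf_{\mathbb{Z}_2}(\mathbb{R}^n-\{0\},(k+1)-1);R)$; (3) apply Theorem~\ref{isoinvariantesimpar} directly to conclude that this ring is $R[\mathcal{C}^+]/\mathcal{K}$, with the generating set $\mathcal{C}^+$ and relations $\mathcal{K}$ now indexed over $1\le j<i<k+1$ as appropriate for configurations of $k$ points. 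One should be careful about the bookkeeping: the generators $C^+_{i,j}$ relevant here range over $1\le j<i<k+1$ (i.e.\ $i$ up to $k$), matching the ``punctured'' configuration space of $k$ points, and the relations ${C^+_{i,j}}^2$ and $C^+_{r,i}C^+_{r,j}-C^+_{i,j}(C^+_{r,i}-C^+_{r,j})$ for $0<j<i<r<k+1$ are precisely those from Lemma~\ref{relsmultsalgunasCs}.

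The main (and essentially only) obstacle is the index-shift bookkeeping and making precise that the subgroup $\langle\epsilon_2,\dots,\epsilon_{k+1}\rangle$-invariants equal the $(\mathbb{Z}_2)^{k+1}$-invariants; this is immediate from Lemma~\ref{lema3.3} but deserves an explicit sentence so the reader sees why dropping $\epsilon_1$ loses nothing in the odd-dimensional case. Everything else is a direct citation of~(\ref{jduhkcks}) and Theorem~\ref{isoinvariantesimpar}, so the proof should be short.
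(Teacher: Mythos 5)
Your proposal is correct and follows essentially the same route as the paper: reduce via the covering-space isomorphism~(\ref{jduhkcks}) to the $\langle\epsilon_2,\dots,\epsilon_{k+1}\rangle$-invariants, note via Lemma~\ref{lema3.3} that for odd $n$ these coincide with the full $(\mathbb{Z}_2)^{k+1}$-invariants, and then cite Theorem~\ref{isoinvariantesimpar} with the index shift $k\mapsto k+1$. The paper's own proof is exactly this one-paragraph argument (including the remark that the generators $C^+_{i,j}$ now range over $0<j<i\leq k$), so nothing is missing.
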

Note that the role of the parameter $k$ in Theorem 4.5 changes here to $k+1$. For instance, the generators $C^+_{i,j}$ of $\mathcal{C}^+$ are now defined for $0 < j < i \leq k$.

\medskip
Our next goal is to compute the invariant elements in~(\ref{jduhkcks})
when $n$ is even (Theorem~\ref{refparbasespult}). It is natural to expect more invariants than those found in Theorem~\ref{invariantespar}. For one, as noted above, the currently acting group is smaller. Besides, we have to consider invariants in the whole of $H^*(\Conf_{\mathbb{Z}_2}(\mathbb{R}^n-\{0\},k);R)$ and not just in the permanent cycles $\mathbb{K}^*$.

We start by noticing that the considerations following Remark~\ref{sparsenessreference} and Lemma~\ref{relsmultsalgunasDs} show that $H^*(\Conf_{\mathbb{Z}_2}(\mathbb{R}^n-\{0\},k);R)$ is multiplicatively generated by $A_{1,0}$ and the elements $D_{i,j}$ with $|j|<i\leq k$ subject only to the relations in Lemma~\ref{relsmultsalgunasDs} together with $A_{1,0}^2=0$. Further, an additive basis is given by all products of the form (\ref{productosdeDs}) and products of the form
\begin{equation}
A_{1,0}D^{}_{i_1,j_1}\cdots D^{}_{i_r,j_r}
\end{equation}
satisfying (\ref{condiciondeordenD}). In fact, all the elements in the set
$$
\mathcal{E}'=\{I^+_{r,i,j} \,|\, 0<j<i<r\leq k\}\cup
\{I^-_{r,i,j} \,|\, 0<j<i<r\leq k\}\cup
\{D_{i,0}\,|\,1<i\leq k\}\cup\{A_{1,0}\}
$$
are clearly $(\mathbb{Z}_2)^k$-invariant. Before showing these generate all other invariants, we describe their multiplicative relations. First of all, while all relations in Lemma~\ref{relsmultI} are clearly inherited (with upper bound $k+1$ instead of $k$ for indices $r,i,j$), the relations involving terms of the form $I_{i,j,0}$ are evidently not in the most primitive form. Instead, we have the following easy-to-check relations:
\begin{lem}\label{relsmultIyD0}Let $R$ be a commutative ring with unit where $2$ is invertible, and suppose $n\geq 2$ even. Then
\begin{eqnarray*}
I^+_{r,i,j}D_{i,0} &=&I^+_{r,i,j}D_{j,0},\\
I^+_{r,i,j}D_{r,0} &=&I^+_{r,i,j}D_{j,0},\\
I^-_{r,i,j}D_{i,0} &=&I^-_{r,i,j}D_{j,0},\\
I^-_{r,i,j}D_{r,0} &=&I^-_{r,i,j}D_{j,0},
\end{eqnarray*}
for $0< j<i<r\leq k$.
\end{lem}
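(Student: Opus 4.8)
The plan is to obtain all four identities by a short formal manipulation, using only graded commutativity together with the multiplicative relations already recorded in Lemma~\ref{relsmultsalgunasDs} (which, as noted in this section, remain valid with the index bound raised from $k$ to $k+1$). The two facts I would isolate at the outset are: \emph{(i)} since $n$ is even, each of the classes $D^{\pm}_{s,t}$ and $D_{s,0}$ has odd degree $n-1$, so any two of them anticommute, and consequently each $I^{\pm}_{r,i,j}$—being a product of two degree-$(n-1)$ classes—has even degree $2(n-1)$ and commutes with every $D_{s,0}$; \emph{(ii)} specializing the relations $D^{\pm}_{r,i}D_{r,0}=-D_{i,0}D^{\pm}_{r,i}$ of Lemma~\ref{relsmultsalgunasDs} under the substitution $(r,i)\mapsto(i,j)$ yields $D^{\pm}_{i,j}D_{i,0}=-D_{j,0}D^{\pm}_{i,j}$ for $0<j<i\le k$.

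With these in hand the four cases are essentially one computation. For $I^+_{r,i,j}D_{r,0}$ I would write
\begin{align*}
I^+_{r,i,j}D_{r,0}&=D^+_{i,j}\bigl(D^-_{r,i}D_{r,0}\bigr)=-D^+_{i,j}D_{i,0}D^-_{r,i}\\
&=D_{j,0}D^+_{i,j}D^-_{r,i}=D_{j,0}\,I^+_{r,i,j}=I^+_{r,i,j}D_{j,0},
\end{align*}
where the second equality is the $D^-$-relation of Lemma~\ref{relsmultsalgunasDs}, the third is the specialized relation \emph{(ii)}, and the last uses \emph{(i)}. For $I^+_{r,i,j}D_{i,0}$ one first slides $D_{i,0}$ past $D^-_{r,i}$ by anticommutativity to reach $-D^+_{i,j}D_{i,0}D^-_{r,i}$, and then continues exactly as above. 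The two $I^-$-statements are the verbatim arguments with $D^+_{i,j}$ replaced by $D^-_{i,j}$ and $D^-_{r,i}$ by $D^-_{r,j}$: for $I^-_{r,i,j}D_{r,0}$ one uses $D^-_{r,j}D_{r,0}=-D_{j,0}D^-_{r,j}$ and then anticommutes $D_{j,0}$ past $D^-_{i,j}$; for $I^-_{r,i,j}D_{i,0}$ one anticommutes $D_{i,0}$ past $D^-_{r,j}$ first and then applies $D^-_{i,j}D_{i,0}=-D_{j,0}D^-_{i,j}$.

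There is no substantive obstacle here; the content is purely formal once \emph{(i)} and \emph{(ii)} are in place. The only point requiring care is the sign bookkeeping—each transposition of two odd-degree classes contributes $(-1)^{(n-1)^2}=-1$, which is precisely where the hypothesis that $n$ is even is used, and one must remember that the relation in Lemma~\ref{relsmultsalgunasDs} trades $D_{r,0}$ for $D_{i,0}$ rather than $D_{j,0}$ for $D_{i,0}$, so the cases involving $D_{i,0}$ genuinely need the preliminary anticommutation step rather than a direct appeal to the lemma. An alternative route, which I would mention but not pursue, is to expand both $I^{\pm}_{r,i,j}$ and the $D_{s,0}$ into the $B$-basis and reduce using Theorem~\ref{2.1} directly; this works but is computationally heavier and obscures why the identities hold.
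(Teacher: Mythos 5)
Your proof is correct, and it is the natural way to carry out the verification that the paper explicitly leaves to the reader (the lemma is introduced only as a set of ``easy-to-check relations'' with no proof given): the four identities do follow formally from the relations $D^{\pm}_{r,i}D_{r,0}=-D_{i,0}D^{\pm}_{r,i}$ of Lemma~\ref{relsmultsalgunasDs} (with the index bound shifted to $k+1$ as in Section~\ref{section4}) together with graded commutativity of the odd-degree classes $D_{s,t}$, and your sign bookkeeping checks out in all four cases. In particular your observation that the cases involving $D_{i,0}$ need a preliminary anticommutation before the lemma applies is exactly the one point of care.
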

Therefore, any product of the form 
\begin{equation}\label{productodeDeIs}
\begin{array}{ccll}
&D_{s_1,0}\cdots D_{s_{m'},0} I_{r_1,i_1,j_1}\cdots I_{r_m,i_m,j_m} &\textnormal{ with }0<|j_l|<i_l<r_l\leq k\textnormal{ for }l=1,\dots,m\\&&\textnormal{ and } 1<s_l\leq k \textnormal{ for }l=1,\ldots,m'
\end{array}
\end{equation}
or
\begin{equation}\label{productodeA10DeIs}
\begin{array}{ccll}
&A_{1,0}D_{s_1,0}\cdots D_{s_{m'},0} I_{r_1,i_1,j_1}\cdots I_{r_m,i_m,j_m} &\textnormal{ with }0<|j_l|<i_l<r_l\leq k\textnormal{ for }l=1,\dots,m\\&&\textnormal{ and } 1<s_l\leq k\textnormal{ for }l=1,\ldots,m'
\end{array}
\end{equation}
can be written as a linear combination of products of the form (\ref{productodeDeIs}) or (\ref{productodeA10DeIs}) satisfying 
\begin{equation}\label{condiciondeordenD01}
s_l<s_{l'}\textnormal{ if }l<l',
\end{equation}
\begin{equation}\label{condiciondeordenD02}
s_{l}\notin  \{r_{1},\ldots,r_{m}\}\cup\{i_1,\ldots,i_m\}\textnormal{ for }l=1,\ldots,m',
\end{equation}
as well as conditions (\ref{condiciondeordendeIs})--(\ref{condicionmasmenos}).
\begin{thm}\label{refparbasespult}
Let $R$ be a commutative ring with unit where $2$ is invertible. For $n\geq 2$ even, the $(\mathbb{Z}_2)^k$-invariants in $H^{*}(\Conf_{\mathbb{Z}_2}(\mathbb{R}^n-\lbrace 0 \rbrace,k);R)$ are multiplicatively generated by the set $\mathcal{E}'$. Moreover, an additive basis is given by products of the form (\ref{productodeDeIs}) together with products of the form (\ref{productodeA10DeIs}) all of which satisfy (\ref{condiciondeordenD01}), (\ref{condiciondeordenD02}), and (\ref{condiciondeordendeIs})--(\ref{condicionmasmenos}).
\end{thm}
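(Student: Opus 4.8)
The plan is to mimic the proof of Theorem~\ref{invariantespar}, adapting it to the smaller group $(\mathbb{Z}_2)^k=\langle\epsilon_2,\ldots,\epsilon_{k+1}\rangle$ and to the larger ambient ring $H^*(\Conf_{\mathbb{Z}_2}(\mathbb{R}^n-\{0\},k);R)$, which (per the discussion just before the statement) is multiplicatively generated by $A_{1,0}$ together with the $D_{i,j}$, with additive basis given by the products~(\ref{productosdeDs}) and those of the form $A_{1,0}D_{i_1,j_1}\cdots D_{i_r,j_r}$ satisfying~(\ref{condiciondeordenD}). First I would record that every generator in $\mathcal{E}'$ is $(\mathbb{Z}_2)^k$-invariant: for the $I^{\pm}_{r,i,j}$ and $I_{i,j,0}=D_{j,0}D_{i,0}$ this follows from~(\ref{laexce1})--(\ref{laexce3}) exactly as before, while $A_{1,0}=D_{1,0}$ is fixed by every $\epsilon_l$ with $l\geq2$ since only $\epsilon_1$ acts nontrivially on it (see~(\ref{laexce3})). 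Hence the subalgebra they generate consists of invariants; the work is the reverse inclusion and the basis claim.

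Next I would run the averaging argument: since each $\epsilon_l$ ($l\geq2$) sends each monomial $D_{i_1,j_1}\cdots D_{i_m,j_m}$ or $A_{1,0}D_{i_1,j_1}\cdots D_{i_m,j_m}$ to a scalar multiple of itself, and $2$ is invertible, an arbitrary invariant $x$ decomposes as a linear combination of \emph{invariant} monomials, so it suffices to show every invariant monomial lies in the span of the products~(\ref{productodeDeIs})--(\ref{productodeA10DeIs}) satisfying~(\ref{condiciondeordenD01}), (\ref{condiciondeordenD02}), and~(\ref{condiciondeordendeIs})--(\ref{condicionmasmenos}). Fix such an invariant monomial $z$ (with or without the leading $A_{1,0}$ factor). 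For each $l\in\{2,\ldots,k+1\}$, invariance under $\epsilon_l$ forces a parity condition on the factors of $z$ indexed by $l-1$: as in Theorem~\ref{invariantespar}, if there is no $D^+_{l-1,*}$ factor then the number of $D^-_{*,l-1}$ factors is even, and if there is exactly one $D^+_{l-1,*}$ factor then the number of $D^-_{*,l-1}$ factors is odd. (The difference from Theorem~\ref{invariantespar} is that $\epsilon_1$ is no longer available, so there is \emph{no} constraint forcing an even number of $D_{i,0}$ factors, and $A_{1,0}=D_{1,0}$ may appear.) This lets me pair up the $D^-$ factors with a leftover $D^+$ (when present) into $I^+_{r,i,j}$'s, pair the remaining $D^-$'s into $I^-_{r,i,j}$'s, and pair all but possibly one of the remaining $D_{*,0}$ factors into $I_{i,j,0}=D_{j,0}D_{i,0}$'s; any single unpaired $D_{s,0}$ with $s>1$ stays as a factor $D_{s,0}\in\mathcal{E}'$, and a possible unpaired $D_{1,0}=A_{1,0}$ stays as the generator $A_{1,0}\in\mathcal{E}'$. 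Then Lemma~\ref{relsmultI} together with Lemma~\ref{relsmultIyD0} rewrites the resulting product into the normal form~(\ref{productodeDeIs}) or~(\ref{productodeA10DeIs}) with all of~(\ref{condiciondeordenD01})--(\ref{condiciondeordenD02}) and~(\ref{condiciondeordendeIs})--(\ref{condicionmasmenos}) satisfied, exactly as in the reduction preceding Theorem~\ref{invariantespar}. Finally, since these normal-form products are a subset of the additive basis of $H^*(\Conf_{\mathbb{Z}_2}(\mathbb{R}^n-\{0\},k);R)$ recalled above, they are linearly independent, so they form a basis of the invariants—completing the proof.

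The main obstacle, and the place where care is needed, is bookkeeping the matchings so that they simultaneously (i) are \emph{well defined}, i.e.\ the parity conditions from all the $\epsilon_l$ are mutually consistent and the greedy pairing actually terminates, and (ii) after applying Lemmas~\ref{relsmultI} and~\ref{relsmultIyD0}, land in the precise normal form with \emph{no ambiguity}, so that the count of normal-form products matches the count of invariant monomials and the "subset of a basis $\Rightarrow$ basis" conclusion is legitimate. Concretely, one must verify that the new condition~(\ref{condiciondeordenD02})—the $D_{s,0}$ indices avoid all $i_l$ and $r_l$—can always be arranged using Lemma~\ref{relsmultIyD0}, and that the presence of the extra $A_{1,0}$ factor does not interfere with any of the rewriting (it is central up to sign and squares to zero, so it genuinely does not). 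Once the matching/normal-form combinatorics is pinned down, the proof is a routine adaptation of Theorem~\ref{invariantespar}; I would keep the write-up terse, flagging only the two structural changes (absence of $\epsilon_1$, presence of $A_{1,0}$) and otherwise referring back to the earlier argument.
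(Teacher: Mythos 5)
Your proposal is correct and follows essentially the same route as the paper, which likewise reduces everything to the argument of Theorem~\ref{invariantespar} with exactly the two modifications you identify (dropping $\epsilon_1$, hence the parity constraint on the number of $D_{i,0}$ factors, and allowing an $A_{1,0}$ factor, which is already invariant). One small slip worth fixing: $A_{1,0}$ is not $D_{1,0}$ (by the paper's convention $B_{1,0}=0$, so $D_{1,0}=0$), so its invariance under $\epsilon_2,\ldots,\epsilon_{k+1}$ for $n$ even should be read off directly from Theorem~\ref{accion}, where $\epsilon_2 A_{1,0}=(-1)^nA_{1,0}$ and $\epsilon_l A_{1,0}=A_{1,0}$ for $l>2$, rather than from~(\ref{laexce3}).
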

\begin{proof}
The proof is almost the same as the proof of Theorem \ref{invariantespar}, except for two differences:
\begin{enumerate}
\item We ignore the action of $\epsilon_1$. This, however, only removes the condition of having an even number of terms $D_{i,0}$, and now we can just associate all terms $D_{i,0}$.
\item We add $A_{1,0}$ as a potential factor to all monomials. This does not affect our previous proof, because $A_{1,0}$ is already an invariant.
\end{enumerate}\parskip-17pt
\end{proof}
We thus get:
\begin{thm}\label{rpnponchadopar}
Let $R$ be a commutative ring with unit where $2$ is invertible. For $n\geq 2$ even, there is an $R$-algebra isomorphism
\[
H^{*}(\Conf(\mathbb{R}\mathrm{P}^n-\star,k);R)\cong H^*(\Conf_{\mathbb{Z}_2}(\mathbb{R}^n-\{0\},k);R)^{(\mathbb{Z}_2)^{k}}  \cong R[\mathcal{E}']/\mathcal{J'}, 
\]
where $\mathcal{J}'$ is the ideal generated by the relations in Lemma \ref{relsmultI} not involving a term $I_{i,j,0}$ together with the relations of Lemma \ref{relsmultIyD0} and the relation $A_{1,0}^2=0$.
\end{thm}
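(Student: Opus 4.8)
The plan is to follow the pattern of the proof of Theorem~\ref{isoinvariantespar}. By the isomorphism~(\ref{jduhkcks}) the statement reduces to identifying the subring of $(\mathbb{Z}_2)^k=\langle\epsilon_2,\ldots,\epsilon_{k+1}\rangle$-invariants of $H^*(\Conf_{\mathbb{Z}_2}(\mathbb{R}^n-\{0\},k);R)$ with $R[\mathcal{E}']/\mathcal{J}'$. First I would produce a well-defined $R$-algebra map $\phi\colon R[\mathcal{E}']/\mathcal{J}'\to H^*(\Conf_{\mathbb{Z}_2}(\mathbb{R}^n-\{0\},k);R)^{(\mathbb{Z}_2)^k}$ sending each generator to the cohomology class of the same name. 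This requires knowing, on the one hand, that every element of $\mathcal{E}'$ is fixed by $\epsilon_2,\ldots,\epsilon_{k+1}$---immediate from the (reindexed) action formulas of Section~\ref{section3} and the fact, recorded just before Theorem~\ref{refparbasespult}, that the $I^{\pm}$'s, the $D_{i,0}$'s and $A_{1,0}$ are all invariant---and, on the other hand, that the relations defining $\mathcal{J}'$ (those of Lemma~\ref{relsmultI} not involving an $I_{i,j,0}$, those of Lemma~\ref{relsmultIyD0}, and $A_{1,0}^2=0$) hold among the correspondingly-named classes; the latter holds because those relations were established by direct computation in cohomology.

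Next I would show $\phi$ is an isomorphism. Surjectivity is precisely the first assertion of Theorem~\ref{refparbasespult}. For injectivity, the discussion preceding Theorem~\ref{refparbasespult} shows that, using only the generators of $\mathcal{J}'$ (plus graded-commutativity and the automatic vanishing $A_{1,0}^2=D_{i,0}^2=0$ of odd-degree squares, since $n-1$ is odd and $2$ is invertible), every monomial in the elements of $\mathcal{E}'$ can be rewritten as an $R$-linear combination of the ``reduced'' monomials of the forms~(\ref{productodeDeIs}) and~(\ref{productodeA10DeIs}) satisfying~(\ref{condiciondeordenD01}), (\ref{condiciondeordenD02}) and~(\ref{condiciondeordendeIs})--(\ref{condicionmasmenos}); hence these reduced monomials span $R[\mathcal{E}']/\mathcal{J}'$. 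By the second assertion of Theorem~\ref{refparbasespult}, $\phi$ carries the set of reduced monomials bijectively onto an $R$-basis of the target. It follows that the reduced monomials are $R$-linearly independent in $R[\mathcal{E}']/\mathcal{J}'$, hence form an $R$-basis of it, and that $\phi$ sends this basis bijectively onto a basis of the invariant subring. Therefore $\phi$ is an $R$-algebra isomorphism, which together with~(\ref{jduhkcks}) yields the asserted presentation of $H^*(\Conf(\mathbb{R}\mathrm{P}^n-\star,k);R)$.

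The step demanding the most care is the completeness of the relation set $\mathcal{J}'$, i.e.\ the claim that the normal-form reduction above uses nothing beyond the listed relations. The subtlety is that the relations of Lemma~\ref{relsmultI} mentioning a factor $I_{i,j,0}=D_{j,0}D_{i,0}$ have deliberately been removed and must be recovered from the separate generators $D_{i,0}$: a product $D_{i,0}\cdot I^{\pm}_{r,a,b}$ with $i\in\{r,a\}$ is collapsed by Lemma~\ref{relsmultIyD0} to one with $i=b$; an unordered string of $D_{i,0}$'s is reordered by graded-commutativity; a repeated $D_{i,0}$ annihilates the monomial; and a leading $A_{1,0}$ is merely carried along. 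Once one verifies---as is done in the paragraph before Theorem~\ref{refparbasespult}---that this rewriting terminates at a reduced monomial, the rest of the argument is purely formal, so there is essentially nothing new to prove here beyond what Theorem~\ref{refparbasespult} already supplies.
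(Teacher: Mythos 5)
Your proposal is correct and follows essentially the same route as the paper, which deduces the theorem directly from Theorem~\ref{refparbasespult}, the isomorphism~(\ref{jduhkcks}), and the rewriting discussion surrounding Lemma~\ref{relsmultIyD0}, in the same pattern as the proof of Theorem~\ref{isoinvariantespar}. Your added care about recovering the excluded $I_{i,j,0}$-relations from Lemma~\ref{relsmultIyD0}, graded commutativity, and the vanishing of odd-degree squares only makes explicit what the paper leaves implicit in the paragraph preceding Theorem~\ref{refparbasespult}.
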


\section{Lusternik-Schnirelmann category and topological complexity}\label{sectioncatTC}
The results in the previous sections are now used to study the category and all the higher topological complexities of the auxiliary orbit configuration spaces $\Conf_{\mathbb{Z}_2}(\mathbb{R}^n-\{0\},k)$. 

\smallskip
Unless explicitly noted otherwise, the hypothesis $n>2$ will be in force throughout this section. Yet, since at the end of the section we include some observations relevant to the more general case $n\geq2$, we will keep a careful track of the usage of the hypothesis $n>2$. Further, in what follows, any reference to cohomology in the form $H^*(X)$ will implicitly use integer coefficients.

\smallskip
We need the following estimates (given in~\cite[Proposition~1.5 and Theorem~1.50]{Cornea} for the Lusternik-Schnirelmann category, and in~\cite[Theorem~3.9]{MR3252053} for the higher topological complexities):

\begin{prop}
\label{lascotas}
Let $X$ be a $(c-1)$-connected space ($c\geq1$) having the homotopy type of a cell complex of dimension $d$. Choose $\star\in X$ and let $\delta_1\colon\star\to X$ denote the inclusion map. For $s\geq2$ let $\delta_s\colon X\to X^s$ stand for the iterated diagonal. Finally, let $M$ be a module over a commutative unitary ring $R$. For $s\geq 1$ assume there are cohomology classes $c_1,\ldots,c_\ell \in H^*(X^s;M)$ such that $\delta_s^*(c_i)=0$ for all $i$, and such that $0\neq c_1\cdots c_\ell\in H^*(X^s;M^{\otimes \ell})$. Then $\ell\leq\TC_s(X)\leq\frac{sd}{c}.$
\end{prop}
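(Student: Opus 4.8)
The statement is a standard combination of two classical facts about sectional category, applied to the fibrations defining $\cat$ and the higher $\TC_s$. The plan is to establish the lower and upper bounds separately, since they come from completely different sources: the lower bound is cohomological (a zero-divisor cup-length argument), and the upper bound is dimension-connectivity (a general ceiling for sectional category).

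\emph{Lower bound $\ell\le\TC_s(X)$.} First I would recall that $\TC_s(X)=\secat(e_s)$, where $e_s\colon P(X)\to X^s$ is the evaluation fibration $\gamma\mapsto(\gamma(0),\gamma(\tfrac1{s-1}),\dots,\gamma(1))$ (and, in the convention of the paper, $\TC_1(X)=\cat(X)=\secat(e_1)$ with $e_1\colon P_0(X)\to X$). The key homotopy-theoretic input is that $e_s$ is fibre-homotopy equivalent to the diagonal inclusion $\delta_s\colon X\hookrightarrow X^s$ in the sense that a section of $e_s$ over an open $U\subseteq X^s$ exists if and only if $\delta_s|_U$ is homotopic (rel nothing) to the constant/diagonal map appropriately; more precisely, $e_s$ restricted over $U$ admits a section iff the inclusion $U\hookrightarrow X^s$ followed by $\delta_s$-based considerations is nullhomotopic in the relevant fibrewise sense. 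The upshot, which I would cite from~\cite{MR3252053} (Theorem~3.9) together with the sectional-category cup-length lower bound~\cite{schwarz} (or~\cite[Section~4]{MR2455573}), is: if $c_1,\dots,c_\ell\in H^*(X^s;M)$ each lie in the kernel of $\delta_s^*$ and their product $c_1\cdots c_\ell\in H^*(X^s;M^{\otimes\ell})$ is nonzero, then $\secat(e_s)\ge\ell$. I would spell out the mechanism only briefly: over any open $U$ admitting a section of $e_s$, each $c_i$ restricts to zero in $H^*(U;M)$ (because a section trivializes $c_i$, $c_i$ being in the kernel of the map induced by the fibre inclusion / by $\delta_s$); hence if $X^s$ were covered by $\ell$ such open sets, a standard relative cup-product argument forces $c_1\cdots c_\ell=0$, a contradiction. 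Therefore at least $\ell+1$ open sets are needed, i.e.\ $\TC_s(X)=\secat(e_s)\ge\ell$.

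\emph{Upper bound $\TC_s(X)\le sd/c$.} Here I would invoke the general dimension-connectivity bound for sectional category. The base of the relevant fibration is $X^s$, which has dimension $sd$ (up to homotopy, $X$ being a $d$-dimensional cell complex). The fibre of $e_s$ is (homotopy equivalent to) a product of $s-1$ copies of the loop space $\Omega X$ when $s\ge2$ (for $s=1$ the fibre of $e_1$ is $\Omega X$), which is $(c-2)$-connected since $X$ is $(c-1)$-connected; equivalently, the fibre inclusion behaves so that a Whitehead-type obstruction-theory count applies. The classical estimate $\secat(p)\le\dim(B)/(\mathrm{conn}(\text{fibre})+1)$ then gives $\TC_s(X)\le sd/c$. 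I would cite~\cite[Theorem~3.9]{MR3252053} for the higher $\TC_s$ and~\cite[Theorem~1.50]{Cornea} (together with~\cite[Proposition~1.5]{Cornea}) for the $s=1$ case $\cat(X)\le d/c$; since in the statement we only need both inequalities simultaneously, nothing further is required beyond quoting these results.

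\emph{Main obstacle.} Most of the proof is assembling citations, so there is little genuine obstacle; the one point that needs a little care is the $s=1$ endpoint, where the ``fibration'' $e_1\colon P_0(X)\to X$ and the ``iterated diagonal'' $\delta_1\colon\star\to X$ must be matched up with the $s\ge2$ formalism. I would handle this by noting that $\delta_1^*\colon H^*(X;M)\to H^*(\star;M)$ is simply the restriction to reduced cohomology, so the hypothesis $\delta_1^*(c_i)=0$ just says each $c_i$ is a reduced (positive-degree) class, and the lower bound $\ell\le\cat(X)$ is then precisely the classical cup-length bound~\cite[Proposition~1.5]{Cornea}; the upper bound $\cat(X)\le d/c$ is~\cite[Theorem~1.50]{Cornea}. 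With that remark, the unified statement $\ell\le\TC_s(X)\le sd/c$ for all $s\ge1$ follows.
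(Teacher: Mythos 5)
Your proposal matches the paper exactly: the paper gives no proof of Proposition~\ref{lascotas}, presenting it as a quotation of \cite[Proposition~1.5 and Theorem~1.50]{Cornea} for $s=1$ and \cite[Theorem~3.9]{MR3252053} for $s\geq2$, which is precisely the pair of citations (plus the standard cup-length and obstruction-theoretic mechanisms behind them) that you assemble. One small caveat: the heuristic formula $\secat(p)\leq\dim(B)/(\mathrm{conn}(\text{fibre})+1)$ applied to the fibre $(\Omega X)^{s-1}$, which is only $(c-2)$-connected, would yield $sd/(c-1)$ rather than $sd/c$; the sharper bound stated in the proposition comes from the higher connectivity of the iterated fibrewise joins (as in the cited theorems), so it is your reliance on the citations rather than on that formula that makes the argument close.
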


There is a version of the previous result where the cohomology groups are allowed to have twisted coefficients. But for our purposes it suffices to work with cohomology groups with trivial coefficients. Note that the condition $\delta^*_1(c_i)=0$ in Proposition~\ref{lascotas} holds precisely when $c_i$ is a positive dimensional class (since $X$ is assumed to be a 0-connected space). On the other hand, for $s\geq2$, 
an element $c_i$ as in Proposition~\ref{lascotas} is called an $s$-th zero-divisor. The largest integer $\ell$ for which there are $s$-th zero-divisors $c_1,\ldots,c_\ell$ as in Proposition~\ref{lascotas} (for all possible $R$-modules $M$ and rings $R$) is called the  $s$-th zero-divisor cup-length of $X$, and is denoted by $\zcl_s(X)$. If we want to refer to the maximal such $\ell$ when the coefficients $M$ are restricted to be some fixed ring $R$, then we will use the notation $\zcl_s^R(X)$.

\medskip
We study these concepts when $X$ is the orbit configuration space $\Conf_{\mathbb{Z}_2}(\mathbb{R}^n-\{0\},k)$. The homotopy exact sequences associated to the fibrations in~(\ref{fib2}) inductively yield that $\Conf_{\mathbb{Z}_2}(\mathbb{R}^n-\{0\}, k)$ is $(n-2)$-connected. Further, from~(\ref{isomodulos}), the integral cohomology of this space is torsion-free, and vanishes above dimension $k(n-1)$. By  \cite[Section 4.C]{Hatcher}, $\Conf_{\mathbb{Z}_2}(\mathbb{R}^n-\{0\}, k)$ is an $(n-2)$-connected space (thus simply connected, in view of our general hypothesis $n>2$) having the homotopy type of a cell complex of dimension at most $k(n-1)$. (For the purposes of Corollaries~\ref{forzudo1} and~\ref{highertc} below, this is the only place where we use the hypothesis $n>2$.) Proposition~\ref{lascotas} then yields 
\begin{equation}\label{losinvariantesdegenero}
\zcl_s(\Conf_{\mathbb{Z}_2}(\mathbb{R}^n-\{0\}, k))\leq\TCs(\Conf_{\mathbb{Z}_2}(\mathbb{R}^n-\{0\}, k))\leq sk
\end{equation}
for $s\geq1$. Both estimates in~(\ref{losinvariantesdegenero}) are in fact sharp for $s=1$, as the product $A_{1,0}\cdots A_{k,0}\in H^*(\Conf_{\mathbb{Z}_2}(\mathbb{R}^n-\{0\}, k))$ is non-zero. We thus get:
\begin{cor}\label{forzudo1}
For $n>2$, $\cat(\Conf_{\mathbb{Z}_2}(\mathbb{R}^n-\{0\}, k))=k$.
\end{cor}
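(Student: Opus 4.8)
The plan is to pin down $\cat(\Conf_{\mathbb{Z}_2}(\mathbb{R}^n-\{0\},k))$ by producing matching upper and lower bounds, both of which are essentially already in hand. For the upper bound I would simply invoke the case $s=1$ of~(\ref{losinvariantesdegenero}): the preceding paragraph records that $\Conf_{\mathbb{Z}_2}(\mathbb{R}^n-\{0\},k)$ is $(n-2)$-connected (hence simply connected, since $n>2$) and has the homotopy type of a cell complex of dimension at most $k(n-1)$, so Proposition~\ref{lascotas} (with $s=1$, $c=n-1$, $d=k(n-1)$) gives $\cat(\Conf_{\mathbb{Z}_2}(\mathbb{R}^n-\{0\},k))\le k(n-1)/(n-1)=k$.

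For the matching lower bound I would exhibit a nonzero product of $k$ positive-dimensional cohomology classes. The obvious candidate is $A_{1,0}A_{2,0}\cdots A_{k,0}$, where (after the index shift of Remark~\ref{notacionrecorrida}, so that the generators $A_{i,0}$ of $\Conf_{\mathbb{Z}_2}(\mathbb{R}^n-\{0\},k)$ are defined for $1\le i\le k$) each factor has degree $n-1\ge 2$. By the additive-basis assertion of Theorem~\ref{2.1}, an ordered monomial $A_{i_1,j_1}\cdots A_{i_r,j_r}$ with $i_1<\cdots<i_r$ is a member of the prescribed $R$-basis of $H^*(\Conf_{\mathbb{Z}_2}(\mathbb{R}^n-\{0\},k);R)$; taking $r=k$, $i_\ell=\ell$, and $j_\ell=0$ for all $\ell$ shows $A_{1,0}\cdots A_{k,0}\ne 0$, in particular with $R=\mathbb{Z}$. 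Feeding $c_i=A_{i,0}$ into Proposition~\ref{lascotas} with $s=1$ (each $c_i$ is positive-dimensional, so $\delta_1^*(c_i)=0$, and the product is nonzero) yields $k\le\cat(\Conf_{\mathbb{Z}_2}(\mathbb{R}^n-\{0\},k))$; equivalently, this is the classical cup-length bound $\cat\ge\zcl_1$. Combining the two inequalities gives the asserted equality.

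There is no real obstacle left at this stage of the paper: the substantive ingredients---the ring presentation and its explicit additive basis (Theorem~\ref{2.1}), the connectivity computation from the tower of fibrations~(\ref{fib2}), and the finite-dimensional, torsion-free cell-structure estimate quoted from Hatcher---have all already been established, and Corollary~\ref{forzudo1} is just their assembly. The only point deserving a moment's attention is the index bookkeeping, namely checking that one is working with $\Conf_{\mathbb{Z}_2}(\mathbb{R}^n-\{0\},k)$ rather than with $\Conf_{\mathbb{Z}_2}(\mathbb{R}^n-\{0\},k-1)$, so that $A_{1,0}\cdots A_{k,0}$ genuinely lies in---and is genuinely a basis element of---the cohomology ring in question.
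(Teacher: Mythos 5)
Your proposal is correct and follows essentially the same route as the paper: the upper bound $\cat\le k$ comes from Proposition~\ref{lascotas} with $s=1$ using the $(n-2)$-connectivity and the $k(n-1)$-dimensional cell structure, and the lower bound comes from the nonzero product $A_{1,0}\cdots A_{k,0}$, which is a basis element by (the index-shifted) Theorem~\ref{2.1}. The paper's own argument is exactly this assembly, stated in the paragraph containing~(\ref{losinvariantesdegenero}).
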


In order to approach the $s$-th zero-divisor cup-length of $\Conf_{\mathbb{Z}_2}(\mathbb{R}^n-\{0\},k)$ for $s\geq2$ we start by noticing that the rule $A'_{i,j}\mapsto A_{i-1,j-1}$, $1\leq j<i\leq k+1$, determines a ring monomorphism 
$$
H^*(\Conf(\mathbb{R}^n,k+1))\hookrightarrow H^*(\Conf_{\mathbb{Z}_2}(\mathbb{R}^n-\{0\},k)).
$$
Since these rings are torsion-free, we also get ring monomorphisms
$$
H^*(\Conf(\mathbb{R}^n,k+1))^{\otimes s}\hookrightarrow H^*(\Conf_{\mathbb{Z}_2}(\mathbb{R}^n-\{0\},k))^{\otimes s}
$$
for any $s\geq2$. Consequently, $\zcl_s(\Conf_{\mathbb{Z}_2}(\mathbb{R}^n-\{0\},k))$ is bounded from below by $\zcl_s^{\mathbb{Z}}(\Conf(\mathbb{R}^n,k+1))$, the latter of which, as shown in~\cite[Proposition~4.2]{GG15}, is given by $sk-1+\delta_n$ where $\delta_n=\mbox{Odd}(n)$, i.e. $\delta_n=0$ if $n$ is even, otherwise $\delta_n=1$. Consequently,~(\ref{losinvariantesdegenero}) specializes to:

\begin{cor}\label{highertc}
Let $n>2$. Then $\TCs(\Conf_{\mathbb{Z}_2}(\mathbb{R}^n-\{0\}, k))=sk$ if $n$ is odd, whereas, if $n$ is even, $\TCs(\Conf_{\mathbb{Z}_2}(\mathbb{R}^n-\{0\}, k))\in\{sk-1,sk\}$.
\end{cor}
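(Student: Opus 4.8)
The plan is to obtain the statement by pinching $\TCs(\Conf_{\mathbb{Z}_2}(\mathbb{R}^n-\{0\},k))$ between the two bounds already assembled in~(\ref{losinvariantesdegenero}), namely $\zcl_s \le \TCs \le sk$, and then checking that for $n$ odd the two ends coincide. The upper bound $sk$ is in hand: it is the instance of Proposition~\ref{lascotas} for $X=\Conf_{\mathbb{Z}_2}(\mathbb{R}^n-\{0\},k)$, which is $(n-2)$-connected and—here is precisely where $n>2$ is used, to make $X$ simply connected and hence eligible for the minimal cell-structure result of~\cite[Section~4.C]{Hatcher}—has the homotopy type of a CW complex of dimension at most $k(n-1)$; thus $\TCs(X)\le s\cdot k(n-1)/(n-1)=sk$. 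So everything reduces to exhibiting enough nonzero products of $s$-th zero-divisors.

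For the lower bound I would push zero-divisors up along the ring monomorphism $\iota\colon H^*(\Conf(\mathbb{R}^n,k+1))\hookrightarrow H^*(\Conf_{\mathbb{Z}_2}(\mathbb{R}^n-\{0\},k))$, $A'_{i,j}\mapsto A_{i-1,j-1}$. Since both rings are $\mathbb{Z}$-free (by~(\ref{isomodulos}) and Lemma~\ref{cohensdescripcion}), the K\"unneth theorem identifies the cohomology of the $s$-fold power with the $s$-fold tensor power of the cohomology, and $\iota^{\otimes s}$ remains injective. Because $\iota$ is a ring map one has $\delta_s^*\circ\iota^{\otimes s}=\iota\circ\delta_s^*$, so $\iota^{\otimes s}$ sends $s$-th zero-divisors of $\Conf(\mathbb{R}^n,k+1)$ to $s$-th zero-divisors of $\Conf_{\mathbb{Z}_2}(\mathbb{R}^n-\{0\},k)$ and nonzero products to nonzero products; hence $\zcl_s(\Conf_{\mathbb{Z}_2}(\mathbb{R}^n-\{0\},k))\ge \zcl_s^{\mathbb{Z}}(\Conf(\mathbb{R}^n,k+1))$. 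Invoking~\cite[Proposition~4.2]{GG15}, the right-hand side is $sk-1+\delta_n$ with $\delta_n=1$ for $n$ odd and $\delta_n=0$ for $n$ even. Feeding this into~(\ref{losinvariantesdegenero}) gives $sk\le\TCs\le sk$ when $n$ is odd, i.e.\ equality, and $sk-1\le\TCs\le sk$ when $n$ is even. The value $s=1$, where $\TC_1=\cat$, is handled separately and sharply by Corollary~\ref{forzudo1}, which gives $\cat=k=sk$ for both parities.

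I do not expect a real obstacle inside this deduction: the two substantive inputs—the CW-dimension estimate (the one place needing $n>2$) and the external computation of $\zcl_s^{\mathbb{Z}}(\Conf(\mathbb{R}^n,k+1))$—are quoted, and the rest is bookkeeping. The only genuine limitation is the parity gap for even $n$: the cohomological lower bound falls one short of $sk$, so at this level one cannot decide between $sk-1$ and $sk$. Resolving that ambiguity (with the expected answer $sk-1$) is deliberately left to the later discussion around Theorem~\ref{jugoso}, where a non-cohomological argument is needed.
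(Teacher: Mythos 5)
Your proposal is correct and follows essentially the same route as the paper: the upper bound $sk$ from Proposition~\ref{lascotas} via the $(n-2)$-connected CW model of dimension $k(n-1)$, and the lower bound obtained by pushing zero-divisors through the ring monomorphism $H^*(\Conf(\mathbb{R}^n,k+1))\hookrightarrow H^*(\Conf_{\mathbb{Z}_2}(\mathbb{R}^n-\{0\},k))$ (and its $s$-fold tensor power, using torsion-freeness) so as to invoke $\zcl_s^{\mathbb{Z}}(\Conf(\mathbb{R}^n,k+1))=sk-1+\delta_n$ from~\cite[Proposition~4.2]{GG15}. This is exactly the paper's argument, so nothing further is needed.
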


At the end of this section we describe our (inconclusive) efforts to resolve the gap in Corollary~\ref{highertc} by one unit for the value of $\TCs(\Conf_{\mathbb{Z}_2}(\mathbb{R}^n-\{0\}, k))$ when $n$ is even. Here we prove that the lower bound in~(\ref{losinvariantesdegenero}) is always sharp:

\begin{thm}\label{tceszcls}
Let $n>2$. Then $\TC_s(\Conf_{\mathbb{Z}_2}(\mathbb{R}^n-\{0\}, k))=\zcl_s(\Conf_{\mathbb{Z}_2}(\mathbb{R}^n-\{0\}, k))$.
\end{thm}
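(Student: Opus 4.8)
The plan is to trap $\TC_s(X_k)$, where I abbreviate $X_k:=\Conf_{\mathbb{Z}_2}(\mathbb{R}^n-\{0\},k)$, between the cohomological lower bound $\zcl_s(X_k)$ and an upper bound extracted from the tower of Fadell--Neuwirth-type fibrations, and then to observe that the two bounds coincide. By~(\ref{losinvariantesdegenero}) we already have $\zcl_s(X_k)\le\TC_s(X_k)\le sk$, while the ring monomorphism $H^*(\Conf(\mathbb{R}^n,k+1))\hookrightarrow H^*(X_k)$ exhibited just above gives $\zcl_s(X_k)\ge\zcl_s^{\mathbb{Z}}(\Conf(\mathbb{R}^n,k+1))=sk-1+\delta_n$. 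If $n$ is odd then $\delta_n=1$, and the chain of inequalities collapses to $\zcl_s(X_k)=\TC_s(X_k)=sk$; so there is nothing left to do in that case, and for the remainder I would assume $n$ even. The only thing still missing is then the single-unit improvement $\TC_s(X_k)\le sk-1$.

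To obtain that improvement I would induct on $k$ along the locally trivial bundles $X_k\to X_{k-1}$ of Lemma~\ref{prodisact}, whose fibre $(\mathbb{R}^n-\{0\})-Q^{\mathbb{Z}_2}_{k-1}$ is homotopy equivalent to a wedge $\bigvee_{2k-1}S^{n-1}$ of $(n-1)$-spheres (the form of~(\ref{hotopytype}) after shifting the index). The base of the induction is $X_1=\mathbb{R}^n-\{0\}\simeq S^{n-1}$; since $n-1$ is odd, $\TC_s(S^{n-1})=s-1$. For the inductive step I would use the standard subadditivity estimate for sequential topological complexity along a fibration $F\to E\to B$ whose fundamental group acts trivially on $H^*(F)$ --- a hypothesis satisfied here by Lemma~\ref{locsyscoe} --- in the form $\TC_s(E)\le\TC_s(B)+s\cdot\cat(F)$. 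A non-contractible wedge of spheres has Lusternik--Schnirelmann category $1$, so each of the $k-1$ stages of the tower contributes exactly $s$, and telescoping down from $X_k$ to $X_1$ yields $\TC_s(X_k)\le\TC_s(S^{n-1})+(k-1)s=(s-1)+(k-1)s=sk-1$.

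Assembling the two halves, for $n$ even we get $sk-1\le\zcl_s(X_k)\le\TC_s(X_k)\le sk-1$, hence $\TC_s(X_k)=\zcl_s(X_k)=sk-1$; together with the odd case this proves the theorem. As a by-product the argument computes $\TC_s(X_k)$ outright and therefore determines the ambiguous constant $\delta_{n,s}$ of Corollary~\ref{s1genezado} for $n>2$, with $\delta_{n,s}=1$ exactly when $n$ is even and $s\ge2$.

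The step I expect to carry the weight is the inductive one, namely having the fibration inequality $\TC_s(E)\le\TC_s(B)+s\cdot\cat(F)$ available in precisely this generality. The honest route is Schwarz-theoretic: pull back along $p^{\times s}\colon E^{\times s}\to B^{\times s}$ an open cover of $B^{\times s}$ by $\TC_s(B)+1$ sets over which the $s$-fold evaluation fibration of $B$ admits local sections, and then refine each of the $\TC_s(B)+1$ preimages using a categorical open cover of $F$ by $\cat(F)+1$ sets together with an explicit fibrewise motion planner on the wedge $\bigvee_{2k-1}S^{n-1}$; the triviality of the local coefficient system (Lemma~\ref{locsyscoe}) and the fact that the fibres are suspensions are exactly what allow these fibrewise sections to be glued continuously over each open set of the base cover. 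One must pay particular attention to the multiplicity $s$ in front of $\cat(F)$: replacing it by $s\cdot\TC_s(F)$ would be too weak, since a wedge of at least two $(n-1)$-spheres can have $\TC_s>1$, so it is the sharp form of the fibration estimate on which the whole upper bound rests.
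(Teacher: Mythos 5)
Your reduction is set up correctly and, up to the point where you must show $\TC_s(X_k)\le sk-1$ for $n$ even, it runs parallel to the paper. The gap is the inequality you rest everything on, $\TC_s(E)\le\TC_s(B)+s\cdot\cat(F)$ for a fibration $F\to E\to B$: this is not a standard result, and your sketch does not establish it. The trouble is the fibre-connecting step. After lifting the chosen section of $e_s^B$ along $p$, you must join each lifted point to the prescribed point $e_i$ by a path chosen continuously in the input; these pairs range over the whole fibrewise square $E\times_B E$ (or the $s$-fold fibrewise product), and the number of open sets needed to section the fibrewise path fibration $E^{[0,1]}_B\to E\times_B E$ is a \emph{fibrewise topological complexity} of $p$, not $\cat(F)$. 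A categorical cover of $F$ by $\cat(F)+1$ sets contractible in $F$ does not yield a motion planner for pairs of points of $F$ --- contracting both coordinates leaves two basepoints still to be joined --- and for the fibre $\bigvee_{2k-1}S^{n-1}$ with $k\ge2$ one has $\TC=2>1=\cat$; on top of that the local planners must be glued across the twisting of the bundle. Your own observation that $s\cdot\TC_s(F)$ "would be too weak" points at the problem: the quantity that actually enters is at least that large. A decisive internal check: if your inequality held, the telescoping would prove $\TC_s(\Conf_{\mathbb{Z}_2}(\mathbb{R}^n-\{0\},k))=sk-1$ unconditionally for every even $n>2$, which is precisely what the paper cannot do --- Corollary~\ref{highertc} leaves $\TC_s\in\{sk-1,sk\}$ open, Theorem~\ref{jugoso} obtains $sk-1$ only under an unproven splitting hypothesis, and the authors describe their efforts to close the gap as inconclusive. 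The unjustified fibration estimate is exactly where your argument breaks.

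For contrast, the paper's proof never computes $\TC_s$: after the same reduction it may assume $\zcl_s(X_k)=sk-1$ (otherwise $\zcl_s=\TC_s=sk$ and there is nothing to prove), realizes $X_k$ as an $(n-2)$-connected complex of dimension $k(n-1)$, and applies obstruction theory to the fibrewise join powers of $e_s$ via Lemma~\ref{classabstrnew}: the only obstruction to sectioning $e_s(sk-1)$ over all of $X_k^s$ is the class $\eta_p^{sk}$, a product of $sk$ zero-divisors, which vanishes \emph{because} $\zcl_s=sk-1$. This gives $\TC_s\le sk-1=\zcl_s$ without deciding which of the two values $\zcl_s$ actually takes. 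To repair your route you would need either the product splitting hypothesized in Theorem~\ref{jugoso} (so that the genuine product inequality $\TC_s(S^1\times X)\le\TC_s(S^1)+\TC_s(X)$ applies) or a fibrewise estimate considerably sharper than anything you have justified.
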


The method of proof of Corollary~\ref{highertc} yields Theorem~\ref{tceszcls} when $n$ is odd. We prove Theorem~\ref{tceszcls} for $n$ even using the main idea in the proof of~\cite[Theorem~4.1]{GG15}. We give full details below for the sake of completeness.

\medskip
For a fibration $p:E\to B$ with fiber $F$, let $p(\ell):E(\ell)\to B$ be the ($\ell+1$)-th fiberwise join power of~$p$. This is a fibration with fiber $F^{\star(\ell+1)}$, the $(\ell+1)$-iterated join of $F$ with itself. It is well known that, for $B$ paracompact, a necessary and sufficient condition for having $\secat(p)\leq \ell$ is that $p(\ell)$ admits a global (continuous) section. Thus, the following result---a direct generalization of~\cite[Theorem~1]{Sch58}---gives a useful cohomological identification of the first obstruction for multi-sectioning~$p$.

\begin{lem}\label{classabstrnew}
Let $p:E\to B$ be a fibration with fiber $F$ whose base $B$ is a CW complex. Assume $p$ admits a section $\phi$ over the $k$-skeleton $B^{(k)}$ of $B$ for some $k\geq1$. If $F$ is $k$-simple and the obstruction cocycle to the extension of $\phi$ to $B^{(k+1)}$ lies in the cohomology class
$
\eta\in H^{k+1}(B;\{\pi_k(F)\}),
$ 
then $p(\ell)$ admits a section over $B^{(k+1)(\ell+1)-1}$ whose obstruction cocycle to extending to $B^{(k+1)(\ell+1)}$ belongs to the cohomology class
\begin{equation*}\label{potencia}
\eta^{\ell+1}\in H^{(k+1)(\ell+1)}(B;\{\pi_{k\ell+k+\ell}(F^{\star(\ell+1)})\}).
\end{equation*}
Here $\eta^{\ell+1}$ denotes the image of the $(\ell+1)$-fold cup power of $\eta$ under the $\pi_1(B)$-homomorphism of coefficients $\pi_k(F)^{\otimes (\ell+1)}\to \pi_{k\ell+k+\ell}(F^{\star(\ell+1)})$ given by the iterated join of homotopy classes.
\end{lem}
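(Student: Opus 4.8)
The plan is to prove Lemma~\ref{classabstrnew} by a direct obstruction-theoretic argument, mimicking Schwarz's classical computation~\cite{Sch58} of the first obstruction to a section, but carried out for the fiberwise join power $p(\ell)$ instead of $p$ itself, and keeping careful track of the coefficient systems. First I would recall that $p(\ell)\colon E(\ell)\to B$ is the fibration whose fiber is the iterated join $F^{\star(\ell+1)}$, which (by the standard join-connectivity estimate) is $(k\ell+k+\ell)$-connected if $F$ is $k$-connected; more relevantly, since we are only assuming $\pi_k(F)$ is the first potentially nonzero homotopy group of interest, the relevant fact is that $F^{\star(\ell+1)}$ has its first nonvanishing homotopy (above the connectivity coming from the lower skeleta) in degree $k\ell+k+\ell = (k+1)(\ell+1)-1$, with $\pi_{(k+1)(\ell+1)-1}(F^{\star(\ell+1)})$ receiving a natural map from $\pi_k(F)^{\otimes(\ell+1)}$ induced by the iterated join pairing on homotopy groups. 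This is the key input and I would state it as a preliminary reduction, citing the usual Blakers--Massey/join arguments.

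Next I would set up the obstruction theory. Given the section $\phi$ of $p$ over $B^{(k)}$, its $(\ell+1)$-fold fiberwise join $\phi\star\cdots\star\phi$ produces a section of $p(\ell)$ over $B^{(k)}$, and because the fiber $F^{\star(\ell+1)}$ is $((k+1)(\ell+1)-2)$-connected relative to what has already been extended, this section extends canonically (up to the relevant homotopy) over the skeleton $B^{((k+1)(\ell+1)-1)}$. The first obstruction to extending over $B^{(k+1)(\ell+1)}$ then lives in $H^{(k+1)(\ell+1)}(B;\{\pi_{(k+1)(\ell+1)-1}(F^{\star(\ell+1)})\})$, and the heart of the argument is to identify this obstruction class with the image of $\eta^{\ell+1}$ under the coefficient homomorphism $\pi_k(F)^{\otimes(\ell+1)}\to\pi_{k\ell+k+\ell}(F^{\star(\ell+1)})$. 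The cleanest way I would do this is the one used by Schwarz: work with the universal model, i.e.~replace $B$ by the appropriate Eilenberg--MacLane-type classifying space for the obstruction class $\eta$ (or more concretely, reduce to the case where $B$ is a product/wedge of cells where the cup power $\eta^{\ell+1}$ is computed cellularly), and then observe that the cellular cochain representing the obstruction to extending the join section is literally the join pairing applied to the $(\ell+1)$-fold product of the cellular cochain representing $\eta$. The fact that the cup product of cohomology classes is computed by the Alexander--Whitney diagonal, combined with the fact that the obstruction cocycle for a fiberwise join is the join of the obstruction cocycles of the factors, is what makes these two descriptions agree on the nose.

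I would organize the verification as follows: (1) reduce to the universal situation via naturality of obstruction classes under pullback; (2) in the universal situation, exhibit an explicit cellular section of $p(\ell)$ over the $((k+1)(\ell+1)-1)$-skeleton built as the fiberwise join of the explicit cellular sections of $p$ over the $k$-skeleton; (3) compute its obstruction cocycle cell-by-cell, recognizing the answer as (iterated join pairing)$\circ$(Alexander--Whitney diagonal iterated $\ell$ times) applied to the product cochain, which is by definition a representative of the image of $\eta^{\ell+1}$; (4) pull back to the original $B$. The main obstacle I anticipate is step (3): one must be scrupulous about the identification of the coefficient group $\pi_{(k+1)(\ell+1)-1}(F^{\star(\ell+1)})$ and about the compatibility of the join pairing on homotopy groups with the join operation on relative Hurewicz images of cells, including getting the $\pi_1(B)$-equivariance of the coefficient homomorphism right (the twisting in $\{\pi_k(F)\}$ must be carried correctly through the tensor power and the join map). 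A secondary subtlety is handling $k$-simplicity of $F$ — this is exactly the hypothesis that makes the obstruction cocycle for $p$ well-defined with coefficients in an honest (twisted) $\pi_k(F)$ rather than in a more complicated nonabelian gadget, and one should check that $k$-simplicity of $F$ propagates to the appropriate simplicity of $F^{\star(\ell+1)}$ in the degree that matters, so that the obstruction class $\eta^{\ell+1}$ is honestly defined. Once these bookkeeping points are settled, the identification $\eta\mapsto\eta^{\ell+1}$ is forced, because both the cellular obstruction cocycle and the cup-power cocycle are representatives of classes that are natural in $B$ and agree in the universal example.
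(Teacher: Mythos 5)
The paper does not actually prove Lemma~\ref{classabstrnew}: it states it as ``a direct generalization of \cite[Theorem~1]{Sch58}'' and leaves the argument to Schwarz. So the comparison here is with the standard obstruction-theoretic proof you are trying to reconstruct, and your outline has one genuine gap in that reconstruction.

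The gap is in your step (2). You claim that the fiberwise join $\phi\star\cdots\star\phi$, which is a priori a section of $p(\ell)$ only over $B^{(k)}$, ``extends canonically over the skeleton $B^{((k+1)(\ell+1)-1)}$'' because $F^{\star(\ell+1)}$ is $((k+1)(\ell+1)-2)$-connected. That connectivity statement requires $F$ to be $(k-1)$-connected, which is not among the hypotheses: the lemma only assumes a section over $B^{(k)}$ and $k$-simplicity of $F$. (Take $F=S^1\vee S^5$ with $k=5$: then $F^{\star 2}\simeq\Sigma(F\wedge F)$ has nontrivial $\pi_3$, far below the degree $(k+1)\cdot2-2=10$ your argument needs.) In Schwarz's proof the section of $p(\ell)$ over the large skeleton is not obtained by pushing the joined section through a highly connected fiber; it is built cell by cell using the join coordinates, decomposing each cell of dimension at most $(k+1)(\ell+1)-1$ into pieces on which one uses the several copies of $\phi$ (each defined near a suitable face of the cell) and interpolating in the join parameter --- the same mechanism that proves $\secat(p_1\ast p_2)\le\secat(p_1)+\secat(p_2)+1$. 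This is exactly the step your plan replaces with an invalid connectivity argument, and it is also what feeds the computation in your step (3): the identification of the resulting obstruction cocycle with the Alexander--Whitney cup power pushed through the join pairing is read off from that explicit cellular construction, so asserting ``the obstruction cocycle of a fiberwise join is the join of the obstruction cocycles'' without it is close to assuming the conclusion. Your universal-example reduction and your remarks on twisted coefficients and simplicity of $F^{\star(\ell+1)}$ are reasonable bookkeeping. Note finally that in the paper's application $F=(\Omega X)^{s-1}$ is $(n-3)$-connected with $k=n-2$, so $F$ \emph{is} $(k-1)$-connected there and your shortcut would suffice for Theorem~\ref{tceszcls}; but it does not prove the lemma as stated.
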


Lemma~\ref{classabstrnew} requires, of course, the use of cohomology with possibly twisted coefficients. In our application, both $B$ and $F$ will be simply connected (as a consequence of the second use of our general hypothesis $n>2$), so cohomology will always have simple coefficients, and the simplicity of $F$ will come for free.

\begin{proof}[Proof of Theorem~\ref{tceszcls}]
We prepare the grounds with two simplification hypotheses. First of all, as we have already noted, it remains to consider the case when $n$ is even and $n\geq4$. Further, the discussion leading to Corollary~\ref{highertc} yields
$
sk-1\leq\zcl_s(\Conf_{\mathbb{Z}_2}(\mathbb{R}^n-\{0\},k))\leq\TC_s((\Conf_{\mathbb{Z}_2}(\mathbb{R}^n-\{0\},k))\leq sk,
$
so we can safely assume
\begin{equation}\label{elparo}
\zcl_s(\Conf_{\mathbb{Z}_2}(\mathbb{R}^n-\{0\},k))=sk-1.
\end{equation}
In these terms, for an $(n-2)$-connected cell complex $X$ of dimension at most $k(n-1)$ having the homotopy type of $\Conf_{\mathbb{Z}_2}(\mathbb{R}^n-\{0\},k)$---whose existence is discussed in the paragraph containing~(\ref{losinvariantesdegenero}))---, it suffices\footnote{It is well known that the value of $\TC_s(Z)$ depends only on the homotopy type of the space $Z$.} to show that $\TC_s(X)\leq sk-1$ or, equivalently, that $p(sk-1)$ admits a global (continuous) section. Here $p$ stands for the fibration $e_s\colon P(X)\to X^s$ described at the beginning of this section.

\smallskip
The fiber of $p$, $(\Omega X)^{s-1}$, is $(n-3)$-connected so that, by standard obstruction theory:
\begin{enumerate}
\item there are sections $\phi$ of $p$ on the $(n-2)$-skeleton of $X^s$,
\item the fundamental class of $p$ (also called the primary class of $p$), denoted by $$\eta_p\in H^{n-1}(X^s,\pi_{n-2}((\Omega X)^{s-1})),$$ is the obstruction class for (altering rel.~the $(n-3)$-skeleton and then) extending any such partial section $\phi$ to the $(n-1)$-skeleton of $X^s$, and
\item $p^*(\eta_p)=0$ so that, in our context, $\eta_p$ is an $s$-th zero-divisor.
\end{enumerate}

By Lemma~\ref{classabstrnew} (with $k=n-2$ and $\ell=sk-1$), we get a section of $p(sk-1)$ on the $((n-1)sk-1)$-skeleton of $X^s$ whose obstruction to extending to the $((n-1)sk)$-skeleton is trivial in view of~(\ref{elparo}). The result follows since such an extended section is already defined on all of $X^s$, as $\dim(X)\leq k(n-1)$.
\end{proof}

We close the paper with a series of ideas (inspired by the work in~\cite{FarberGrant} as generalized in~\cite{GG15}) leading to a plausible argument to prove that $\TC_s(\Conf_{\mathbb{Z}_2}(\mathbb{R}^n-\{0\},k))=sk-1$ for $s\geq2$ and for any even $n\,$---including the case $n=2$. First of all, the results in Section~\ref{section2} imply that, for any $R$-module $M$, $$H^*(\Conf_{\mathbb{Z}_2}(\mathbb{R}^n-\{0\}, k)^s;M)\cong H^*(\Conf_{\mathbb{Z}_2}(\mathbb{R}^n-\{0\}, k);R)^{\otimes s}\otimes_R M.$$ Since the rings $H^*(\Conf_{\mathbb{Z}_2}(\mathbb{R}^{n}-\{0\}, k);R)$ and $H^*(\Conf_{\mathbb{Z}_2}(\mathbb{R}^{n+2}-\{0\}, k);R)$ differ only by degree scaling, it follows that, for fixed $s$ and $k$, $\zcl_s(\Conf_{\mathbb{Z}_2}(\mathbb{R}^n-\{0\}, k))$ depends only on the parity of $n$. In particular, in settling the gap by one in Corollary~\ref{highertc} for $\TC_s(\Conf_{\mathbb{Z}_2}(\mathbb{R}^n-\{0\},k))$ when $n$ is even (and $n\geq4$), it is enough to consider a single value of $n$. Of course, the case $n=4$ is the most reasonable instance to explore. However, the situation for $n=2$ not only would seem to be more accessible, but can potentially be more fruitful:

\begin{thm}\label{jugoso}
Assume that $\Conf_{\mathbb{Z}_2}(\mathbb{R}^2-\{0\},k)$ has the homotopy type of a product $S^1\times X$ for some cell complex $X$ of dimension $k-1$. Then $\;\TC_s(\Conf_{\mathbb{Z}_2}(\mathbb{R}^n-\{0\},k))=sk-1\;$ for any $s\geq2$ and any positive even integer $n$.
\end{thm}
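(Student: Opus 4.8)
The plan is to reduce the claim, via the parity-invariance of the $s$-th zero-divisor cup-length together with Theorem~\ref{tceszcls}, to a single computation of $\TC_s$ for $\Conf_{\mathbb{Z}_2}(\mathbb{R}^2-\{0\},k)$; there the hypothesized product structure supplies precisely the ingredient that Theorem~\ref{tceszcls} cannot deliver when $n=2$, namely a sharp upper bound for $\TC_s$.

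First I would record the lower bound in the case $n=2$. The ring monomorphism $H^*(\Conf(\mathbb{R}^n,k+1))\hookrightarrow H^*(\Conf_{\mathbb{Z}_2}(\mathbb{R}^n-\{0\},k))$ sending $A'_{i,j}$ to $A_{i-1,j-1}$, constructed in the discussion preceding Corollary~\ref{highertc}, uses only the ring presentations of Section~\ref{section2} and the torsion-freeness recorded in~(\ref{isomodulos}), both valid for $n\geq2$; hence, exactly as in that discussion, it induces monomorphisms on $s$-fold tensor powers and gives
\[
\zcl_s(\Conf_{\mathbb{Z}_2}(\mathbb{R}^2-\{0\},k))\ \geq\ \zcl_s^{\mathbb{Z}}(\Conf(\mathbb{R}^2,k+1))\ =\ sk-1,
\]
the last equality being \cite[Proposition~4.2]{GG15} (with $\delta_2=0$ since $2$ is even). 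Combining with the universal inequality $\zcl_s\leq\TC_s$ of Proposition~\ref{lascotas}, this yields $sk-1\leq\zcl_s(\Conf_{\mathbb{Z}_2}(\mathbb{R}^2-\{0\},k))\leq\TC_s(\Conf_{\mathbb{Z}_2}(\mathbb{R}^2-\{0\},k))$.

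Next I would exploit the hypothesis $\Conf_{\mathbb{Z}_2}(\mathbb{R}^2-\{0\},k)\simeq S^1\times X$, where $X$ is a cell complex of dimension $k-1$ which is automatically connected since $\Conf_{\mathbb{Z}_2}(\mathbb{R}^2-\{0\},k)$ is. As $\TC_s$ is a homotopy invariant and satisfies the product inequality $\TC_s(A\times B)\leq\TC_s(A)+\TC_s(B)$, we get $\TC_s(\Conf_{\mathbb{Z}_2}(\mathbb{R}^2-\{0\},k))\leq\TC_s(S^1)+\TC_s(X)$; here $\TC_s(S^1)=\cat\bigl((S^1)^{s-1}\bigr)=s-1$ (as $S^1$ is a topological group), and $\TC_s(X)\leq s\dim(X)=s(k-1)$ by the dimensional upper bound of Proposition~\ref{lascotas} applied with connectivity parameter $c=1$. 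Hence $\TC_s(\Conf_{\mathbb{Z}_2}(\mathbb{R}^2-\{0\},k))\leq(s-1)+s(k-1)=sk-1$, which together with the previous paragraph forces
\[
\TC_s(\Conf_{\mathbb{Z}_2}(\mathbb{R}^2-\{0\},k))=\zcl_s(\Conf_{\mathbb{Z}_2}(\mathbb{R}^2-\{0\},k))=sk-1 .
\]

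To finish, I would propagate this to every positive even $n$. By the remark following Corollary~\ref{highertc}, $\zcl_s(\Conf_{\mathbb{Z}_2}(\mathbb{R}^n-\{0\},k))$ depends only on the parity of $n$ (the relevant cohomology rings differing only by a rescaling of degrees), so it equals $sk-1$ for all even $n\geq2$; Theorem~\ref{tceszcls} then gives $\TC_s(\Conf_{\mathbb{Z}_2}(\mathbb{R}^n-\{0\},k))=\zcl_s(\Conf_{\mathbb{Z}_2}(\mathbb{R}^n-\{0\},k))=sk-1$ for even $n\geq4$, and for $n=2$ this is the displayed equality above. Essentially everything here is bookkeeping with results already in place, so I do not anticipate a genuine obstacle; the one point worth a line of verification is that $X$ may be taken connected of dimension $k-1$ (immediate from the hypothesis), which is exactly what lets the bound $\TC_s(X)\leq s(k-1)$ be applied. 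The reason Theorem~\ref{tceszcls} does not by itself cover $n=2$ is that its proof relies on $n>2$ to make the relevant cell complex simply connected---so that Lemma~\ref{classabstrnew} applies with simple coefficients---and in its connectivity estimates; the splitting $S^1\times X$ circumvents this by producing the required upper bound directly.
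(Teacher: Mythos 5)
Your proposal is correct and follows essentially the same route as the paper's proof: the product splitting gives the upper bound $\TC_s\leq sk-1$ at $n=2$, parity invariance of $\zcl_s$ transports the resulting equality $\zcl_s=sk-1$ to all even $n$, and Theorem~\ref{tceszcls} closes the case $n\geq4$. The only cosmetic difference is that you verify the lower bound $\zcl_s\geq sk-1$ at $n=2$ directly via the ring monomorphism (checking it persists for $n=2$), whereas the paper imports it from the $n\geq4$ case through the same parity-invariance observation; both are valid.
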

\begin{proof}The subadditivity property of $\TC_s$ and Proposition~\ref{lascotas} yield (in that order) the two inequalities in
\begin{equation}\label{cortocircuito}
\TC_s(\Conf_{\mathbb{Z}_2}(\mathbb{R}^2-\{0\},k))\leq\TC_s(S^1)+\TC_s(X)=s-1+\TC_s(X)\leq s-1+s(k-1)=sk-1.
\end{equation}
Another application of Proposition~\ref{lascotas} then yields $\zcl_s(\Conf_{\mathbb{Z}_2}(\mathbb{R}^2-\{0\},k))\leq sk-1$ which, in view of the considerations just before Theorem~\ref{jugoso}, generalizes to
\begin{equation}\label{cuantomas}
\zcl_s(\Conf_{\mathbb{Z}_2}(\mathbb{R}^n-\{0\},k))\leq sk-1,\quad\mbox{for any positive even integer $n$.}
\end{equation}
The inequality in~(\ref{cuantomas}) is now forced to be an equality giving the value of $\TC_s(\Conf_{\mathbb{Z}_2}(\mathbb{R}^n-\{0\},k))$: in view of Corollary~\ref{highertc} and Theorem~\ref{tceszcls} if $n\geq4$, and in view of~(\ref{cortocircuito}) and, again, the considerations just before Theorem~\ref{jugoso} if $n=2$.
\end{proof}

\begin{rem}\label{jsuydbe}
We give evidence supporting the conjecture that the hypothesis in Theorem~\ref{jugoso} holds without restriction. For one, while $\Conf(\mathbb{R}^2,k)$ is known to have the homotopy type of a cell complex of dimension $k-1$, the analogue of Theorem~\ref{jugoso} described in~\cite{FarberGrant} and~\cite{GG15} makes use of the well known and more explicit  homotopy splitting $\Conf(\mathbb{R}^2,k)\simeq Y\times S^1$
with $Y$ a CW complex of dimension $k-2$. But perhaps more interesting is to note that, while the hypothesis in Theorem~\ref{jugoso} is obviously true for $k=1$, it also holds for $k=2$. A direct (but {\it ad-hoc}) argument is given in the final chapter of the Ph.D.~thesis of the second named author.
\end{rem}

\def\cprime{$'$} \def\cprime{$'$} \def\cprime{$'$} \def\cprime{$'$}
  \def\cprime{$'$}

\end{document}